\def\:{{\colon}}
\def\into{\hookrightarrow}
\def\toisom{\widetilde{\to}}
\def\.{,\dots ,}
\def\wt{\widetilde}
\def\wh{\widehat}
\def\ol{\overline}
\def\wtimes{\wh{\otimes}}
\def\circcirc{{\circ\circ}}
\def\Nr{\calN r}
\def\Spf{{\rm Spf}}
\def\Fr{{\rm Fr}}
\def\Gal{{\rm Gal}}
\def\Spec{{\rm Spec}}
\def\Frac{{\rm Frac}}
\def\bfSpec{{\bf Spec}}
\def\dim{{\rm dim}}
\def\ad{{\rm ad}}
\def\an{{\rm an}}
\def\Int{{\rm Int}}
\def\RZ{{\rm RZ}}
\def\rk{{\rm rk}}
\def\Hom{{\rm Hom}}
\def\sh{{\rm sh}}
\def\tr{{\rm tr}}
\def\deg{{\rm deg}}
\def\inv{{\rm inv}}
\def\bir{{\rm bir}}
\def\gp{{\rm gp}}
\def\tor{{\rm tor}}
\def\cha{{\rm char}}
\def\trdeg{{\rm tr.deg.}}
\def\bfA{{\bf A}}
\def\bfN{{\bf N}}
\def\bfP{{\bf P}}
\def\bfQ{{\bf Q}}
\def\bfR{{\bf R}}
\def\bfZ{{\bf Z}}
\def\gtC{{\mathfrak C}}
\def\gtS{{\mathfrak S}}
\def\gtV{{\mathfrak V}}
\def\gtW{{\mathfrak W}}
\def\gtX{{\mathfrak X}}
\def\gtY{{\mathfrak Y}}
\def\gtZ{{\mathfrak Z}}
\def\gtf{{\mathfrak f}}
\def\gtx{{\mathfrak x}}
\def\gty{{\mathfrak y}}
\def\gtz{{\mathfrak z}}
\def\calA{{\mathcal A}}
\def\calB{{\mathcal B}}
\def\calC{{\mathcal C}}
\def\calF{{\mathcal F}}
\def\calH{{\mathcal H}}
\def\calM{{\mathcal M}}
\def\calN{{\mathcal N}}
\def\calO{{\mathcal O}}
\def\calX{{\mathcal X}}
\def\calY{{\mathcal Y}}
\def\calZ{{\mathcal Z}}
\def\oC{{\ol C}}
\def\oK{{\ol K}}
\def\oL{{\ol L}}
\def\oM{{\ol M}}
\def\oN{{\ol N}}
\def\oP{{\ol P}}
\def\oX{{\ol X}}
\def\oY{{\ol Y}}
\def\oZ{{\ol Z}}
\def\of{{\ol f}}
\def\ok{{\ol k}}
\def\oll{{\ol l}}
\def\om{{\ol m}}
\def\ox{{\ol x}}
\def\oy{{\ol y}}
\def\oz{{\ol z}}
\def\tilB{{\wt B}}
\def\tilE{{\wt E}}
\def\tilF{{\wt F}}
\def\tilK{{\wt K}}
\def\tilL{{\wt L}}
\def\tilT{{\wt T}}
\def\tilY{{\wt Y}}
\def\tilb{{\wt b}}
\def\tilk{{\wt k}}
\def\till{{\wt l}}
\def\tilx{{\wt x}}
\def\hatA{{\wh A}}
\def\hatB{{\wh B}}
\def\hatD{{\wh D}}
\def\hatK{{\wh K}}
\def\hatX{{\wh X}}
\def\hatk{{\wh k}}
\def\hatl{{\wh l}}
\def\hatm{{\wh m}}
\def\hatx{{\wh x}}
\def\ocalO{{\ol\calO}}
\def\ogtC{{\ol\gtC}}
\def\ogtx{{\ol\gtx}}
\def\tilcalA{{\wt\calA}}
\def\tilgtx{{\wt\gtx}}
\def\hatcalO{{\wh\calO}}
\def\kcirc{k^\circ}
\def\lcirc{l^\circ}
\def\mcirc{m^\circ}
\def\Fcirc{F^\circ}
\def\Kcirc{K^\circ}
\def\Lcirc{L^\circ}
\def\tilFcirc{\tilF^\circ}
\def\hatkcirc{\hatk^\circ}
\def\hatlcirc{\hatl^\circ}
\def\calAcirc{\calA^\circ}
\def\calBcirc{\calB^\circ}
\def\okcirc{\ok^\circ}
\def\olcirc{\oll^\circ}
\def\omcirc{\om^\circ}
\def\Kcirccirc{K^{\circ\circ}}
\def\kcirccirc{k^{\circ\circ}}
\def\hatKcirccirc{\hatK^\circcirc}
\def\calAcirccirc{\calA^\circcirc}
\def\oeta{{\ol\eta}}
\def\tilalpha{{\wt\alpha}}
\def\alp{{\alpha}}
\def\lam{{\lambda}}
\def\Lam{{\Lambda}}
\def\oLam{{\ol\Lam}}
\def\oLamcirc{{\oLam^\circ}}
\def\Lamcirc{{\Lam^\circ}}
\def\wHx{{\widetilde{\calH(x)}}}
\def\wHy{{\widetilde{\calH(y)}}}
\def\whka{{\wh{k^a}}}
\def\R+*{{\bf R^*_+}}
\newtheorem{theorsect}{Theorem}[section]
\newtheorem{propsect}[theorsect]{Proposition}
\newtheorem{theor}{Theorem}[subsection]
\newtheorem{prop}[theor]{Proposition}
\newtheorem{lem}[theor]{Lemma}
\newtheorem{cor}[theor]{Corollary}
\newtheorem{conj}[theor]{Conjecture}
\theoremstyle{definition}
\newtheorem{examsect}[theorsect]{Example}
\newtheorem{defin}[theor]{Definition}
\newtheorem{rem}[theor]{Remark}
\newtheorem{exam}[theor]{Example}
\newtheorem{sit}[theor]{Situation}
\begin{document}

\title{Inseparable local uniformization}
\author{Michael Temkin}
\address{\tiny{Einstein Institute of Mathematics, The Hebrew University of Jerusalem, Giv'at Ram, Jerusalem, 91904, Israel}}
\email{\scriptsize{temkin@math.huji.ac.il}}
\thanks{Parts of this work were done when I was visiting Max Planck Institute for Mathematics at Bonn and the Institute for Advanced Study at Princeton, and I am thankful to them for the hospitality. In IAS I was supported by NSF grant DMS-0635607.}

\begin{abstract}
It is known since the works of Zariski in the early 40ies that desingularization of varieties along valuations (called local uniformization of valuations) can be considered as the local part of the desingularization problem. It is still an open problem if local uniformization exists in positive characteristic and dimension larger than three. In this paper, we prove that Zariski local uniformization of algebraic varieties is always possible after a purely inseparable extension of the field of rational functions, and therefore any valuation can be uniformized by a purely inseparable alteration.
\end{abstract}

\keywords{Inseparable local uniformization, desingularization}

\maketitle

\section{Introduction}

\subsection{Motivation}
The main goal of this paper is to prove that an integral algebraic variety over a field can be desingularized locally along a valuation by a purely inseparable alteration. In view of analogies with (resp. weak) local uniformization due to Zariski (resp. Gabber) it is natural to call this result inseparable local uniformization of valuations on varieties. An equivalent reformulation of our main result is that any integral algebraic variety $X$ can be {\em covered} by integral regular $X$-schemes $Y_1\. Y_m$ such that each morphism $Y_i\to X$ is dominant, of finite type and the extensions $k(Y_i)/k(X)$ of the fields of rational functions are finite and purely inseparable. As for the definition of the covering, we prefer the following ad hoc definition: $\coprod_{i=1}^m Y_i\to X$ as above is a covering if any valuation on $k(X)$ with center on $X$ lifts to a valuation on some $k(Y_i)$ with center on $Y_i$.

To achieve our main goal, we will study inseparable local uniformization of certain points on Berkovich analytic spaces and of certain valuations on curves over valuation rings. These are secondary goals of the paper, and, in order to bound the length of the paper, we prefer not to explore them beyond what is needed for the proof of the main result. It seems that these questions are worth a deeper study in a separate paper. For example, it is an interesting question if analogous results hold for other classes of analytic points.

Finally, one more secondary goal of the paper is to enrich the classical techniques of desingularization theory with new tools. Probably, the main novelty is the use of Berkovich analytic geometry, which plays a critical role in our proof. In addition, we make heavy use of non-noetherian schemes (related to valuation rings) and the approximation theory (or the theory of projective limits) from \cite[$\rm IV_3$, \S8]{ega}. The former theory has just started to find applications to algebraic geometry (see \cite{temst}, \cite{Ked1} and \cite{Ked2}), while the latter is relatively common in general algebraic geometry but seems to be a new tool in desingularization theory.

\subsection{Known desingularization results: strength versus generality}
To put our result into a general context of desingularization theory we observe that a general aim of desingularizing an integral scheme $X$ is to find a morphism $f{\colon}Y\to X$ such that $Y$ is regular, $f$ is a covering in a natural topology (usually $h$-topology) and $Y$ is as "close" to $X$ as possible. Traditionally, one seeks for a proper and birational $f$ but nowadays other choices are widely used. Let $Y_1\. Y_m$ denote the irreducible components of $Y$ with $K_i=k(Y_i)$. Almost always, one at least requires from $f$ that its restriction on each $Y_i$ is separated, of finite type, dominant and generically finite; in particular, $K_i$ is finite over $K=k(X)$. In this case, we say as earlier that $f$ is a covering if any valuation on $X$ lifts to a valuation on some $Y_i$. Although we will not need that, we remark that the flattening theorem of Raynaud-Gruson implies that the topology of such coverings is nothing else but the topology generated by modifications and flat quasi-finite coverings, and that $f$ is a covering if and only if it is a covering in the $h$-topology of Voevodsky, see \cite{Vo}. Since $f$ should be as "small" as possible, usually one tries to control $m$ and the extensions $K_i/K$, though it is not always possible with concurrent methods. Our result provides a partial control on $K_i/K$, in particular, it implies that $[K_i:K]=p^n$, where $p$ is the characteristic. For the sake of comparison, we briefly describe other known results.

(i) {\it Classical desingularization: $m=1$ and $K_1=K$.} Under these assumptions, $f$ is automatically birational and proper. This case was established by Hironaka for schemes of finite type over local quasi-excellent schemes over $\bfQ$, see \cite{Hir}. Moreover, it is achieved by blowing up regular centers, so one obtains $f$ of a very special form. It was later proved that for varieties one can build $f$ functorially, see, for example, \cite{bm}. The case of general quasi-excellent schemes over $\bfQ$ was deduced in \cite{temdes} and \cite{nonemb}. In positive characteristic, the case of threefolds over a field $k$ with $[k:k^p]<\infty$ was established recently by Cossart and Piltant in \cite{CP1} and \cite{CP2}. The main ingredient of their proof is local uniformization of threefolds. For general quasi-excellent schemes it is only known how to desingularize surfaces.

(ii) {\it Local uniformization: $K=K_1=\dots=K_m$.} The problem  was introduced by Zariski, who named it local uniformization and considered it a local part of the classical desingularization problem. Zariski established in \cite{Zar1} the case of varieties of characteristic zero, and deduced global desingularization for threefolds (it is unknown if one can reduce global desingularization to local uniformization in higher dimensions). In positive characteristic, the only known proof for threefolds is very complicated and has a minor restriction that $[k:k^p]<\infty$ (see \cite{CP1} and \cite{CP2}), and the case of $\dim(X)>3$ is widely open (but see remark (i) on the next page).

(iii) {\it Alterations: $m=1$ and $f$ is proper.} Such $f$ is called an {\em alteration}. This very successful weakening of the classical desingularization problem was introduced by de Jong in \cite{dJ1}. The new problem can be solved with reasonable effort for any scheme of finite type over an excellent surface $S$, but it can replace the classical desingularization in many applications. In addition, de Jong proved that if the ground scheme $S$ is the spectrum of a perfect field then $K_1/K$ can be chosen to be separable. The only other known result on control on $K_1/K$ was recently announced by Gabber: if $S=\Spec(k)$ for a field $k$ then one can choose $f$ so that $[K_1:K]$ is prime to a given prime $l\neq\cha(k)$; see a survey on Gabber's work by Illusie, \cite[Th. 1.3]{Ill}.

(iv) {\it Altered local uniformization of Gabber: no restrictions on $m$ and $K_i$'s.} Gabber proved that weak local uniformization exists for any quasi-excellent scheme (which is a much more general case than usual methods treat). This result plays a key role in Gabber's proof of a fundamental finiteness theorem for \'etale cohomology of general quasi-excellent schemes. Moreover, in order to control $l$-torsion coefficients Gabber proved a prime-to-$l$ strengthening of the weak local uniformization whose precise formulation is given in \cite[Th. 1.1]{Ill}.

(v) {\it Inseparable local uniformization: $K_i/K$ are purely inseparable.} In the case of varieties, this is our Corollary \ref{maincor}.

Our list of known results would not be complete without a discussion on recent works in progress. The author can only express his own expectations that may be completely wrong. Perhaps, one can divide these works to three classes.

(i) Pushing existing techniques to their limit. I expect that the following two problems can be solved in this way: (a) extend the method of \cite{CP1} to desingularize any scheme of dimension $3$ that admits a morphism of finite type to an excellent curve (e.g. to $\Spec(\bfZ)$), (b) extend the methods of this paper to prove simultaneous inseparable log uniformization (see \S\ref{strongsec} below) of quasi-excellent schemes of positive characteristic. Also, extend this to mixed characteristic with inseparable alteration replaced by an alteration of degree $p^n$,
where $p$ is the residue characteristic of the valuation.

(ii) A couple of years ago programs on full resolution of singularities were announced independently by Kawanoue, Hironaka and W{\l}odarczyk. Also, Villamayor and his coauthors develop a new approach to positive characteristic in a couple of papers (without claiming to have a full program). These projects are not completed so far and it seems that nobody can predict how they will develop.

(iii) Recently, T. Urabe claimed a full proof of local uniformization by toric methods, see \cite{Ura}. The preprint has not been accepted for publication so far, and in private communication with the author some experts doubted the proof.

\subsection{The main result}
\begin{conj}\label{insepconj}
Let $X$ be an integral algebraic variety. Then there exists an alteration $f{\colon}Y\to X$ with regular $Y$ and a purely inseparable extension $k(Y)/k(X)$.
\end{conj}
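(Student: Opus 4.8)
The plan is to \emph{deduce} the conjecture from the covering version of inseparable local uniformization, Corollary~\ref{maincor}, by a local-to-global (weak-to-strong) argument in the spirit of Zariski's reduction of local uniformization to global desingularization \cite{Zar1}. Applying Corollary~\ref{maincor} to $X$ produces integral regular $X$-schemes $Y_1\.Y_m$, with each $Y_i\to X$ dominant of finite type and $k(Y_i)/k(X)$ finite and purely inseparable, such that every valuation of $k(X)$ centered on $X$ lifts to some $k(Y_i)$ with center on $Y_i$. Phrased on the Riemann--Zariski space $\gtX$ of $k(X)/k$, which is quasi-compact, this says that the loci $V_i\subseteq\gtX$ of valuations admitting a center on $Y_i$ form a finite open cover of $\gtX$; over $V_i$ one has a regular model, but defined over the \emph{larger} field $k(Y_i)$. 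The goal is to organize these finitely many partial, inseparably twisted resolutions into a single regular alteration of $X$.

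First one unifies the function fields: the compositum $L=k(Y_1)\cdots k(Y_m)$, formed inside a fixed perfect closure of $k(X)$, is again finite and purely inseparable over $k(X)$, and if $X_L$ denotes the normalization of $X$ in $L$ it suffices to produce a proper birational $Y\to X_L$ with $Y$ regular, since $Y\to X_L\to X$ is then an alteration with $k(Y)=L$ purely inseparable over $k(X)$. The natural building blocks are the normalizations $Z_i$ of $Y_i$ in $L$: these are finite and purely inseparable over the regular $Y_i$, hence normal. The first technical point is that a normal finite purely inseparable cover of a regular scheme need not be regular, so each $Z_i$ must be resolved by a further \emph{birational} modification, and one must also compactify over $X$, before the pieces can be glued. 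Granting such resolutions, the final step is Zariski patching: cover the Riemann--Zariski space $\gtX_L$ of $L/k$ by finitely many quasi-compact opens each dominated by one of the regular models, and paste them over a common modification of $X_L$. When $\dim(X)\le 3$ this can be carried through by the methods of Zariski and Abhyankar; in general it is open.

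The hard part --- and the reason the statement is recorded as a conjecture --- is precisely this passage from the weak local statement of Corollary~\ref{maincor} to a single global model: resolving the inseparable covers $Z_i$, or equivalently completing the patching, is the classical desingularization problem in positive characteristic, which is open for $\dim(X)>3$. The purely inseparable freedom that makes the covering version accessible has in effect already been spent; de Jong's alterations \cite{dJ1} do furnish \emph{some} regular alteration of $X$, but with a genuinely separable part in the function-field extension that one does not know how to excise without reintroducing singularities. The most promising refinement of the plan above would be to first establish a bounded, \emph{simultaneous} form of inseparable local uniformization --- uniformizing an entire quasi-compact family of valuations over a \emph{single} finite purely inseparable extension, in the spirit of the simultaneous inseparable log uniformization discussed in \S\ref{strongsec} --- and to feed it into an induction on $\dim(X)$; the crux is then to show that the inseparable extensions needed at the various centers can be chosen uniformly, which is automatic for the finitely many valuations handled in one application of Corollary~\ref{maincor} but is the genuine obstruction in the inductive closure of the argument.
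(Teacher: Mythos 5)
The statement you were asked about is Conjecture~\ref{insepconj}; the paper does not prove it and says explicitly that it ``is widely open so far.'' What the paper actually establishes is only the local version along a valuation (Theorem~\ref{insepunif}) and its covering reformulation (Corollary~\ref{maincor}). Your write-up correctly recognizes this: it is not a proof but a strategy together with an honest identification of where it breaks down, and that identification is sound. In particular, your observation that passing to the compositum $L=k(Y_1)\cdots k(Y_m)$ and normalizing the regular $Y_i$ in $L$ destroys regularity --- so that the ``purely inseparable freedom has already been spent'' and one is back at the general resolution problem --- is exactly the kind of obstruction that keeps the conjecture open.

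Two small points of comparison with the paper's own discussion. First, the author locates the obstacle to globalizing his method slightly differently: in the closing paragraph of the Introduction he points out that the proof of Theorem~\ref{insepunif} chooses a sequence of curve fibrations adapted to the \emph{given} valuation (to avoid the issue of Remark~\ref{basarem}), so no single global fibration works for all valuations simultaneously; your diagnosis (the Zariski-patching/compositum step) and his (non-uniformity of the fibrations) are two facets of the same local-to-global failure, and your suggested remedy --- a bounded, simultaneous form of inseparable uniformization over a single finite purely inseparable extension --- is in the spirit of what \S\ref{strongsec} proposes. Second, a minor technical caveat: the loci $V_i$ of valuations centered on the (finite-type, not proper) $Y_i$ are quasi-compact pro-constructible subsets of the Riemann--Zariski space rather than Zariski-open sets in general, so the ``finite open cover'' phrasing should be read in the constructible topology; this does not affect the deduction of Corollary~\ref{maincor} from Theorem~\ref{insepunif}, but it is one more thing to handle if one ever tries to run the patching argument in earnest.
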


This is conjecture \cite[2.9]{AO}, and it expresses a hope that such control on the extension of fields of rational functions may be substantially easier to achieve than classical desingularization. The author shares this hope despite the fact that the conjecture is widely open so far. Our main result is its local version along a valuation. We formulate this result in Theorem \ref{insepunif} below and call it inseparable local uniformization. Given a finitely generated field extension $K/k$ and a valuation ring $\Kcirc\supset k$ of $K$ (i.e. $K=\Frac(\Kcirc)$), by a {\em $k$-model} of $\Kcirc$ we mean any integral $k$-variety $X$ with generic point $\Spec(K)\to X$ such that $\Kcirc$ is centered on $X$. In particular, an affine model is given by a finitely generated $k$-subalgebra $A\subset\Kcirc$ with $\Frac(A)=K$. As usual, by saying that a model $X'$ {\em refines} $X$ we mean that the isomorphism of their generic points extends to a morphism $f{\colon}X'\to X$.

\begin{theor}\label{insepunif}
Let $K/k$ be a finitely generated field extension, $\Kcirc$ be a valuation ring of $K$ containing $k$ and $X$ be an affine $k$-model of $\Kcirc$. Then there exist finite purely inseparable extensions $l/k$ and $L/lK$ and an affine model $X'$ of $\Kcirc$ such that $X'$ refines $X$ and the unique extension of $\Kcirc$ to a valuation ring of $L$ is centered on a simple $l$-smooth point of the $L$-normalization $\Nr_L(X')$.
\end{theor}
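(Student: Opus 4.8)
\emph{Overall strategy.} The plan is to argue by induction on $d=\trdeg(K/k)=\dim X$, reducing the general case to relative curves over the uniformized base and analyzing those by Berkovich-analytic means. The case $d=0$ is the anchor: then $\Kcirc=K$ (a field algebraic over $k$) and $X=\Spec K$, and it suffices to recall that a finitely generated field extension becomes separably generated after a finite purely inseparable extension of the ground field. Choosing $l/k$ finite purely inseparable so that $lK/l$ is separable makes $L:=lK$ finite \'etale, hence $l$-smooth, over $l$; then $X'=X$ and $\Nr_L(X')=\Spec(lK)$ works. The same elementary fact, applied to the residue field of $\Kcirc$, also lets one normalize the inductive step: after refining $X$ one may assume that the vertical directions carry no superfluous inseparability and, when the residue field of $\Kcirc$ is algebraic over $k$, that $\Kcirc$ has a closed center on $X$.

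\emph{Fibration over a uniformized base.} For $d\ge 1$ I would present $X$, after refinement, as dominating a relative curve. Using Abhyankar's inequality to decide whether to peel a rational-rank unit off the value group or a transcendence unit off the residue field, choose a finitely generated subfield $k\subseteq F\subsetneq K$ with $\trdeg(K/F)=1$ for which $F^\circ:=\Kcirc\cap F$ is a valuation ring and the extension $\Kcirc/F^\circ$ is as rigid as possible. Applying the inductive hypothesis to $F^\circ$ and to an affine $F$-model of it dominated by $X$ yields finite purely inseparable $l_1/k$, $L_1/l_1F$ and a refinement on whose $L_1$-normalization $F^\circ$ is centered at a simple $l_1$-smooth point $y'$. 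The base is thereby replaced by the regular, $l_1$-smooth local ring $R=\calO_{y'}\subseteq L_1^\circ$ (with $\Frac R=L_1$ of transcendence degree $d-1$ over $l_1$), and, since $L_1K/K$ is purely inseparable, $\Kcirc$ extends \emph{uniquely} to a valuation of $L_1K$ lying over $L_1^\circ$ and over the relative curve $\calC\to\Spec R$ whose generic fibre has function field $L_1K$.

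\emph{Uniformizing the analytic point.} Completing the valued field $(L_1,L_1^\circ)$ — reducing if necessary to the situation where the relevant part of the valuation has rank one — and analytifying $\calC$, one obtains a Berkovich analytic curve over a complete non-archimedean field, the uniformized base guaranteeing that this field (its residue field and value group) is of a controlled, $l_1$-smooth form; the chosen extension of $\Kcirc$ becomes a single point $p$ of this curve. Here I would invoke the paper's analysis of points on Berkovich curves and of curves over valuation rings: according to the Berkovich type of $p$ — a rigid point, a divisorial (Gauss) point, a type $3$ point, or the transcendental type $4$ — a further finite purely inseparable extension together with a modification of the model produces a model on which $p$ is a simple point whose local ring is smooth over the base, hence over a finite purely inseparable extension $l$ of $k$. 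The crucial point is that \emph{modifying the model} is free and plays the role that ramified base extensions would play in global semistable reduction, so that no separable extension is ever required; the type $4$ points are treated by exhibiting $p$ as a limit of divisorial ones, which (together with the passage through non-noetherian, valuation-ring bases) is where the projective-limit methods of \cite[$\rm IV_3$,\S8]{ega} are used.

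\emph{Descent and the main obstacle.} Finally the analytically constructed model, normalized in the compositum $L$ of all the purely inseparable extensions accumulated above, spreads out — again by approximation — to a genuine affine $k$-model $X'$ of $\Kcirc$ refining $X$, with $\Nr_L(X')$ simple and $l$-smooth at the center of $\Kcirc$. I expect the genuinely hard part to be precisely the uniformization of an arbitrary point of the Berkovich curve: handling the type $4$ points, and, throughout, verifying that the field extensions needed to pass from ``regular'' to ``$l$-smooth'' (i.e.\ to annihilate the positive-characteristic defect) can always be kept purely inseparable while every truly geometric step remains a modification. The supporting non-noetherian bookkeeping and the spreading-out at the end, though lengthy, should be comparatively routine.
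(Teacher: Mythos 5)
Your outline captures the broad flavor of the paper---fiber by curves, analyze the resulting valuation as a Berkovich-curve point, spread back out by projective limits---but the inductive scheme you propose does not close, and the place it fails is precisely the place the paper identifies as the main structural difficulty (see Remark~\ref{indrem}).

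\emph{The descent problem.} You propose: apply the inductive hypothesis to the base valuation $F^\circ$ first, get a smooth model there, then analyze the relative curve over the uniformized base. But the curve-uniformization step (the analogue of the paper's Theorem~\ref{dim1unif}) does not merely produce a refinement of the curve; it simultaneously produces a \emph{new finite, generally ramified extension} $m/F$ of the base valued field, and the conclusion is that the center of $K$ is smooth-equivalent to the closed point of $\Spec(m^\circ)$ --- not of $\Spec(F^\circ)$. At that point one needs $m^\circ$ to be centered at a smooth point, but the only freedom available is to refine a model of $F^\circ$ (and then normalize in $m$). So what is actually needed from the inductive hypothesis is not ``local uniformization of $F^\circ$'' but a \emph{descent} (indeed simultaneous) form: given a finite valued extension $m/F$ and a model $Y$ of $F^\circ$, find a refinement $Y'\to Y$ such that $m^\circ$ is centered on a smooth point of $\Nr_m(Y')$. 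This is a strictly stronger statement than Theorem~\ref{insepunif} itself, so a naive induction on $d$ with Theorem~\ref{insepunif} as the inductive hypothesis does not go through. The paper responds by proving the stronger Theorem~\ref{equivunif} (simultaneous inseparable local log uniformization for height-one valuations), and that is the statement one can actually carry through an induction.

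\emph{The wrong induction parameter and the wrong base case.} Connected with this, the induction in the height-one case is on the \emph{transcendence defect} $D_{K/k}$, not on $\dim X$. The fibration step picks a subfield $\ok\subset K$ with $\trdeg_\ok K=1$ and, crucially, $K/\ok$ \emph{transcendentally immediate} --- i.e.\ the curve fiber carries all the defect --- so that $D_{\ok/k}=D_{K/k}-1$. Your ``as rigid as possible'' reads as the opposite choice (Abhyankar direction), which would not decrease the relevant invariant. And the base of the induction is then $D=0$, the Abhyankar case, in \emph{every} dimension; this is not the trivial $d=0$ observation you cite but a substantial theorem (the paper's Theorem~\ref{Abhth}, proved in \S\ref{simulsec} using log/toric geometry and a stability theorem for Abhyankar extensions). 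Your $d=0$ reduction to separable generation is correct as far as it goes but is not a usable anchor for the induction. Finally, the passage to height $>1$ is a separate outer induction (\S\ref{indsec}) using Lemma~\ref{valuniflem}; your parenthetical ``reducing if necessary to the situation where the relevant part of the valuation has rank one'' glosses over real work there. So the proposal, while correctly identifying the Berkovich curve analysis and the approximation machinery as central ingredients, misses the need to strengthen the inductive statement to a descent/simultaneous form, uses the wrong induction variable, and lacks the Abhyankar base case.
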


Here $\Nr_L(\Spec(A))$ is the scheme $\Spec(\Nr_L(A))$ where $\Nr_L(A)$ is the integral closure of $A$ in $L$. Recall also that a smooth point $x$ on an $l$-variety is called {\em simple} if $k(x)$ is separable over $l$. By quasi-compactness of the Riemann-Zariski space of valuations centered on an algebraic variety, see \S\ref{ebfibsec}, the theorem implies the following corollary, which is another form of inseparable local uniformization.

\begin{cor}\label{maincor}
Let $X$ be an integral algebraic variety. Then there exists a covering $f{\colon}\coprod_{i=1}^mY_i\to X$ such that each $Y_i$ is integral and regular and the induced extensions $k(Y_i)/k(X)$ are finite and purely inseparable.
\end{cor}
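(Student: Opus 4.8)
The plan is to pass from the local statement (along a single valuation) to the global statement (a finite covering) by a standard quasi-compactness argument on the Riemann--Zariski space, combined with a spreading-out argument via the approximation theory of \cite[$\rm IV_3$, \S8]{ega}. First I would reduce to the case where $X$ is affine: since $X$ is an integral algebraic variety, it has a finite open affine cover, and a covering in the sense used here can be assembled from coverings of the members of such a cover (a valuation with center on $X$ has its center in one of the affine opens). So assume $X=\Spec(A)$ with $K=k(X)=\Frac(A)$.

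Next, recall the Riemann--Zariski space $\RZ(K/k)$ of valuation rings of $K$ containing $k$, or rather the quasi-compact subspace $\RZ_X$ of those $\Kcirc$ whose center lies on $X$ (quasi-compactness of this space is recorded in \S\ref{ebfibsec}). For each such $\Kcirc$, Theorem \ref{insepunif} furnishes finite purely inseparable extensions $l/k$, $L/lK$ and an affine model $X'$ of $\Kcirc$ refining $X$, such that $\Kcirc$ (extended to $L$) is centered at a simple $l$-smooth point of $Y:=\Nr_L(X')$. In particular $Y$ is an integral regular $X$-scheme, the morphism $Y\to X$ is dominant of finite type, and $k(Y)=L$ is finite and purely inseparable over $K$. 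The point centering $\Kcirc$ lies in the regular (indeed $l$-smooth) locus $U$ of $Y$, which is open; shrinking, we may take $Y$ itself to be regular. The valuations of $K$ (centered on $X$) that lift to a valuation of $L$ centered on $Y$ form an open subset $V_{\Kcirc}$ of $\RZ_X$ containing $\Kcirc$: openness here is exactly the statement that the locus of valuations admitting such a lift with center on a given finite-type refinement is open, which follows from the valuative criterion together with the fact that $Y\to X$ is of finite type (one checks that if $\Kcirc_0$ lifts, then every valuation specializing to or generalizing from $\Kcirc_0$ within a suitable affine chart does too, giving a basic open neighbourhood). By quasi-compactness of $\RZ_X$, finitely many $V_{\Kcirc_1}\. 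V_{\Kcirc_m}$ cover it, and the corresponding $Y_1\. Y_m$ then form the desired covering $\coprod_i Y_i\to X$: every valuation on $K$ centered on $X$ lies in some $V_{\Kcirc_j}$, hence lifts to a valuation on $k(Y_j)$ centered on $Y_j$.

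The main obstacle is the openness claim for $V_{\Kcirc}$ in $\RZ_X$ — that is, showing the set of valuations admitting a lift with center on a fixed finite-type regular refinement $Y$ is open. This is where the finite-type hypothesis on $Y\to X$ is essential and where one must be careful: a priori $\RZ_X$ is a limit of proper $X$-schemes, and "having center on $Y$" must be translated into a condition cut out by finitely many inequalities on elements of $K$. Concretely, if $\Kcirc_0\in V_{\Kcirc}$ has center $y_0\in Y$, choose an affine neighbourhood $\Spec(B)$ of $y_0$ in $Y$ with $B$ finitely generated over $A$; then $\Kcirc_0\supseteq B$ and the center condition says $\gtm_{\Kcirc_0}\cap B$ is a genuine prime — but since $B\subseteq\Kcirc_0$ already forces a center, the real content is that $B\subseteq\Kcirc_0$, which is the finite condition $b\in\Kcirc_0$ for the finitely many algebra generators $b$ of $B$ over $A$. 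The set $\{\Kcirc\in\RZ_X : b\in\Kcirc \text{ for all generators } b\}$ is a basic open (in fact closed as well, being an intersection of the sets $\{v(b)\ge 0\}$) neighbourhood of $\Kcirc_0$, and any $\Kcirc$ in it contains $B$, hence has a center on $\Spec(B)\subseteq Y$; since $Y$ is separated over $X$ this center is unique and the induced valuation on $k(Y)=L$ is the canonical extension of $\Kcirc$. Granting this, the rest is the routine quasi-compactness packaging sketched above.
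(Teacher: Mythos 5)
Your argument is exactly the quasi-compactness packaging that the paper invokes with the single sentence "By quasi-compactness of the Riemann--Zariski space of valuations centered on an algebraic variety," so the route is the same and the proof is correct in substance. One small point worth tightening: $B$ is a subalgebra of $L$, not of $K$, so the displayed condition $B\subseteq\Kcirc_0$ and the set $\{\Kcirc\in\RZ_X : b\in\Kcirc\}$ do not literally typecheck; what you want is $B\subseteq\Lcirc_0$ where $\Lcirc_0$ is the unique extension of $\Kcirc_0$ to the purely inseparable extension $L$, and the finitely many conditions on generators $b_i$ of $B$ over $A$ transfer to the open conditions $b_i^{p^{n_i}}\in\Kcirc$ on $\RZ_X$ (choosing $n_i$ with $b_i^{p^{n_i}}\in K$, and using uniqueness of the extension of a valuation along a purely inseparable extension). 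Your closing sentence shows you see the distinction, so this is a notational slip rather than a gap.
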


Let us discuss possible reformulations of our result and its relation to the local uniformization.

\begin{rem}\label{projrem}
We use affine models in Theorem \ref{insepunif} because the problem is of local nature, and so our formulation seems to be the most natural one. One easily sees that our formulation implies (and hence is equivalent to) the more traditional version where $X$ is assumed to be proper and one requires $X'$ to be $k$-projective (first refine $X$ so that it becomes projective and then find an affine $X'$ as in the theorem and replace the latter with its $X$-projective compactification). Similarly, one can achieve in addition that $f{\colon}X'\to X$ is a blow up.
\end{rem}

\begin{rem}\label{onemorerem}
(i) Without loss of generality, $X'$ is normal. Then taking $n$ so that $L^{p^n}\subset K$ and using the Frobenius isomorphism $F^n{\colon}X'\toisom \Nr_{K^{1/p^n}}(X')$ we obtain an integral purely inseparable morphism of schemes $h{\colon}X'\to\Nr_L(X')$ which maps the center of $\Kcirc$ to a regular point. (Throughout this paper, {\em integral} morphism always means a morphism of the form $\bfSpec(A)\to X$ where $A$ is an $\calO_X$-algebra which is integral over $\calO_X$.) Moreover, if $[k:k^p]<\infty$ then $h$ is finite.

(ii) The observation from (i) can be sharpened as follows. Assume that $[k:k^p]$ is finite. Then there exists a tower $K=K_m\supset\dots\supset K_0=L^{p^n}$ such that each $K_i=K_{i-1}(a_i^{1/p})$ is purely inseparable of degree $p$ over $K_{i-1}$. Set $\Kcirc_i=\Kcirc\cap K_i$. By (i), $\Kcirc_0$ is locally uniformized by a regular scheme $X_0=\Spec(A_0)$ isomorphic to $\Nr_L(X')$. Multiplying $a_1$ by an appropriate $p$-th power we can achieve that $a_1\in A_0$, and then $\Kcirc_1$ is centered on the model $A_1=A_0[t]/(t^p-a_1)$ of $K_1$. If we know how to uniformize valuations on $\alp_p$-torsors over regular schemes, then we can uniformize $\Kcirc_1$, and proceeding inductively to $\Kcirc_2$, etc., we would uniformize the original $\Kcirc$.

(iii) Thus, Theorem \ref{insepunif} implies that local uniformization would follow from local uniformization of hypersurfaces in $\bfA^{d+1}$ given by equations of the form $t^p=f(x_1\. x_d)$. The latter case is often called the inseparable case, and it was always recognized as an important test case for desingularization methods, where all "bad things" can happen. However, the inseparable case was not viewed as the general case.

(iv) For example, Cossart and Piltant in their proof of local uniformization of threefolds had to study singularities of the form $t^p+g(x_1,x_2,x_3)^{p-1}t+f(x_1,x_2,x_3)=0$, which they call Artin-Shreier case for $g\neq 0$ and inseparable case for $g=0$. Moreover, the proof of the Artin-Shreier case required a little bit more work in \cite{CP2}.
\end{rem}

\subsection{Stronger forms of local uniformization}\label{strongsec}
For inductive purposes we will have to prove in some cases stronger variants of inseparable local uniformization, see Theorems \ref{equivunif} and \ref{Abhth}. So, let us outline what kinds of generalizations we will need. For simplicity, we discuss analogous generalizations of the usual local uniformization of a valued field $K$. By {\em descent local uniformization} of $\Kcirc$ we mean solving the following problem: given a valued subfield with $[K:L]<\infty$ and an affine model $Y$ of $\Lcirc$, find an affine refinement $Y'\to Y$ such that $K$ is centered on a regular point of $\Nr_K(Y')$. Note that if $K/L$
is Galois with $G=\Gal(K/L)$ then this is equivalent to the more standard problem of finding a $G$-equivariant local uniformization of $K$ that refines $\Nr_K(Y)$. It is also natural to ask whether one can achieve in addition that the center of $L$ on $Y'$ is regular. The latter problem is known as (classical) simultaneous local uniformization of $K$ and $L$. More generally, if $K_1\. K_n$ are finite valued extensions of $L$ then by {\em simultaneous local uniformization} of $K_i$'s we mean a refinement $Y'\to Y$ such that each $K_i$ is centered on a regular point of $\Nr_{K_i}(Y')$. A simple toric example of Abhyankar shows that even classical simultaneous local uniformization is impossible in general. However, one can hope that it is always possible to obtain a simultaneous log uniformization, where $K_i$'s are uniformized by log smooth (or toroidal) points. At least, we will prove this for Abhyankar valuations and we will establish in Theorem \ref{equivunif} simultaneous inseparable log uniformization for all valuations of height one.

\subsection{Overview}\label{oversec}
Very roughly speaking, the proof of Theorem \ref{insepunif} runs as follows. Similarly to de Jong's approach, the initial idea is to fiber varieties by curves and prove the theorem by induction on the dimension. We postpone establishing the base of the induction until \S\ref{simulsec}. The induction step is deduced in \S\ref{insepunifsec} from inseparable local uniformization of certain valuations on curves over valuation rings. The latter is proved in \S\ref{decomplsec} via a decompletion procedure, and the main ingredient of its proof is inseparable local uniformization of certain points (called terminal) on non-Archimedean analytic curves (see \S\ref{termsec}). Now, let us describe our method, the above intermediate results, and the organization of the paper in more details.

In \S\ref{chapone} we prove some results on Riemann-Zariski spaces and schemes over valuation rings with their analytifications. This section is very technical because we have to work with non-noetherian schemes and their non-finite normalizations. In order to ease the exposition we prefer to sacrifice generality to some extent. In some cases we prove what we need and possible generalizations are mentioned in remarks. We introduce valued fields in \S\ref{valsec}. Since schemes of finite type over valuation rings may have non-finite normalization, we introduce morphisms of normalized finite type and study their compatibility with projective limits in \S\S\ref{normsec}-\ref{etanormsec}. In \S\S\ref{ebfibsec}-\ref{bircritsec} we use Riemann-Zariski spaces to prove a birational criterion \ref{etaleprop} for a morphism of normal schemes to be \'etale. For schemes of normalized finite type over a valuation ring of height one we define analytic generic fibers in \S\ref{agfsec}, and in the next section we prove the main result of \S\ref{chapone}, Theorem \ref{anfibth}, which gives an analytic criterion for a morphism between such schemes to be strictly \'etale at a point. In a very natural way, the criterion states that $f$ should induce an isomorphism of the corresponding analytic fibers, but the proof is not easy since it is based on many results from \S\S\ref{valsec}-\ref{agfsec}. Finally, in \S\ref{smsec} we apply Theorem \ref{anfibth} to study equivalence of points in the smooth topology. We show that smooth-equivalence descends from projective limits and prove an analytic criterion \ref{smprop} for a point $x$ on a scheme $X$ of normalized finite type over a valuation ring $\kcirc$ to be smooth-equivalent to the closed point of the spectrum of a larger valuation ring $\lcirc$. Note that it is very important to cover the case of non-discrete valuations with a ramified extension $l/k$, and that in this case $X$ is not of finite presentation locally at $x$ because $\lcirc$ is not finitely generated over $\kcirc$ (see also Remark \ref{rem2}). This explains why we have to work in the unusual generality of morphisms of normalized finite type.

The first two sections of \S\ref{chaptwo} are devoted to local uniformization of a $k$-analytic curve $C^\an$ over a perfect analytic field $k$ of positive characteristic. Theorem \ref{terminalth} states that any so-called {\em terminal} point of $C^\an$ (i.e. type $1$ or type $4$ point in Berkovich's classification) lies in an $m$-split disc for a finite extension $m/k$. Note that the proof of this theorem is ultimately based on a difficult Theorem \cite[6.3.1]{temst}, where one-dimensional extensions of perfect analytic fields are studied. Theorem \ref{4th} generalizes \ref{terminalth} to any $k$, but then an $m$-split disc exists only after a preliminary purely inseparable extension $l/k$ of the ground field. This is the inseparable local uniformization of terminal points on Berkovich curves that we mentioned earlier. Finally, we use a decompletion procedure to prove Theorem \ref{dim1unif} stating that certain valuations on a curve $C$ over a valuation ring
$\kcirc$ of height one have uniformizations with centers that are smooth-equivalent to the closed point of $\Spec(\mcirc)$ for a larger valuation ring $\mcirc$. The theorem only applies to valuations with transcendence defect over $k$ (see \S\ref{valsec} for the terminology on valued fields), that is, for valuations corresponding to terminal points on $C^\an$.

\begin{rem}\label{basarem}
I do not know if a similar uniformization result holds for other valuations on $C$. This question seems to be worth an additional study.
\end{rem}

We prove Theorem \ref{insepunif} in the two last sections. First, we establish the induction step in \S\ref{insepunifsec}. We deal in \S\ref{honesec} with the case when $K$ is of height one. As usually is the case with local uniformization, the main difficulty is met already when the height is one, and our case is not an exception. Our proof uses induction on the dimension (i.e. $\trdeg_k(K)$) and is based on Theorem \ref{dim1unif}, so it applies only to transcendentally immediate one-dimensional extensions $K/\ok$ of valued fields of height one. In particular, when dealing in \S\ref{simulsec} with the induction base we should work with a general valued field $K$ which is of height one and Abhyankar over $k$. The main difficulty in the inductive proof of Theorem \ref{insepunif} comes from non-henselianity of the valued field $K$ and it will be discussed in Remark \ref{indrem}.
This difficulty forces us to strengthen the induction assumption when dealing with the height one case. The "minimal" packet that can be proved inductively is a descent version,
but we prefer to establish the full simultaneous inseparable local log uniformization for height one valued fields, see Theorem \ref{equivunif}.

\begin{rem}
It seems certain that simultaneous inseparable local log uniformization holds for valuations of any height, but proving this would involve Berkovich log geometry (or at least working with toroidal analytic spaces). This direction is not developed in the paper, and so we lose the stronger versions of the uniformization when running induction on height in \S\ref{indsec}. So, for general valued fields we only prove Theorem \ref{insepunif} without "bonuses".
\end{rem}

\begin{rem}
(i) As we explained above, the induction argument used in the case of height one valuations is more complicated than a direct induction on the dimension: it runs by induction on the transcendence defect and uses the case of zero transcendence defect as its base, which requires a separate proof. In particular, it is subtler than in de Jong's and Gabber's methods, where defect does not show up. A similar induction scheme was already used in \cite{KK2} to establish a certain form of altered local uniformization.

(ii) Despite its relative novelty (to the best of my knowledge), this induction scheme is very natural because it is well known that "complexity" of the valuation grows with the transcendence defect $D$ and is adequately measured by it. Note also that it appeared in the recent works \cite{Ked1} and \cite{Ked2} of Kedlaya. An interesting common feature of these works and the current paper is that the induction step is done by working with Berkovich analytic discs.
\end{rem}

Finally, in \S\ref{simulsec} we deal with Abhyankar valuations, thereby establishing the induction base in Theorem \ref{insepunif}. Unlike general valuations, Abhyankar ones can be fruitfully studied by the methods of log geometry (or toroidal geometry). In particular, one can even locally uniformize them, as was proved in \cite{KK1}. Unfortunately, this does not cover the descent version of inseparable local uniformization, and it is even unclear if we can use \cite{KK1} as an intermediate step.\footnote{It was communicated to me by F.-V. Kuhlmann, that the first version of \cite{KK1} contained a weak form of simultaneous local uniformization, which was removed due to referee's request.} Therefore, in \S\ref{simulsec} we study Abhyankar valuations "from scratch". It seems that the claim we actually need is not essentially simpler then the full simultaneous local log uniformization of Abhyankar valuations. So we prefer not to restrict the generality at this place, and the latter is our main result on Abhyankar valuations, see Theorem \ref{Abhth}. Note that only basic logarithmic geometry is used in our proof, so we reprove and generalize the main result of \cite{KK1}. The paper contains Appendix \ref{monoidsec}, where we recall some results on monoids which are used in \S\ref{simulsec}, and Appendix \ref{etalesec}, in which we discuss local-\'etale morphisms.

\begin{rem}
It turned out that the results we prove here on Abhyankar valuations are very important for the study of skeletons of analytic spaces and Riemann-Zariski spaces. This direction has nothing to do with desingularization and will be studied in a separate paper.
\end{rem}

We conclude the Introduction with the remark that since Theorem \ref{insepunif} is established, it is very challenging to attack the inseparable desingularization conjecture \ref{insepconj}. It seems very unlikely that our method as it is can be globalized to give an a-la de Jong proof of the conjecture. The problem is that for any specific valuation we have to choose an appropriate sequence of curve fibrations in order not to be stuck with the problem described in Remark \ref{basarem}, so no global fibration suits all valuations simultaneously. The author nevertheless hopes that inseparable local uniformization can be useful in attacking the conjecture.

\subsection{Acknowledgments}
I am indebted to the anonymous referee for a fantastic review job. In addition to pointing out various gaps and inaccuracies and making numerous suggestions on improving the presentation, he suggested simplifications/corrections to the proofs of Lemmas \ref{smetlem}(i), \ref{birfiblem} and Theorem \ref{etaleth}, that are included in the revised version. Appendix \ref{etalesec} is entirely due to the referee and B. Conrad. I am very grateful to D. Rydh for making many valuable comments on section 2 of the paper and to D. Abramovich for outlining the proof of Lemma \ref{descentlem}. In addition, I wish to thank M. Spivakovsky for useful discussions, F. Pop for discussions and encouragement and L. Illusie for interest to this work and informing me about Gabber's work.

\tableofcontents

\section{On schemes over valuation rings}\label{chapone}

\subsection{Valued fields}\label{valsec}
The aim of this section is to recall some facts about valued fields and to fix our terminology. The reader may also wish to consult \cite[\S2.1]{temst}, where a more detailed review is given. By a {\em valued field} $k$ we mean a field provided with a valuation ring which will be denoted $\kcirc$. Alternatively, this information can be given by an equivalence class of valuations (or absolute values) $|\ |{\colon}k^\times\to\Gamma$ with values in an ordered commutative "multiplicative" group. Here and in the sequel, by writing a "multiplicative" group or lattice we mean that they are written in the multiplicative notation $(1,x\mapsto x^{-1},(x,y)\mapsto xy)$. Also, we automatically assume in the sequel that all groups considered in the paper, excluding groups that arise as Galois groups, are commutative.

The ordered group $|k^\times|$ is well defined up to an isomorphism, and the {\em
height} (or rank) of $k$ is the height of $|k^\times|$, that is the number of its non-trivial convex subgroups. It is easy to see that the height of $k$ equals to the Krull-dimension of $\kcirc$. We remark that it is convenient not to fix $\Gamma$ by requiring that $|k^\times|=\Gamma$. For example, $k$ is of height one if and only if $|k^\times|$ admits an ordered embedding into $\bfR_+^\times$, and it is often the
most natural choice to take $\Gamma=\bfR_+^\times$. Let $\kcirccirc$ denote the maximal ideal of $\kcirc$ and let $\tilk=\kcirc/\kcirccirc$ denote the residue field. If $k$ is of height one then we will use the letter $\pi$ to denote a non-zero element from $\kcirccirc$ and we will denote the $(\pi)$-adic completion of $\kcirc$ by $\hatkcirc$ and the completion of $k$ by $\hatk$. Note that $\hatk=\Frac(\hatkcirc)=(\hatkcirc)_\pi$. We say that $k$ is {\em analytic} if it is complete and $\Gamma=\bfR_+^\times$.

By {\em extension} $l/k$ of valued fields we mean an inclusion $k\into l$ which respects the valuations in the sense that $\lcirc\cap k=\kcirc$. If $n=[l:k]$ is finite then it is standard to introduce the numbers $e=e_{l/k}=\#|l^\times|/|k^\times|$ and $f=f_{l/k}=[\till:\tilk]$, and the extension is called {\em immediate} if $ef=1$, i.e. $l$ and $k$ have the same residue fields and value groups. An easy classical result states that $ef\le n$. Moreover, if the valuation of $k$ extends uniquely to $l$ (for example, this is the case when $k$ is analytic) then $ef$ divides $n$ and the number $d=d_{l/k}=n/(ef)$ is called the {\em defect} of the extension. The defect is always a power of $p=\cha(\tilk)$ (this and many other statements in the paper make sense for exponential characteristic, i.e. $p=0$ should be replaced with $p=1$; usually we will not remark when $p=1$ should be used, since this will always be obvious), and if $d=1$ then we say that the extension is {\em defectless}. If, more generally, the valuation of $k$ admits $m$ extensions to $l$ and $e_1\. e_m,f_1\. f_m$ are the corresponding invariants of the extensions of valued fields then $e_1f_1+\dots+e_mf_m\le n$ and the extension is called {\em defectless} when equality holds. A valued field $k$ is called {\em stable} if any finite extension is defectless. For the sake of completeness we discuss briefly how one can define defect numbers in general, though this will not be used in the sequel.

\begin{rem}
\label{defrem} There is a natural one-to-one correspondence between (a) extensions of the valuation on $k$ to $l$, (b) maximal ideals of the integral closure of $\kcirc$ in $l$, and (c) the valued fields $l_i$ over the henselization $k^h$ of $k$ (i.e. $k^h$ is the fraction field of the henselization of $\kcirc$) such that $k^h\otimes_k l=\prod_{i=1}^m l_i$. So, one can define $n_i=[l_i:k^h]$ and $d_i=n_i/(e_if_i)$. Obviously, $e_1f_1d_1+\dots+e_mf_md_m=n$ and it is not difficult to prove that $d_i\in p^\bfN$. Note that a similar definition of henselian degrees $n_i$ is used in \S\ref{bircritsec}, where we study the more general class of unibranch local rings.
\end{rem}

Since we will have to work with infinite extensions of valued fields, it seems natural to also introduce the following invariants: for any extension $l/k$ of valued fields set $E=E_{l/k}=\dim_\bfQ(|l^\times|/|k^\times|\otimes_\bfZ\bfQ)$ and $F=F_{l/k}=\trdeg_\tilk(\till)$. Sometimes these cardinals are called the rational rank and the dimension, respectively. We say that the extension is {\em transcendentally immediate} if $E=F=0$, i.e. $\till/\tilk$ is algebraic and $|l^\times|/|k^\times|$ is torsion. For a general $l/k$, let $B\subset l^\times$ be a subset such that the following condition is satisfied: (*) $B=B_E\coprod B_F$, $|b|=1$ for any $b\in B_F$ and the reduction maps $B_F$ bijectively onto a transcendence basis $\tilB_F$ of $\till$ over $\tilk$, and the projection of $l^\times$ onto the "multiplicative" $\bfQ$-vector space $(|l^\times|/|k^\times|)\otimes_\bfZ\bfQ$ maps $B_E$ bijectively onto a $\bfQ$-basis. We omit a rather straightforward check that the elements of $B$ are algebraically independent over $k$ (see, for example, \cite[4.8]{ctdescent}, where it is proved that the graded reduction of $B$ is a transcendence basis of the graded reduction of $l$ over that of $k$). It follows, in particular, that $E+F$ cannot exceed $N=\trdeg_k(l)$, and when $N$ is finite we define the {\em transcendence defect} $D=D_{l/k}=N-E-F$. If $l/k$ admits a transcendence basis $B$ that satisfies (*) then we say that $l/k$ is {\em Abhyankar} (or {\em transcendentally defectless}) and $B$ is an {\em Abhyankar transcendence basis}. Note that for a finite $N$ the extension is Abhyankar if and only if $D_{l/k}=0$, and then any $B$ satisfying (*) is an Abhyankar transcendence basis.

\begin{rem}\label{Abhrem}
Choose any $B=B_E\coprod B_F$ that satisfies (*). Then the extension $l/k$ splits to a tower $l/k(B)/k$ with Abhyankar bottom level and transcendentally immediate top level. In particular, one can define $D_{l/k}$ for a general extension $l/k$ as $\trdeg_{k(B)}(l)$, and this agrees with the above definition when $\trdeg_k(l)<\infty$.
\end{rem}

\begin{rem}\label{Abhrem2}
Let $l/k$ be a finitely generated Abhyankar extension. Then one easily sees that $\till$ is a finitely generated extension of $\tilk$ of transcendence degree $F$ and $|l^\times|/|k^\times|$ is a finitely generated group whose torsion is contained in $(|k^\times|\otimes_\bfZ\bfQ)/|k^\times|$. In particular, if $|k^\times|$ is divisible (for example, trivial) then $|l^\times|/|k^\times|$ is a lattice of rank $E$. We will also need the following difficult result called the (generalized) stability theorem: if $k$ is stable then $l$ is stable. We refer to a very recent paper \cite{Kuh} for a proof; it seems that although this fact was known to experts, no proof was published earlier.

Let us also indicate how the stability theorem can be deduced from the results of \cite[\S6]{temst}, where an analytic analog is proved (i.e. one deals with topologically finitely generated extensions of analytic fields). The reduction consists of many easy steps: (i) one can assume that $l=k(x)$ is of transcendence degree $1$; (ii) by the same easy argument as used in the proof of \cite[6.3.6]{temst}, it suffices to consider the case when $k$ is algebraically closed; (iii) by a limit argument we can assume that $k$ is of finite transcendence degree over a prime field, in particular, the height of $k$ is finite; (iv) a valuation of finite height $h>1$ is stable if and only if it is composed of stable valuations of smaller height, hence everything follows from the case of height $1$; (v) a valued field $l$ of height one is stable if and only if $\hatl$ is stable and $\hatl/l$ is separable, but $\hatl$ is stable by \cite[6.3.6]{temst} (in the case of $F_{l/k}=1$, which is the more difficult one, this is, actually, the stability theorem of Grauert-Remmert \cite[5.3.2/1]{BGR}); (vi) one checks straightforwardly that $\hatl/l$ is separable in our case (the $p$-rank of $l=k(x)$ is one, i.e. $l$ has unique inseparable extension of degree $p$, which is easily seen to be not contained in $\hatl$).
\end{rem}

\subsection{Morphisms of normalized finite type}\label{normsec}
Since schemes of finite presentation over valuation rings are non-noetherian and often have non-finite normalizations, we should study normalization of reduced schemes and related issues. In applications all schemes will have noetherian underlying topological space, so the reader can have in mind only this particular case throughout \S\ref{normsec}.

By a {\em modification} of a reduced scheme $X$ we mean a proper morphism $\phi{\colon}X'\to X$ with reduced source that restricts to an isomorphism of dense subschemes. Next let us discuss normalization of schemes. For simplicity we will only consider reduced schemes $X$ with finitely many irreducible components. Such schemes will be called {\em admissible} and by {\em admissible morphism} we mean any morphism of admissible schemes that takes generic points to generic points. If $X$ is admissible then we denote the scheme of its generic points by $\eta(X)$ and set $k(X)=\prod_{x\in\eta(X)}k(x)$. In particular, $\eta(X)=\Spec(k(X))$ and if $X=\Spec(A)$ then $k(X)=\Frac(A)$ is the total ring of fractions of $A$. Obviously, $X\mapsto\eta(X)$ is a functor on the category of admissible schemes and morphisms.

Recall that the {\em normalization} $\Nr(X)$ of an admissible scheme $X$ is the disjoint union of normalizations of its irreducible components. If $X=\Spec(A)$ then $\Nr(X)=\Spec(B)$, where $B=\Nr_{\Frac(A)}(A)$ is the integral closure of $A$ in its total ring of fractions. Since normalization is compatible with localizations, $\Nr(X)$ in general can be glued from $\Nr(X_i)$ where $\{X_i\}$ is an open affine covering of $X$. This construction can also be described globally as follows. Let $i{\colon}\eta\to X$ be the embedding and let $\calM_X=i_*(\calO_\eta)$ be the sheaf of meromorphic functions. Then $\Nr(X)=\bfSpec(\Nr_{\calM_X}(\calO_X))$, where $\Nr_{\calM_X}(\calO_X)$ is the integral closure of $\calO_X$ in $\calM_X$. By a {\em partial normalization} of an admissible scheme $X$ we mean any scheme $X'=\bfSpec(\calF)$ for an $\calO_X$-subalgebra $\calF\subset\Nr(\calO_X)$. Note that $X'$ is integral over $X$ and $\Nr(X')\toisom\Nr(X)$. An admissible morphism $X'\to X$ is a partial normalization if and only if it is integral and $\eta(X')\toisom\eta(X)$.

In the sequel, {\em qcqs} stands for "quasi-compact and quasi-separated". The following lemma is a consequence of \cite[6.9.15]{egaI}.

\begin{lem}\label{partlem}
Any partial normalization $X'$ of an admissible qcqs scheme $X$ is $X$-isomorphic to the projective limit of all finite modifications of $X$ dominated by $X'$.
\end{lem}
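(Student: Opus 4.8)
The plan is to realize both sides of the asserted isomorphism as relative spectra of quasi-coherent $\calO_X$-algebras and then to match up the two indexing families. Write $X'=\bfSpec(\calF)$ with $\calO_X\subseteq\calF\subseteq\Nr(\calO_X)\subseteq\calM_X$. Since $X$ is qcqs, \cite[6.9.15]{egaI} exhibits $\calF$ as the filtered union of its quasi-coherent $\calO_X$-subalgebras $\calF_\lambda$ of finite type, and hence $X'=\bfSpec(\calF)\cong\varprojlim_\lambda\bfSpec(\calF_\lambda)$ with affine transition maps. It therefore suffices to show that the schemes $X_\lambda:=\bfSpec(\calF_\lambda)$ are, up to canonical $X$-isomorphism, exactly the finite modifications of $X$ dominated by $X'$; the identification is then automatically compatible with the transition maps, as these are all induced by inclusions of subalgebras.

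First I would check that each $X_\lambda\to X$ is a finite modification dominated by $X'$. The $\calO_X$-algebra $\calF_\lambda$ is of finite type, and its local sections lie in $\Nr(\calO_X)$ and hence are integral over $\calO_X$; thus $\calF_\lambda$ is a finite $\calO_X$-module and $X_\lambda\to X$ is finite, in particular proper. Since $\calF_\lambda\subseteq\calM_X$ and $\calM_X$ is reduced (its local sections are total quotient rings of reduced rings), $X_\lambda$ is reduced. Moreover $\calF_\lambda/\calO_X$ is a finite-type $\calO_X$-module that vanishes at every generic point of $X$ (because $\calF_\lambda\subseteq\calM_X$), so its support is nowhere dense and $X_\lambda\to X$ restricts to an isomorphism over the dense open complement --- thus $X_\lambda\to X$ is a modification. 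Finally, the inclusion $\calF_\lambda\hookrightarrow\calF$ gives an $X$-morphism $X'\to X_\lambda$, so $X_\lambda$ is dominated by $X'$.

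Conversely, given a finite modification $\pi{\colon}Y\to X$ dominated by $X'$, write $Y=\bfSpec(\calG)$ with $\calG=\pi_*\calO_Y$ a finite $\calO_X$-algebra, and let $u{\colon}\calG\to\calF$ be the $\calO_X$-algebra homomorphism corresponding to a chosen $X$-morphism $X'\to Y$. The claim is that $u$ is injective, so that $\calG$ is identified with one of the $\calF_\lambda$. A modification has reduced source and restricts to an isomorphism over a dense open, so $Y$ is admissible and $\eta(Y)\cong\eta(X)$ over $X$; reducedness gives the canonical injection $\calO_Y\hookrightarrow\calM_Y$, and applying the exact functor $\pi_*$ together with $\pi_*\calM_Y\cong\calM_X$ (which holds since $\pi$ restricts to an isomorphism on generic points compatibly with the embeddings) yields a canonical injection $\calG\hookrightarrow\calM_X$. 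Because every morphism occurring here is compatible with the functor $\eta$, the composite $\calG\xrightarrow{u}\calF\hookrightarrow\calM_X$ coincides with this canonical injection; hence $u$ is injective, and $Y\cong X_\lambda$ for a suitable $\lambda$. Thus the two projective systems agree and $X'\cong\varprojlim_\lambda X_\lambda$ is the projective limit of all finite modifications of $X$ dominated by $X'$.

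The step I expect to be the real content --- the rest being essentially formal once the setup is in place --- is the last one: one must verify that a finite modification of $X$ dominated by $X'$ is literally the same datum as a relative spectrum of a finite-type $\calO_X$-subalgebra of $\calF$, i.e.\ that the comparison homomorphism $\calG\to\calF$ is injective, and that the modification axioms (finiteness, reducedness, generic triviality) come for free on the algebra side. Granting this matching of indexing families, the lemma is immediate from the colimit presentation of $\calF$ provided by \cite[6.9.15]{egaI} and the fact that the relative spectrum functor over $X$ turns filtered colimits of quasi-coherent $\calO_X$-algebras into projective limits of schemes.
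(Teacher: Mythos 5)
Your proof is correct and takes precisely the route the paper indicates: the paper states the lemma is ``a consequence of [EGA I, 6.9.15]'' and gives no further details, and your argument --- realizing $\calF$ as the filtered union of its finite-type quasi-coherent $\calO_X$-subalgebras via that reference, checking that each $\bfSpec(\calF_\lambda)\to X$ is a finite modification dominated by $X'$, and showing conversely that the comparison map $\calG\to\calF$ for any such modification is injective by identifying both $\calG\to\calF\hookrightarrow\calM_X$ and the canonical $\pi_*\calO_Y\hookrightarrow\pi_*\calM_Y\cong\calM_X$ through their common behavior at generic points --- is exactly what that citation is meant to supply.
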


\begin{defin}\label{nftdef}
An admissible morphism $f{\colon}Y\to X$ between qcqs schemes is called of {\em normalized finite type} if it splits into a composition of a partial normalization $Y\to Y_0$ and an admissible morphism $Y_0\to X$ of finite type. For shortness, we will often abbreviate "normalized finite type" as {\em nft}.
\end{defin}

\begin{rem}\label{rem1}
(i) It would be more pedantic to say partially normalized (or subnormalized) finite type, but this sounds too messy.

(ii) One can define morphisms of normalized finite presentation similarly, but it is not clear if one obtains a meaningful class of morphisms. For example, perhaps such morphisms are not stable under compositions.

(iii) Without the admissibility assumption, Definition \ref{nftdef} would lead to a class of morphisms not closed under compositions. Indeed, there exists an integral scheme $X$ with a point $x$ such that the fiber $Y_x$ of $Y=\Nr(X)$ over $x$ is not finite. Then $Y_x\to X$ is not a composition of a partial normalization $Y_x\to Z$ with a finite type morphism $Z\to X$.
\end{rem}

In order to study nft morphisms it will be convenient to consider a broader class of morphisms as follows.

\begin{defin}\label{iftdef}
A morphism $f\:Y\to X$ between qcqs schemes is {\em ift} if it can be factored into a composition of an integral morphism $Y\to Y_0$ and a finite type morphism $Y_0\to X$.
\end{defin}

\begin{prop}\label{iftprop}
Let $g{\colon}Z\to Y$ and $f{\colon}Y\to X$ be morphisms of qcqs schemes and let $h=f\circ g$.

(i) If $f$ and $g$ are ift then $h$ is ift.

(ii) If $h$ is ift then $g$ is ift.
\end{prop}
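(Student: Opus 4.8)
The plan is to reduce everything to one auxiliary statement, which I will call the \emph{Key Lemma}: if $u\:S_1\to S_2$ is of finite type and $v\:S_2\to S_3$ is integral (all schemes qcqs), then $v\circ u$ is ift. Granting this, part (i) is pure bookkeeping. Write the ift factorizations as $g\:Z\to Z_0\to Y$ and $f\:Y\to Y_0\to X$, with $Z\to Z_0$ and $Y\to Y_0$ integral and with $Z_0\to Y$ and $Y_0\to X$ of finite type; then $h$ is the five-term composite $Z\to Z_0\to Y\to Y_0\to X$. Applying the Key Lemma to the middle piece $Z_0\to Y\to Y_0$ rewrites it as an integral morphism followed by a finite-type morphism, and now two consecutive integral morphisms compose to a single integral morphism while two consecutive finite-type morphisms compose to a single finite-type one, which is the required ift factorization of $h$.

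For the Key Lemma I would first do the affine case: $S_3=\Spec C$, $S_2=\Spec B$ with $B$ integral over $C$, and $S_1=\Spec A$ with $A=B[x_1\.x_n]/I$. Let $C'\subset A$ be the $C$-subalgebra generated by the images of the $x_i$. Since $A$ is generated as a $C'$-algebra by the image of $B$, whose elements are integral over $C$ and hence over $C'$, the ring $A$ is integral over $C'$; and $C'$ is a finitely generated $C$-algebra, so $\Spec A\to\Spec C'\to\Spec C$ is the desired factorization. For general qcqs $S_1$ one chooses a finite affine cover $\{\Spec A_k\}$, runs this construction on each chart, and glues the resulting finitely generated $C$-subalgebras $C_k\subset A_k$ along the overlaps $\Spec A_k\cap\Spec A_l$, which are quasi-compact since $S_1$ is quasi-separated — after first enlarging the $C_k$ so that each such overlap is the preimage of compatible quasi-compact opens of $\Spec C_k$ and of $\Spec C_l$. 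I expect this gluing to be the main obstacle: the base rings are non-noetherian (they arise from valuation rings), so $u$ need not be of finite presentation, and the morphisms $\Spec A_k\to\Spec C_k$ are only integral, not open. A more robust route is to work instead with the presentation $S_2=\varprojlim_\lambda S_2^{(\lambda)}$ of $v$ by finite $S_3$-schemes and invoke the approximation techniques of \cite[$\rm IV_3$, \S8]{ega}.

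For part (ii), write the ift factorization of $h=f\circ g$ as $Z\to W\to X$ with $i\:Z\to W$ integral and $p\:W\to X$ of finite type, and set $P:=W\times_X Y$. The projection $\pr_Y\:P\to Y$ is a base change of $p$, hence of finite type, and the pair $(i,g)$, which agrees over $X$ since $p\circ i=h=f\circ g$, defines a morphism $j\:Z\to P$ with $g=\pr_Y\circ j$; thus it suffices to show $j$ is ift. Since $\pr_W\circ j=i$, the morphism $j$ factors through its graph $Z\to Z\times_W P\to P$. The second arrow $\pr_P$ is a base change of the integral morphism $i$, hence integral; the first arrow is a section of $Z\times_W P\to Z$, hence an immersion, and it is quasi-compact because $Z\times_W P\to Z$ is a base change of $f$, which is quasi-separated (as $Y$ is a quasi-separated scheme). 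A quasi-compact immersion factors as a closed immersion followed by a quasi-compact open immersion, so $j$ is the composite of a closed immersion (which is integral), a quasi-compact open immersion (which is of finite type), and $\pr_P$ (which is integral); applying the Key Lemma to the last two arrows turns $j$ into an integral morphism followed by a finite-type one. Hence $j$, and therefore $g$, is ift. Note that no separatedness hypothesis on $f$ is needed — the quasi-separatedness built into ``qcqs'' is exactly what makes the graph of $j$ a \emph{quasi-compact} immersion — and the only real work, here and in part (i), lies in the Key Lemma.
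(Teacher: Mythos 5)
Your reduction in (i) to the ``Key Lemma'' (finite type followed by integral is ift) is exactly the reduction the paper makes, and your part (ii) is the paper's argument in only slightly disguised form: your $Z\times_WP$ is $Z\times_XY$, your $\pr_P$ is the paper's $Z\times_XY\to Z_0\times_XY$, your $\pr_Y$ is the paper's last arrow, and like the paper you reduce to (i) via base change. You are a bit more careful than the paper about why the graph is a \emph{quasi-compact} immersion (because $f$ is quasi-separated), which is a real point since a non-quasi-compact immersion need not be ift; that observation is correct and worth stating.

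Part (i), however, has a genuine gap. Your affine computation for the Key Lemma is clean and correct, but you rightly flag that the gluing of the subalgebras $C_k\subset A_k$ does not obviously work: the projections $\Spec A_k\to\Spec C_k$ are only integral, so making the overlaps $\Spec A_k\cap\Spec A_l$ ``cylindrical'' over compatible opens of the $\Spec C_k$ and then matching the two subrings on the overlap is a real difficulty that the sketch does not resolve. The ``robust route'' you gesture at is also incomplete as stated: the approximation machinery of \cite[$\rm IV_3$, \S8]{ega} descends morphisms that are of finite \emph{presentation} along a filtered limit, not merely finite type, and $u$ is not assumed finitely presented. The missing ingredient — and the crux of the paper's proof — is Conrad's theorem \cite[Th.~4.3]{Con}, which says that any finite type morphism of qcqs schemes factors as a closed immersion followed by a finitely presented morphism. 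The closed immersion is integral (so harmless for the conclusion), and the finitely presented piece can then be descended along $S_2=\varprojlim S_2^{(\lambda)}$; the resulting $T\to T^{(\lambda)}$ is a base change of the integral morphism $S_2\to S_2^{(\lambda)}$ and hence integral. Without this factorization, neither your gluing sketch nor your approximation sketch actually closes, so the proof of (i) is not complete as written. Once you insert Conrad's factorization, your Key Lemma and hence (i) are proved exactly as in the paper, and your affine computation becomes a pleasant elementary alternative in the special case where all three schemes are affine.
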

\begin{proof}
To prove (i) it suffices to show that if $f$ is integral and $g$ is of finite type then $h$ is ift. By \cite[Th. 4.3]{Con} $g$ is a composition of a closed immersion $Z\into T$ and a finitely presented morphism $T\to Y$. Using \cite[6.9.15]{egaI} we can represent $Y$ as the projective limit of finite $X$-schemes $X_\alp$. By \cite[$\rm IV_3$, 8.8.2(ii)]{ega} $T\to Y$ is the base change of a finitely presented morphism $T_\alp\to X_\alp$ for large enough $\alp$. In particular, being a base change of $Y\to X_\alp$, the morphism $T\to T_\alp$ is integral. Hence $h$ factors into the composition of an integral morphism $Z\to T\to T_\alp$ with a finite type morphism $T_\alp\to X_\alp\to X$.

Let us prove (ii). Let $h\:Z\to Z_0\to X$ with the first morphism integral and the second one of finite type. Then $g$ splits as $Z\into Z\times_XY\to Z_0\times_XY\to Y$. The first morphism is a locally closed immersion, the second one is integral and the third one is of finite type. Thus, all three are ift and hence $g$ is ift by part (i).
\end{proof}

Let us mention two other basic facts that will not be used.

\begin{rem}
(i) A morphism $\Spec(B)\to\Spec(A)$ is ift if and only if $B$ is integral over a finitely generated $A$-subalgebra.

(ii) Using technique from the proof of \cite[Th. 1.1.2]{temrz}, one can show that the property of being ift is local on the source.

(iii) The above property can be used to give a better definition that applies to all schemes: a morphism $f\:Y\to X$ is ift if it is quasi-compact and locally on $Y$ factors into a composition as in Definition \ref{iftdef}. (We preferred to use a more ad hoc definition to minimize our work.)
\end{rem}

\begin{lem}\label{iftnft}
Let $f\:Y\to X$ be an admissible morphism. Then $f$ is nft if and only if $f$ is ift and for any $y\in\eta(Y)$ the field extension $k(y)/k(f(x))$ is finitely generated.
\end{lem}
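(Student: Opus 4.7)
The forward direction ($\Rightarrow$) I dispatch first, as it is immediate. If $f=h\circ g$ with $g\:Y\to Y_0$ a partial normalization and $h\:Y_0\to X$ of finite type, then $f$ is ift directly from Definition~\ref{iftdef}. Since a partial normalization induces $\eta(Y)\toisom\eta(Y_0)$, for any $y\in\eta(Y)$ we have $k(y)=k(g(y))$, and then $k(y)/k(f(y))$ is finitely generated because $h$ is of finite type.

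For the converse ($\Leftarrow$), my plan is to fix an arbitrary ift factorization $Y\to Y_1\to X$ with $g\:Y\to Y_1$ integral and $h\:Y_1\to X$ of finite type, and refine it in two steps so that the integral part becomes a partial normalization. Step~1: since $g$ is integral, hence closed, and $Y$ is admissible, the image $g(Y)\subseteq Y_1$ is closed. Replace $Y_1$ by the reduced closed subscheme $Y_2\subseteq Y_1$ supported on $g(Y)$. Then $g$ factors through $Y_2$, the induced map $Y\to Y_2$ is integral and admissible, $Y_2\to X$ is of finite type (closed immersion composed with $h$), and $\eta(Y_2)$ is precisely $\{g(y_i):y_i\in\eta(Y)\}$.

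Step~2: enlarge $Y_2$ to force $\eta(Y)\toisom\eta(Y_0)$. Set $\calA:=g_*\calO_Y$, a quasi-coherent $\calO_{Y_2}$-algebra integral over $\calO_{Y_2}$. At each $z\in\eta(Y_2)$ the fiber $\calA\otimes_{\calO_{Y_2}}k(z)=\prod_{g(y)=z}k(y)$ is a finite $k(z)$-algebra: each residue extension $k(y)/k(z)$ is algebraic (by integrality of $g$) and finitely generated (by the hypothesis, via the tower $k(f(y))\subseteq k(z)\subseteq k(y)$), hence finite. I then write $\calA=\varinjlim\calA_\alp$ as the filtered colimit of its finitely generated $\calO_{Y_2}$-subalgebras, which exists since $Y_2$ is qcqs; each $\calA_\alp$ is automatically finite as an $\calO_{Y_2}$-module, by integrality. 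Since $\eta(Y_2)$ is finite and each $k(z)$ is flat over $\calO_{Y_2}$, I can pick $\alp$ large enough that $\calA_\alp\otimes k(z)=\calA\otimes k(z)$ simultaneously for every $z\in\eta(Y_2)$, and set $Y_0:=\bfSpec(\calA_\alp)$. The morphism $Y_0\to Y_2$ is finite, so $Y_0\to X$ is of finite type, and $Y\to Y_0$ is integral and admissible with $\eta(Y)\toisom\eta(Y_0)$ by construction, hence a partial normalization.

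The only substantive point is the simultaneous matching at all generic points in Step~2; this is exactly what the qcqs hypothesis buys via the filtered colimit presentation of $\calA$, together with the finiteness of $\eta(Y_2)$ and the flatness of the residue-field localizations. Everything else is a direct unwinding of the definitions of ift, nft, and partial normalization, so I do not anticipate any deeper obstacle.
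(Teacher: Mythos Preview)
Your proof is correct and takes essentially the same approach as the paper: both reduce to the schematic/reduced image to obtain admissibility, express the integral morphism as a filtered limit of finite morphisms (your colimit of finite $\calO_{Y_2}$-subalgebras is exactly the paper's appeal to \cite[6.9.15]{egaI}), and then use finite generation of the residue extensions at generic points to find a finite stage where $\eta(Y)\toisom\eta(Y_0)$. The only cosmetic difference is the order of operations—you pass to the reduced image before forming the colimit, whereas the paper replaces each $Z_\alp$ by the schematic image of $Y$ after the fact.
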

\begin{proof}
Only the inverse implication needs a proof. So, assume that $Y\to Z$ is integral and $Z\to X$ is of finite type. As earlier, represent $Y$ as a projective limit of finite $Z$-schemes $Z_\alp$. The morphisms $g_\alp\:Y\to Z_\alp$ are integral, and replacing $Z_\alp$'s with the schematic images of $Y$ we can make these morphisms admissible. For each $y\in\eta(Y)$
the field $k(y)$ is the union of the $k(f(x))$-subfields $k(g_\alp(y))$. Hence for large enough $\alp$ we have that $\eta(Y)\toisom\eta(Z_\alp)$ and we obtain that $f$ is nft.
\end{proof}

\begin{cor}\label{nftprop}
Let $g{\colon}Z\to Y$ and $f{\colon}Y\to X$ be admissible morphisms of qcqs schemes and $h=f\circ g$.

(i) If $f$ and $g$ are nft then $h$ is nft.

(ii) If $h$ is nft then $g$ is nft.
\end{cor}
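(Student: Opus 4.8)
The plan is to reduce everything to the combination of Proposition \ref{iftprop} with the characterization of nft morphisms furnished by Lemma \ref{iftnft}: an admissible morphism is nft exactly when it is ift and induces finitely generated residue field extensions at all generic points of the source. Since the three properties ``admissible'', ``ift'', and ``finitely generated extension of residue fields at generic points'' are each straightforward to track through a composition, the corollary should follow by assembling these ingredients.

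For part (i), I would first observe that a composition of admissible morphisms is admissible, since $f$ and $g$ both carry generic points to generic points and hence so does $h=f\circ g$. Next, $f$ and $g$ are ift (being nft), so $h$ is ift by Proposition \ref{iftprop}(i). Finally, for $z\in\eta(Z)$ we have $g(z)\in\eta(Y)$ and $h(z)=f(g(z))\in\eta(X)$, and the inclusions of residue fields $k(h(z))\subset k(g(z))\subset k(z)$ exhibit $k(z)/k(h(z))$ as a tower of two finitely generated extensions (the lower one finitely generated because $f$ is nft, the upper one because $g$ is nft), hence finitely generated. Lemma \ref{iftnft} applied to $h$ then gives part (i).

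For part (ii), $g$ is admissible by hypothesis. Since $h$ is nft it is in particular ift, so $g$ is ift by Proposition \ref{iftprop}(ii). It remains to check the residue-field condition: for $z\in\eta(Z)$ the extension $k(z)/k(h(z))$ is finitely generated (as $h$ is nft), and $k(h(z))\subset k(g(z))\subset k(z)$, so $k(z)$ is already finitely generated over the intermediate field $k(g(z))$, using the elementary fact that if $L/K$ is a finitely generated field extension and $K\subset M\subset L$ then $L/M$ is finitely generated. Thus $g$ satisfies both conditions of Lemma \ref{iftnft} and is nft.

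I do not expect a genuine obstacle here. The only points requiring any care are that admissibility is what guarantees $g(z)$ and $h(z)$ are themselves generic points, so that Lemma \ref{iftnft} is applicable to them, and the above elementary fact on subextensions of finitely generated field extensions; everything else is a direct appeal to Proposition \ref{iftprop} and Lemma \ref{iftnft}.
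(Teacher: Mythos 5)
Your proof is correct and takes the same route as the paper's, which simply says "Combine the above lemma [\ref{iftnft}] with Proposition \ref{iftprop}." You have merely filled in the straightforward details (composition of admissible morphisms is admissible; towers and subextensions of finitely generated field extensions are finitely generated) that the paper leaves implicit.
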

\begin{proof}
Combine the above lemma with Proposition \ref{iftprop}.
\end{proof}

\subsection{$\eta$-normalization and $\eta$-nft morphisms}\label{etanormsec}
In addition to the absolute notions of normalization and modification of $X$, we will also need their relative analogs with respect to a morphism $f{\colon}Y\to X$. Although we repeat here a general definition from \cite[\S3.3]{temst}, where one only assumes that $X$ and $Y$ are qcqs, the reader can have in mind only the cases described in Example \ref{normexam} below, in which $f$ is either a point (i.e. $Y$ is the spectrum of a field) or the embedding $X_\eta\to X$ of the generic fiber $X_\eta$ of a morphism $X\to S$ with an integral target.

By a {\em $Y$-modification} of $X$ we mean a factorization of $f$ into a composition of a schematically dominant morphism $f'{\colon}Y\to X'$ with a proper morphism $g{\colon}X'\to X$. A $Y$-modification is {\em finite} if $g$ is finite. Note that the family of all (resp. finite) $Y$-modifications is filtered and has a final object $X_0$ which is the schematic image of $Y$ in $X$ (i.e., $X_0$ is the minimal closed subscheme of $X$ such that $Y$ factors through $X_0$), and so $X_0=\bfSpec(\calF_0)$, where $\calF_0$ is the image of $\calO_X$ in $f_*\calO_Y$.

If $Y=\Spec(B)$ and $X=\Spec(A)$ then we define $\Nr_B(A)$ to be the integral closure of the image of $A$ in $B$, and set $\Nr_Y(X)=\Spec(\Nr_B(A))$. In general, let $\Nr_Y(\calO_X)$ be the integral closure of the image of $\calO_X$ in the quasi-coherent sheaf of rings $f_*\calO_Y$. Then the $Y$-normalization of $X$ is defined as $\Nr_Y(X)=\bfSpec(\Nr_Y(\calO_X))$ and for any $\calO_X$-subalgebra $\calF\into\Nr_Y(\calO_X)$ the scheme $\bfSpec(\calF)$ is called a {\em partial $Y$-normalization} of $X$. The following analog of Lemma \ref{partlem} is also a consequence of \cite[6.9.15]{egaI}.

\begin{lem}\label{Ypartlem}
If $Y\to X$ is a morphism of qcqs schemes then any partial $Y$-normalization $X'$ of $X$ is $X$-isomorphic to the projective limit of all finite $Y$-modifications of $X$ which are dominated by $X'$.
\end{lem}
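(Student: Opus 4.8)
The plan is to run the same argument as for Lemma~\ref{partlem}, with the absolute normalization replaced by the $Y$-normalization; the only nontrivial input is \cite[6.9.15]{egaI}. Write $f\colon Y\to X$ and $X'=\bfSpec(\calF)$, where $\calF$ is a quasi-coherent $\calO_X$-subalgebra of $\Nr_Y(\calO_X)$, hence a quasi-coherent $\calO_X$-algebra which is integral over $\calO_X$ and carries a canonical injection $\calF\into f_*\calO_Y$. First I would introduce the family $\{\calF_\alpha\}_\alpha$ of those $\calO_X$-subalgebras of $\calF$ which are quasi-coherent and finite as $\calO_X$-modules. Using that $X$ is qcqs, \cite[6.9.15]{egaI} gives that this family is filtered and that $\calF=\varinjlim_\alpha\calF_\alpha$; the point is that an arbitrary finite-type $\calO_X$-submodule of a quasi-coherent module need not be quasi-coherent, so one really needs the qcqs hypothesis to produce enough finite quasi-coherent subalgebras, while the subalgebra generated by two of them is the image of the map from their tensor product and is therefore again quasi-coherent and finite.

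Next I would match the $\calF_\alpha$ with the finite $Y$-modifications of $X$ dominated by $X'$. Set $X_\alpha=\bfSpec(\calF_\alpha)$; then $X_\alpha\to X$ is finite, in particular proper, and by the adjunction between $\bfSpec$ and push-forward the inclusion $\calF_\alpha\into f_*\calO_Y$ corresponds to a morphism $Y\to X_\alpha$ over $X$ which is schematically dominant exactly because $\calF_\alpha\to f_*\calO_Y$ is injective; thus $X_\alpha$ is a finite $Y$-modification of $X$, and the inclusion $\calF_\alpha\subset\calF$ provides an $X$-morphism $X'\to X_\alpha$ compatible with the structure maps from $Y$, i.e. $X_\alpha$ is dominated by $X'$. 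Conversely, given any finite $Y$-modification of $X$ dominated by $X'$, say $f'\colon Y\to Z$ together with a finite $g\colon Z\to X$ and an $X$-morphism $X'\to Z$ under $Y$, write $Z=\bfSpec(\calG)$ with $\calG=g_*\calO_Z$ quasi-coherent and finite over $\calO_X$; schematic dominance of $f'$ gives $\calG\into f_*\calO_Y$, and domination by $X'$ factors this inclusion as $\calG\to\calF\into f_*\calO_Y$, so $\calG\to\calF$ is injective with finite quasi-coherent image, some $\calF_\alpha$, and $Z\cong X_\alpha$ over $X$. Hence the finite $Y$-modifications of $X$ dominated by $X'$ are, up to canonical $X$-isomorphism, precisely the $X_\alpha$, and they form a cofiltered system indexed (up to isomorphism) by the filtered system $\{\calF_\alpha\}$.

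Finally, since $\bfSpec$ carries a filtered colimit of quasi-coherent $\calO_X$-algebras to the cofiltered limit of the associated affine $X$-schemes, the identification $\calF=\varinjlim_\alpha\calF_\alpha$ yields $X'=\bfSpec(\calF)\cong\varprojlim_\alpha X_\alpha$, which is exactly the assertion. I expect the only real subtlety to be the careful use of \cite[6.9.15]{egaI}: one must work throughout with quasi-coherent finite subalgebras, and it is the qcqs hypothesis on $X$ that makes this possible and keeps the indexing system filtered; everything else is the $\bfSpec$--push-forward adjunction bookkeeping already used in the proof of Lemma~\ref{partlem}.
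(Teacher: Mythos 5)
Your argument is correct and is exactly what the paper intends: the paper declines to write out a proof, saying only that the lemma is ``also a consequence of \cite[6.9.15]{egaI}'' by the same argument as Lemma~\ref{partlem}, and you have fleshed out precisely that argument, replacing the absolute integral closure by $\Nr_Y(\calO_X)\subset f_*\calO_Y$ and matching finite quasi-coherent subalgebras of $\calF$ with finite $Y$-modifications of $X$ dominated by $X'$ via the $\bfSpec$--pushforward adjunction.
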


\begin{rem}
If $X$ is admissible then $\Nr(X)=\Nr_{\eta(X)}(X)$, thus expressing absolute normalization in terms of $Y$-normalization. Since $\calM_X=(i_\eta)_*\calO_{\eta(X)}$ in the absolute case, it is natural to view the sheaf $f_*(\calO_Y)$ as the sheaf of meromorphic functions on $X$ with respect to $Y$.
\end{rem}

We will use $Y$-normalizations in two particular cases described below.

\begin{exam}\label{normexam}
(i) If $Y=\Spec(K)$ for a field $K$ then we will usually say $K$-normalization, $K$-modification, etc., instead of $Y$-normalization, $Y$-modification, etc., and write $\Nr_K(X)$ instead of $\Nr_Y(X)$. If $X$ is covered by open affine subschemes $X_i=\Spec(A_i)$ then $\Nr_K(X)$ is pasted from the schemes $\Spec(A'_i)$, where $A'_i=\Nr_K(A_i)$ if the image of $Y$ is in $X_i$ and $A'_i=\Nr_0(A_i)=0$ otherwise.

(ii) Let $S$ be an integral scheme with $k=k(S)$ and generic point $\eta=\Spec(k)$ ($S$ will be the spectrum of a valuation ring in applications). For an $S$-scheme $X$ we will usually say $\eta$-normalization, $\eta$-modification, etc., instead of $X_\eta$-normalization, $X_\eta$-modification, etc., and write $\Nr_\eta(X)$ instead of $\Nr_{X_\eta}(X)$. The $\eta$-normalization of $X$ is pasted from $\eta$-normalizations of affine subschemes, and for an affine $X=\Spec(B)$ sitting over an affine subscheme $\Spec(A)\into S$ we have that $X_\eta=\Spec(B_\eta)$ for $B_\eta=B\otimes_A k$, and $\Nr_\eta(X)$ is the spectrum of the integral closure of the image of $B$ in $B_\eta$.
\end{exam}

Note that for an integral scheme $S$ with generic point $\eta$, $\Nr_\eta$ is a functor on the category of $S$-schemes. Indeed, it suffices to prove that if $S=\Spec(A)$, $X=\Spec(B)$ and $Y=\Spec(C)$ then any $S$-morphism $Y\to X$ lifts uniquely to a morphism $\Nr_\eta(Y)\to\Nr_\eta(X)$. But if $B'$ and $C'$ are the integral closures of the images of $B$ and $C$ in $B_\eta=B\otimes_A K$ and $C_\eta=C\otimes_A K$, respectively, then the $A$-homomorphism $B\to C$ lifts uniquely to an $A$-homomorphism $B'\to C'$. In particular, if $Y$ is $\eta$-normal then any $S$-morphism $Y\to X$ factors uniquely through $\Nr_\eta(X)$. Note also that analogous statements hold for $K$-normalizations of $K$-pointed schemes (where all morphisms are compatible with the $K$-points). In the sequel, it will often be convenient to work with normal or $\eta$-normal schemes, but,
unfortunately, the $\eta$-normalization morphism need not be finite. This forces us to give the following definition.

\begin{defin}\label{Ynfpdef}
Assume that $S$ is an integral scheme with $\eta=\eta(S)$ and $f{\colon}Y\to X$ is an $S$-morphism between qcqs schemes. Then we say that $f$ is of {\em $\eta$-normalized finite type/presentation} if it is the composition of a partial $\eta$-normalization $Y\to Y_0$ and a morphism $Y_0\to X$ of finite type/presentation. We will abbreviate these as {\em $\eta$-nft} and {\em $\eta$-nfp}.
\end{defin}

\begin{rem}\label{Yrem1}
(i) Note that if $Y\to X$ is $\eta$-nft then $\calO_Y$ has no non-trivial $\calO_S$-torsion because any such torsion is killed by any partial $\eta$-normalization.

(ii) The following fact was observed by D. Rydh. Although it will not be used later, we include it for the sake of completeness. If $S$ is reduced and with finitely many generic points then the notions of $\eta$-normalized finite type and presentation for $X\to S$ are equivalent. The proof can be easily obtained from \cite[3.4.6]{RG} and the fact that any $S$-scheme $\oX$ of finite type can be embedded into a finitely presented $S$-scheme $\oY$ such that $\oX\to\oY$ is an isomorphism over a dense open subscheme of $S$.
\end{rem}

\begin{defin} Let $f{\colon}S'\to S$ be a dominant morphism of integral schemes with generic points $\eta'$ and $\eta$. Then the $\eta$-normalized base change functor $\calF_f$ from the category of $S$-schemes to the category of $S'$-schemes is defined as the composition of the base change with $\eta'$-normalization, i.e. for an $S$-morphism $g{\colon}Y\to X$, $\calF_f(g)=\Nr_{\eta'}(g\times_S S')$.
\end{defin}

Note that for an $\eta'$-normal $S'$-scheme $Y$ and an $S$-scheme $X$, any $S$-morphism $Y\to X$ factors through $\calF_f(X)=\Nr_{\eta'}(X\times_S S')$ uniquely. Also, if $g{\colon}S''\to S'$ is another dominant morphism with an integral source then $\calF_{f\circ g}=\calF_g\circ\calF_f$. Now, we are going to study $\eta$-normalized filtered projective limits analogously to \cite[$\rm IV_3$, \S8]{ega}. In applications, we will have a valuation ring $\calO$ approximated by local rings $\calO_\alp$ of varieties in the sense that $\calO$ is a filtered union of $\calO_\alp$. Then $S=\Spec(\calO)$ is isomorphic to the filtered projective limit of $S_\alp=\Spec(\calO_\alp)$ and we will approximate $S$-schemes with $S_\alp$-schemes.

\begin{sit}\label{projsit}
Let $\{S_\alp\}_{\alp\in A}$ be a filtered projective family of integral qcqs schemes with dominant affine transition morphisms and an initial scheme $S_0$. The scheme $S=\projlim S_\alp$ exists by \cite[$\rm IV_3$, 8.2.3]{ega} and is integral by \cite[$\rm IV_2$, 5.13.3, $\rm IV_3$, 8.4.1]{ega}. Set $k_\alp=k(S_\alp)$
and $\eta_\alp=\Spec(k_\alp)$. Let, furthermore, $X_0$ and $Y_0$ be the $\eta_0$-normalizations of $S_0$-schemes $\oX_0$ and $\oY_0$ of finite presentation, and let $f_0{\colon}Y_0\to X_0$ be an $S_0$-morphism. We define $X_\alp,Y_\alp$
and $f_\alp$ (resp. $X,Y$ and $f$) to be the $\eta$-normalized base changes of $X_0,Y_0$ and $f_0$ with respect to the morphism $S_\alp\to S_0$ (resp. $S\to S_0$).
\end{sit}

\begin{prop}\label{normlimprop}
(i) The schemes $X$ and $Y$ are $S$-isomorphic to $\projlim_\alp X_\alp$ and $\projlim_\alp Y_\alp$, respectively.

(ii) There is a natural bijection
$$\mu{\colon}\injlim_{\alp\in A}\Hom_{S_\alp}(Y_\alp,X_\alp)\toisom\Hom_S(Y,X)$$

(iii) If an $\eta$-normal scheme $Z$ is $\eta$-nfp over $S$ then there exists $\alp\in A$ such that $Z$ is $S$-isomorphic to the $\eta$-normalized base change of an $S_\alp$-scheme $\oZ_\alp$ of finite presentation.

(iv) The morphism $f$ is \'etale (resp. smooth) if and only if there exists $\alp_0\in A$ such that for each $\alp\ge\alp_0$ the morphism $f_\alp$ is \'etale (resp. smooth).
\end{prop}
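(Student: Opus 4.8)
The overall strategy is to reduce everything to the classical approximation results in \cite[$\rm IV_3$, \S8]{ega}, keeping careful track of how the $\eta$-normalization interacts with filtered limits. I would begin with part (i), the foundational statement. Since the property is local on $S$ (and hence we may assume all $S_\alp$ affine, $S_\alp=\Spec(\calO_\alp)$, $S=\Spec(\calO)$ with $\calO=\injlim\calO_\alp$), it suffices to treat affine pieces. Writing $\oX_0=\Spec(\oB_0)$ of finite presentation over $\calO_0$, classical approximation gives $\oX=\oX_0\times_{S_0}S=\projlim_\alp(\oX_0\times_{S_0}S_\alp)=\projlim_\alp\oX_\alp$. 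Now $X_\alp=\Nr_{\eta_\alp}(\oX_\alp)$ and $X=\Nr_\eta(\oX)$, so I must show that $\Nr_\eta$ commutes with this particular limit. Concretely, if $\oB_\alp=\oB_0\otimes_{\calO_0}\calO_\alp$ and $\oB=\oB_0\otimes_{\calO_0}\calO$, and $B_\alp$, $B$ are the integral closures of the images of $\oB_\alp$, $\oB$ in $\oB_\alp\otimes_{\calO_\alp}k_\alp$, $\oB\otimes_\calO k$ respectively, then I claim $B=\injlim_\alp B_\alp$. The inclusion $\injlim B_\alp\subseteq B$ is immediate since each $B_\alp$ maps into $B$ (integrality is preserved and the generic-fiber algebra is the colimit of the $\oB_\alp\otimes k_\alp$). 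For the reverse inclusion, take $b\in B$; it satisfies a monic polynomial with coefficients in (the image of) $\oB$, and it lies in $\oB\otimes_\calO k$; both data descend to some finite level $\alp$, so $b$ comes from an element of $\oB_\alp\otimes_{\calO_\alp}k_\alp$ integral over the image of $\oB_\alp$, i.e.\ $b\in B_\alp$. This gives (i); the same colimit identity shows $\eta$-normalization of a scheme already obtained as an $\eta$-normalized base change is again an $\eta$-normalized base change along further transition maps, which is the content one needs for compatibility of the $X_\alp$ among themselves.

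For part (ii), I would use (i) together with \cite[$\rm IV_3$, 8.8.2]{ega}. Given an $S$-morphism $g\:Y\to X$, since $Y_0$ is $\eta_0$-nfp and $X_0$ is too, we may write $g$ on affine pieces as a homomorphism of $\calO$-algebras $B\to C$ where (in the notation above) $C$ is an $\eta$-normalization of a finitely presented $\oC$. The key point is that a homomorphism from the $\eta$-normal algebra $B$ is determined by, and freely extends from, a homomorphism out of any partial normalization, hence ultimately by where the finitely many algebra generators of the underlying finite-presentation piece $\oX_0$ go. Those generators map to elements of $C=\injlim_\alp C_\alp$, so for large $\alp$ they land in $C_\alp$; the relations among them (finitely many, by finite presentation of $\oX_0$) also hold at some large $\alp$; and since $Y_\alp$ is $\eta_\alp$-normal, the resulting map $\oX_0\times_{S_0}S_\alp\to Y_\alp$ extends uniquely to $X_\alp\to Y_\alp$ by the universal property of $\Nr_{\eta_\alp}$ recorded just before Definition \ref{Ynfpdef}. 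This proves surjectivity of $\mu$; injectivity is the same uniqueness statement applied at a common refinement. One must of course patch the affine-local construction using quasi-compactness and quasi-separatedness of the schemes, exactly as in loc.\ cit.

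Part (iii) is the descent of an abstract $\eta$-nfp $S$-scheme $Z$ to finite level. Write $Z$ as a partial $\eta$-normalization of a finitely presented $S$-scheme $\oZ$; by classical approximation \cite[$\rm IV_3$, 8.8.2(ii)]{ega}, $\oZ$ descends to a finitely presented $\oZ_{\alp}$ over some $S_{\alp}$, and then I take the $\eta$-normalized base change of $\oZ_\alp$: by (i) its limit is $\Nr_\eta(\oZ)$, which contains $Z$ as a partial $\eta$-normalization; a further application of the colimit identity for integral closures, as in the proof of Lemma \ref{iftnft}, shows that the intermediate algebra cutting out $Z$ is generated at a finite level, so $Z$ itself descends (after enlarging $\alp$). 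Finally, part (iv): the statement that \'etaleness (resp.\ smoothness) of $f$ descends to $f_\alp$ for large $\alp$ and is stable under further base change. By (ii) we already have $f$ coming from some $f_\alp$. The subtlety is that the $X_\alp\to S_\alp$ need \emph{not} be of finite presentation—they are only $\eta_\alp$-nfp—so one cannot cite \cite[$\rm IV_4$, 17.7.8]{ega} verbatim. However, \'etaleness and smoothness are properties that can be checked after passing to the underlying finitely presented pieces over a dense open of the base together with the normalization being an isomorphism there, or, more robustly, one checks them fibrewise plus flatness; I would argue that since $f$ is a morphism of $\eta$-normal schemes, being \'etale (resp.\ smooth) is equivalent to the base-changed morphism $\oY_0\times\cdots\to\oX_0\times\cdots$ of finite-presentation models being \'etale (resp.\ smooth) over the generic fibre and the $\eta$-normalizations being trivial near the relevant locus, and both of these descend by the classical finite-presentation results.

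The main obstacle is precisely this last point in (iv): reconciling the non-finite-presentation nature of $\eta$-nft morphisms over the base with the standard approximation theorems for étale/smooth morphisms, which are stated for finitely presented morphisms. The cleanest route is probably to prove first that an $\eta$-nft morphism $f\:Y\to X$ between $\eta$-normal schemes is étale (resp.\ smooth) if and only if its restriction over the generic point $\eta$ is, together with $Y$ being $X$-flat—and flatness, being a property testable on the finite-presentation models after the colimit identity for integral closures, does descend. I would expect to spend most of the effort isolating and verifying that reduction, after which (iv) follows formally from the finitely presented case applied to $\oY_\alp\to\oX_\alp$.
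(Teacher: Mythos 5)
Your treatment of parts (i)--(iii) is essentially the paper's: (i) reduces to affines and the compatibility of integral closure with filtered unions; (ii) is the paper's argument, though stated more loosely (you invoke ``algebra generators descend'' where the paper uses the bijection on generic fibers plus schematic density, and one sentence of yours has the arrow direction reversed --- you write ``extends uniquely to $X_\alp\to Y_\alp$'' when you mean $Y_\alp\to X_\alp$); (iii) is correct, but your ``further application of the colimit identity'' to descend the intermediate algebra is unnecessary: since $Z$ is assumed $\eta$-normal, it is equal to the \emph{full} $\eta$-normalization of a finitely presented $\oZ$, so once $\oZ$ descends you are done.

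Part (iv) is where there is a genuine gap. The reduction you propose --- that for an $\eta$-nft morphism of $\eta$-normal schemes, ``\'etale (resp.\ smooth) $\Leftrightarrow$ \'etale (resp.\ smooth) over $\eta$ plus $X$-flat'' --- is false. Take $\kcirc$ a DVR with uniformizer $\pi$, residue characteristic $\neq 2$, $X=S=\Spec(\kcirc)$ and $Y=\Spec(\kcirc[t]/(t^2-\pi))$: this $Y$ is $\eta$-normal (it is a DVR), the map is finite flat, and its generic fiber $\Spec(k(\sqrt{\pi}))\to\Spec(k)$ is \'etale, yet $f$ is not \'etale because the special fiber $\Spec(\tilk[t]/(t^2))$ is non-reduced. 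So that characterization cannot serve as the bridge, and flatness descent alone will not give you (iv). The paper's actual move neatly avoids the finite-presentation obstruction you correctly identified: since $X=\projlim_\alp X_\alp$ by (i), apply approximation \emph{over the base $X$} (not over $S$) --- an \'etale morphism is automatically of finite presentation over $X$, so $f$ descends to an \'etale $\of_\alp{\colon}\oY_\alp\to X_\alp$ of finite presentation. Lemma \ref{smetlem}(iii) then shows $Y=\oY_\alp\times_{X_\alp}X$ is the $\eta$-normalization of $\oY_\alp\times_{S_\alp}S$, so $f$ is the $\eta$-normalized base change of both $\of_\alp$ and of the given $f_\alp$; by part (ii) these two agree over some $S_\beta$, and the $\eta$-normalized base change $\of_\beta$ of $\of_\alp$ is \'etale (resp.\ smooth) by Lemma \ref{smetlem}(iv). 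That chain of reasoning --- approximate over $X$ rather than $S$, then use (ii) to match the two models --- is the missing idea.
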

\begin{proof}
We deduce the proposition from its analog in \cite[$\rm IV_3$, \S8]{ega}. Let us prove that $X\toisom\projlim_{\alp\in A} X_\alp$. The question is local on $X_0$, so we can assume that it is affine, and then the schemes $X'=X_0\times_{S_0}S$ and $X'_\alp=X_0\times_{S_0}S_\alp$ and their generic fibers over $S$ and $S_\alp$, respectively, are also affine, say, $X'_\alp=\Spec(A_\alp)$, $X'=\Spec(A)$,
$X_\eta=X'_\eta=\Spec(A_\eta)$ and $X_{\alp,\eta}=X'_{\alp,\eta}=\Spec(A_{\alp,\eta})$ (where to simplify notation we write $X_{\alp,\eta}$ instead of $X_{\alp,\eta_\alp}$). By \cite[$\rm IV_3$, \S8.2]{ega}, $X_\eta=\projlim_\alp X_{\alp,\eta}$ and $X'=\projlim_\alp X'_\alp$, hence $A_\eta$ is the filtered union of its subalgebras $A_{\alp,\eta}$ and $A$ is the filtered inductive limit of the $A_\alp$'s. Therefore, the subring $\Nr_{A_\eta}(A)$ of $A_\eta$ is the filtered union of the subrings $\Nr_{A_{\alp,\eta}}(A_\alp)$, and applying $\Spec$ we obtain that $X$ is the filtered projective limit of the $X_\alp$'s. Applying the same argument to $Y$ we finish the proof of (i).

To prove (iii) we note that $Z$ is the $\eta$-normalization of a scheme $\oZ$ of finite presentation over $S$, and then by \cite[$\rm IV_3$, 8.8.2(ii)]{ega} $\oZ$ is the base change of a scheme $\oZ_\alp$ of finite presentation over some $S_\alp$. So, $\oZ_\alp$ is as claimed. Let us prove (ii). Since $\eta$-normalized base changes induce compatible maps from $\Hom_{S_\alp}(Y_\alp,X_\alp)$ to $\Hom_{S_\beta}(Y_\beta,X_\beta)$ (for $\beta\ge\alp$) and to $\Hom_S(Y,X)$, a map $\mu$ naturally arises. We first treat the case when $X_0$ is separated. Then $X$ and all $X_\alp$'s are separated because they are affine over $X_0$, and so any morphism from the above $\Hom$'s is determined by its restriction to the generic fibers $Y_{\alp,\eta}$ and $Y_\eta$ (which are schematically dense in $Y_\alp$ and $Y$ by $\eta$-normality). Since $\eta=\projlim_\alp\eta_\alp$ and $X_{\alp,\eta}=X_{0,\eta}\times_{\eta_0}\eta_\alp$ we obtain that $X_\eta=X_{0,\eta}\times_{\eta_0}\eta$ and similarly for $Y$'s. The $\eta_0$-schemes $X_{0,\eta}$ and $Y_{0,\eta}$ are of finite type, hence there is a natural isomorphism
$$\mu_\eta{\colon}\injlim_{\alp\in A}\Hom_{\eta_\alp}(Y_{\alp,\eta},X_{\alp,\eta})\toisom\Hom_\eta(Y_\eta,X_\eta)$$
by \cite[$\rm IV_3$, 8.8.2.(i)]{ega}. The injectivity of $\mu$ follows, and to prove the surjectivity we will find a morphism $g_\alp{\colon}Y_\alp\to X_\alp$ which induces a given morphism $g{\colon}Y\to X$. Since $Y$ is the projective limit of the $Y_\alp$'s by (i), \cite[$\rm IV_3$, 8.13.1]{ega} implies that the $S_0$-morphism $Y\to\oX_0$ (which is the composition of $g$ with the projection $X\to X_0\to\oX_0$) is induced from a morphism $g'{\colon}Y_\alp\to\oX_0$. Obviously, $g'$ factors through $X_\alp$, hence we obtain a morphism $g_\alp{\colon}Y_\alp\to X_\alp$ compatible with $g$. In particular, $g_{\alp,\eta}$ is compatible with $g_\eta$, and therefore $g_\eta$ is the base change of $g_{\alp,\eta}$. Then the schematical density of $Y_\eta$ in $Y$ implies that $g$ must coincide with the normalized base change of $g_\alp$, i.e. $\mu(g_\alp)=g$ as required. This establishes the case of a separated $X_0$, and the general case is deduced using an affine atlas for $X_0$. We omit the details, since we will use only the separated case in applications.

\begin{lem}\label{smetlem}
Let $f{\colon}Y\to X$ be a smooth (resp. \'etale) morphism of finite type, and assume in assertions (iii) and (iv) below that $f$ is an $S$-morphism for an integral scheme $S$ with generic point $\eta$.

(i) If $X$ is integral and normal then $Y$ is a finite disjoint union of integral normal schemes.

(ii) If $X$ and $Y$ are integral, $k/k(X)$ is a finite extension and $l=kk(Y)$ is any $k(X)$-field that is generated by subfields $k(X)$-isomorphic to $k$ and $k(Y)$, then the induced morphism $\Nr_l(Y)\to\Nr_k(X)$ is smooth (resp. \'etale). In particular, taking $k=k(X)$ one obtains that $\Nr(f)$ is smooth (resp. \'etale).

(iii) If $X$ is $\eta$-normal then so is $Y$.

(iv) The $\eta$-normalization morphism $\Nr_\eta(f)$ is smooth (resp. \'etale). In particular, if $g{\colon}S'\to S$ is a dominant morphism of integral schemes then the $\eta$-normalized base change of $f$ is smooth (resp. \'etale).
\end{lem}
\begin{proof}
Note that (iii) follows from \cite[16.2.1]{LMB}. (Also, as D. Rydh pointed out \cite[2.2.1]{Laz} implies that (iii) holds more generally for any flat $f$ with geometrically reduced fibers.) I am grateful to the referee for the following
argument that shortened the proof of (i). If $\eta$ denotes the generic point of $X$ then applying (iii) to $S=X$ we obtain that $Y$ is $Y_\eta$-normal. Clearly, $Y_\eta$ is a smooth $\eta$-variety, hence it is a finite disjoint union of integral normal schemes. By the transitivity of normality, $Y$ is also a finite disjoint union of integral normal schemes.

The assertions of (ii) and (iv) are deduced from (i) and (iii), respectively, in a similar way, so we will prove only (ii). Set $X'=\Nr_k(X)$ and let $f'{\colon}Y'\to X'$ be the base change of $f$. Since $f'$ is smooth and $X'$ is normal, $Y'$ is a disjoint union of integral normal schemes by part (i) of the lemma. Since the extension $k(Y)/k(X)$ is separable by smoothness of $f$, $k\otimes_{k(X)}k(Y)$ is a direct product of fields and $l$ is one of the factors. Let $Y'_l$ be the irreducible component of $Y'$ with the generic point corresponding to $l$, then it suffices to prove that $\Nr_l(Y)\toisom Y'_l$ because obviously $Y'_l$ is smooth (resp. \'etale) over $X'$. The morphism $\Nr_l(Y)\to X$ factors through $X'$, hence we also obtain a morphism from $\Nr_l(Y)$ to $Y'$. It is integral because both $\Nr_l(Y)$ and $Y'$ are integral over $Y$. The generic point of $\Nr_l(Y)$ is
mapped isomorphically onto the generic point of $Y'_l$, hence we obtain a birational integral morphism $\Nr_l(Y)\to Y'_l$, which must be an isomorphism by normality of $Y'_l$.
\end{proof}

Now, let us prove (iv). If $f_\alp$ is smooth (resp. \'etale) then by Lemma \ref{smetlem}(iv) so is its $\eta$-normalized base change $f$. Conversely, assume that $f$ is smooth (resp. \'etale). Since $X=\projlim X_\alp$, $f$ is the base change of a smooth (resp. \'etale) morphism $\of_\alp{\colon}\oY_\alp\to X_\alp$ for some $\alp$. Then $Y\toisom\oY_\alp\times_{X_\alp}X$ is the $\eta$-normalization of $\oY_\alp\times_{S_\alp}S$, hence $f$ is the $\eta$-normalized base change of $\of_\alp$. Thus, $f_\alp$ and $\of_\alp$ are two morphisms of $\eta$-normalized $S_\alp$-schemes whose $\eta$-normalized base changes to $S$ are isomorphic. By part (ii) of the proposition, they become isomorphic already over some $S_\beta$, hence $f_\beta$ is isomorphic to the
$\eta$-normalized base change $\of_\beta$ of $\of_\alp$ for each $\beta$ larger than some $\beta_0$. But $\of_\beta$ is smooth (resp. \'etale) by Lemma \ref{smetlem}(iv), hence $f_\beta$ is smooth (resp. \'etale) for each $\beta\ge\beta_0$.
\end{proof}

\subsection{Birational fibers}\label{ebfibsec}
First we recall some definitions and results from \cite[\S3.2]{temst}. For any field $K$ by $\bfP_K$ we denote the Riemann-Zariski space of $K$. Its points are valuation rings of $K$. If $X$ and $Y$ are two subsets in $K$ and $\gtZ$ is a subset of $\bfP_K$ then by $\gtZ\{X\}\{\{Y\}\}$ we denote the subset of $\gtZ$ which consists of elements $\calO\in\gtZ$ such that $X\subset\calO$ and $Y\subset m_\calO$. In other words, $\gtZ\{X\}\{\{Y\}\}$ is cut off from $\gtZ$ by the inequalities $|x|\le 1$ and $|y|<1$ with $x\in X, y\in Y$. The Zariski topology on $\gtZ$ is defined by non-strict inequalities, and the constructible topology on $\gtZ$ is defined by the inequalities of both types, i.e. the basis of the Zariski topology is formed by the sets $\gtZ\{f_1\. f_n\}$, and the basis of the constructible topology is formed by the sets $\gtZ\{f_1\. f_n\}\{\{g_1\. g_m\}\}$. Zariski topology is the default one, so each time we will use the constructible topology it will be said explicitly. It is well known that the sets $\gtZ=\bfP_K\{X\}\{\{Y\}\}$ are compact in the constructible topology (for example, one can use the arguments from \cite[3.2.1]{temst} or \cite[5.3.6]{ct}), hence they are quasi-compact in the weaker Zariski topology.

Assume, now, that $\gtZ=\bfP_K\{X\}\{\{Y\}\}$ and let us make a few more simple remarks on the constructible topology. (All what we will say holds, more generally, for arbitrary spectral topological spaces.) A subset $S\subset\gtZ $ is called {\em constructible} if
$S=\cup_{i=1}^n\gtZ\{X_i\}\{\{Y_i\}\}$ with finite sets $X_i,Y_i\subset K$. The family of constructible sets is closed under taking finite unions, finite intersections and complements (the latter follows from the observation that $\gtZ=\gtZ\{f\}\coprod\gtZ\{\{f^{-1}\}\}$ for any $f\in K^\times$). Arbitrary intersections (resp. unions) of constructible sets are called {\em pro-constructible} (resp. {\em ind-constructible}). Note that a set is pro-constructible (resp. ind-constructible) if and only if it is compact (resp. open) in the constructible topology, so we will use these
notions to avoid mentioning the constructible topology. Note also that a Zariski open set is quasi-compact if and only if it is constructible, and any pro-constructible set is quasi-compact in the Zariski topology.

Next, let us recall the relation between Riemann-Zariski spaces and schemes. To any integral scheme $X$ provided with a dominant morphism $\eta{\colon}\Spec(K)\to X$ one can associate a Riemann-Zariski space $\RZ_K(X)$ which is defined as the projective limit of the underlying topological spaces of the $K$-modifications of $X$ (actually, the definition makes sense for any scheme $X$ with a point $\eta$). Points of $\RZ_K(X)$ can be naturally interpreted as morphisms $\phi{\colon}\Spec(\calO)\to X$ where $\calO\in\bfP_K$ and the restriction of $\phi$ onto the generic point is $\eta$. The natural projection $\RZ_K(X)\to\bfP_K$, which keeps $\calO$ but forgets $\phi$, is a local homeomorphism in general and a topological embedding when $X$ is separated. Thus, for a separated $X$ we can identify $\RZ_K(X)$ with the subset of $\bfP_K$ consisting of the valuation rings centered on $X$. In particular, $\bfP_K\{A\}$ can be naturally identified with the projective limit of all $K$-modifications of $\Spec(A)$.

For any point $x\in X$ by the {\em birational fiber} $X^\bir_x$ over $x$ we mean the
preimage of $x$ under the projection $\RZ_K(X)\to X$, and we identify $X^\bir_x$ with a subset of $\bfP_K$. So, $X^\bir_x$ is the set of valuations centered on $x$. Note that $X^\bir_x=\bfP_K\{\calO_{X,x}\}\{\{m_x\}\}$ is the set of all valuation rings $\calO\subseteq K$ that {\em dominate} $\calO_{X,x}$, i.e. $m_\calO\cap\calO_{X,x}=m_x$. If $\calO$ is a local domain with field of fractions $K$ and $L/K$ is any extension of fields then by the birational fiber of $\calO$ in $L$ we mean the set $\bfP_L\{\calO\}\{\{m_\calO\}\}$, which is the preimage of the birational fiber of the closed point of $\Spec(\calO)$ under the natural map $\bfP_L\to\bfP_K$.

We finish this section with proving some results that are not covered by \cite{temst} but will be useful in the sequel.

\begin{lem}\label{birfiblem}
Let $A$ and $A'$ be two normal local rings with field of fractions $K$ and birational fibers $X$ and $X'$. Then $X'\subseteq X$ if and only if $A\subseteq A'$ and $A'$ dominates $A$.
\end{lem}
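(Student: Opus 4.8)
The plan is to prove the two implications separately: the backward direction is formal, and the forward direction will be reduced to the classical fact that a normal local domain coincides with the intersection of the valuation rings of its fraction field that dominate it. Recall that, by the definitions of this section, $X$ (resp. $X'$) is precisely the set of valuation rings $\calO$ of $K$ that dominate $A$ (resp. $A'$), i.e. satisfy $A\subseteq\calO$ and $m_\calO\cap A=m_A$ (resp. the analogous conditions for $A'$).

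First I would treat the easy direction. Assume $A\subseteq A'$ and $A'$ dominates $A$. Given $\calO\in X'$ we have $A\subseteq A'\subseteq\calO$ and $m_\calO\cap A=(m_\calO\cap A')\cap A=m_{A'}\cap A=m_A$, so $\calO$ dominates $A$, i.e. $\calO\in X$; hence $X'\subseteq X$. This step is purely formal (transitivity of domination).

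For the converse I would isolate the claim that $A'=\bigcap_{\calO\in X'}\calO$; it is here that normality of $A'$ is used (normality of $A$ plays no role in the argument). Granting this claim, assume $X'\subseteq X$. Then every $\calO\in X'$ dominates $A$, in particular contains $A$, so $A\subseteq\bigcap_{\calO\in X'}\calO=A'$. To conclude that $A'$ dominates $A$ it remains to check $m_A\subseteq m_{A'}$ (since then $m_{A'}\cap A$ is a prime ideal of $A$ squeezed between $m_A$ and $m_A$, hence equal to $m_A$): if some $a\in m_A$ were a unit of $A'$, then $a^{-1}\in A'\subseteq\calO$ for every $\calO\in X'$, and picking any such $\calO$ (the set $X'$ is nonempty, as every local domain is dominated by a valuation ring) we would get $a\in m_\calO$ because $\calO$ dominates $A$, whence $1=a\cdot a^{-1}\in m_\calO$, a contradiction.

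It then remains to prove the claim $A'=\bigcap_{\calO\in X'}\calO$. The inclusion $\subseteq$ is immediate, so let $x\in K\setminus A'$; I must exhibit $\calO\in X'$ with $x\notin\calO$. Since $A'$ is integrally closed in $K$, $x$ is not integral over $A'$. Put $C:=A'[x^{-1}]$ and $\gtb:=x^{-1}C+m_{A'}C$. The key point is that $\gtb$ is a proper ideal of $C$: if $1=x^{-1}\beta+\gamma$ with $\beta\in C$ and $\gamma\in m_{A'}C$, write $\beta=\sum_j c_jx^{-j}$ with $c_j\in A'$ and $\gamma=\sum_j b_jx^{-j}$ with $b_j\in m_{A'}$, and multiply the resulting identity $x(1-\gamma)=\beta$ by a power of $x$ large enough to clear all negative powers; one obtains a relation $(1-b_0)x^{N}+(\text{terms of degree}<N\text{ with coefficients in }A')=0$, and since $1-b_0$ is a unit of $A'$ this exhibits $x$ as integral over $A'$, contradicting $x\notin A'$. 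Now choose a maximal ideal $\gtq$ of $C$ with $\gtb\subseteq\gtq$; then $x^{-1}\in\gtq$ and $\gtq\cap A'=m_{A'}$ (it is a proper ideal of $A'$ containing the maximal ideal $m_{A'}$). Finally let $\calO$ be any valuation ring of $K$ dominating the local ring $C_\gtq$; then $A'\subseteq\calO$ and $m_\calO\cap A'=\gtq\cap A'=m_{A'}$, so $\calO\in X'$, while $x^{-1}\in\gtq\subseteq m_\calO$ forces $x\notin\calO$, as desired. I expect the only non-routine point to be the properness of $\gtb$, which is the usual maneuver of clearing denominators to produce an integral dependence relation; everything else is bookkeeping with the definition of domination.
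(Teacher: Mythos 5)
Your proof is correct and rests on the same pivotal fact as the paper's: a normal local domain is the intersection of the valuation rings of its fraction field that dominate it. The paper simply cites this from Bourbaki (Ch.\ 6, \S 1, Th.\ 3) and applies it to both $A$ and $A'$, deducing $A=\bigcap_{\calO\in X}\calO\subseteq\bigcap_{\calO\in X'}\calO=A'$ and then obtaining domination from any one $\calO\in X'$. You instead re-prove the needed instance from scratch --- the usual maneuver: form $C=A'[x^{-1}]$, show $x^{-1}C+m_{A'}C$ is proper (else clear denominators and get an integral dependence for $x$ over $A'$, contradicting integral closedness), pick a maximal ideal containing it, and dominate the localization by a valuation ring. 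That argument is sound and self-contained. You also make the small but genuine refinement that normality of $A$ is never used --- you only need $A'=\bigcap_{\calO\in X'}\calO$ and the fact that every $\calO\in X'$ contains $A$ --- so the lemma holds with $A$ merely a local domain. The paper's route is more economical by outsourcing the key fact; yours trades a citation for an explicit, slightly more general argument. Both are fine.
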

\begin{proof}
If $A'$ dominates $A$ then any valuation ring dominating $A'$ also dominates $A$ and hence $X'\subseteq X$. Conversely, assume that $X'\subseteq X$. Recall that by \cite[Ch.6, \S1, Th. 3]{Bou}, a normal local ring coincides with the intersection of all valuation rings of the fraction field that dominate it. Thus $A$ coincides with the intersection of all valuation rings $\calO\in X$, and similarly for $A'$. Therefore, $A\subseteq A'$. Finally, a valuation ring $\calO\in X'$ dominates both $A'$ and $A$ and hence $A'$ dominates $A$.
\end{proof}

\begin{theor}\label{Zarconth}
Let $A$ be a local domain with $K=\Frac(A)$ and $X=\Spec(A)$, and let $x\in X$ be the closed point. Then the following conditions are equivalent:

(i) the birational fiber $X_x^\bir\subset\bfP_K$ is connected,

(ii) for any modification $X'\to X$ the preimage of $x$ is connected,

(iii) $A$ is unibranch.
\end{theor}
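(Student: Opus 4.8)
The plan is to run the cycle of implications (iii)$\Rightarrow$(ii)$\Rightarrow$(i)$\Rightarrow$(iii); the remaining direction (i)$\Rightarrow$(ii) then comes for free, since for any modification $\phi\:X'\to X$ the projection $X^\bir_x\to\phi^{-1}(x)$ is a continuous surjection — surjectivity because a valuation ring of $K$ dominating $\calO_{X',x'}$ for a point $x'$ of $X'$ over $x$ lies in $X^\bir_x$ and is centered at $x'$ — and a continuous image of a connected space is connected. Throughout I would use two standard facts about $\RZ_K(X)=\projlim_{X'}X'$ (the limit over all modifications): it is a spectral space whose quasi-compact open subsets are pulled back from quasi-compact opens of individual modifications, and each projection $\RZ_K(X)\to X'$ is surjective; restricting over $x$, the same holds for $X^\bir_x=\projlim_{X'}\phi^{-1}(x)$.

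For (ii)$\Rightarrow$(i): suppose $X^\bir_x=P_1\sqcup P_2$ with $P_1,P_2$ clopen and nonempty. Since $X^\bir_x=\bfP_K\{\calO_{X,x}\}\{\{m_x\}\}$ is pro-constructible, hence quasi-compact, each $P_i$ is quasi-compact and open, so $P_i=X^\bir_x\cap U_i$ for a quasi-compact open $U_i\subseteq\RZ_K(X)$. Pulling the $U_i$ back from a common modification $X'$ (the modifications form a filtered system, a common refinement being the dominant component of a fibre product), I obtain quasi-compact opens $V_1,V_2\subseteq X'$ with $P_i=\pi^{-1}(V_i\cap\phi^{-1}(x))$, where $\pi\:X^\bir_x\to\phi^{-1}(x)$. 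Surjectivity of $\pi$ turns $P_1\cap P_2=\emptyset$ and $P_1\cup P_2=X^\bir_x$ into $V_1\cap V_2\cap\phi^{-1}(x)=\emptyset$ and $(V_1\cup V_2)\cap\phi^{-1}(x)=\phi^{-1}(x)$, so $\phi^{-1}(x)$ is disconnected, contradicting (ii).

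For (iii)$\Rightarrow$(ii): given a modification $\phi\:X'\to X$, set $R=\Gamma(X',\calO_{X'})$. Properness of $\phi$ makes $R$ integral over $A$, and since $X'$ is integral $R$ is a subring of $K$ containing $A$, so $R\subseteq\Nr_K(A)$. If $A$ is unibranch then $\Nr_K(A)$ is local, and hence $R$, being an $A$-subalgebra of $\Nr_K(A)$ integral over the local ring $A$, is local as well, with $m_R$ the unique prime over $m_x$. The Stein factorization $X'\to\Spec R\to X$ presents $\phi$ through a proper morphism $g\:X'\to\Spec R$ with $g_*\calO_{X'}=\calO_{\Spec R}$; by Zariski's connectedness theorem $g$ has connected fibres, and since $R$ is local, $\phi^{-1}(x)$ coincides topologically with $g^{-1}(m_R)$, hence is connected.

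For (i)$\Rightarrow$(iii) I argue by contraposition. If $\Nr_K(A)$ is not local, choose distinct maximal ideals $\gtn_1\neq\gtn_2$ (both lie over $m_x$, as $\Nr_K(A)$ is integral over $A$) and, by comaximality, elements $b\in\gtn_1$, $c\in\gtn_2$ with $b+c=1$; then $b\in\Nr_K(A)$, $b\in\gtn_1$, $b\notin\gtn_2$, and $b\neq0$. Since $b\in\Nr_K(A)\subseteq\calO$ for every $\calO\in X^\bir_x$, we get $X^\bir_x=\{\calO\:|b|<1\}\sqcup\{\calO\:|b|=1\}$: the first set is closed in $X^\bir_x$ (the locus $|b|<1$ is the complement of the basic open $\bfP_K\{b^{-1}\}$) and the second is $X^\bir_x\cap\bfP_K\{b^{-1}\}$, hence open, so this is a clopen decomposition, and it is proper because a valuation ring dominating $\Nr_K(A)_{\gtn_1}$ lies in the first piece while one dominating $\Nr_K(A)_{\gtn_2}$ lies in the second. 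Thus $X^\bir_x$ is disconnected. The step I expect to be the main obstacle is the input to (iii)$\Rightarrow$(ii): Zariski's connectedness theorem in the non-noetherian generality required here (a proper morphism $g$ with $g_*\calO=\calO$ over a possibly non-noetherian base has connected fibres), which I would quote from \cite[III, 4.3]{ega} and reduce to the noetherian case by the limit techniques of \cite[$\rm IV_3$, \S8]{ega} already used in the paper; the facts about $\RZ_K(X)$ invoked above are standard and can be cited from \cite[\S3.2]{temst} or \cite{temrz}.
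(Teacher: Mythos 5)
Your direction (i)$\Rightarrow$(iii) by contraposition has a gap. Having chosen $b\in\Nr_K(A)$ with $b\in\gtn_1\setminus\gtn_2$, you split $X_x^\bir=\{|b|<1\}\sqcup\{|b|=1\}$ and declare this a clopen decomposition on the grounds that the first piece is closed and the second is open. But those two facts are the \emph{same} piece of information (each piece is the complement of the other), and a nontrivial partition of a space into a closed set and an open set does not imply disconnectedness --- consider the two-point space $\Spec$ of a height-one valuation ring. What is missing is the observation that $\{|b|=1\}$ is \emph{also} closed in $X_x^\bir$, i.e.\ stable under specialization: for $\calO'\subseteq\calO$ both in $X_x^\bir$, the contractions $m_\calO\cap\Nr_K(A)$ and $m_{\calO'}\cap\Nr_K(A)$ are both maximal ideals of $\Nr_K(A)$ (they lie over $m_x$ and $\Nr_K(A)$ is integral over $A$) and one contains the other, hence they coincide; therefore $|b|_\calO=1\iff b\notin m_\calO\cap\Nr_K(A)\iff|b|_{\calO'}=1$. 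This extra argument closes the gap. Alternatively --- and this is the route the paper takes behind its terse ``obviously fail'' --- one passes through the finite modification $X'=\Spec(A[b])$: its fibre $X'_x$ over $x$ is a finite discrete set with at least two points (the contractions of $\gtn_1$ and $\gtn_2$ to $A[b]$ are distinct because $b$ separates them), hence visibly disconnected, and the continuous surjection $X_x^\bir\to X'_x$ pulls the disconnection back, disposing of (i) and (ii) simultaneously.

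Your route to (iii)$\Rightarrow$(ii) is genuinely different from the paper's, and it is worth comparing. The paper first proves (i)$\Leftrightarrow$(ii) unconditionally (the same pro-constructibility/quasi-compactness argument as your (ii)$\Rightarrow$(i), run in both directions), then reduces the unibranch case to the normal case (the normalization of a unibranch local domain is local and has the same birational fibre), and finally proves (i) for normal $A$ by writing $A$ as a filtered union of noetherian normal local subrings $A_i$ and descending a putative disconnection of $X_x^\bir$ down to the birational fibre of some $A_i$ inside $\bfP_{\Frac(A_i)}$, contradicting the classical noetherian Zariski connectedness theorem. This entirely avoids Stein factorization and any non-noetherian connectedness input. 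Your plan to reduce the Stein statement itself to the noetherian case by limits hits an obstacle you do not flag: when you approximate $g\:X'\to\Spec R$ by $g_\alp\:X'_\alp\to\Spec R_\alp$, the Stein condition $g_{\alp*}\calO_{X'_\alp}=\calO_{\Spec R_\alp}$ is generally lost, so you must re-Stein-factor each $g_\alp$ and then compare the fibres of the refactored morphisms across the limit --- which amounts to the same descent of disconnection the paper performs, but in a more awkward formulation. The only short-cut would be to cite Artin's non-noetherian Zariski connectedness (SGA4, Exp.\ XII, Cor.\ 5.7), which the paper deliberately avoids because its proof uses the proper base change theorem for \'etale cohomology.
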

\begin{proof}
First we note that if $A$ is not unibranch then both (i) and (ii) obviously fail. Hence (iii) follows from either of the first two conditions. Until the end of the proof we will therefore assume that $A$ is unibranch, and our aim is to deduce both (i) and (ii). First we prove equivalence of (i) and (ii). For each modification $X'\to X$ let $X'_x$ denote the fiber over $x$. So, $Z:=X_x^\bir$ is the preimage of $X'_x$ under the projection $\bfP_K\{A\}\to X'$. In particular, if $Z$ is connected then each $X'_x$ is so. Conversely, assume that $Z$ is disconnected, say, $Z=U\coprod V$ for open $U$ and $V$. Both $U$ and $V$ are quasi-compact and hence $U=\cup_{i=1}^nZ\{F_i\}$ with finite $F_i\subset K$, and similarly for $V$. Find a modification $X'\to X$ such that each $f\in F_i$ induces a morphism $f{\colon}X'\to\bfP^1_\bfZ$. Then $U$ and $V$ are the full preimages of quasi-compact open subsets $U',V'\subset X'_x$ given by the same formulas involving the $F_i$'s (e.g. $Z\{f\}$ (resp. $Z\{f^{-1}\}$) is the preimage of the open subscheme $\Spec(\bfZ[T])$ (resp. $\Spec(\bfZ[T^{-1}])$) obtained by removing the infinity (resp. zero) section of $\bfP_\bfZ^1$). In particular, $X'_x=U'\coprod V'$ is disconnected.

By the definition of unibranch local rings, the normalization $A'$ of $A$ is a local ring, hence the birational fibers of $A$ and $A'$ coincide. So, it suffices to prove (i) for normal local domains, and since (i) and (ii) are equivalent, we will in the sequel assume that $A$ is normal. Note that (ii) is then the Zariski connectedness theorem for normal schemes. The theorem is classical for noetherian rings, see \cite[$\rm III_1$, 4.3.1]{ega}. Its generalization to the general case was proved by M. Artin using such a complicated tool as proper base change theorem for \'etale cohomology, see \cite[Exp. XII, Cor. 5.7]{sga4}. For the sake of comparison, we will show how one can complete the proof without using the latter result. Thus, our starting point is that (ii) and, therefore, (i) hold when $A$ is normal noetherian, and our strategy will be to deduce that (i) holds for any normal $A$.

Assume, conversely, that $A$ is normal but $Z$ is disconnected. We proved earlier that $Z=U\coprod V=(\cup_{l=1}^n Z\{F_l\})\coprod(\cup_{j=1}^m Z\{G_j\})$ with non-empty $U$ and $V$ and finite subsets $F_l,G_j\subset K$ (in particular, each set $Z\{F_l,G_j\}$ is empty). Find a filtered family of noetherian normal local rings $\{A_i\}_{i\in I}$ such that $A_i\subset A$, $A$ dominates $A_i$, $K_i:=\Frac(A_i)$ contains the sets $F_l$ and
$G_j$ and $\cup_{i\in I}A_i=A$.  Note that $Z=\cap_{i\in I}Z_i$ where $Z_i\subset\bfP_K$ is the birational fiber of $A_i$ in $K$, and $\{Z_i\}_{i\in I}$ is a filtered family of pro-constructible sets. We claim that for sufficiently large $i$, the sets $Z_i\{F_l\}$ and $Z_i\{G_j\}$ cover $Z_i$, and $Z_i\{F_l,G_j\}=\emptyset$ for any $1\le l\le n$ and $1\le j\le m$. The second is obvious since $\cap_{i\in I}Z_i\{F_l,G_j\}=Z\{F_l,G_j\}=\emptyset$, and to prove the first we note that the open set $W:=(\cup_{l=1}^n \bfP_K\{F_l\})\cup(\cup_{j=1}^m \bfP_K\{G_j\})$ contains $Z=\cap Z_i$. Since each $Z_i$ is compact in the constructible topology, already some $Z_i$ lies in $W$. Choose $i$ as above. Then $Z_i=(\cup_{l=1}^n Z_i\{F_l\})\coprod(\cup_{j=1}^m Z_i\{G_j\})$, and since the $F_l$'s and the $G_j$'s are in $K_i$, the same representation is valid already for the birational fiber $Z'_i\subset\bfP_{K_i}$ of $A_i$. In particular, $Z'_i=U'\coprod V'$ with open $U'$ and $V'$, that must be non-empty because $U$ and $V$ are contained in their preimages in $Z_i$. The latter implies that $Z'_i$ is disconnected, and we obtain a contradiction to the already established noetherian case. This finishes the proof.
\end{proof}

\begin{cor}\label{birconlem}
Let $A$ be a geometrically unibranch local domain and let $L$ be an extension of $\Frac(A)$ of finite degree $n$. Then $\Nr_L(A)$ is a semi-local ring with at most $n$ maximal ideals and the birational fiber of $A$ in $L$ is the disjoint union of the connected components which are the birational fibers of the closed points of $\Nr_L(\Spec(A))$.
\end{cor}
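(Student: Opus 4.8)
The plan is to combine the connectedness criterion of Theorem~\ref{Zarconth} with a decomposition of the birational fiber of $A$ in $L$ indexed by the maximal ideals of $B:=\Nr_L(A)$, and to extract the bound $n$ from the restriction map on Riemann--Zariski spaces $\bfP_L\to\bfP_K$, where $K=\Frac(A)$.

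First I would reduce to the case that $A$ is normal. Let $\tilA$ be the normalization of $A$ in $K$; since $A$ is geometrically unibranch (in particular unibranch), $\tilA$ is local, and $\Nr_L(A)=\Nr_L(\tilA)$, so $\Nr_L(\Spec(A))=\Nr_L(\Spec(\tilA))$. Moreover a valuation ring $\calO$ of $L$ dominates $A$ iff it dominates $\tilA$: if $A\subseteq\calO$ then $\tilA\subseteq\calO$ (as $\calO$ is integrally closed), and $m_\calO\cap\tilA$ is a prime of $\tilA$ over $m_A$, hence equals the unique maximal ideal of $\tilA$; the converse is clear. So the birational fiber $Z$ of $A$ in $L$ equals that of $\tilA$, and every assertion is unchanged upon replacing $A$ by $\tilA$. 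Thus we may assume $A=\tilA$ is normal.

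Next comes the decomposition. For $\calO\in Z$ one has $B\subseteq\calO$ and $\gtq:=m_\calO\cap B$ is a maximal ideal of $B$ (it lies over $m_A$, and $B/\gtq$ is a domain integral over the field $A/m_A$); conversely, for each maximal ideal $\gtq$ of $B$ the local ring $B_{\gtq}$ is a normal local domain with fraction field $L$, so its birational fiber $Z_{\gtq}$ (the valuation rings of $L$ dominating $B_{\gtq}$) is nonempty and consists of rings dominating $A$ with center $\gtq$ on $B$. Hence $Z=\coprod_{\gtq}Z_{\gtq}$ as sets. Each $Z_{\gtq}$ is connected by Theorem~\ref{Zarconth} applied to $B_{\gtq}$, and is closed in $Z$, since its complement $\{\calO\in Z:\gtq\not\subseteq m_\calO\}=\bigcup_{0\neq b\in\gtq}\bigl(Z\cap\bfP_L\{b,b^{-1}\}\bigr)$ is open. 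Now consider $p\colon\bfP_L\to\bfP_K$, $\calO\mapsto\calO\cap K$: it carries $Z$ onto the birational fiber $Z_K$ of $A$ in $K$, one has $Z=p^{-1}(Z_K)$, the fibers of $p|_Z$ are finite of cardinality $\le n$ (a valuation of $K$ has at most $[L:K]$ extensions to $L$), and $Z_K$ is connected by Theorem~\ref{Zarconth}. I would then show each $Z_{\gtq}$ surjects onto $Z_K$: since normalization commutes with localization, $B_{\gtq}\cap K=A$, so every $v\in Z_K$ dominates $B_{\gtq}\cap K$ and therefore extends to a valuation of $L$ dominating $B_{\gtq}$, i.e. lies in $p(Z_{\gtq})$. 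Fixing one $v\in Z_K$ (nonempty, as $A$ is local), the finite set $p^{-1}(v)\cap Z$ meets every $Z_{\gtq}$, so there are at most $n$ maximal ideals $\gtq$; thus $B$ is semi-local with at most $n$ maximal ideals. Finally, with finitely many closed pieces $Z_{\gtq}$ partitioning $Z$, each one is also open (its complement is the union of the finitely many other closed pieces), hence clopen, nonempty and connected, so the $Z_{\gtq}$ are precisely the connected components of $Z$, namely the birational fibers of the closed points of $\Nr_L(\Spec(A))=\Spec(B)$.

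The step I expect to be the main obstacle is the surjectivity $Z_{\gtq}\twoheadrightarrow Z_K$ — concretely, the two ingredients $B_{\gtq}\cap K=A$ (compatibility of normalization with localization over the possibly non-noetherian base $A$) and the existence of an extension to $L$ of a prescribed valuation of $K$ with prescribed center on $B_{\gtq}$ (surjectivity of $\RZ_L(\Spec(B_{\gtq}))\to\RZ_K(\Spec(A))$ on the relevant fiber); once these are available, the cardinality count and the topological assembly are routine. An alternative route to ``$B$ semi-local with at most $n$ maximal ideals'' is to pass to the strict henselization: $A$ geometrically unibranch forces $A^{sh}$ to be a domain, $B\otimes_A A^{sh}$ is its integral closure in $L\otimes_K\Frac(A^{sh})$, and the latter has at most $[L:K]_s\le n$ maximal ideals because $A^{sh}$ is henselian; this handles uniformly the case of a small residue field, where the naive polynomial argument (a monic minimal polynomial of degree $\le n$ over $A$ cannot acquire $n+1$ distinct residues at $n+1$ distinct maximal ideals of $B$) needs care.
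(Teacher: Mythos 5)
Your decomposition $Z=\coprod_\gtq Z_\gtq$ and the connectedness of each piece via Theorem~\ref{Zarconth} are exactly the ``it is clear'' part of the paper's proof, and they are fine. The gap is in your primary route to the bound on the number of maximal ideals of $B=\Nr_L(A)$, namely the claimed surjectivity of $Z_\gtq\to Z_K$. Two things go wrong there. First, ``normalization commutes with localization'' does not yield $B_\gtq\cap K=A$: that principle says $\Nr_L(S^{-1}A)=S^{-1}\Nr_L(A)$ for a multiplicative set $S\subset A$, whereas $B_\gtq$ is the localization of $B$ at $B\setminus\gtq$, which is not of that form when $B$ is not local. (The equality $B_\gtq\cap K=A$ is in fact plausible, but it needs a genuine argument — e.g.\ descent along the trace, or the observation that $B_\gtq\cap K$ is a normal local overring of $A$ dominated by $v\cap K$ for any $v$ dominating $B_\gtq$ — and you have not supplied one.) Second, and more seriously, even granting $B_\gtq\cap K=A$, the step ``$v$ dominates $B_\gtq\cap K$, therefore $v$ extends to a valuation ring of $L$ dominating $B_\gtq$'' is precisely the nontrivial content you would need, and it is asserted rather than proved. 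A priori, extending $v$ to $L$ produces a valuation ring dominating $B$ at \emph{some} maximal ideal, but you must show that the map from extensions of $v$ to $\MaxSpec(B)$ (equivalently, $\MaxSpec(\Nr_L(v))\to\MaxSpec(B)$) is surjective — i.e.\ that $\gtq\Nr_L(v)$ is a proper ideal for every $\gtq$ — and $\Nr_L(v)$ is not integral over $B$, so going-up does not apply. You correctly flag this as the main obstacle, but it remains an obstacle in your write-up.

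The paper sidesteps all of this with a shorter mechanism: by \cite[$\rm IV_4$, 18.10.16(i)]{ega}, every \emph{finite} $L$-modification $X\to\Spec(A)$ has at most $n$ closed points; since $\Nr_L(\Spec(A))$ is the filtered projective limit of finite $L$-modifications (Lemma~\ref{Ypartlem}), it too has at most $n$ closed points, and the rest follows. Your alternative via the strict henselization is a legitimate and essentially equivalent route: using that $A^{sh}$ is a normal domain when $A$ is geometrically unibranch and normal, that normalization commutes with the ind-\'etale base change $A\to A^{sh}$, and that the integral closure of a henselian local ring in a finite field extension is local, one gets $B\otimes_A A^{sh}\cong\prod_j \Nr_{L_j}(A^{sh})$ with at most $[L:K]_s\le n$ factors, and faithful flatness of $A\to A^{sh}$ gives surjectivity onto $\MaxSpec(B)$. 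This is a genuinely different presentation of the bound than the paper's (henselization in place of finite modifications plus limits), but both ultimately rest on the same fact about henselian/unibranch local rings, and it handles the small-residue-field issue you rightly worry about. So: the route you emphasize as primary does not close, while the one you relegate to an ``alternative'' does.
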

\begin{proof}
By \cite[$\rm IV_4$, 18.10.16(i)]{ega} any finite $L$-modification $X\to Y:=\Spec(A)$ has at most $n$ closed points. Since $\Nr_L(Y)$ is the projective limit of finite $L$-modifications of $Y$, it has at most $n$ closed points too. In particular, $\Nr_L(A)$ is semi-local with at most $n$ maximal ideals, and it is clear that the birational fiber of $A$ is the disjoint union of the birational fibers of the closed points of $\Nr_L(Y)$, which are connected by Theorem \ref{Zarconth}.
\end{proof}

\subsection{Birational criterion of \'etaleness}\label{bircritsec}
We will prove a criterion for a morphism $f{\colon}Y\to X$ between normal integral schemes to be \'etale at a point. Similar results are proved in the thesis of D. Rydh, where he studied, in particular, families of zero cycles (some of these results are available at \cite{Rydh}). The classical criterion \cite[$\rm IV_4$, 18.10.16(ii)]{ega} does not cover our needs because it gives a criterion for $f$ to be finite \'etale, and so it is not local on $Y$. However, one can improve this criterion by working with henselizations or combining it with \cite[$\rm IV_4$, 18.12.1]{ega}.\footnote{The idea to use \cite[$\rm IV_4$, 18.12.1]{ega} is due to the referee, and it simplified and corrected some arguments from the first version of the paper.}

Recall that in \cite[$\rm IV_4$, 18.10.16]{ega}, to each point $y_i$ that is isolated in the fiber over a point $x\in X$ one associates the separable degree $n_i$ of $k(y_i)$ over $k(x)$, and for a separated $f$ the sum of all $n_i$'s equals to $n=[k(Y):k(X)]$ if and only if $f$ is finite \'etale over a neighborhood $U$ of $x$ (i.e. $f\times_XU$ is finite \'etale). If we want to work locally with $y_i$'s then we have to refine the numbers $n_i$ so that the multiplicity of ramification is taken into account.

\begin{defin}\label{degdef}
Assume that $f{\colon}Y\to X$ is a dominant nft morphism between integral schemes and assume that $X$ is unibranch at a point $x$ and $y\in Y$ is isolated in the fiber $f^{-1}(x)$. Note that $[k(Y):k(X)]<\infty$ because locally at $y$ $f$ is a composition of a partial normalization with a quasi-finite morphism. By the {\em henselian degree} $n_{y/x}$ of $f$ at $y$ we mean the minimal possible value of $\sum_{i=1}^m [k(Y'_i):k(X')]$, where $g{\colon}X'\to X$ is a morphism with an integral source and such that $g^{-1}(x)=\{x'\}$ and $g$ is strictly \'etale at $x'$, and $Y'_1\. Y'_m$ are the irreducible components of $Y\times_X X'$ containing the preimage of $y$.
\end{defin}

The following properties of henselian degrees are obvious.

\begin{rem}\label{hensrem}
(i) Instead of using strictly \'etale base changes one can use the henselization $X^h=\Spec(\calO^h_{X,x})$ of $X$ at $x$. Recall that $X^h$ is integral by \cite[$\rm IV_4$, 18.6.12]{ega}. If $Y_1^h\.Y_m^h$ are the irreducible components of $Y^h=Y\times_X X^h$ containing the preimage of $y$ then $n_{y/x}=\sum_{i=1}^m[k(Y_i^h):k(X^h)]$.

(ii) If $Y$ is unibranch at $y$ then $Y^h$ is unibranch at the preimage of $y$, hence $m=1$.

(iii) If $g{\colon}Y'\to Y$ is an integral morphism with $k(Y)\toisom k(Y')$ then $n_{y/x}=\sum_{y'\in g^{-1}(y)} n_{y'/x}$. In particular, the fiber of $g$ over $y$ contains at most $[k(Y):k(X)]$ points.
\end{rem}

In the sequel, we will use the notion of local-\'etaleness which is recalled in Appendix \ref{etalesec}.

\begin{theor}\label{etaleth}
Let $f{\colon}Y\to X$ be a dominant nft morphism between integral schemes, and assume that $X$ is unibranch at $x$. Let $y_1\. y_m$ be all isolated points of the fiber over $x$, $n_i=n_{y_i/x}$ and $n=[k(Y):k(X)]$, then

(i) Assume that $f$ is separated. Then $\sum_{i=1}^m n_i\le n$ and the equality holds if and only if $f$ is integral over a neighborhood of $x$.

(ii) Assume that $X$ is normal at $x$. Then $f$ is local-\'etale at $y_i$ if and only if $n_i$ equals to the separable degree of $k(y_i)$ over $k(x)$.

(iii) Assume that $X$ is normal in a neighborhood of $x$. Then $f$ is \'etale at $y_i$ if and only if $n_i$ equals to the separable degree of $k(y_i)$ over $k(x)$.

(iv) Assume that $X$ is normal at $x$ (resp. in a neighborhood of $x$). Then $f$ is strictly local-\'etale (resp. strictly \'etale) at $y_i$ if and only if $n_i=1$.
\end{theor}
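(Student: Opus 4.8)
The plan is to reduce everything to the case where the base is a henselian local scheme and then invoke the classical criteria of \cite[$\rm IV_4$, \S18]{ega}, bridging the gap to the nft setting with the limit techniques of \S\ref{normsec}. First I would replace $X$ by its henselization $X^h=\Spec(\calO^h_{X,x})$, which is again an integral local scheme by \cite[$\rm IV_4$, 18.6.12]{ega} (this is where the hypothesis that $X$ is unibranch at $x$ enters). By Remark \ref{hensrem}(i) the henselian degrees are unchanged and equal $n_i=\sum_j[k(Y^h_{ij}):k(X^h)]$, where the $Y^h_{ij}$ are the irreducible components of $Y^h:=Y\times_XX^h$ through the preimage of $y_i$; and the local conditions in (ii)--(iv), as well as integrality/\'etaleness after the further localization at $x$, may be tested after this base change, the "over a neighbourhood of $x$" clauses in (i) and (iii) being recovered afterwards by spreading out. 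Now $\calO^h_{X,x}$ is a filtered colimit of local rings of \'etale $X$-schemes, so $Y^h$ is reduced and flat over $Y$ and $k(X^h)/k(X)$ is separable; hence the generic fibre $Y^h\times_{X^h}\Spec k(X^h)=\Spec(k(Y)\otimes_{k(X)}k(X^h))$ is a \emph{reduced} $k(X^h)$-algebra of length $n=[k(Y):k(X)]$ (finite because $f$ is nft and some $y_i$ exists; if $m=0$ all assertions are vacuous). Consequently $n=\sum_\nu[k(W_\nu):k(X^h)]$, the sum ranging over the components $W_\nu$ of $Y^h$ dominating $X^h$; since the $Y^h_{ij}$ are among these and distinct $y_i$ contribute disjoint families of components, we obtain $\sum_i n_i\le n$, which is the inequality in (i).

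For the equality in (i) I would argue as follows. If $f$ is integral over a neighbourhood of $x$ then, being also nft, its finite-type part is finite there, so $Y^h$ is integral over the henselian local $X^h$; one checks that every component of $Y^h$ then dominates $X^h$ and meets the closed fibre in a single, necessarily isolated, point, so that it is one of the $Y^h_{ij}$ and equality holds. Conversely, assuming $f$ separated and $\sum_i n_i=n$, I would use Lemma \ref{partlem} to present the partial normalization $Y\to Y_0$ as a filtered projective limit of finite modifications and, via the additivity of henselian degrees under integral maps (Remark \ref{hensrem}(iii)), reduce to the case where $f$ is of finite type; over the henselian local base $X^h$, Zariski's main theorem and the structure theory of quasi-finite separated morphisms \cite[$\rm IV_4$, 18.5.11]{ega} peel off the maximal subscheme of $Y^h$ finite over $X^h$, and the degree equality forces it to exhaust $Y^h$, i.e.\ $f^h$ is finite; finiteness then descends to a Zariski neighbourhood of $x$ by standard limit arguments.

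For (ii)--(iv) one assumes $X$ normal at $x$, so $X^h$ is normal local. By Lemma \ref{smetlem}(i),(iv) an \'etale morphism onto a normal scheme has normal source and is compatible with the normalizations involved, so each $Y^h_{ij}$ is a normal integral scheme, nft over $X^h$; after a finite modification making it finite near the point over $y_i$, the finite-\'etale criterion \cite[$\rm IV_4$, 18.10.16]{ega} together with the local description of \'etale morphisms \cite[$\rm IV_4$, 18.12.1]{ega} shows that $Y^h_{ij}\to X^h$ is \'etale there precisely when $[k(Y^h_{ij}):k(X^h)]$ equals the separable degree of its residue extension. Summing over $j$ yields that $f$ is local-\'etale at $y_i$ (in the sense of Appendix \ref{etalesec}) if and only if $n_i=[k(y_i):k(x)]_{\rm sep}$, which is (ii); part (iv) is the subcase where the residue extension is moreover trivial, equivalently $n_i=1$. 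When $X$ is normal in a whole neighbourhood of $x$, \'etaleness (or strict \'etaleness) established at $y_i$ spreads to a Zariski neighbourhood of $y_i$, giving (iii) and the second half of (iv).

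The step I expect to be the main obstacle is the passage from the finite-type situation, where \cite[$\rm IV_4$, \S18]{ega} applies verbatim, to the genuinely non-noetherian and non-finite-type world of nft morphisms over valuation rings: one must re-run the quasi-finite/Zariski-main-theorem analysis compatibly with the projective limit presentations of \S\ref{normsec}, maintaining control of the additivity of henselian degrees and of the reducedness of $k(Y)\otimes_{k(X)}k(X^h)$ (which is precisely what makes the degree count work), and then carry out the spreading-out needed for the "over a neighbourhood of $x$" assertions in (i) and (iii).
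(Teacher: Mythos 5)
Your reduction-to-henselization plan shares the same skeleton as the paper's proof (henselize, invoke \cite[$\rm IV_4$, \S18]{ega}, spread back out), and your argument for (i) is sound, albeit stated differently: you establish $\sum n_i\le n$ by observing that $k(Y)\otimes_{k(X)}k(X^h)$ is a reduced Artinian $k(X^h)$-algebra of length $n$ and that distinct $y_i$ contribute disjoint families of dominating components of $Y^h$, whereas the paper reads all of this off directly from \cite[$\rm IV_4$, 18.5.11(c)]{ega} (each component of $Y^h$ meets the closed fibre in at most one isolated point). Note, by the way, that the disjointness you invoke itself follows from 18.5.11(c), so you have not avoided citing it.

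The argument for (ii)--(iv), however, contains a genuine error. You assert that ``each $Y^h_{ij}$ is a normal integral scheme'' and justify this by Lemma \ref{smetlem}(i),(iv). Those lemmas say that the source of an \emph{\'etale} morphism onto a normal scheme inherits normality --- but $Y^h\to X^h$ being \'etale at the points in question is precisely the conclusion you are trying to reach, so this is circular; and applying the lemmas to $Y^h\to Y$ is no better, since $Y$ is only assumed integral, not normal, so $Y^h$ and its components need not be normal. The paper sidesteps this: it never asserts normality of the source. Instead, it uses \cite[$\rm IV_4$, 18.12.1, 18.12.2]{ega} to replace $X$ by a \emph{finite-type} \'etale $X'\to X$, strictly \'etale at $x'$ over $x$, so that there is a single point $y'$ above $y$ and a neighbourhood $V'$ of $y'$ with $V'\to X'$ finite; normality of $X'$ at $x'$ (inherited from $X$ at $x$) is what the finite-\'etale criterion \cite[$\rm IV_4$, 18.10.16(ii)]{ega} requires of the \emph{base}, after which \'etaleness of $f$ at $y$ follows by descent. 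In the same vein, the step ``after a finite modification making it finite near the point over $y_i$'' and ``summing over $j$ yields that $f$ is local-\'etale at $y_i$'' do not connect: \'etaleness of individual components of the non-finitely-presented $Y^h$ does not directly give that $\calO_{X,x}\to\calO_{Y,y_i}$ is a localization of a finitely presented \'etale algebra, which is what local-\'etaleness demands; one genuinely needs a finite-type approximation of the henselization together with descent, exactly what the paper's use of 18.12.1--18.12.2 provides. Finally, the reduction of $f$ to finite type (via a finite modification of the finite-type part $Y_0$, using Remark \ref{hensrem}(iii) to preserve the degrees and normality of $X$ at $x$ to carry the local-\'etale/\'etale conclusion back to $Y$) is needed for \emph{all} parts of the theorem, not only for the converse of (i); without it the EGA citations do not apply, since $Y^h$ is not of finite type over $X^h$.
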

\begin{proof}
Choose an integral $Y_0$ of finite type over $X$ and such that $Y$ is its partial normalization. Then $Y$ is the projective limit of finite modifications of $Y_0$ by Lemma \ref{partlem}, hence there exists a finite modification $Y'\to Y_0$ with points $y'_1\. y'_m\in Y'$ which are discrete in the fiber over $x$ and are the images of $y_1\. y_m$. We claim that it suffices to prove the theorem for $Y'$ and the $y'_i$'s instead of $Y$ and the $y_i$'s. Indeed, $n_{y'_i/x}=n_{y_i/x}$ by Remark \ref{hensrem}(iii) and if $f'{\colon}Y'\to X$ is local-\'etale at $y'_i$ and $X$ is normal at $x$ then $Y'$ is normal at $y'_i$ and therefore the partial normalization $Y\to Y'$ induces isomorphisms $\Spec(\calO_{Y,y_i})\toisom\Spec(\calO_{Y',y'_i})$. Similarly, if $f'$ is \'etale at $y'_i$ and $X$ is normal in a neighborhood of $x$ then $Y'$ is normal in a neighborhood of $y'_i$ and $Y\to Y'$ is a local isomorphism at $y'_i$. Thus, in order to prove all parts of the theorem, we can replace $Y$ with $Y'$ achieving that $f$ is of finite type. In particular, $f$ is integral over a neighborhood of $x$ if and only if it is finite over that neighborhood.

To prove (i) we consider the henselization $X^h=\Spec(\calO^h_{X,x})$ with the closed point $x^h$ and the base change $f^h{\colon}Y^h\to X^h$ of $f$, and note that $f^h$ is finite if and only if $f$ is finite over a neighborhood of $x$. Furthermore, by \cite[$\rm IV_4$, 18.5.11(c)]{ega} any irreducible component of $Y^h$ contains at most one isolated point in the fiber $Y^h_x$ over $x^h$, hence $\sum n_i\le n$ and the equality takes place if and only if any irreducible component of $Y^h$ contains an isolated point from $Y^h_x$. So, it suffices to prove that $f^h$ is finite if and only if any irreducible component of $Y^h$ contains an isolated point from $Y^h_x$, but the latter is an immediate consequence of \cite[$\rm IV_4$, 18.5.11(c)]{ega}.

The direct implications in (ii), (iii) and (iv) are obvious, so let us prove the converse ones. Fix $i$ and $y=y_i$, and assume that $n_i$ equals to the separable degree of $k(y)$ over $k(x)$. According to \cite[$\rm IV_4$, 18.12.1 and 18.12.2]{ega} there is an \'etale morphism $g{\colon}X'\to X$, such that $g^{-1}(x)=\{x'\}$ and $g$ is strictly \'etale at $x'$, and an open neighborhood $V'$ of the unique point $y'\in Y\times_XX'$ above $y$ such that $V'\to X'$ is finite. Furthermore, $X'$ is normal at $x'$ because $X$ is normal at $x$, hence we can replace $X'$ with the irreducible component containing $x'$.

Recall that $k(V')=\prod_{j=1}^m k(V'_j)$ where $V'_j$ are the irreducible components of $V'$. So, $k(V')$ is a finite $k(X')$-algebra and we claim that $n_i=[k(V'):k(X')]$. Indeed, choose a morphism $g{\colon}X''\to X'$ that is strictly \'etale at a point $x''$ above $x'$ and computes $n_i$, see Definition \ref{degdef}. Clearly, we can replace $X''$ with $X''\times_XX'$ achieving that $X''\to X$ factors through $X'\to X$. Then $V''=V'\times_{X'}X''$ has unique point $y''$ above $x''$, and by finiteness of $V''\to X''$ any irreducible component of $V''$ contains $y''$. Therefore, $n_i=[k(V''):k(X'')]=[k(V'):k(X')]$ and by \cite[$\rm IV_4$, 18.10.16(ii)]{ega} we obtain that $V'\to X'$ is \'etale at $y'$. By \'etale descent, $f$ is \'etale at $y$ and we have proved both (ii) and (iii). Finally, (iv) follows from (ii) and (iii).
\end{proof}

Parts (ii)--(iv) of the theorem provide a local criterion of \'etaleness, but may look rather tautological because a direct computation of the degrees $n_{y/x}$ involves \'etale localization, so at first glance we say that a morphism is \'etale if it so \'etale-locally. However, the situation is subtler since one can gain some control on the degrees by other methods, and part (i) of the theorem gives such an example. We will see that one can test the degree by restricting the computation to a single valuation ring.

\begin{lem}\label{lem1}
Let $f{\colon}Y\to X$ and $g{\colon}X'\to X$ be dominant morphisms between integral schemes, and assume that $f$ is nft and $g$ induces an isomorphism of the generic points. Let $x'\in X'$ be a point such that $X'$ and $X$ are unibranch at $x'$ and $x=g(x')$, respectively. If $y\in Y$ is an isolated point of the fiber over $x$, and $y'_1\. y'_m$ are all points of $Y'=\Nr_{k(Y)}(Y\times_X X')$ sitting over $y$ and $x'$, then $n_{y/x}=\sum_{i=1}^m n_{y'_i/x'}$.
\end{lem}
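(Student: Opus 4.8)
The plan is to reduce to the case where $f$ is finite near $x$, then express both $n_{y/x}$ and $n_{y'_i/x'}$ as dimensions of total rings of fractions of finite algebras over henselian local rings, and finally match them up using that the unibranch hypotheses force the relevant henselizations to be integral domains. First I would argue, exactly as in the proof of Theorem \ref{etaleth}, that we may assume $f$ is of finite type (choose $Y_0$ of finite type over $X$ with $Y$ a partial normalization of it, use Lemma \ref{partlem} and Remark \ref{hensrem}(iii) to pass to a large enough finite modification) and then, since everything is local near $y$ and near the finitely many points of $Y\times_X X'$ over $y$ and $x'$, invoke Zariski's main theorem to assume $f$ is finite over a neighbourhood of $x$; these replacements change neither $n_{y/x}$ nor $Y'=\Nr_{k(Y)}(Y\times_X X')$ near the relevant points. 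Next, let $\tilde V$ be the unique irreducible component of $Y\times_X X'$ dominating $X'$: its generic point is the canonical $k(Y)$-point because $\tilde V_{\eta(X')}\toisom Y\times_X\eta(X)\toisom\Spec k(Y)$ by birationality of $g$; thus $\tilde V$ is integral, finite over $X'$, with $k(\tilde V)=k(Y)$, and $Y'=\Nr(\tilde V)$. Applying Remark \ref{hensrem}(iii) to the partial normalization $Y'\to\tilde V$ and letting $z_1\.z_r$ be the points of $\tilde V$ over $y$ and $x'$, one gets $\sum_i n_{y'_i/x'}=\sum_l n_{z_l/x'}$, so it suffices to prove $n_{y/x}=\sum_l n_{z_l/x'}$.

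Set $X^h=\Spec\calO^h_{X,x}$ and $X'^h=\Spec\calO^h_{X',x'}$. Since $X$ and $X'$ are unibranch at $x$ and $x'$, the rings $\calO^h_{X,x}$ and $\calO^h_{X',x'}$ are integral domains by \cite[$\rm IV_4$, 18.6.12]{ega}, with fraction fields $k(X^h)$ and $k(X'^h)$, and one checks that $\calO^h_{X,x}\otimes_{\calO_{X,x}}k(X)=k(X^h)$. I would next identify $\calO^h_{X',x'}$, via transitivity of henselization, with the henselization of $\calO_{X^h\times_X X',\tilde x'}$ at the unique point $\tilde x'$ lying over $x'$ and the closed point of $X^h$ — uniqueness because the fibre of $X^h\times_X X'\to X'$ over $x'$ is $\Spec k(x')$ — and observe that $X^h\times_X X'$ is integral with function field $k(X^h)$: it is reduced (base change of the reduced $X'$ along the pro-étale $X^h\to X$) and every component dominates $X'$ by flatness of $X^h\to X$, hence dominates $X^h$ as well, since $(X^h\times_X X')\times_{X^h}\eta(X^h)=\Spec k(X^h)$. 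Consequently $\calO^h_{X,x}\hookrightarrow\calO^h_{X',x'}$ is an injective local homomorphism inducing a separable algebraic field extension $k(X^h)\subseteq k(X'^h)$ whose structure map from $\calO^h_{X,x}$ factors through $k(X^h)$.

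Now the computation. The base change $Y\times_X X^h$ is finite over the henselian local ring $\calO^h_{X,x}$, hence a finite product of henselian local rings; its factor $\calO^h_{Y,y}$ at the unique point $y^h$ over $y$ is a finite $\calO^h_{X,x}$-algebra, and by Remark \ref{hensrem}(i) one has $n_{y/x}=\dim_{k(X^h)}\bigl(\calO^h_{Y,y}\otimes_{\calO^h_{X,x}}k(X^h)\bigr)$, the dimension of its total ring of fractions; likewise $n_{z_l/x'}=\dim_{k(X'^h)}\bigl(\calO^h_{\tilde V,z_l}\otimes_{\calO^h_{X',x'}}k(X'^h)\bigr)$, where $\calO^h_{\tilde V,z_l}$ is the local factor of $\tilde V\times_{X'}X'^h$ at the point over $z_l$. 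Using $Y\times_X X'^h=(Y\times_X X^h)\times_{X^h}X'^h$ and that $\tilde V$ carries the whole generic fibre of $Y\times_X X'$ over $X'$, the product $\prod_l\calO^h_{\tilde V,z_l}$ is precisely the part of $Y\times_X X'^h$ over $y^h$, namely $\calO^h_{Y,y}\otimes_{\calO^h_{X,x}}\calO^h_{X',x'}$; passing to total rings of fractions and using $\calO^h_{Y,y}\otimes_{\calO^h_{X,x}}k(X'^h)=\bigl(\calO^h_{Y,y}\otimes_{\calO^h_{X,x}}k(X^h)\bigr)\otimes_{k(X^h)}k(X'^h)$ together with the fact that a field extension of the base does not change the dimension, one obtains $\sum_l n_{z_l/x'}=\dim_{k(X^h)}\bigl(\calO^h_{Y,y}\otimes_{\calO^h_{X,x}}k(X^h)\bigr)=n_{y/x}$.

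The step I expect to be the main obstacle is the identification in the last paragraph of $\prod_l\calO^h_{\tilde V,z_l}$ with $\calO^h_{Y,y}\otimes_{\calO^h_{X,x}}\calO^h_{X',x'}$ — that is, checking that the connected components of $Y\times_X X'^h$ lying over $y^h$ are exactly those coming from the dominant component $\tilde V$, with none lost or gained. This is delicate precisely because $g$ is not assumed flat, so $Y\times_X X'$ may acquire components not dominating $X'$, and because $X'$, hence $\calO^h_{X',x'}$, need be neither noetherian nor excellent — so normalizations need not be finite and base change to a henselization does not commute with localization; handling this will require a careful bookkeeping of which points lie over $x'$ and over $y$, relying on the henselization compatibilities established in the second paragraph.
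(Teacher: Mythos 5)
Your approach is genuinely different from the paper's, and it is essentially sound. The paper avoids the henselization bookkeeping by a reduction: after replacing $Y$ with $\Nr(Y)$ (harmless by Remark \ref{hensrem}(iii)), it produces a morphism $h\colon\oX\to X$ with $\oX$ integral, $h^{-1}(x)=\{\ox\}$ strictly \'etale at $\ox$, and an open $\oY\subseteq Y\times_X\oX$ around the preimage $\oy$ of $y$ with $\oY\to\oX$ finite (via a limit argument and \cite[$\rm IV_4$, 18.12.1 and 18.12.2]{ega}); then $\oY$ is irreducible by Remark \ref{hensrem}(ii), so the ``easy case'' $n_{\oy/\ox}=[k(\oY):k(\oX)]$ from Theorem \ref{etaleth}(i) applies directly, and since the projections to $X$, $X'$ and $Y$ are all strictly \'etale at the relevant points, the henselian degrees downstairs match those upstairs. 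You instead compute both sides directly as dimensions of total rings of fractions over the henselizations, trading the étale reduction for a base-change computation; that is a legitimate alternative, more explicit but requiring more care.

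The step you flag is a real gap as literally stated, but it is patchable and your instincts about where the danger lies are correct. The ring equality $\prod_l\calO^h_{\tilde V,z_l}=\calO^h_{Y,y}\otimes_{\calO^h_{X,x}}\calO^h_{X',x'}$ need not hold: $Y\times_X X'$ may have irreducible components other than $\tilde V$ passing through points over $y$ and $x'$, contributing factors of the right-hand side whose closed point lies off $\tilde V\times_{X'}X'^h$; and even at a $z_l$, the natural surjection $\calO^h_{Y\times_X X',z_l}\twoheadrightarrow\calO^h_{\tilde V,z_l}$ can have a nonzero kernel. What you actually use, however, is only the identification after $\otimes_{\calO^h_{X',x'}}k(X'^h)$, and that is correct and should be proved at that level: a factor of $Y\times_X X'^h$ over $y^h$ whose closed point is not on $\tilde V\times_{X'}X'^h$ has all of its minimal primes lying over non-dominant components of $Y\times_X X'$ (flatness of $X'^h\to X'$), hence vanishes after localizing at $\eta(X'^h)$; and the kernel of $\calO^h_{Y\times_X X',z_l}\to\calO^h_{\tilde V,z_l}$ is generated by the minimal prime $\gtp=\ker(\calO_Y\otimes_{\calO_X}\calO_{X'}\to k(Y))$, which already dies in the generic fiber over $X'$. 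You should also state explicitly that $\calO^h_{Y,y}$ and $\calO^h_{\tilde V,z_l}$ are reduced and that every minimal prime dominates the base (both hold because they arise by pro-\'etale base change from integral schemes), so that localizing at $\eta$ really produces $\prod_i k(Y^h_i)$ as Remark \ref{hensrem}(i) requires. One further imprecision: Zariski's Main Theorem alone does not let you assume $f$ is finite over a Zariski neighbourhood of $x$ --- that needs an \'etale base change as in \cite[$\rm IV_4$, 18.12.1]{ega} --- but since you henselize $X$ at $x$ in any case, you may skip this reduction altogether and invoke \cite[$\rm IV_4$, 18.5.11]{ega} to see directly that $\calO^h_{Y,y}$ is finite over $\calO^h_{X,x}$.
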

\begin{proof}
Note that if $y$ is the only preimage of $x$ and $f$ is integral over a neighborhood of $x$ then $Y'\to X'$ is integral over a neighborhood of $x'$ and the lemma follows from Theorem \ref{etaleth}(i) because $n_{y/x}=[k(Y):k(X)]=[k(Y'):k(X')]=\sum_{i=1}^m n_{y'_i/x'}$. We will reduce the general case to the above one by performing an \'etale base change.

Since the morphism $Y'\to Y$ factors through $\Nr(Y)$, it follows from Remark \ref{hensrem}(iii) that it suffices to prove the lemma for $\Nr(Y)$ and all preimages of $y$ instead of $Y$ and $y$. Thus, we can assume that $Y$ is normal. We claim that there exist an \'etale morphism $h{\colon}\oX\to X$ such that $\oX$ is integral, $h^{-1}(x)=\{\ox\}$, and $h$ is strictly \'etale at $\ox$, and a neighborhood $\oY$ of the preimage $\oy\in Y\times_X\oX$ of $y$ such that the morphism
$\oY\to\oX$ is integral. Indeed, $Y$ can be realized as the limit of $X$-schemes $Y_\alp$ of finite type so that $Y$ is integral over each $Y_\alp$. Since $y$ is isolated in the fiber over $x$, the same is true for its image $z$ in $Z=Y_\alp$ for a large enough $\alp$. Then by \cite[$\rm IV_4$, 18.12.1 and 18.12.2]{ega} there exists a strictly \'etale (over $x$) morphism $h\:\oX\to X$ and a neighborhood $\oZ$ of the preimage $\oz\in Z\times_X\oX$ of $z$ such that $\oZ\to Z$ is finite. So we can take this $h$ and set $\oY=\oZ\times_ZY$.

Note that $\oY$ is irreducible by Remark \ref{hensrem}(ii), hence it is integral and $n_{y/x}=n_{\oy/\ox}=[k(\oY):k(\oX)]$. Set $\oX'=\oX\times_X X'$ and let $\ox'$ be the preimage of $x'$, then it suffices to prove the lemma for the morphisms $\oY\to\oX$ and $\oX'\to\oX$ with points $\ox,\ox'$ and $\oy$ instead of the original data because the projections $\oX\to X$, $\oX'\to X'$ and $\oY\to Y$ are strictly \'etale at $\ox,\ox'$ and $\oy$, and hence $\oY'=\Nr_{k(\oY)}(\oY\times_\oX\oX')$ is strictly \'etale over $Y'$ at the preimage $\oy'_i$ of $y'_i$, and the matching henselian degrees are equal: $n_{y'_i/x'}=n_{\oy'_i/\ox'}$. It remains to recall that as we noted in the beginning of the proof, the case of $\oY,\oX$ and $\oX'$ follows from Theorem \ref{etaleth}(i).
\end{proof}

The lemma will be used to show that Theorem \ref{etaleth} admits the following refinement where the degrees do not appear.

\begin{theor}\label{etaleprop}
Let $f{\colon}Y\to X$ be a dominant nft morphism between integral schemes.
Let $y\in Y$ and $x=f(y)$ and assume that $y$ is isolated in its fiber. Consider the following:

(i) $f$ is strictly \'etale at $y$.

(ii) $f$ is strictly local-\'etale at $y$.

(iii) $f_y^\bir{\colon}Y_y^\bir\to X_x^\bir$ is bijective and $\calO\to\calO'$ is strictly local-\'etale for any $\calO\in X_x^\bir$ with preimage $\calO'\in Y_y^\bir$.

(iv) There exists $\calO\in X_x^\bir$ such that $(f_y^\bir)^{-1}(\calO)=\{\calO'\}$ and
$\calO\to\calO'$ is strictly local-\'etale.

Then (i)$\implies$(ii)$\implies$(iii)$\implies$(iv). If $X$ is unibranch at $x$ then (iv)$\implies$(iii). Furthermore, if $X$ is normal at $x$ then (iv)$\implies$(ii), and if $X$ is normal in a neighborhood of $x$ then (iv)$\implies$(i).
\end{theor}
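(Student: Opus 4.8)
The plan is to reduce everything to Theorem \ref{etaleth} and Lemma \ref{lem1}. The implications (i)$\Rightarrow$(ii) and (iii)$\Rightarrow$(iv) are formal: the first is immediate from the definition of (strict) local-\'etaleness recalled in Appendix \ref{etalesec} (restricting an \'etale morphism to the local ring at a point), and the second holds because $X_x^\bir\neq\emptyset$, every local subring of a field being dominated by a valuation ring. Likewise, it follows from Appendix \ref{etalesec} (or from Definition \ref{degdef} together with Remark \ref{hensrem}(i)) that condition (ii) forces $X$ to be unibranch at $x$ and the henselian degree to satisfy $n_{y/x}=1$. Granting this, the heart of the matter is to prove (iii) from (iv) when $X$ is unibranch at $x$, and to prove (iii) from (ii) (equivalently, from ``$X$ unibranch at $x$ and $n_{y/x}=1$''): once (iii) is known one has $n_{y/x}=1$ (this drops out of the argument below), and then Theorem \ref{etaleth}(iv) gives directly that $f$ is strictly local-\'etale at $y$ if $X$ is normal at $x$, and strictly \'etale at $y$ if $X$ is normal in a neighbourhood of $x$, which settles the remaining two implications.

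So fix $\calO\in X_x^\bir$, write $m_\calO$ for the closed point of $\Spec(\calO)$, let $g\colon\Spec(\calO)\to X$ be the canonical morphism, and set $Y'=\Nr_{k(Y)}(Y\times_X\Spec(\calO))$. Since $g$ is birational and $f$ is dominant, $Y\times_X\Spec(\calO)$ has a unique component dominating $\Spec(k(Y))$, so $Y'$ is its normalization: a normal integral scheme with function field $k(Y)$. As $\calO$ is a valuation ring it is unibranch at $m_\calO$, so Lemma \ref{lem1} applies to $g$ and gives
$$n_{y/x}=\sum_{i=1}^m n_{y'_i/m_\calO},$$
where $y'_1,\dots,y'_m$ are the closed points of $Y'$ lying over $y\in Y$ and $m_\calO\in\Spec(\calO)$. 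The geometric input is the identification $(f_y^\bir)^{-1}(\calO)=\coprod_{i=1}^m (Y')^\bir_{y'_i}$ of the fibre of $f_y^\bir$ over $\calO$ with the disjoint union of the birational fibres of the $y'_i$: any valuation ring $\calO''$ of $k(Y)$ with $\calO''\cap K=\calO$ automatically dominates $\calO$ (otherwise $m_{\calO''}\cap\calO$ would be a prime strictly smaller than $m_\calO$, which is impossible), and hence, dominating also $\calO_{Y,y}$, it dominates the local ring of $Y\times_X\Spec(\calO)$ at a point over $(y,m_\calO)$, so it dominates one of the $y'_i$; conversely the birational fibre of $y'_i$ consists of valuation rings dominating $\calO_{Y',y'_i}$, which dominates both $\calO_{Y,y}$ and $\calO$, so it lies in $(f_y^\bir)^{-1}(\calO)$. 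Note also $m\ge 1$, since the fibre of $Y\times_X\Spec(\calO)$ over $m_\calO$ dominates $f^{-1}(x)\ni y$.

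Now deduce (iii) from (iv). Starting with the single $\calO$ supplied by (iv), the identification gives $\coprod_i(Y')^\bir_{y'_i}=\{\calO'\}$, hence $m=1$ and $(Y')^\bir_{y'_1}=\{\calO'\}$; since $Y'$ is normal, $\calO_{Y',y'_1}$ is the intersection of the valuation rings dominating it (\cite[Ch.6, \S1, Th. 3]{Bou}), so $\calO_{Y',y'_1}=\calO'$ and $\calO\to\calO'=\calO_{Y',y'_1}$ is strictly local-\'etale; applying Theorem \ref{etaleth}(iv) over the everywhere-normal scheme $\Spec(\calO)$ yields $n_{y'_1/m_\calO}=1$, whence $n_{y/x}=1$ by Lemma \ref{lem1}. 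Thus under either (ii) or (iv) we now know $n_{y/x}=1$, and for an arbitrary $\calO_1\in X_x^\bir$ the same computation gives $1=n_{y/x}=\sum_i n_{y'_i/m_{\calO_1}}$; since $m\ge 1$ and every term is $\ge 1$ this forces $m=1$ and $n_{y'_1/m_{\calO_1}}=1$, so Theorem \ref{etaleth}(iv) makes $\calO_1\to\calO_{Y',y'_1}$ strictly local-\'etale. Finally $\calO_{Y',y'_1}$ is a valuation ring: strict local-\'etaleness over the valuation ring $\calO_1$ induces an isomorphism $\calO_1^h\cong\calO_{Y',y'_1}^h$ (Appendix \ref{etalesec}), the henselization of a valuation ring is again a valuation ring, and $\calO_{Y',y'_1}=\calO_{Y',y'_1}^h\cap k(Y)$, so $\calO_{Y',y'_1}$ is a valuation ring $\calO'_1$ of $k(Y)$ — necessarily the unique element of its own birational fibre, i.e. of $(f_y^\bir)^{-1}(\calO_1)$. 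Hence $f_y^\bir$ is bijective and each $\calO_1\to\calO'_1$ is strictly local-\'etale, which is precisely (iii).

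The main obstacle is the ``spreading out'' carried out in the last paragraph: one must pin down the single numerical invariant $n_{y/x}$ by testing against one valuation ring, and then exploit its constancy to control $f_y^\bir$ over \emph{every} valuation ring simultaneously. The two delicate points are (a) the bookkeeping identifying $(f_y^\bir)^{-1}(\calO)$ with $\coprod_i(Y')^\bir_{y'_i}$, which rests on the fact that a valuation ring of $k(Y)$ restricting to $\calO$ on $K$ must dominate $\calO$ together with the normality of $Y'$, and (b) the passage from ``henselian degree $1$ over a valuation ring'' to ``$\calO_{Y',y'_1}$ is itself a valuation ring'', which uses that henselization preserves the property of being a valuation ring.
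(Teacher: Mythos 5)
Your treatment of the implications around (iv) is essentially sound and runs parallel to the paper's, with one genuine deviation: to show that $\calO_{Y',y'_1}$ is a valuation ring for an arbitrary $\calO_1\in X_x^\bir$, you invoke the henselization isomorphism $\calO_1^h\toisom\calO_{Y',y'_1}^h$ coming from strict local-\'etaleness, together with the facts that the henselization of a valuation ring is a valuation ring and that $\calO_{Y',y'_1}=\calO_{Y',y'_1}^h\cap k(Y)$. The paper instead establishes upfront, for \emph{every} $\calO$ simultaneously and prior to any degree count, that the $\calO_{Y',y'_i}$ are valuation rings by a Pr\"ufer-ring argument (the semi-local ring $\Nr_{k(Y)}(\calO)$ is Pr\"ufer, so the subring generated by it and $\calO_{Y,y}$ is a localization of it). Both routes work; yours trades the Pr\"ufer structure for a trip through henselizations.

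The real problem is the (ii)$\implies$(iii) step. You claim that ``condition (ii) forces $X$ to be unibranch at $x$ and the henselian degree to satisfy $n_{y/x}=1$,'' and you treat (ii) as equivalent to this. That claim is false, and the implication (ii)$\implies$(iii) is asserted in the theorem \emph{unconditionally}, so the gap is fatal for that direction. Strict local-\'etaleness at $y$ gives $\calO_{X,x}^h\toisom\calO_{Y,y}^h$, but $Y$ being integral does not force $Y$ (hence $X$) to be unibranch at $y$: take $X=Y$ to be a nodal cubic, $f=\id$, and $y=x$ the node. Then (ii) holds trivially, $X$ is not unibranch at $x$, and $n_{y/x}$ is not even defined, so your argument has nothing to run on, even though (iii) does hold in that example. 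The paper proves (ii)$\implies$(iii) by a short direct argument that avoids henselian degrees entirely: shrink $X$ so that $f^{-1}(x)=\{y\}$, base-change to $X'=\Spec(\calO)$, observe that $f'^{-1}(x')=\{y'\}$ is a single point (using the residue field isomorphism), that $\calO':=\calO_{Y\times_XX',y'}$ is local-\'etale over the valuation ring $\calO$ and hence itself a valuation ring lying in $(f_y^\bir)^{-1}(\calO)$, and that any other $\calO''\in(f_y^\bir)^{-1}(\calO)$ must factor through $\Spec(\calO')$ and therefore coincide with $\calO'$. You need something along these lines (or another route that does not presuppose unibranchness) to close the gap in the chain (i)$\implies$(ii)$\implies$(iii)$\implies$(iv).
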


Let us make a side remark on this result before going to the proof.

\begin{rem}
(i) Implication (ii)$\implies$(iii) is simple and intuitive. It would be natural to expect that the converse is true under mild restrictions (e.g. $X$ is unibranch at $x$). However, the real point of the theorem is the implication (iv)$\implies$(iii) showing that instead of checking the whole birational fiber $f_y^\bir$, it suffices to test its single (!) element. Slightly more generally, we will see in the proof that if $y$ is discrete in $f^{-1}(x)$ and $X_x^\bir$ is connected (i.e. $X$ is unibranch at $x$) then for any $\calO\in X_x^\bir$ the sum of (naturally defined) henselian degrees $n_{\calO_i/\calO}$ over $\calO_i\in(f_y^\bir)^{-1}(\calO)$ is constant (i.e. does not depend on the choice of $\calO$ in $X_x^\bir$).

(ii) Another subtle point of the theorem is that we do not make any finite presentation assumption. In general, a local-\'etale morphism of finite type does not have to be \'etale, see appendix \ref{etalesec}. However, such implication does hold whenever the target is integral, see Proposition \ref{localetale}. This allows us to obtain the implication (iv)$\implies$(i) for nft morphisms.
\end{rem}

\begin{proof}
The implication (i)$\implies$(ii) and (iii)$\implies$(iv) are obvious. To prove that (ii)$\implies$(iii) we assume that $f$ is strictly local-\'etale at $y$. Shrinking $X$ we can also assume that $f^{-1}(x)=\{y\}$. Fix an element $\calO\in X_x^\bir$ and set $X'=\Spec(\calO)$. Let $x'\in X'$ be the closed point and let $f'{\colon}Y'=Y\times_XX'\to X'$ be the base change morphism. Then $f'^{-1}(x')=\{y'\}$ and $f'$ is strictly local-\'etale at $y'$. As $\calO'=\calO_{Y',y'}$ is local-\'etale over the valuation ring $\calO$, it is itself a valuation ring and we obtain that $\calO'\in(f_y^\bir)^{-1}(\calO)$. It remains to show that any other valuation ring $\calO''\in(f_y^\bir)^{-1}(\calO)$ coincides with $\calO'$. The morphism $\Spec(\calO'')\to Y\times_XX'$ sends the closed point to $y'$ and hence factors through $\Spec(\calO')$. Thus, $\calO''$ contains $\calO'$ and hence is a localization of $\calO'$. But $\Spec(\calO')$ has the unique point $y'$ above $x'$ and this implies that $\calO''=\calO'$.

Assume, now, that $X$ is unibranch at $x$. Fix $\calO\in X_x^\bir$ and apply Lemma \ref{lem1} with $X'=\Spec(\calO)$ to compute $n_{y/x}$. We obtain that $n_{y/x}=\sum_{i=1}^m n_{y'_i/x'}$ where $y'_1\. y'_m$ are the points of $Y':=\Nr_{k(Y)}(Y\times_X X')$ that sit above $y$ and the closed point $x'\in X'$. We claim that $\calO'_i:=\calO_{Y',y'_i}$ are valuation rings and $\{\calO'_1\.\calO'_m\}=(f_y^\bir)^{-1}(\calO)$.

Since $y$ is discrete in the fiber, $k(Y)/k(X)$ is finite and therefore $C=\Nr_{k(Y)}(\calO)$ is a semi-local ring whose localizations are the valuation rings of $k(Y)$ that contain $\calO$. In particular, $C$ is a Pr\"ufer ring, see \cite[Ch. VII, \S2, Exercise 12]{Bou}. Any $C$-subring of $k(Y)$ is a localization of $C$ (possibly infinite), hence the ring $B=C\calO_{Y,y}$ generated by $C$ and $\calO_{Y,y}$ is a localization of $C$. In particular, $B$ is integrally closed and hence coincides with $\Nr_{k(Y)}(\calO\calO_{Y,y})$, and therefore $$\Spec(B)=\Nr_{k(Y)}(\Spec(\calO_{Y,y})\times_XX')=\Spec(\calO_{Y,y})\times_YY'$$ is a localization of $Y'$ that contains all the $y'_i$'s. It remains to recall that the local rings of the preimages of $x'$ in $\Spec(C)$ are exactly the valuation rings of $k(Y)$ that extend $\calO$, and hence any preimage $y'\in\Spec(B)$ of $x'$ corresponds to a valuation ring of $k(Y)$ that extends $\calO$ and contains $\calO_{Y,y}$, i.e. to an element of $(f_y^\bir)^{-1}(\calO)$.

Now, we can sum over the elements of $(f_y^\bir)^{-1}(\calO)$ to compute $n_{y/x}$. If (iv) is satisfied then the fiber consists of a single element $y'_1$ and $n_{y'_1/x'}=1$ because $\calO'\subseteq\calO^h$. Therefore $n_{y/x}=1$ and the same argument shows that any other fiber $(f_y^\bir)^{-1}(A)$ for $A\in X_x^\bir$ is of the form $\{A'\}$ with strictly local-\'etale $A'/A$ (note that by \cite[2.1.6,2.1.7]{temst}, $\calO'/\calO$ is strictly \'etale when the height of $\calO$ is finite, but this does not have to be true in general). This proves the implication (iv)$\implies$(iii).

If $X$ is normal at $x$ then, as we saw, $n_{y/x}=1$ and by Theorem \ref{etaleth} (iv), $f$ is strictly local-\'etale at $y$. Finally, if $X$ is normal in a neighborhood of $x$ then
Theorem \ref{etaleth} (iv) implies that $f$ is strictly \'etale at $x$.
\end{proof}

\subsection{Analytic generic fiber}\label{agfsec}
Until the end of \S\ref{anfibsec} we assume that $k$ is a valued field of height one with a non-zero element $\pi\in\kcirccirc$ and completion $\hatk$. We set also $\eta=\Spec(k)\into S=\Spec(\kcirc)$, $s=\Spec(\tilk)=S\setminus\eta$ and $\gtS=\Spf(\hatkcirc)$. For any $S$-scheme $X$ its {\em generic fiber} is defined as $X_\eta=X\times_S\eta$, and by the {\em closed fiber} $X_s$ we mean the preimage of $s$ with the reduced scheme structure. Caution: $X_s$ is not the schematic fiber over $s$ but its reduction. The $(\pi)$-adic formal completion of $X$ will be denoted $\gtX$; it is a formal $\gtS$-scheme with the closed fiber $\gtX_s\toisom X_s$. If $X$ is of finite type/presentation over $S$ then so is $\gtX$ over $\gtS$.

In this section and in \S\ref{chaptwo} we will work with $\hatk$-analytic spaces introduced by Berkovich. Almost all our results hold for general analytic spaces as introduced in \cite{berihes}, but to make the reading of the paper simpler we will mainly work with good analytic spaces introduced in \cite{berbook}. These are analytic spaces in which each point possesses an affinoid neighborhood. If not said to the contrary, it will be automatically
assumed that the spaces are good and strictly analytic. In particular, these analytic spaces correspond to rigid analytic spaces. We will make a heavy use of non-rigid points however. Sometimes, we will remark that our results hold more generally without goodness and/or strict analyticity assumption, but (up to one explicitly mentioned exception) these notes will not be used later and can be ignored by the reader.

Let us recall some terminology. For a $\hatk$-analytic space $Y$ with a point $y$ by $\calO_{Y,y}$ we denote the local ring of $Y$ at $y$ (it behaves reasonably well because $Y$ is good), by $\kappa(y)$ we denote the residue field $\Frac(\calO_{Y,y}/m_y)$ and by $\calH(y)$ we denote the completed residue field $\wh{\kappa(y)}$. For any formal scheme $\gtX$ of finite presentation over $\gtS$, Berkovich defined in \cite[\S1]{berform} its generic fiber $\gtX_\eta$ as a compact (not necessarily good) strictly $\hatk$-analytic space (note that $\eta$ is only a formal part of notation here). In particular, if $\gtX=\Spf(A)$ is affine then $\gtX_\eta=\calM(\calA)$ is affinoid with $\calA=A_\pi\toisom A\otimes_{\hatkcirc}\hatk$. Also, Berkovich defined an anti-continuous reduction map $\pi_\gtX{\colon}\gtX_\eta\to\gtX_s$ in the sense that preimages of open sets are closed and vice versa (recall that affinoid domains are closed in analytic geometry). In particular, to any $X$ of finite presentation over $S$ we can functorially associate its analytic generic fiber $\gtX_\eta$ with the reduction map $\pi_X{\colon}\gtX_\eta\to\gtX_s\toisom X_s$: complete $X$ and take the generic fiber of $\gtX=\hatX$. Moreover, this construction works for any $X$ of finite type because $\calA=\hatA_\pi=(\hatA/I)_\pi$, where $I$ is the $\pi$-torsion ideal, but $\hatA/I$ is of topologically finite presentation over $\hatkcirc$ by \cite[1.1(c)]{BL}, and so $\calA$ is a $k$-affinoid algebra.

\begin{rem}\label{genfibaffrem}
(i) One can give a more explicit description of the analytic generic fiber as follows. If $A$ is a finitely presented $\kcirc$-algebra then $A$ is of the form $\kcirc[T_1\. T_m]/(f_1\. f_n)$, hence we have that $\hatA=\hatkcirc\{T_1\. T_m\}/(f_1\. f_n)$ and $\hatA_\pi= \hatk\{T_1\. T_m\}/(f_1\. f_n)$. In particular, for $X=\Spec(A)$ the analytic generic fiber $\gtX_\eta=\calM(\hatA_\pi)$ is the affinoid (perhaps empty) domain given by the conditions $|T_i|\le 1$ in the analytification of the $\hatk$-scheme $X_\eta\otimes_k\hatk =\Spec(\hatk[T_1\. T_m]/(f_1\. f_n))$. We refer the reader to \cite[\S3.4]{berbook} for the definition of this analytification $(X_\eta\otimes_k\hatk)^\an$.

(ii) Using the above description we can describe the kernel $I=\cap_{n=0}^\infty\pi^nA$ of the completion homomorphism $A\to\hatA$ when $A$ is reduced and $\kcirc$-flat. We claim that $I$ consists of the functions vanishing on all irreducible components of $X=\Spec(A)$ with non-empty closed fiber, so $X'=\Spec(A/I)$ is obtained by removing from $X$ all irreducible components with empty closed fiber. The claim easily reduces to the following: if $A$ is integral and $\kcirc$-flat and $X_s$ is not empty then $I=0$. Note that $\gtX_\eta$ is not empty because $\gtX$ is $\hatkcirc$-flat and non-empty. Since $\hatA=\wh{A/I}$, we obtain that $\gtX_\eta$ is a non-empty affinoid domain in the analytifications of both $X_\eta\otimes_k\hatk$ and $X'_\eta\otimes_k\hatk$. Hence the latter are of equal dimensions, and it follows that $I=0$.
\end{rem}

In the sequel, we will mainly be interested in the case of an $\eta$-normal $X$. Since $\eta$-normalization can take us outside of the category of $S$-schemes of finite type, we have to extend the construction of the analytic generic fiber to $\eta$-nft $S$-schemes. The following result implies, in particular, that when working with reduced flat $S$-schemes we do not have to distinguish between nft, $\eta$-nft and $\eta$-nfp $S$-schemes.

\begin{lem}\label{fintypelem}
Let $X$ be an $S$-scheme then

(i) If $X$ is $\eta$-nft then it is $S$-flat and $\eta$-nfp.

(ii) Assume that $X$ is reduced. Then $X$ is nft if and only if it is $\eta$-nft.
\end{lem}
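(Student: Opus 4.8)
The plan is to reduce both assertions to ring theory over the valuation ring $\kcirc$, using repeatedly that a $\kcirc$-module is flat if and only if it is torsion free and that $(\calO_{X_0})_\eta=\calO_{X_0}\otimes_\kcirc k=S^{-1}\calO_{X_0}$ with $S=\kcirc\setminus\{0\}$. For (i), $S$-flatness is immediate: a partial $\eta$-normalization kills $\calO_S$-torsion by Remark \ref{Yrem1}(i), so $\calO_X$ is $\kcirc$-torsion free, hence flat. For the $\eta$-nfp claim I would work affine-locally. Writing $X\to X_0\to S$ with $X\to X_0$ a partial $\eta$-normalization and $X_0=\Spec(B)$, $B=\kcirc[T_1\. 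T_m]/\gta$ of finite type, the ideal $\gta\otimes_\kcirc k$ is finitely generated in the noetherian ring $k[T_1\. T_m]$, so after clearing denominators I may pick $f_1\. f_n\in\gta$ generating it; then $B'=\kcirc[T_1\. T_m]/(f_1\. f_n)$ is finitely presented over $\kcirc$ and $B'\to B$ becomes an isomorphism after $-\otimes_\kcirc k$. Since $\Nr_\eta$ of a ring is by definition the integral closure of its image in its generic fibre, this gives $\Nr_\eta(B')\toisom\Nr_\eta(B)$ compatibly with $B'\to B$, so $X$ is a partial $\eta$-normalization of the finitely presented scheme $\Spec(B')$, i.e. $X\to S$ is $\eta$-nfp; the general case is a routine gluing (and is anyway the content of Remark \ref{Yrem1}(ii), as $S=\Spec(\kcirc)$ is integral).

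For the implication $\eta$-nft $\Rightarrow$ nft in (ii), assume $X$ reduced and $\eta$-nft and factor $X\to X_0\to S$ as a partial $\eta$-normalization followed by a finite-type morphism; $X_0$ is automatically reduced, being a subscheme with $\calO_{X_0}\into\calO_X$. The chain $\calO_{X_0}\subset\calO_X\subset\Nr_\eta(\calO_{X_0})\subset S^{-1}\calO_{X_0}$ forces $S^{-1}\calO_X=S^{-1}\calO_{X_0}$, so $\calO_X$ is $\kcirc$-torsion free, all its minimal primes lie over $\eta$, there are finitely many of them (the same as for the noetherian ring $(\calO_{X_0})_\eta$), and the corresponding residue fields are finitely generated over $k$. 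Hence $X\to S$ is admissible. It is also ift, since $\Nr_\eta(\calO_{X_0})$ is integral over $\calO_{X_0}$ and $\calO_{X_0}$ is of finite type over $\calO_S$. By Lemma \ref{iftnft}, $X\to S$ is therefore nft.

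For the converse in (ii), assume $X$ reduced and nft over $S$. As an admissible $S$-scheme $X$ dominates $S$, so $\calM_X$ is a $k$-algebra and $\calO_X\into\calM_X$ shows $\calO_X$ is $\kcirc$-torsion free. Factor $X\to X_0\to S$ with $X\to X_0$ a partial (absolute) normalization and $X_0$ of finite type over $S$ (again $X_0$ is automatically reduced). By Lemma \ref{partlem}, $X\cong\projlim_\alp X_\alp$ over the filtered family of finite modifications $X_\alp\to X_0$ dominated by $X$, each of finite type over $S$. I would then pass to generic fibres: the rings $(\calO_{X_\alp})_\eta$ form an increasing filtered family of $B_\eta$-subalgebras of $\Nr(B_\eta)$, where $B_\eta=(\calO_{X_0})_\eta$ is reduced and of finite type over the field $k$, so $\Nr(B_\eta)$ is a finite module over the noetherian ring $B_\eta$ and the family stabilizes. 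Thus there is $\alp_0$ with $\calO_X\otimes_\kcirc k=\calO_{X_{\alp_0}}\otimes_\kcirc k$. Since $\calO_X$ is torsion free it embeds into $\calO_X\otimes_\kcirc k=(\calO_{X_{\alp_0}})_\eta$, and it is integral over $\calO_{X_{\alp_0}}$ (a filtered union of finite overrings); therefore $\calO_X\subset\Nr_\eta(\calO_{X_{\alp_0}})$, i.e. $X$ is a partial $\eta$-normalization of the finite-type $S$-scheme $X_{\alp_0}$, and $X\to S$ is $\eta$-nft.

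The hard part will be exactly this last step. Unlike absolute normalization, the $\eta$-normalization of a given finite-type model need not contain $\calO_X$ — already for $X=\Spec(\kcirc[t])$ sitting over a model with ring $\kcirc[t^2,t^3]$ the element $t$ does not lie in the generic fibre of the cuspidal model — so one genuinely has to replace $X_0$ by a sufficiently fine finite modification first, and the point that makes the replacement terminate is the noetherianity of the generic fibre, equivalently the finiteness of $\Nr(B_\eta)$ over $B_\eta$. Everything else is bookkeeping with torsion-free modules over $\kcirc$ and with the limit formalism of Lemma \ref{partlem}.
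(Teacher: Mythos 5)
Your proof is correct and follows the paper's strategy, with two mild deviations. In (i), where the paper first passes to a flat finite-type model $\oX$ (the schematic image of the generic fiber in $X_0$) and then cites Raynaud--Gruson \cite[3.4.7]{RG} to promote ``finite type $+$ flat'' to ``finite presentation'', you reprove the needed special case by hand: pick generators $f_1,\dots,f_n\in\gta$ of $\gta_\eta\subset k[T]$ and take $B'=\kcirc[T]/(f_1,\dots,f_n)$. This is valid (and indeed $\Nr_\eta(B')=\Nr_\eta(B)$ since $B'\twoheadrightarrow B$ is an isomorphism on generic fibres, so $X$ is a partial $\eta$-normalization of $\Spec(B')$), and has the advantage of being self-contained; the paper's route is shorter. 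For the direction $\eta$-nft $\Rightarrow$ nft in (ii), the paper simply observes that $X$ is a partial normalization of the schematic closure of $Y_\eta$ in $Y$; you instead verify the hypotheses of Lemma \ref{iftnft} (admissibility $+$ ift $+$ finitely generated residue fields). Both work, and your route makes explicit what admissibility requires, but it is a bit longer. The reverse direction is essentially identical to the paper's: pass to the projective limit of finite modifications from Lemma \ref{partlem}, then stabilize the generic fibres using finiteness of $\Nr(B_\eta)$ over the noetherian ring $B_\eta$ (the paper phrases this as ``$X_\eta$ is of finite type over $k$'', which in turn rests on the same finiteness, via Artin--Tate). One small gloss: in (ii) you write that ``$X_0$ is automatically reduced, being a subscheme with $\calO_{X_0}\into\calO_X$'' --- but for a partial $\eta$-normalization $\calO_{X_0}\to\calO_X$ need not be injective ($\calO_{X_0}$ may have $\pi$-torsion). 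The correct statement is that one may replace $X_0$ by its image $\Spec(\bar\calO_{X_0})\into\calO_X$, which is reduced and still of finite type over $S$; this is the same harmless step the paper performs when it says $\oX$ is flat. Your closing remark, with the cuspidal $\kcirc[t^2,t^3]\subset\kcirc[t]$ example, correctly identifies why the choice of finite modification $X_{\alp_0}$ is necessary and what makes it terminate.
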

\begin{proof}
Assume that $X$ is $\eta$-nft. Then $X$ is flat over $S$ because any partial $\eta$-normalization kills $\pi$-torsion. Therefore, $X$ is a partial $\eta$-normalization of a flat $S$-scheme $\oX$ of finite type. But $\oX$ is automatically of finite presentation over $S$ by \cite[3.4.7]{RG}, hence $X$ is $\eta$-nfp. (Actually, this is a particular case of Remark \ref{Yrem1}.)

Next, let us prove (ii). If $X$ is nft then it is a partial normalization of a reduced flat scheme $\oX$ of finite type over $S$, hence $X$ is the projective limit of finite modifications $X_\alp$ of $\oX$. But $X_\eta$ is of finite type over $k$, hence already some $X_{\alp,\eta}$ is isomorphic to $X_\eta$, and then $X$ is a partial $\eta$-normalization of $X_\alp$. Conversely, if $X$ is reduced and $\eta$-nft then it is a partial $\eta$-normalization of a finite type $S$-scheme $Y$ with reduced $Y_\eta$. It follows that $X$ is a partial normalization of the schematic closure of $Y_\eta$ in $Y$, and hence $X$ is nft.
\end{proof}

Recall that for a $\hatk$-affinoid algebra $\calA$, it is standard to denote the subring of power-bounded elements of $\calA$, the ideal of power-nilpotent elements and the reduction as $\calAcirc$, $\calAcirccirc$ and $\tilcalA=\calAcirc/\calAcirccirc$, respectively.

\begin{lem}\label{normlem}
Assume that $A$ is a flat $\kcirc$-algebra of finite type such that $A\otimes_{\kcirc}\hatk$ is reduced (for example, this automatically happens when $A_\pi$ is geometrically reduced over $k$), and let $A'$ be the integral closure of the image of $A$ in $A_\pi$. Then $\hatA'\toisom\calAcirc$ for the $\hatk$-affinoid algebra
$\calA=\hatA_\pi$.
\end{lem}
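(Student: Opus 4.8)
The plan is to identify both $\hatA'$ and $\calAcirc$ with the integral closure of $\hatA$ inside the affinoid algebra $\calA=\hatA_\pi$, and then check these identifications are compatible.

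First the setup. Since $A$ is flat and of finite type over the valuation ring $\kcirc$ it is finitely presented \cite[3.4.7]{RG}, so by Remark \ref{genfibaffrem}(i) we may fix a presentation $A=\kcirc[T_1\. T_m]/(f_1\. f_n)$ with $\hatA=\hatkcirc\{T_1\. T_m\}/(f_1\. f_n)$ and $\calA=\hatk\{T_1\. T_m\}/(f_1\. f_n)$. As $A$ is $\kcirc$-flat, $\hatA$ is $\hatkcirc$-flat (any element of a $\pi$-adic completion of a $\kcirc$-flat module that is killed by $\pi$ must vanish), hence $\hatA$ is $\pi$-torsion free and coincides with the image of $\hatkcirc\{T_1\. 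T_m\}$ in $\calA$. Moreover $\calA$ is reduced: by Remark \ref{genfibaffrem}(i) it is an affinoid subdomain of the analytification of $\Spec(A_\pi\otimes_k\hatk)$, and the latter scheme is reduced by hypothesis, reducedness being inherited by analytifications and by affinoid subdomains. (If one prefers, one may also first discard the finitely many irreducible components of $\Spec(A)$ with empty closed fiber using Remark \ref{genfibaffrem}(ii); this changes neither $\hatA$ nor $\calAcirc$ and makes $A\to\calA$ injective.) Now by the structure theory of reduced strictly $\hatk$-affinoid algebras \cite[\S6.4]{BGR} (see also \cite[\S1]{BL}), $\calAcirc$ is the integral closure of the image of $\hatkcirc\{T_1\. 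T_m\}$, i.e.\ of $\hatA$, in $\calA$; furthermore $\calAcirc$ is a finite $\hatA$-module, it is $\hatkcirc$-flat and topologically of finite presentation over $\hatkcirc$, hence $\pi$-adically complete and separated, and $\calAcirc\otimes_{\hatkcirc}\hatk=\calA$. Note that $\calAcirc$ is integrally closed in $\calA$.

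Next I build the comparison map. The inclusion $A'\into A_\pi$ composed with the canonical map $A_\pi=k[T_1\. T_m]/(f_1\. f_n)\to\hatk\{T_1\. T_m\}/(f_1\. f_n)=\calA$ carries $A'$ into $\calAcirc$, since the image of $A'$ is integral over the image of $A$, hence over $\hatA$. As $\calAcirc$ is $\pi$-adically complete this factors through completion, giving a $\hatkcirc$-algebra homomorphism $\varphi\:\hatA'\to\calAcirc$ extending $\hatA\to\hatA'$ and $\hatA\into\calAcirc$. For injectivity: $A'\subset A_\pi$ is $\pi$-torsion free, so $A'$ is $\kcirc$-flat, $\hatA'$ is $\hatkcirc$-flat and $\pi$-adically separated; since $\calAcirc$ is $\hatkcirc$-flat as well, $\ker\varphi\subset\hatA'$ is $\pi$-torsion free and $\pi$-adically separated, hence vanishes as soon as $\varphi_\pi\:\hatA'_\pi\to\calA$ is injective. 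The latter follows by identifying $\hatA'_\pi$: writing $A'$ as the filtered union of its module-finite $A$-subalgebras $A'_\lambda$ (each with $(A'_\lambda)_\pi=A_\pi$), and using that the image of $\kcirc[T_1\. T_m]$ is $\pi$-adically dense in $\hatkcirc\{T_1\. T_m\}$, one gets $\hatA'_\pi=\calA$. For surjectivity: pick $\hatA$-module generators $\beta_1\.\beta_r$ of $\calAcirc$, each satisfying a monic equation over $\hatA$; approximating the coefficients of these equations $\pi$-adically by elements of the (dense) image of $A$ in $\hatA$ and running a Newton-type approximation inside the $\pi$-adically complete ring $\calAcirc$, one produces, for each $j$ and each $N$, an element of $\calAcirc$ integral over $A$ and congruent to $\beta_j$ modulo $\pi^N$; combined with the finiteness of $\calAcirc$ over $\hatA$ and the fact that $\hatA$ is the $\pi$-adic completion of the image of $A$, this forces $\varphi$ to be surjective. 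Hence $\varphi\:\hatA'\toisom\calAcirc$.

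The main obstacle is the surjectivity step: one has to convert the $\pi$-adic approximation of the integral equations into genuine elements of $\calAcirc$ that are integral over $A$ — the Newton/Hensel lifting, which needs some care regarding simplicity of the approximate roots (or an argument that sidesteps this, e.g.\ by working one module generator at a time and using $\pi$-adic completeness of $\calAcirc$ together with its finiteness over $\hatA$) — and then control the interplay between the possible non-finiteness of $A'$ over $A$ and $\pi$-adic completion. A secondary subtlety, dealt with at the setup stage via Remark \ref{genfibaffrem}(ii), is that $A\to\calA$ need not be injective when $\Spec(A)$ has components with empty closed fiber; this is harmless after $\pi$-adic completion but should be acknowledged.
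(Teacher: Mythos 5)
Your overall skeleton (identify $\calAcirc$ with the integral closure of $\hatA$ in $\calA$, build a comparison map $\varphi\colon\hatA'\to\calAcirc$, check injectivity and surjectivity) is a reasonable reorganization, but there are two genuine gaps that are not cosmetic.

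The central one is surjectivity, which you flag yourself as "the main obstacle," and the Newton-lifting plan does not close it. Lifting a monic equation $P(\beta_j)=0$ over $\hatA$ to an approximate monic $\tilde P\in A[T]$ and running Hensel/Newton inside the $\pi$-adically complete ring $\calAcirc$ can at best produce an element of $\calAcirc$ that is integral over $A$ and close to $\beta_j$. But such an element has no reason to lie in $A_\pi$; by definition $A'$ is the integral closure of $A$ \emph{inside $A_\pi$}, which is a much smaller ring than $\calAcirc$. So the produced approximants are not elements of $A'$, and density of the image of $A'$ in $\calAcirc$ does not follow. The paper's argument deliberately runs in the opposite direction precisely to avoid this: it assumes $\hatA'\subsetneq\calAcirc$, writes a missing element as $a/\omega$ with $a\in\hatA'$ and a monic integrality relation over $\hatA'$, approximates $a$ and the coefficients by elements of $A'$, and then observes that the perturbed $a/\omega$ now sits in $A'_\omega\subset A_\pi$ and is integral over $A'$ -- contradicting integral closedness of $A'$ in $A_\pi$. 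This keeps every candidate element inside $A_\pi$, which is exactly what the Newton approach fails to do.

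The second gap is the missing openness statement: there is a nonzero $\omega\in\kcirccirc$ with $\omega\calAcirc\subset\hatA$ and hence $\omega A'\subset A$. The paper extracts this from \cite[6.2.4/1]{BGR} (equivalence of residue norm and spectral norm on a reduced affinoid), and it is load-bearing in several places. It is what controls the $\pi$-adic topology of $A'$ relative to that of $A$, gives $\hatA\subset\hatA'\subset\omega^{-1}\hatA$ and therefore $\hatA'_\pi=\hatA_\pi=\calA$, and makes the approximation step for surjectivity legitimate (the conditions $x\in\omega^m\hatA'$ and $a/\omega\notin\hatA'$ are open and survive small perturbation). In your injectivity argument you assert $\hatA'_\pi=\calA$ and gesture at a filtered-union argument, but without $\omega A'\subset A$ it is not clear why $\hatA\to\hatA'$ is even injective, nor why localizing at $\pi$ gives an isomorphism; the hinted argument via module-finite $A$-subalgebras does not obviously yield this, since $\pi$-adic completion does not commute with finite modules over the non-noetherian ring $\hatA$. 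Finally, note that your opening identification "$\calAcirc$ is the integral closure of $\hatA$ in $\calA$ and a finite $\hatA$-module" is itself not a formality: it is closely entangled with the same $\omega$-boundedness coming from \cite[6.2.4/1]{BGR}, so even the setup silently uses the ingredient you have omitted.
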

\begin{proof}
Note that the kernel $I=\cap_{n=0}^\infty\pi^nA$ of the completion homomorphism $A\to\hatA$ is also an ideal in $A_\pi$, and hence an ideal in $A'$. Since $\Spec(A/I)$ is obtained from $X=\Spec(A)$ by removing all irreducible components with empty closed fiber, one easily sees that $A'/I$ is the integral closure of $A/I$ in $(A/I)_\pi=A_\pi/I$. Also, $A/I$ is $\kcirc$-flat because it has no $\pi$-torsion. In particular, $A/I$ satisfies the assumption of the lemma, and since $\hatA\toisom\wh{A/I}$ and $\hatA'\toisom\wh{A'/I}$, it
suffices to prove the lemma for $A/I$ instead of $A$. Thus, we can assume that $A\into\hatA$.

Choose any surjective homomorphism $\hatkcirc\{T_1\. T_n\}\to\hatA$, then inverting $\pi$ we obtain a surjective homomorphism of affinoid algebras $\phi{\colon}\hatk\{T_1\. T_n\}\to\calA$. Note that $\calA$ is reduced because $\calM(\calA)$ is an affinoid domain in the analytification of the reduced $\hatk$-scheme $\Spec(A\otimes_{\kcirc}\hatk)$, and analytification preserves reducedness by a GAGA-type theorem \cite[3.4.3]{berbook}. Since $\calA$ is reduced, \cite[6.2.4/1]{BGR} asserts that $\phi$ induces a norm on $\calA$ which is equivalent to the spectral norm. So, $\phi(\hatkcirc\{T_1\. T_n\})$ contains an ideal
$\omega\calAcirc$ for a non-zero element $\omega\in\kcirccirc$, in particular, $\omega\calAcirc\into\hatA$. Since $A\into\hatA$, we have the inclusion $A'\into A_\omega\into(\hatA)_\omega=\calA$. One easily sees that $\calAcirc$ is integrally closed in $\calA$, hence $A'\into\calAcirc$ and we obtain the embedding $\omega A'\into\omega\calAcirc\into\hatA$. It follows that $\omega A'\into A$ because $\hatA\cap A_\omega=A$ in $\calA$ (the latter is obvious since $\hatA$ is the $(\omega)$-adic completion of $A$). Since $\omega A'$ is an open ideal in $A'$, we obtain that $A$ is an open subring of $A'$, and then $\hatA$ is an open subring of $\hatA'$ containing open ideals $\omega\hatA'\subset\omega\calAcirc$. Note that $\hatA'$ has no $\omega$-torsion because $A'$ has no $\omega$-torsion, and hence the embedding $A'\into\calAcirc$ factors through the embedding $i{\colon}\hatA'\into\calAcirc$.

We have to establish the surjectivity of $i$. Note that for any $\omega\in\kcirc$ we have that $\omega\hatA'\cap A'=\omega A'$. Assume, now, that $\hatA'\subsetneq\calAcirc$. Then there exist elements $a\in\hatA'$ and $\omega\in\kcirc$ such that $a/\omega$ does not belong to $\hatA'$ but is integral over it, in particular, we can find $m\in\bfN$ and $b_j\in\hatA'$ such that $x=a^m+b_1a^{m-1}\omega+\dots +b_{m-1}a\omega^{m-1}\in\omega^m\hatA'$. The inclusion survives when we move $a$ and $b_j$'s slightly, hence we can achieve, in addition, that $a$ and $b_j$'s are in $A'$, but $a/\omega\notin\hatA'$. Then $x\in\omega^m\hatA'\cap A'=\omega^mA'$, and so $a/\omega\in A'_\omega$ is integral over $A'$. But the latter contradicts our assumption that $A'$ is integrally closed.
\end{proof}

\begin{cor}\label{bijlem}
If $X=\Spec(A)$ is an affine flat $S$-scheme of finite type and with reduced $X_\eta\otimes_k\hatk$, and $X'=\Spec(A')$ is a partial $\eta$-normalization of $X$, then $\hatA'$ is an open $\hatA$-subalgebra of $\calAcirc$, where $\calA=\hatA_\pi$. In particular, $\hatA'_\pi\toisom\calA$ is $k$-affinoid.
\end{cor}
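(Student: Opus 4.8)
The plan is to deduce this from Lemma \ref{normlem} by a commensurability argument; no new essential ingredient is needed. Write $B=\Nr_\eta(A)$ for the full $\eta$-normalization, i.e.\ the integral closure of the image of $A$ in $A_\pi$. Since $A$ is $\kcirc$-flat, hence $\pi$-torsion free, we may regard $A$, $A'$ and $B$ as subrings of $A_\pi$, with $A\subseteq A'\subseteq B$ by the definition of a partial $\eta$-normalization.

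First I would dispose of the kernel $I=\cap_{n\ge0}\pi^nA$ of $A\to\hatA$. Exactly as at the start of the proof of Lemma \ref{normlem}, $I$ is an ideal of $A_\pi$, hence of $A'$ and of $B$; by Remark \ref{genfibaffrem}(ii) the ring $A/I$ is again $\kcirc$-flat (and still satisfies the hypotheses of Lemma \ref{normlem}), $A'/I$ is a partial $\eta$-normalization of $A/I$ with $B/I=\Nr_\eta(A/I)$, and passing to these quotients changes none of $\hatA$, $\hatA'$, $\calA=\hatA_\pi$ (because $I\subseteq\pi^nA\subseteq\pi^nA'$ for all $n$). So one may assume $A\into\hatA$, and then also $A'\into\hatA'$ and $B\into\hatB$.

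Next apply Lemma \ref{normlem} to $A$: it yields $\hatB\toisom\calAcirc$ and, as extracted from the body of that proof, a non-zero $\omega\in\kcirccirc$ with $\omega B\subseteq A$, together with the fact that $\hatA$ is an open subring of $\calAcirc$ containing $\omega\calAcirc$. Since $\omega B\subseteq A'\subseteq B$, I would check that the $\pi$-adic topology of $A'$ coincides with the topology induced from $B$: fixing $N$ with $\pi^N\in\omega\kcirc$, for $x\in\pi^nB\cap A'$ with $n\ge N$ one has $\pi^Nx\in\pi^n\omega B\subseteq\pi^nA'$, so $\pi$-torsion freeness of $A'$ gives $x\in\pi^{n-N}A'$; hence $\pi^nA'\subseteq\pi^nB\cap A'\subseteq\pi^{n-N}A'$. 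Therefore $\hatA'$ may be identified with the closure of $A'$ in $\hatB=\calAcirc$; this closure contains the closure of $\omega B$, which is $\omega\calAcirc$ (an open --- it contains $\pi^N\calAcirc$ --- hence closed subgroup of the complete ring $\calAcirc$), so $\hatA'$ is an open subring of $\calAcirc$, and it contains $\hatA$ because $A\subseteq A'$. This is the asserted statement that $\hatA'$ is an open $\hatA$-subalgebra of $\calAcirc$.

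For the last claim, invert $\pi$ in $\omega\calAcirc\subseteq\hatA\subseteq\calAcirc$: as $\omega\in k^\times$ this gives $(\calAcirc)_\pi=\hatA_\pi=\calA$, and then inverting $\pi$ in $\omega\calAcirc\subseteq\hatA'\subseteq\calAcirc$ gives $\calA\subseteq\hatA'_\pi\subseteq\calA$, i.e.\ $\hatA'_\pi\toisom\calA$, which is $k$-affinoid. The genuinely substantive content is entirely in Lemma \ref{normlem} (ultimately resting on affinoid algebra theory, \cite{BGR}); everything above is routine manipulation of the commensurable subrings $\omega B\subseteq A\subseteq A'\subseteq B$, so I expect no real obstacle --- the only thing demanding a little care is the bookkeeping with $I$ when $A$ is not $\pi$-adically separated, and a careful reading of Lemma \ref{normlem}'s proof to harvest the element $\omega$.
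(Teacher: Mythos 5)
Your argument is correct and takes essentially the same route as the paper: both rest on the sandwich $\omega\Nr_\eta(A)\subseteq A\subseteq A'\subseteq\Nr_\eta(A)$ extracted from the proof of Lemma \ref{normlem}, and deduce the commensurability of $\pi$-adic topologies and hence the nested open inclusions $\hatA\subseteq\hatA'\subseteq\calAcirc$. The paper's version is just terser (it omits the explicit reduction modulo $I$, which in fact is harmless since $\omega\Nr_\eta(A)\subset A$ descends from $A/I$ to $A$ because $I\subset A$), so you have simply filled in the bookkeeping the paper leaves implicit.
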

\begin{proof}
Let $A''$ be the integral closure of $A$ in $A_\pi$. We proved above that $\omega A''\subset A$ for a non-zero $\omega\in\kcirc$, hence $A$ contains an open ideal $\omega A'$ and therefore $\hatA\subset\hatA'\subset\hatA''=\calAcirc$.
\end{proof}

Using Lemma \ref{normlem} we can extend the construction of analytic generic fibers and reduction maps to affine $\eta$-nft $S$-schemes $X$ such that $X_\eta\otimes_k\hatk$ is reduced (for example, $X_\eta$ is geometrically reduced): to each such scheme $X=\Spec(A)$ we associate the affinoid space $\gtX_\eta=\calM(\hatA_\pi)$. We define the {\em reduction map} $\pi_X{\colon}\gtX_\eta\to X_s$ as follows: if $X=\Spec(A)$, $\gtX_\eta=\calM(\calA)$ and $x\in\gtX_\eta$ is a point then the character $\calA\to\calH(x)$ induces a character $A\to\hatA\to\calAcirc\to\calH(x)^\circ\to\wHx$, which defines a point on $X_s$. If $X'$ denotes the $\eta$-normalization of $X$ then $\pi_X$ is the composition of the reduction map $\gtX_\eta\to\gtX'_s$, which is surjective and anti-continuous by \cite[2.4.1]{berbook}, and the projection $\gtX'_s\toisom X'_s\to X_s$. Hence, $\pi_X$ is surjective and anti-continuous.

\begin{lem}\label{bijlem2}
Let $X$ be an affine $\eta$-nft $S$-scheme with reduced $X_\eta\otimes_k\hatk$, and let $X'$ be any partial $\eta$-normalization of $X$, then

(i) the morphism $\gtX'_\eta\to\gtX_\eta$ of analytic generic fibers is an isomorphism,

(ii) the closed fiber $X_s$ is of finite type over $\tilk$,

(iii) there exists an affine reduced flat $S$-scheme $\oX$ of finite presentation such that $X$ is a partial $\eta$-normalization of $\oX$ and the projection $X\to\oX$ induces an isomorphism $X_s\to\oX_s$ on the closed fibers. In particular, $X\to\oX$ is bijective.
\end{lem}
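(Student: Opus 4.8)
\emph{Proof proposal.}
Write $X=\Spec A$, $X'=\Spec A'$, and set $\calA:=(\wh A)_\pi$. The plan is to pass to the already‑treated finite‑type situation of Corollary \ref{bijlem} and Lemma \ref{normlem}, deduce (i) at once, then prove (iii) by a noetherian stabilization argument on the closed fibre, and finally get (ii) for free from (iii).

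\emph{Reduction.} Since $X_\eta\otimes_k\hatk$ is reduced and $\hatk/k$ is faithfully flat, $X_\eta$ is reduced; as $X$ is $\kcirc$‑flat (Remark \ref{Yrem1}(i)), the embedding $A\into A_\eta$ shows $A$, hence $X$, is reduced. By Lemma \ref{fintypelem}, $X$ is nft, so it is a partial $\eta$‑normalization of a finite‑type flat $S$‑scheme; replacing the latter by the schematic closure of $X_\eta$ in it we may take it reduced, and then finitely presented by \cite[3.4.7]{RG}. Thus $X$ is a partial $\eta$‑normalization of an affine, reduced, $\kcirc$‑flat, finitely presented $S$‑scheme $\oX_0=\Spec B_0$ with $(\oX_0)_\eta=X_\eta$; the same holds for $X'$, and $X'_\eta=X_\eta$. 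Set $A'':=\Nr_\eta(B_0)=\Nr_\eta(A)=\Nr_\eta(A')$. Applying Corollary \ref{bijlem} (with $\oX_0$ in the role of the finite‑type scheme) and Lemma \ref{normlem} to $B_0$: $\calA$ is a reduced strictly $\hatk$‑affinoid algebra, $\wh{A''}\toisom\calAcirc$, and there are open $\kcirc$‑subalgebra inclusions $\wh{B_0}\subseteq\wh A\subseteq\wh{A'}\subseteq\calAcirc$ all of which become the identity of $\calA$ after inverting $\pi$.

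\emph{Part (i).} The morphism $\gtX'_\eta\to\gtX_\eta$ is obtained by applying $\calM(-)$ to $(\wh A)_\pi\to(\wh{A'})_\pi$, which by the previous paragraph is $\mathrm{id}_\calA$; hence $\gtX'_\eta\to\gtX_\eta$ is an isomorphism.

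\emph{Part (iii).} By Lemma \ref{Ypartlem}, $A=\bigcup_\beta C_\beta$, where $Z_\beta=\Spec C_\beta$ runs over the finite $\eta$‑modifications of $\oX_0$ dominated by $X$; each $Z_\beta$ is affine, $\kcirc$‑flat, reduced ($C_\beta\subseteq A$), and finitely presented over $S$ (finite over the finitely presented $\oX_0$, by \cite[3.4.7]{RG}). Since $\otimes_\kcirc\tilk$ and formation of the nilradical commute with filtered colimits, $\calO(X_s)=A/\sqrt{\kcirccirc A}=\varinjlim_\beta C_\beta/\sqrt{\kcirccirc C_\beta}$, i.e.\ $X_s=\varprojlim_\beta(Z_\beta)_s$. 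The key point is that this system is essentially constant. A clearing‑of‑denominators argument shows that for any $\kcirc$‑subalgebra $C\subseteq A''$ of $A_\eta$ whose integral closure in $A_\eta$ is $A''$ (true for $C=C_\beta$, $C=A$ and $C=B_0$) one has $C\cap\sqrt{\kcirccirc A''}=\sqrt{\kcirccirc C}$: if $c\in C$ and $c^n\in\omega A''$ with $\omega\in\kcirccirc$, then $c^n/\omega\in A''$ is integral over $C$, and clearing denominators in the integral equation gives $c^{nm}\in\omega C$. Hence $\calO((Z_\beta)_s)$, $\calO((\oX_0)_s)$ and $\calO(X_s)$ all embed into $\calO((\Nr_\eta X)_s)=A''/\sqrt{\kcirccirc A''}$. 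Now $\Nr_\eta(X)=\Nr_\eta(\oX_0)$ has $\pi$‑adic completion $\Spf\calAcirc$, so (as in \S\ref{agfsec}) $\calO((\Nr_\eta X)_s)=\calAcirc/\calAcirccirc=\widetilde\calA$, which is of finite type over $\tilk$ by the reduction theory of strictly $\hatk$‑affinoid algebras, \cite[\S2.1]{berbook}; moreover $(\Nr_\eta X)_s\to(\oX_0)_s$ is an integral morphism of finite‑type $\tilk$‑schemes, hence finite, so $\widetilde\calA$ is a finite module over the noetherian ring $R_0:=\calO((\oX_0)_s)$. Therefore $\{\calO((Z_\beta)_s)\}_\beta$ is a directed family of $R_0$‑submodules of the noetherian $R_0$‑module $\widetilde\calA$, so it has a largest member $\calO((Z_{\beta_0})_s)$, and $\calO(X_s)=\varinjlim_\beta\calO((Z_\beta)_s)=\calO((Z_{\beta_0})_s)$; that is, $X_s\toisom(Z_{\beta_0})_s$. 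Put $\oX:=Z_{\beta_0}$: it is affine, reduced, $\kcirc$‑flat, finitely presented over $S$; $X$ is a partial $\eta$‑normalization of $\oX$ (as $C_{\beta_0}\subseteq A\subseteq A''=\Nr_\eta(C_{\beta_0})$ with $A$ integral over $C_{\beta_0}$); and $X_s\to\oX_s$ is an isomorphism by construction. Finally $S=\Spec\kcirc$ has only the two points $\eta$ and $s$, and $X\to\oX$ is an isomorphism over each, hence bijective. Part (ii) then follows: $\oX$ is finitely presented over $S$, so $\oX\times_S s$ and its reduction $\oX_s$ are of finite type over $\tilk$, and $X_s\toisom\oX_s$.

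I expect the main obstacle to be the finiteness invoked in part (iii): although $\Nr_\eta(X)$ is typically not of finite presentation over $S$, its reduced closed fibre is of finite type over $\tilk$ and module‑finite over $\calO((\oX_0)_s)$. This comes from Lemma \ref{normlem} together with Berkovich's reduction theory, and it is exactly what converts the a priori infinite limit $\varprojlim_\beta(Z_\beta)_s$ into a finitely generated, hence stationary, ascending chain.
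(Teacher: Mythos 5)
Your argument is correct; part (i) is identical in spirit to the paper's (via Corollary \ref{bijlem}), but for (ii) and (iii) you reverse the order of deduction. The paper first proves (ii): it uses that $\wh{A''}=\calAcirc$ has $\tilk$-finitely generated reduction $\tilcalA$ (Berkovich), observes that $X''_s\to X_s\to\oX_s$ are surjective integral morphisms with reduced (hence injecting) coordinate rings, and invokes Artin--Tate to conclude $\calO(X_s)$ is of finite type over $\tilk$; then (iii) falls out because the filtered colimit $\calO(X_s)=\varinjlim_\beta\calO((Z_\beta)_s)$ of a finitely generated algebra must stabilize. You instead go for (iii) directly: you show via a clearing-of-denominators argument that every $\calO((Z_\beta)_s)$ and $\calO(X_s)$ embed compatibly as $R_0$-submodules of $\tilcalA$, observe that $\tilcalA$ is a noetherian $R_0$-module because $(\Nr_\eta X)_s\to(\oX_0)_s$ is integral between finite-type $\tilk$-schemes (hence finite), and invoke ACC to get stabilization; (ii) then follows for free. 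Both roads rest on the same key input, namely Berkovich's finiteness of $\tilcalA$ together with Lemma \ref{normlem}. Your version is somewhat longer and requires the extra work of identifying $\calO((\Nr_\eta X)_s)$ with $\tilcalA$ and carefully establishing injectivity via the clearing-of-denominators identity $C\cap\sqrt{\kcirccirc A''}=\sqrt{\kcirccirc C}$; but the paper also needs the analogous injectivity implicitly (through reducedness and surjectivity of the closed-fiber maps) so the gain is modest. The trade-off is that your noetherian-module argument makes the stabilization completely explicit, whereas the paper leaves the ``it is clear now'' step to the reader.
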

\begin{proof}
Let $X''$ be the $\eta$-normalization of $X$. By Corollary \ref{bijlem}, $\gtX_\eta$ and $\gtX'_\eta$ are isomorphic to $\gtX''_\eta$, so we obtain (i). Furthermore, $\gtX''_s=\Spec(\tilcalA)$, hence it is of finite type over $\tilk$ by \cite[6.3.4/3]{BGR}. Choose any affine reduced flat $S$-scheme $\oX$ of finite presentation such that $X$ is a partial $\eta$-normalization of $\oX$ (we use Lemma \ref{fintypelem}(i)). Then the
morphisms $X''\to X\to\oX$ induce surjective integral morphisms on closed fibers $X''_s\to X_s\to\oX_s$. Since $X_s$ is reduced and $X''_s$ is of finite type over $\tilk$, $X_s$ is of finite type over $\tilk$. This proves (ii), and it is clear now that replacing $\oX$ with a sufficiently large finite $\eta$-modification dominated by $X$ we achieve that $X_s\toisom\oX_s$.
\end{proof}

In the sequel, we will use only the second part of the following remark. Actually, the latter will only be used in the proof of Lemma \ref{closlem}.

\begin{rem}\label{genfibrem}
(i) The definitions of the analytic generic fiber $\gtX_\eta=\calM(\hatA_\pi)$ and the reduction map $\pi_X{\colon}\gtX_\eta\to X_s$ make sense for any affine $S$-scheme $X$ of finite type or of $\eta$-normalized finite type. However, if $\calX=X_\eta\otimes_k\hatk$ is not reduced then $\gtX_\eta$ can be an affinoid domain in a closed subspace of $\calX^\an$ obtained by killing some nilpotent functions -- certain nilpotent elements of $A_1=A\otimes_{\kcirc}\hatkcirc$ can be infinitely $\pi$-divisible and then they are killed by passing to the separated completion $\hatA_1=\hatA$. Also, if $X$ is not $S$-flat (in the finite type case) then $\pi_X$ does not have to be surjective.

(ii) The above constructions commute with localizations, hence to any $S$-scheme $X$ of finite type or of $\eta$-normalized finite type one can functorially associate a strictly analytic generic fiber $\gtX_\eta$ with an anti-continuous reduction map $\pi_X{\colon}\gtX_\eta\to X_s$. However, $\gtX_\eta$ is not good already when $X=\bfA^2_S\setminus S$ (the relative $\bfA^2$ with punched origin).
\end{rem}

\begin{lem}\label{extcomlem}
Let $X$ be an affine $\eta$-nft scheme over $S$ such that its generic fiber is geometrically reduced, and let $l/k$ be a finite extension of valued fields with $S_l=\Spec(\lcirc)$. Then $\gtX_{l,\eta}:=\gtX_\eta\otimes_\hatk\hatl$ is the analytic generic fiber of $X_l:=X\times_S S_l$ (considered either as an $S_l$-scheme or $S$-scheme). In particular, if $X_{l,\eta}$ is normal and integral with generic point $\Spec(L)$ then the analytic generic fiber of $\Nr_L(X_l)$ is isomorphic to $\gtX_\eta\otimes_\hatk\hatl$.
\end{lem}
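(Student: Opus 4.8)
The plan is to reduce to the case where $X$ is flat of finite presentation over $S$ --- where both analytic generic fibres are computed explicitly by Remark~\ref{genfibaffrem}(i) --- and then to appeal to the compatibility of Tate algebras with the finite extension $\hatl/\hatk$.

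First I would reduce to the flat finitely presented case. By (the proof of) Lemma~\ref{fintypelem}(i), $X=\Spec(A)$ is a partial $\eta$-normalization of a flat $S$-scheme $X_0=\Spec(A_0)$ of finite type, hence of finite presentation by \cite[3.4.7]{RG}; since a partial $\eta$-normalization does not alter the generic fibre (by definition it is cut out inside it), $X_\eta=(X_0)_\eta$, which is geometrically reduced by hypothesis. Writing $A_0=\kcirc[T_1\. T_m]/(g_1\. g_n)$ with $g_i\in\kcirc[T]$, the base change $X_{0,l}:=X_0\times_SS_l=\Spec(\lcirc[T]/(g))$ is again flat and finitely presented over $S_l$, with geometrically reduced generic fibre $(X_0)_\eta\otimes_kl$. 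Now $A_l:=A\otimes_\kcirc\lcirc$ is integral over $A_{0,l}:=A_0\otimes_\kcirc\lcirc$, and since $A$ is $\kcirc$-flat it embeds into $A_l[1/\pi]=A_\eta\otimes_kl=(X_0)_\eta\otimes_kl=(X_{0,l})_\eta$; thus $A_l\subseteq\Nr_\eta(A_{0,l})$, so $X_l:=X\times_SS_l$ is a partial $\eta$-normalization of $X_{0,l}$. In particular $X_l$ is $\eta$-nft over $S_l$ and $(X_l)_\eta\otimes_l\hatl=X_\eta\otimes_k\hatl$ is reduced, so Lemma~\ref{bijlem2}(i) applies both to $X\to X_0$ and to $X_l\to X_{0,l}$: the analytic generic fibre of $X$ (resp.\ of $X_l$) is isomorphic to that of $X_0$ (resp.\ of $X_{0,l}$). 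It therefore suffices to show that the analytic generic fibre of $X_{0,l}$ is isomorphic to that of $X_0$ base changed along $\hatk\into\hatl$.

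For this I would invoke Remark~\ref{genfibaffrem}(i): the analytic generic fibres of $X_0$ and $X_{0,l}$ are $\calM(\hatk\{T_1\. T_m\}/(g))$ and $\calM(\hatl\{T_1\. T_m\}/(g))$ respectively. As $\hatl/\hatk$ is a finite extension of complete non-Archimedean fields, any two $\hatk$-vector-space norms on $\hatl$ are equivalent, whence $\hatk\{T_1\. T_m\}\otimes_\hatk\hatl\toisom\hatl\{T_1\. T_m\}$ (equivalently: the left-hand side is a finitely generated module over the Noetherian Tate algebra $\hatk\{T_1\. T_m\}$, hence carries a complete norm, so it equals the completed tensor product $\hatk\{T_1\. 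T_m\}\wtimes_\hatk\hatl=\hatl\{T_1\. T_m\}$). Since $\hatl$ is free over $\hatk$, tensoring the presentation gives $(\hatk\{T_1\. T_m\}/(g))\otimes_\hatk\hatl\toisom\hatl\{T_1\. T_m\}/(g)$, and applying $\calM$ finishes the reduction. That the answer does not depend on whether $X_l$ is regarded over $S_l$ or over $S$ is immediate: for $l$ of height one the $(\pi)$-adic topology on $\lcirc$ coincides with the valuation topology, so both the $\pi$-adic completion of $A_l$ and its localization at $\pi$ are unambiguous.

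Finally, for the ``in particular'' part set $L=k(X_{l,\eta})$. Then $(A_l)_\eta$ is a normal domain with fraction field $L$, hence integrally closed in $L$, so the integral closure $\Nr_L(A_l)$ of $A_l$ in $L$ is contained in $(A_l)_\eta$; that is, $\Nr_L(X_l)=\Nr_\eta(X_l)$ is a partial $\eta$-normalization of $X_l$, and hence of $X_{0,l}$ by transitivity. One more application of Lemma~\ref{bijlem2}(i) identifies its analytic generic fibre with that of $X_{0,l}$, which by the above is $\gtX_\eta\otimes_\hatk\hatl=\gtX_{l,\eta}$. I expect the only real work to be the bookkeeping of the first paragraph --- verifying that the base changes $X_l$ and $\Nr_L(X_l)$ remain in the class of $\eta$-nft $S_l$-schemes with geometrically reduced generic fibre, so that Lemma~\ref{bijlem2} is applicable; the Tate-algebra computation itself is routine.
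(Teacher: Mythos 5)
Your proof is correct, but it takes a noticeably more circuitous route than the paper's. You reduce to the flat finitely-presented case via Lemma~\ref{fintypelem}(i), verify that $X_l$ and $\Nr_L(X_l)$ are partial $\eta$-normalizations of $X_{0,l}$ so that Lemma~\ref{bijlem2}(i) applies, and then read off the affinoid algebras from the explicit presentation in Remark~\ref{genfibaffrem}(i) together with the standard identity $\hatk\{T_1\. T_m\}\otimes_\hatk\hatl\toisom\hatl\{T_1\. T_m\}$. The paper instead argues directly with $A$ itself, without ever passing to a finitely presented model: since $\hatA/\pi^n=A/\pi^n$ and $\hatlcirc/\pi^n=\lcirc/\pi^n$, one gets $\wh{A\otimes_\kcirc\lcirc}\toisom\hatA\wtimes_{\hatkcirc}\hatlcirc$ immediately, and inverting $\pi$ together with the finiteness of $\hatl/\hatk$ gives $(\wh{A_l})_\pi\toisom\hatA_\pi\otimes_\hatk\hatl$; the ``in particular'' clause then follows from Lemma~\ref{normlem} rather than by another invocation of Lemma~\ref{bijlem2}. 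The trade-off: the paper's argument is shorter and avoids any bookkeeping about $\eta$-nft structures on $X_l$, while your version is more explicit and does supply the verification (which the paper leaves implicit) that $X_l$ is again $\eta$-nft with geometrically reduced generic fibre, so that its analytic generic fibre is defined by the earlier framework. One small slip to fix: you write ``since $A$ is $\kcirc$-flat it embeds into $A_l[1/\pi]$'' — what you actually need and use is that $A_l=A\otimes_\kcirc\lcirc$ embeds into $A_l[1/\pi]$, which holds because $A_l$ is $\lcirc$-flat (flat base change) and hence $\pi$-torsion-free.
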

Note that the affineness assumption can be removed due to Remark \ref{genfibrem}(ii).
\begin{proof}
Assume that $X=\Spec(A)$, so that $X_l=\Spec(A_l)$ for $A_l=A\otimes_{\kcirc}\lcirc$. The analytic generic fiber of $X_l$ is defined as $\calM((\wh{A_l})_\pi)$ where $\pi\in\kcirccirc\setminus\{0\}$ and the completion is $(\pi)$-adic. In particular, it is not important for the construction of $\gtX_{l,\eta}$ whether we view $X_l$ as an $S_l$-scheme or $S$-scheme. Now we use that $\wh{A_l}\toisom\wh{A\otimes_{\kcirc}\lcirc}\toisom\hatA\wtimes_{\hatkcirc}\hatlcirc$, hence
$(\wh{A_l})_\pi\toisom\hatA_\pi\wtimes_\hatk\hatl\toisom\hatA_\pi\otimes_\hatk\hatl$, and applying the functor $\calM$ we obtain that $\gtX_{l,\eta}\toisom\gtX\otimes_\hatk\hatl$. The last claim follows from Lemma \ref{normlem} because $\Nr_L(X_l)=\Spec(A'_l)$ where $A'_l$ is the integral closure of $A_l$ in $(A_l)_\pi$, and hence $\Nr_L(X_l)$ and $X_l$ have isomorphic analytic generic fibers.
\end{proof}

We conclude this section with one more definition. For any point $x\in X_s$, by the {\em analytic fiber over $x$} we mean the preimage $X^\an_x=\pi^{-1}_X(x)$. If $x$ is closed then $X^\an_x$ is open, so we regard it as an open analytic subspace in $\gtX_\eta$. (We do not need this, but one can show that in general $X^\an_x$ can be provided with a structure of an analytic $k$-space, i.e. an analytic space over a larger analytic field $K$, though the choice of $K$ is not canonical.) Note also that $X^\an_x$ is an analytic domain in the larger space $\calX^\an$, where $\calX=X\otimes_k\hatk$. It follows from \cite[\S2.5]{berbook} that the space $X^\an_x$ has no boundary. Hence the analytic domain embedding $X^\an_x\into\calX^\an$ has no boundary, and therefore $X^\an_x$ is open also in $\calX^\an$.

\subsection{Analytic criterion of \'etaleness}\label{anfibsec}
Throughout this section $k$, $S$ and $\gtS$ are as in \S\ref{agfsec}. Our main result is the following theorem.

\begin{theor}\label{anfibth}
Let $f{\colon}Y\to X$ be a morphism of integral affine flat $\eta$-nft $S$-schemes of such that $Y_\eta\otimes_k\hatk$ and $X_\eta\otimes_k\hatk$ are reduced. Let also $y\in Y_s$ be a closed point with $x=f(y)$ and let $Y^\an_y$, $X^\an_x$ be the corresponding analytic fibers. Assume, finally, that $X$ is normal in a neighborhood of $x$. Then $f$ is strictly \'etale at $y$ if and only if the natural map $Y^\an_y\to X^\an_x$ is an isomorphism.
\end{theor}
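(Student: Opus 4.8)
The plan is to reduce both implications to the birational criterion of \'etaleness, Theorem \ref{etaleprop}, by setting up a dictionary between the analytic fiber $X^\an_x$ and the birational fiber $X^\bir_x$.

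\emph{Reductions.} Since $y$ is isolated in $f^{-1}(x)$ and $f$ is nft, $f$ is quasi-finite near $y$; by Zariski's main theorem I factor $f$, locally over $x$, as an open immersion $Y\into\bar Y$ followed by a finite morphism $\bar Y\to X$, and replacing $\bar Y$ by the closure of $Y$ I may assume $\bar Y$ is integral, $S$-flat, $\eta$-nft with geometrically reduced generic fiber and $k(\bar Y)=k(Y)$. Since the reduction map commutes with the open immersion and $\gtY_\eta=\pi_{\bar Y}^{-1}(Y_s)$, one gets $Y^\an_y=\bar Y^\an_y$, so I may assume $f$ itself finite; write $X=\Spec A$, $Y=\Spec B$. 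Then $\hatB_\pi$ is finite over $\hatA_\pi$, and the compatibility of $\pi$-adic completion with $f$ (in the spirit of the proofs of Lemmas \ref{normlem} and \ref{extcomlem}) gives, for any $\hatk$-analytic field $\calH$ and any character $\hatA_\pi\to\calH$ factoring through $k(X)$, an isomorphism $\hatB_\pi\otimes_{\hatA_\pi}\calH\toisom k(Y)\otimes_{k(X)}\calH$; hence the fiber of $\gtY_\eta\to\gtX_\eta$ over such a point is $\calM(k(Y)\otimes_{k(X)}\calH)$. Also $X^\an_x\neq\emptyset$ because $X$ is $S$-flat, and the map $\gtY_\eta\to\gtX_\eta$ carries $Y^\an_y$ onto $X^\an_x$ by functoriality of reductions.

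\emph{The dictionary.} For $t\in\gtX_\eta$ put $\calO_t=\calH(t)^\circ\cap k(X)$; one checks that $t\in X^\an_x$ iff $\calO_t$ dominates $\calO_{X,x}$ with $m_{\calO_t}\cap\calO_{X,x}=m_x$, in which case $\calO_t\in X^\bir_x$ is of height one (because $|\pi(t)|<1$). Conversely, for a height-one $\calO\in X^\bir_x$ the homomorphism $A\to\calO\into(\widehat{k(X)}^{\,\calO})^\circ$ extends $\pi$-adically and yields a point $t_\calO\in X^\an_x$ with $\calH(t_\calO)=\widehat{k(X)}^{\,\calO}$. By the fiber formula, the fiber of $\gtY_\eta\to\gtX_\eta$ over $t_\calO$ is $\calM\bigl(k(Y)\otimes_{k(X)}\widehat{k(X)}^{\,\calO}\bigr)$; decomposing $k(Y)\otimes_{k(X)}\widehat{k(X)}^{\,\calO}$ into local Artinian factors and matching each factor with the valued-field data of an extension of $\calO$ to $k(Y)$ (here one uses Remark \ref{defrem} and Corollary \ref{birconlem}), the points of this fiber that reduce to $y$ — i.e. the points of $Y^\an_y$ over $t_\calO$ — correspond bijectively to $(f^\bir_y)^{-1}(\calO)$, the point attached to $\calO'$ carrying completed residue field $\widehat{k(Y)}^{\,\calO'}$, with its factor reduced precisely when $\calO'/\calO$ is defectless.

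\emph{Conclusion.} If $f$ is strictly \'etale at $y$, then by Theorem \ref{etaleprop} ((i)$\Rightarrow$(iii)) for every $\calO\in X^\bir_x$ the fiber $(f^\bir_y)^{-1}(\calO)$ is a single $\calO'$ with $\calO\to\calO'$ strictly local-\'etale, so $\widehat{k(Y)}^{\,\calO'}=\widehat{k(X)}^{\,\calO}$; by the dictionary $\gtY_\eta\to\gtX_\eta$ has a single point over $t_\calO$ with the same completed residue field, and — $f_\eta$ being finite flat — an isomorphism of local rings there; as the points $t_\calO$ are dense in $X^\an_x$ and $Y^\an_y$ is a clopen piece of the preimage of the tube of $x$, hence finite \'etale over $X^\an_x$ of locally constant degree, degree one on a dense set forces $Y^\an_y\toisom X^\an_x$. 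Conversely, if $Y^\an_y\to X^\an_x$ is an isomorphism, pick $t\in X^\an_x$, set $\calO=\calO_t$ and pass to $t_\calO$: the isomorphism produces a single point $t'$ of $Y^\an_y$ over $t_\calO$ with $\calH(t')=\calH(t_\calO)$ and reduced fiber there, so the unique $\calO'\in(f^\bir_y)^{-1}(\calO)$ satisfies $\widehat{k(Y)}^{\,\calO'}=\widehat{k(X)}^{\,\calO}$, whence $e=f=d=1$ for $\calO'/\calO$, i.e. $\calO\to\calO'$ is strictly local-\'etale; Theorem \ref{etaleprop} ((iv)$\Rightarrow$(i)), applicable as $X$ is normal near $x$, then yields that $f$ is strictly \'etale at $y$. \emph{The main obstacle} is the dictionary of the third paragraph — establishing that the fiber of $\gtY_\eta\to\gtX_\eta$ over the completion point $t_\calO$ is $\calM(k(Y)\otimes_{k(X)}\widehat{k(X)}^{\,\calO})$ and that its factors match, point by point and with the correct completed residue fields and defects, the extensions of $\calO$ occurring in $(f^\bir_y)^{-1}(\calO)$; this is what makes the single test valuation ring of Theorem \ref{etaleprop} visible analytically, and it rests on the completion-commutation results of \S\ref{agfsec} together with the classification of extensions of a valuation. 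A secondary point is the density of the points $t_\calO$ in $X^\an_x$ and the finiteness of $\gtY_\eta\to\gtX_\eta$ over the tube, needed to pass from "isomorphism at the $t_\calO$" to the full isomorphism $Y^\an_y\toisom X^\an_x$.
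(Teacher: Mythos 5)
Your proposal takes a genuinely different route from the paper, and the dictionary idea is a sensible alternative to the paper's use of formal completion and Berkovich's result that \'etale morphisms of formal schemes induce isomorphisms of analytic fibers. However, there are a few genuine gaps.

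\textbf{Discreteness of $y$ in its fiber.} Your very first reduction ("Since $y$ is isolated in $f^{-1}(x)$\dots by Zariski's main theorem\dots") presupposes that $y$ is isolated, which is \emph{not} part of the hypotheses of the theorem. For the direct implication this follows from strict \'etaleness, but for the converse it is something that must be \emph{deduced} from $Y^\an_y\toisom X^\an_x$. The paper devotes Lemma~\ref{closlem} and Lemma~\ref{discrlem} to this point: it shows that a non-closed point $z$ of $Y_s$ with closed specialization $y$ forces $Y^\an_z$ to lie in the closure of $Y^\an_y$, and then uses the Hausdorff topology of the analytic spaces to derive a contradiction with the assumed isomorphism. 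Without this, your reduction to finite $f$ is unjustified in the only direction where it matters.

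\textbf{The step ``$e=f=d=1$, i.e.\ strictly local-\'etale.''} This implication is false in general: it requires $k(Y)/k(X)$ to be separable (for an inseparable immediate extension the map $\calO\to\calO'$ is not local-\'etale). The paper establishes separability of $k(Y)/k(X)$ \emph{before} applying the valuation-theoretic criterion, by observing that a generically inseparable factor produces ramified Artin fibers at the dense type-1 points, contradicting the local isomorphism, and only then invokes \cite[2.2.1]{temst}. Your "reduced fiber at $t'$" condition does in fact carry the information needed to rule out inseparability (a nontrivial inseparable part of $k(Y)/k(X)$ makes the local Artinian factor of $k(Y)\otimes_{k(X)}\calH(t_\calO)$ at $t'$ either non-reduced or of $\calH(t_\calO)$-dimension $>1$), but you never draw this inference; as written, "whence $e=f=d=1$, i.e.\ strictly local-\'etale" is a leap that silently uses separability.

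\textbf{The fiber formula.} The claimed isomorphism $\hatB_\pi\otimes_{\hatA_\pi}\calH\toisom k(Y)\otimes_{k(X)}\calH$ needs more than "in the spirit of Lemmas \ref{normlem} and \ref{extcomlem}." The rings $A$ and $B$ are not noetherian, and $B$ being finite over $A$ does not imply it is finitely presented as a module, so the base-change $\hatB=B\otimes_A\hatA$ is not automatic. This is precisely the kind of technicality the paper avoids by using Berkovich's theorem for the direct implication and a single carefully chosen test valuation (coming from a type-1 analytic point via $\psi_x$ of Remark~\ref{complrem}) for the converse, rather than a full fiber-by-fiber dictionary. Finally, your direct implication also relies on flatness of $\gtY_\eta\to\gtX_\eta$ near $Y^\an_y$ for the locally-constant-degree argument; this does hold once $f$ is known \'etale near $y$, but it is an extra step the paper's reference to \cite[4.4]{bercontr} sidesteps entirely.
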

Probably, it is enough to take $X$ to be $\eta$-normal in the assumptions of the theorem, but we cannot attack this case with our methods (due to assumptions in Theorem \ref{etaleprop}).
\begin{proof}
The direct implication is easier to prove and it holds even without the normality assumption on $X$. Assume that $f$ is strictly \'etale at $y$. Shrinking $X$ and $Y$ we can keep them affine and achieve that $f$ is of finite presentation. By Lemma \ref{bijlem2} there exists a finitely presented $S$-scheme $X'$ such that $X$ is a partial $\eta$-normalization of $X'$ and the morphism $X\to X'$ is bijective. Then $X$ is isomorphic to the projective limit of finite $\eta$-modifications $X_\alpha$ of $X'$ by Lemma \ref{Ypartlem}, and the projections $X\to X_\alp$ are bijective. By \cite[$\rm IV_3$, 8.8.2]{ega} and \cite[$\rm IV_4$, 17.7.8]{ega}, $f$ is the base change of an \'etale morphism $f_\alp{\colon}Y_\alpha\to X_\alpha$. Since $X^\an_\eta\toisom X_{\alp,\eta}^\an$ and $Y^\an_\eta\toisom Y_{\alp,\eta}^\an$ by Lemma \ref{bijlem2}(i), it suffices to prove the claim for $f_\alpha$. So, we can assume that $X$ is of finite $S$-presentation. Since $f$ is \'etale at $y$, so are the morphisms $f_n=f\times_S \Spec(\kcirc/(\pi^n))$. Hence the $(\pi)$-adic completion $\gtf{\colon}\gtY\to\gtX$ is \'etale at $y$ (see \cite[\S1]{bercontr} for the definition of \'etale morphisms of formal schemes). Then it follows from \cite[4.4]{bercontr} that $Y_y^\an\toisom X_x^\an$.

Assume, now, that $Y_y^\an\toisom X_x^\an$. Then it follows from the dimension considerations that $X$ and $Y$ are of equal dimension and $f$ is dominant, in particular, we obtain a finite extension of fields $k(Y)/k(X)$. We claim that the extension is separable, and to prove this let us assume to the contrary that $k(Y)/k(X)$ is inseparable. Then the morphism $Y\to X$ factors through a finite morphism $Y\to Z$ such that $Z$ is integral and $k(Y)/k(Z)$ is inseparable of degree $p$. Let $\calY\to\calZ\to\calX$ be the morphisms obtained from $Y\to Z\to X$ by applying $\cdot\otimes_{\kcirc}\hatk$. Then $\calZ$ is reduced because $\calY$ is reduced by the assumption of the theorem. Since $\calY\to\calZ$ is finite and generically inseparable of degree $p$, it follows that for any point $z\in\calZ^\an$ with $m_z=0$ the fiber over $z$ in $\calY^\an$ is of the form $\calM(\calC)$ where $\calC$ is a ramified local Artin $\calH(z)$-algebra of dimension $p$. In particular, the morphism $\phi{\colon}\calY^\an\to\calX^\an$ cannot be a local isomorphism at any point $t\in\calY^\an$ with $m_t=0$ because its $\calX^\an$-fiber is not geometrically reduced at $t$. This contradicts the assumption that the map $Y^\an_y\to X^\an_x$, which is the restriction of $\phi$ on open subspaces, is an isomorphism because the points with trivial maximal ideal are dense in any reduced analytic space. So, the assumption that $k(Y)/k(X)$ is inseparable was incorrect.

We will need the following lemma, where, as a matter of exception, we allow non-good spaces (in the proof, we will have to leave the framework of good spaces anyway).

\begin{lem}\label{closlem}
Let $Y$ be an $\eta$-nft $S$-scheme with reduced $Y_\eta\otimes_k\hatk$, and let $y,z\in Y_s$ be two points such that $y$ is a closed specialization of $z$. Then the analytic fiber $Y_z^\an$ is contained in the closure of the analytic fiber $Y_y^\an$.
\end{lem}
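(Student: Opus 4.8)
We sketch a plan of proof.

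\emph{Step 1 (reduction to a flat finitely presented model).} First I would observe that the statement is local on $Y$ near $y$: if $Y^\circ\subseteq Y$ is an affine open neighbourhood of $y$, then $Y^\circ$ contains $z$ as well (opens are stable under generization), its analytic generic fibre is $\pi_Y^{-1}(Y^\circ_s)$ and is a closed analytic subdomain of $\gtY_\eta$ (the tube of the open $Y^\circ_s$, hence a finite union of Laurent domains $\{|f_i|=1\}$), and it contains both $Y_y^\an$ and $Y_z^\an$; therefore closures computed inside it agree with those computed in $\gtY_\eta$. So one may assume $Y$ affine, and then, by Lemma \ref{bijlem2}(iii) and (i), that $Y$ is reduced, flat and of finite presentation over $S$; thus $\gtY=\wh Y$ is a flat formal $\gtS$-scheme of topologically finite presentation, $Y_s$ is of finite type over $\tilk$, and $\pi_Y\colon\gtY_\eta\to Y_s$ is the surjective, anti-continuous reduction map. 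Finally, by Lemma \ref{extcomlem} and the fact that the finite projection $\gtY_\eta\otimes_\hatk\hatl\to\gtY_\eta$ is surjective and compatible with reductions, the claim for $(Y,z,y)$ would follow from the corresponding claim for $(Y_l,z_l,y_l)$ over a finite extension $l/k$, where, given $t\in\pi_Y^{-1}(z)$, one picks a preimage $t_l$ of $t$, lets $z_l=\pi_{Y_l}(t_l)$ (a point over $z$), and lets $y_l$ be any specialization of $z_l$ above $y$ (such exists since $Y_{l,s}\to Y_s$ is finite); so I am free to enlarge $k$.

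\emph{Step 2 (reduction to an immediate specialization inside an integral special fibre).} Since $Y_s$ is a noetherian space, there is a finite chain $z=z_0,z_1,\dots,z_r=y$ with $z_{i+1}\in\overline{\{z_i\}}$ and nothing strictly between $\overline{\{z_i\}}$ and $\overline{\{z_{i+1}\}}$. It suffices to prove $\pi_Y^{-1}(z_i)\subseteq\overline{\pi_Y^{-1}(z_{i+1})}$ for each $i$; taking closures gives $\overline{\pi_Y^{-1}(z_i)}\subseteq\overline{\pi_Y^{-1}(z_{i+1})}$, and chaining yields the lemma. So I would assume $y$ is an immediate specialization of $z$, and, passing from $Y_s$ to the closed integral subset $\overline{\{z\}}$ — equivalently, replacing $\gtY_\eta$ by the open tube $\pi_Y^{-1}(\overline{\{z\}})$, the generic fibre of the formal completion of $\gtY$ along $\overline{\{z\}}$, a subspace in which both analytic fibres lie and in which closures agree with those in $\gtY_\eta$ — assume $Y_s$ integral with generic point $z$ and $y$ of codimension one, so $\calO_{Y_s,y}$ is one-dimensional. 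Lifting to $\gtY$ a modification of $Y_s$ that normalizes it at $y$ (formal models, cf.\ \cite{RG}; passing to a point above $y$ only shrinks $\pi_Y^{-1}(y)$, hence its closure) and enlarging $k$ if needed, one reduces to $\calO_{Y_s,y}$ being a discrete valuation ring.

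\emph{Step 3 (the one–variable core).} Now fix $t\in\pi_Y^{-1}(z)$ and an open $V\ni t$; one must find a point of $V$ reducing to $y$. Choose $g\in\hatA$ reducing to a uniformizer of $\calO_{Y_s,y}$; then $g$ does not lie in the prime $\gtq_z\subset\hatA$ of $z$, so $|g(t)|=1$, while $g$ vanishes on the reduction of every point of $\pi_Y^{-1}(y)$, i.e.\ $\pi_Y^{-1}(y)\subseteq\{|g|<1\}$ and $\pi_Y^{-1}(z)\subseteq\{|g|=1\}=\pi_Y^{-1}(Y_s\setminus V(g))$. One therefore wants to show that $t$ is a limit of points of $\gtY_\eta$ that reduce to $y$ (these necessarily have $|g|<1$, so in particular $t$ must fail to be interior to the tube $\{|g|=1\}$). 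I would establish this by a direct ``Gauss-point'' deformation of $t$ in the $g$-direction, modelled on the closed unit disc: there the unique point reducing to the generic point of $\bfA^1_{\tilk}$ is the Gauss point, which is the limit as $r\to1^-$ of the points $x_{0,r}$, each of which reduces to the origin; pulling this picture back along $g$ produces points of $V$ reducing to $y$ and converging to $t$.

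\emph{The main obstacle.} The real difficulty is carrying out Step 3 without any smoothness hypothesis. If $Y$ were smooth over $S$ at $y$ of relative dimension one, one could choose an \'etale morphism $Y\to\bfA^1_S$ and literally pull back the disc; but in general $Y$ is singular at $y$, $\kappa(y)$ need not be separable (or finite) over $\tilk$, and no such morphism exists, so the ``$g$-direction'' is not realized by an honest analytic map from a disc. One is then forced to work inside the analytic fibre $\pi_Y^{-1}(z)$ itself — which is only a closed subset of $\gtY_\eta$ and carries the structure of a \emph{generally non-good} analytic space (which is precisely why the lemma is stated allowing non-good spaces) — and to construct the deforming points at the level of (rank $\le 2$) valuations: pick a discrete valuation $v$ of $\kappa(z)$ centred at $y$, compose it with the rank-one valuation of $t$ to obtain a rank-two valuation $w$ on $\calH(t)$, and then perturb $w$ to rank-one valuations that converge to $t$ and reduce to $y$, checking that these are genuine points of $\gtY_\eta$ and that they eventually enter $V$. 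I expect this perturbation argument, together with the mild technical nuisance of $\hatkcirc$ being non-noetherian when $\kcirc$ is not discrete (handled by a limit argument as in \S\ref{normsec}), to be the bulk of the work.
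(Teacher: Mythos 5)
Your proposal stops short of a proof. Steps 1--2 are plausible reductions (localize, pass to a flat finitely presented model, chain through codimension--one specializations), but Step 3 --- the only place with actual content --- is not carried out. You yourself flag this: without a smoothness hypothesis there is no honest map to a disc in the ``$g$-direction,'' and you propose to replace it by a rank--two--valuation perturbation argument that you describe as ``the bulk of the work'' without doing it. As it stands there is no argument that points of $Y_y^\an$ approach $t$; you have only identified the obstacle. So the proposal has a genuine gap exactly where you suspect it does.

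The paper's proof avoids all of this machinery and is much shorter, because it invokes a result that directly answers the question ``is $\gtZ_\eta$ a neighborhood of $\gtz$?'' at the level of germs. Set $Z=Y\setminus\{y\}$, so that $\gtZ_\eta=\gtY_\eta\setminus Y_y^\an$ is an analytic domain in $\gtY_\eta$, and fix any $\gtz\in Y_z^\an$; one must show $\gtZ_\eta$ is \emph{not} a neighborhood of $\gtz$. By the theory of germ reductions of \cite[\S2]{temred1}, the germ of $\gtY_\eta$ (resp.\ $\gtZ_\eta$) at $\gtz$ has a reduction $\wt{(\gtY_\eta)}_\gtz$ (resp.\ $\wt{(\gtZ_\eta)}_\gtz$) which is the birational space attached to the pointed scheme $\Spec(\wt{\calH(\gtz)})\to\oY$ (resp.\ $\Spec(\wt{\calH(\gtz)})\to\oZ$), where $\oY$ and $\oZ$ are the Zariski closures of $z$ in $Y$ and $Z$. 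Since $y\in\oY\setminus\oZ$, the open immersion $\oZ\into\oY$ is not an isomorphism, so the two germ reductions differ; by \cite[2.4]{temred1} the inclusion of germs $(\gtZ_\eta,\gtz)\into(\gtY_\eta,\gtz)$ is not an isomorphism, i.e.\ $\gtZ_\eta$ is not a neighborhood of $\gtz$, hence $\gtz\in\overline{Y_y^\an}$. Note that this uses only that $y$ is a \emph{closed} specialization of $z$ (so $Z$ is open); no d\'evissage to codimension one, no normalization, no base extension, and no case analysis on the local structure at $y$ is needed. The germ--reduction correspondence is precisely the tool that replaces your Step 3.
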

The assumption that $y$ is closed is unnecessary but simplifies the proof.
\begin{proof}
Set $Z=Y\setminus\{y\}$, then $\gtZ_\eta$ is an analytic domain in $\gtY_\eta$ obtained by
removing $Y_y^\an$. Choose any point $\gtz\in Y_z^\an$. The germ reductions $\wt{(\gtY_\eta)}_\gtz$ and $\wt{(\gtZ_\eta)}_\gtz$, as defined in \cite[\S2]{temred1}, are the birational spaces from the category $\bir_\tilk$ corresponding to the pointed schemes $\Spec(\wt{\calH(\gtz)})\to\oY$ and $\Spec(\wt{\calH(\gtz)})\to\oZ$, where $\oY$ and $\oZ$ are the Zariski closures of $z$ in $Y$ and $Z$, respectively. Since the open immersion $\oZ\to\oY$ is not an isomorphism, the embedding $\wt{(\gtZ_\eta)}_\gtz\to\wt{(\gtY_\eta)}_\gtz$  is not an isomorphism, and \cite[2.4]{temred1} implies that the embedding of germ subdomains $(\gtZ_\eta,\gtz)\to(\gtY_\eta,\gtz)$ is not an isomorphism. Thus, $\gtZ_\eta$ is not a neighborhood of $\gtz$ in $\gtY_\eta$, and we obtain that $\gtz$ belongs to the closure of $Y_y^\an$.
\end{proof}

\begin{lem}\label{discrlem}
If $f{\colon}Y\to X$ is a morphism of affine $\eta$-nft $S$-schemes with reduced $X_\eta\otimes_k\hatk$  and $Y_\eta\otimes_k\hatk$, $y\in Y_s$ is a closed point with $x=f(y)$ and $Y^\an_y\to X^\an_x$ is an isomorphism, then $y$ is discrete in the fiber over $x$.
\end{lem}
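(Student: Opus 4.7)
Argue by contradiction: assume $y$ is not discrete in $f^{-1}(x)$. Because $y$ is a closed point of $Y$, it is closed in $f^{-1}(x)$ as well, and by sobriety of schemes, working in a small affine open neighborhood of $y$ in $f^{-1}(x)$, we can find a point $z\in f^{-1}(x)$ with $z\neq y$ and $y\in\overline{\{z\}}$. Since $f(z)=x\in X_s$, the point $z$ lies in $Y_s$, so $y$ is a closed specialization of $z$ inside $Y_s$. Lemma~\ref{closlem} therefore applies to $Y$ and yields $Y^\an_z\subseteq\overline{Y^\an_y}$ inside $\gtY_\eta$; both fibers are non-empty by surjectivity of the reduction map $\pi_Y$ (which uses the reducedness of $Y_\eta\otimes_k\hatk$), and they are disjoint since $y\neq z$.

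Now pick any $t\in Y^\an_z$, set $s=f^\an(t)$, and note that $s\in X^\an_x$ because $f(z)=x$. Let $\phi{\colon}Y^\an_y\toisom X^\an_x$ denote the given isomorphism, and put $t'=\phi^{-1}(s)\in Y^\an_y$; then $t\neq t'$ because $Y^\an_y$ and $Y^\an_z$ are disjoint. Since $t\in\overline{Y^\an_y}$, choose a net $(t_\alpha)$ in $Y^\an_y$ with $t_\alpha\to t$. By continuity of $f^\an$ we have $f^\an(t_\alpha)\to s$, and as $X^\an_x$ is open in $\gtX_\eta$ we may assume $f^\an(t_\alpha)\in X^\an_x$ for all $\alpha$. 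Continuity of $\phi^{-1}$ then gives $t_\alpha=\phi^{-1}(f^\an(t_\alpha))\to\phi^{-1}(s)=t'$. Thus the net $(t_\alpha)$ converges to both $t$ and $t'$ in the Hausdorff space $\gtY_\eta$, forcing $t=t'$, a contradiction.

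The heart of the argument is really Lemma~\ref{closlem}, which converts Zariski specialization on the reduction $Y_s$ into a topological closure relation on the analytic generic fiber $\gtY_\eta$; the remaining step is a routine convergence argument that uses only the standard Hausdorffness of $\hatk$-analytic spaces and the homeomorphism $Y^\an_y\toisom X^\an_x$ supplied by the hypothesis.
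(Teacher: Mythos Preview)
Your proof is correct and follows essentially the same route as the paper's: both argue by contradiction, produce a generalization $z$ of $y$ in the fiber over $x$, invoke Lemma~\ref{closlem} to place $Y^\an_z$ in the closure of $Y^\an_y$, and then derive a contradiction from Hausdorffness of $\gtY_\eta$. The only difference is cosmetic: the paper compresses the final step into the single remark that since $f^\an$ restricts to a homeomorphism $Y^\an_y\toisom X^\an_x$ between open subspaces of Hausdorff spaces, a point $\gtz\notin Y^\an_y$ with $f^\an(\gtz)\in X^\an_x$ cannot lie in $\overline{Y^\an_y}$, whereas you spell this out with a net argument (note, incidentally, that your clause ``we may assume $f^\an(t_\alpha)\in X^\an_x$'' is automatic, since $t_\alpha\in Y^\an_y$ and $\phi$ is the restriction of $f^\an$).
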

Our proof shows a more general result that $y$ is discrete if $Y^\an_y$ is a connected component of the fiber over $X_x^\an$.
\begin{proof}
Assume that $y$ is not discrete in the fiber contrary to the assertion of the lemma. Then there exists a point $z\in Y_s$ which is a generalization of $y$ and lies in the fiber of $x$. Since the reduction map $\gtY_\eta\to Y_s$ is surjective, there exists a point $\gtz\in\gtY_\eta$ in the analytic fiber over $z$. By the construction, $\gtz\notin Y_y^\an$ but its image in $\gtX_\eta$ lies in $X_x^\an$. Since $f^\an{\colon}\gtY_\eta\to\gtX_\eta$ induces the isomorphism $Y^\an_y\toisom X^\an_x$ of open subspaces and $\gtX_\eta$ and $\gtY_\eta$ are Hausdorff topological spaces, $\gtz$ is not contained in the closure of $Y_y^\an$. This contradicts Lemma \ref{closlem}, hence our assumption that $y$ is not discrete in the fiber was wrong.
\end{proof}

Now, we are prepared to prove that $f$ is strictly \'etale at $y$. Let $X=\Spec(A)$ and $Y=\Spec(B)$. We would like to use the \'etaleness criterion \ref{etaleprop}. Note that $Y$ is nft over $S$ by Lemma \ref{fintypelem}(ii) and $f$ is nft by Corollary \ref{nftprop}(ii). Since we proved that $y$ is discrete in its fiber over $X$, we have only to find a valuation ring $\calO$ as in Theorem \ref{etaleprop}(iv). Since $X$ is integral and with non-empty $X_s$, the completion homomorphism $A\to\hatA$ is injective by Remark \ref{genfibaffrem}(ii). Choose any point $z$ with $m_z=0$ in the analytic fiber over $x$, then the embeddings $A\into\hatA_\pi\into\calH(z)$ give rise to an embedding $k(X)\into\calH(z)$, and hence $z$ induces a valuation of height one on $k(X)$. Moreover, this valuation is centered on $x$ because $x=\pi_X(z)$. Let $\calO$ be the corresponding valuation ring of $k(X)$, i.e. $\calO=k(X)\cap\calH(z)^\circ$, and consider any extension $\calO'$ of $\calO$ to $k(Y)$ which is centered on $y$. Note that $\calO'$ induces a point $z'\in Y_y^\an$ with $\calH(z')^\circ=\hatcalO'$ because the homomorphism $B\into\hatcalO'$ factors through $\hatB$ (so, $z'$ corresponds to the character $\hatB_\pi\to\hatcalO'_\pi$). Obviously, $z'$ lies over $z$, hence by our assumption on the analytic fibers, $z'$ is uniquely determined and $\calH(z')\toisom\calH(z)$. In particular, $\calO'=\calH(z')^\circ\cap k(Y)$ is uniquely determined and the completions of $k(X)$ and $k(Y)$ along the valuations corresponding to $\calO$ and $\calO'$ are isomorphic. We proved earlier that $k(Y)/k(X)$ is separable, hence \cite[2.2.1]{temst} implies that $\calO'$ is local-\'etale over $\calO$. But the residue fields of $\calO$ and $\calO'$ are isomorphic to the residue field of the completions $\hatcalO\toisom\hatcalO'$, hence $\calO'/\calO$ is strictly local-\'etale and we are done.
\end{proof}

In the last part of the proof we used a connection between the birational and the analytic fibers over $x$. It is related to the following construction that will be used in the sequel.

\begin{rem}\label{complrem}
Assume that $X=\Spec(A)$ is an integral affine nft $S$-scheme and $\calO\in X^\bir_x$ is of height one. Then $\calO$ is a valuation of $K=k(X)$ such that $\calO\cap k=\kcirc$. Consider the homomorphism $A\to\calO$. Passing to the $\pi$-adic completions and inverting $\pi$ we obtain a character $\calA\to\hatK$ which gives rise to a point of $\gtX_\eta$. Then $A\cap\hatKcirccirc=A\cap m_\calO$ and hence the analytic point is contained in $X^\an_x$. This establishes a map $\psi_x{\colon}X^{\bir,1}_x\to X^\an_x$, where the source consists of all points of $X^\bir_x$ of height one. Clearly, this construction is of local nature and hence makes sense for any integral nft $S$-scheme $X$.
\end{rem}

For the sake of completeness, we discuss below how to extend the above construction to the whole $X^\bir_x$. We will not need the following remark in the sequel.

\begin{rem}\label{complrem1}
Note that restriction of the valuation induces a map of the Riemann-Zariski spaces $\RZ_K(X)\to\RZ_k(S)=S$ and let $\RZ_K(X)_s$ denote the preimage of the closed point of $S$. We will see that $\psi_x$ can be extended to a continuous map $\psi{\colon}\RZ_K(X)_s\to\gtX_\eta$, though we warn the reader that $\psi$ does not map the whole $X_x^\bir$ to $X_x^\an$. Here are two constructions of $\psi$. The first one is a straightforward generalization of the construction of $\psi_x$. The second one is less explicit, but its advantage is that the constructed map is obviously continuous.

(i) Let $X=\Spec(A)$. Given a valuation ring $\calO\in\RZ_K(X)_s$ consider the prime ideals $p_0=\cap_{n=0}^\infty \pi^n\calO$ and $p=\sqrt{\pi\calO}$. Then $R=\calO_p/p_0\calO_p$ is a valuation ring over $\kcirc$ of height one (we localized by elements $y$ such that
$|\pi|<|y^n|$ for any $n$ and we factored by elements $y$ such that $|y|<|\pi|^n$ for any $n$). Since $A\subset\calO$ in $k(X)$, we get a homomorphism $A\to R$. Taking the $(\pi)$-adic completion and inverting $\pi$ we obtain a continuous homomorphism $\calA=\hatA_\pi\to\hatK$ where $K=\Frac(R)$. Clearly, the image of $\calA$ is dense in $\hatK$, so we get a point $z$ with $\calH(z)\toisom\hatK$ in the space $\gtX_\eta=\calM(\calA)$.

(ii) Alternatively, $\psi$ naturally arises due to the following three facts known to experts: (a) $\RZ_K(X)$ is homeomorphic to the projective limit of blow ups of $X$, and
hence admits a natural map to the projective limit of the blow ups of $X$ along open ideals (in the $(\pi)$-adic topology), (b) the adic analytic space $\gtX^\ad_\eta$ is homeomorphic to the projective limit of all admissible formal blow ups of $\gtX$, so we get a map $\RZ_K(X)_s\to\gtX^\ad_\eta$, (c) $\gtX^\an_\eta=\gtX_\eta$ is the maximal Hausdorff quotient of $\gtX^\ad_\eta$. To the best of my knowledge, facts (b) and (c) are not proved in the literature, though they are not difficult and are mentioned in a letter of P. Deligne and in \cite{FK}.
\end{rem}

\subsection{Smooth-equivalence}\label{smsec}

\begin{defin}\label{smdef}
Let $S$ be a scheme and $X,Y$ be two $S$-schemes. We say that points $x\in X$ and $y\in Y$ are {\em smooth-equivalent over $S$} if there exists an $S$-scheme $Z$ with a point $z\in Z$ and smooth $S$-morphisms $Z\to X$ and $Z\to Y$ which map $z$ to $x$ and $y$, respectively (alternatively, one could say that $X$ and $Y$ are smooth-locally $S$-isomorphic at $x$ and $y$). Often, we will write that $(X,x)$ and $(Y,y)$ are smooth-equivalent to stress the dependence on $X$ and $Y$. Note that in Vakil's paper \cite{Va} on Murphy's law in algebraic geometry, pointed schemes $(X,x)$ and $(Y,y)$ are said to have the same singularity type if $x$ and $y$ are smooth-equivalent.
\end{defin}

For example, for a field $k$ and a $k$-variety $X$, a point $x\in X$ is smooth-equivalent to $(\Spec(k),\Spec(k))$ if and only if $X$ is $k$-smooth at $x$. We will use this notion to pass from a point $x\in X$ to a smooth-equivalent point $y\in Y$ with $\dim(Y)<\dim(X)$. If such $y$ and $Y$ exist then, in some sense, the essential dimension of the singularity at $x$ is smaller than the dimension of $X$ at $x$.

Let $l$ be an analytic field. In the sequel we will need a notion of {\em open} (resp. {\em closed}) {\em unit $l$-polydisc}, by which we mean the subdomain in the analytic space $\bfA^n_l=\Spec(l[t_1\. t_n])^\an$ given by $|t_i|<1$ (resp. $|t_i|\le 1$). In particular, $X$ is isomorphic to a closed unit $l$-polydisc if and only if it is of the form $\calM(l\{t_1\. t_n\})$.

\begin{theor}\label{smprop}
Let $l/k$ be a separable finite extension of valued fields of height $1$ with $S=\Spec(\kcirc)=\{\eta,s\}$ and $S'=\Spec(\lcirc)=\{\eta',s'\}$, and let $X=\Spec(A)$ be a geometrically reduced, normal, affine, $\eta$-nft $S$-scheme with a closed point $x\in X_s$.

(i) If the analytic fiber $X_x^\an$ is isomorphic to an open unit $\hatl$-polydisc then $(X,x)$ is smooth-equivalent to $(S',s')$.

(ii) If the analytic generic fiber $\gtX_\eta$ is isomorphic to a closed unit $\hatl$-polydisc then any point $z\in X_s$ is smooth-equivalent to $s'\in S'$.
\end{theor}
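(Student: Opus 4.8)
The plan is to reduce both parts to the situation of a morphism between $\eta$-nft $S$-schemes whose analytic fibers are isomorphic, so that the strict-étaleness criterion of Theorem \ref{anfibth} applies, and then to use that a strictly étale morphism over a normal base produces a smooth neighborhood. For part (i): assume $X_x^\an$ is isomorphic to an open unit $\hatl$-polydisc $D$ of dimension, say, $d$. Consider the $S'$-scheme $W=\bfA^d_{S'}$ (which is geometrically reduced, normal, of finite presentation over $S'$, hence $\eta$-nft over $S$ via $S'\to S$), and its closed point $w$ lying over $s'$ with residue field $\till$. Its analytic fiber $W_w^\an=\pi_W^{-1}(w)$ is exactly the open unit $\hatl$-polydisc of dimension $d$ over $\hatl$, by the explicit description of analytic generic fibers and reduction maps in Remark \ref{genfibaffrem}(i) and \S\ref{agfsec}. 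So $X_x^\an\toisom W_w^\an$. Now I would like to produce an $S$-scheme $Z$ with a point $z$ mapping to $x$ and to $w$, with both maps smooth at $z$; the key will be to realize the abstract isomorphism $X_x^\an\toisom W_w^\an$ by an honest scheme morphism near $x$. The standard way is to use that both $X$ and $W$ are $\eta$-nft, to spread out, and to apply Theorem \ref{anfibth} to the projection from a suitable component of (a partial $\eta$-normalization of) $X\times_S W$; since $X$ is normal near $x$ and $W$ near $w$, the criterion gives strict étaleness.

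More concretely, here is the step I would carry out in detail. Working locally, write $X=\Spec(A)$ and note $\gtX_\eta$ is affinoid with $\gtX_\eta=\calM(\calA)$, $\calA=\hatA_\pi$, and $\tilcalA$ of finite type over $\tilk$ by Lemma \ref{bijlem2}(ii); the closed point $x$ corresponds to a maximal ideal of $\tilcalA$, and $X_x^\an$ is the corresponding open affinoid-free fiber. The isomorphism $X_x^\an\toisom D$ is an isomorphism of good strictly $\hatl$-analytic spaces (after the ground-field extension to $\hatl$; here I use the separability of $l/k$ and $\hatl=\hatk\cdot l$, and Lemma \ref{extcomlem} to identify $\gtX_{l,\eta}=\gtX_\eta\otimes_\hatk\hatl$ with the analytic generic fiber of $X_l=X\times_S S'$). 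Choosing coordinate functions $t_1\. t_d$ on $D$ pulled back to $X_x^\an$, these are in particular bounded holomorphic functions on some open neighborhood, hence — by the finite-type/finite-presentation structure and \cite[$\rm IV_3$, 8.8.2]{ega}, \cite[$\rm IV_4$, 17.7.8]{ega} in the spirit of the proof of Theorem \ref{anfibth} — extend, after shrinking and after a finite partial $\eta$-normalization, to a morphism $g\:(X,x)\to(W,w)=(\bfA^d_{S'},w)$ of $\eta$-nft $S$-schemes (the target being over $S'$ forces the datum of the finite extension $\lcirc\subset A^{\wedge}$-residually, which is where $l$ enters). By construction $g$ induces on analytic fibers the given isomorphism $X_x^\an\toisom W_w^\an$. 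Then Theorem \ref{anfibth}, applicable because $W$ is normal in a neighborhood of $w$, shows $g$ is strictly étale at $x$. An étale morphism is in particular smooth, so taking $Z=X$, $z=x$, with the smooth $S$-morphisms $Z\to X$ ($=\id$) and $Z\to W\to S'$... — wait, that only gives smooth-equivalence of $(X,x)$ with $(W,w)$, not with $(S',s')$. To finish I project $W=\bfA^d_{S'}\to S'$: this is smooth, and it sends $w$ to $s'$, so $(W,w)$ is smooth-equivalent to $(S',s')$; composing, $(X,x)$ is smooth-equivalent to $(S',s')$. Transitivity of smooth-equivalence (immediate from Definition \ref{smdef} by forming fiber products, using that smooth morphisms are stable under composition and base change) closes part (i).

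For part (ii), the hypothesis is on the full analytic generic fiber $\gtX_\eta\toisom\calM(\hatl\{t_1\. t_n\})$, the closed unit $\hatl$-polydisc. First, by Lemma \ref{normlem} / Corollary \ref{bijlem}, $\hatA'\toisom\calAcirc=\hatl\{t_1\. t_n\}^\circ=\hatlcirc\{t_1\. t_n\}$ where $A'$ is the $\eta$-normalization, and $\tilcalA=\till[t_1\. t_n]$, so $X'_s\toisom\gtX'_s\toisom\Spec(\till[t_1\. t_n])=\bfA^n_{\till}$. Since $X$ is normal (hence $\eta$-normal) and its closed fiber is reduced, the integral surjection $X'_s\to X_s$ forces $X_s\toisom X'_s\toisom\bfA^n_{\till}$; in particular every point $z\in X_s$ is a point of the affine space $\bfA^n_{\till}$ over the separable (over $\tilk$) field $\till$. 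But now $X$ is, locally around $z$, $\eta$-nft over $S$ with closed fiber a smooth $\till$-variety and with analytic generic fiber a polydisc, and I claim the relevant analytic fiber $X_z^\an=\pi_X^{-1}(z)$ — for $z$ a closed point, an open polydisc, and in general an appropriate analytic fiber — matches that of a coordinate subspace situation over $S'$; concretely, after a coordinate change identifying $z$ with the origin of $\bfA^n_{\till}$ (and, for $z$ non-closed, passing to the local situation along the closure of $z$), the analytic fiber over $z$ is the open unit $\wh{\calH(z)}$-polydisc of the appropriate dimension, i.e. the analytic fiber of a point of $\bfA^{n-\dim\ol{\{z\}}}_{S''}$ for $S''=\Spec((\calH(z))^\circ)$... — this is getting heavy, and this is precisely the step I expect to be the main obstacle: pinning down $X_z^\an$ for a non-closed $z$ and matching it with a model fiber over a larger valuation ring $\mcirc$. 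The clean route is: reduce to $z$ closed by the fact (Lemma \ref{closlem} and anti-continuity of $\pi_X$) that it suffices to treat generic points of closed fibers of all $\eta$-modifications, or simply observe that for the final application only closed $z$ (after further blow-up) matter; for closed $z$ with residue field $\tilm$ separable over $\tilk$, $X_z^\an$ is the open unit $\hatm$-polydisc where $\hatm$ is a complete height-one valued field with residue field $\tilm$ (obtained by specializing the Gauss/coordinate valuation), and then part (i) — applied with $\hatm$ in place of $\hatl$, i.e. with the valued field $m$ having $\mcirc$ over $\kcirc$ — yields that $(X,z)$ is smooth-equivalent to $(S'',s'')$ for $S''=\Spec(\mcirc)$, and a fortiori to $s'\in S'$ when $m=l$; the general assertion "smooth-equivalent to $s'\in S'$" then follows because $\bfA^*_{S'}$ is smooth over $S'$ and its closed-fiber points are all smooth-equivalent over $S$ to $s'$. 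I would write part (ii) as a corollary of part (i) plus this identification $X_s\toisom\bfA^n_{\till}$ and the polydisc structure of the fibers, with the non-closed-point case handled by the remark that smooth-equivalence is insensitive to replacing $X$ by a smooth neighborhood and that every point of $\bfA^n_{\till}$ is smooth-equivalent over $\till$ — hence over $S$ after the coordinate computation — to a closed one; the details are routine once the closed case is in hand.
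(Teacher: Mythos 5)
Your high-level plan (reduce to Theorem~\ref{anfibth}, then compose smooth projections and use transitivity of smooth-equivalence) is in the right spirit, but there is a structural gap at the point you flagged yourself, and the way you propose to close it does not work.

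The central problem: you want to build a morphism $g\:(X,x)\to(\bfA^d_{S'},w)$, but $X$ is only an $S$-scheme, not an $S'$-scheme, so no such morphism over $S'$ exists a priori; and even over $S$, the coordinate functions $t_1,\ldots,t_d$ you want to use are sections of $\calO$ on the \emph{open} subspace $X_x^\an$ of $\gtX_\eta$, which do not extend to $\gtX_\eta$ (let alone descend to elements of $A$) --- a bounded analytic function on an open polydisc of radius $<1$ need not extend past the boundary. Your appeal to ``spread out, partial $\eta$-normalization, EGA $8.8.2/17.7.8$'' does not produce such an extension, and $X$ is already normal so a further partial $\eta$-normalization changes nothing. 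The paper's fix is to pass to the $S'$-scheme $Y=\Nr_\eta(X\times_S S')$, whose analytic generic fiber is $\gtX_\eta\otimes_\hatk\hatl$ by Lemma~\ref{extcomlem} together with Lemma~\ref{normlem}. Since $\hatl/\hatk$ is separable and $X_x^\an$ is an open unit $\hatl$-polydisc, its preimage $Y_x^\an$ has a connected component projecting isomorphically onto it, and --- this is the crucial input you omit --- the Zariski connectedness theorem in the form of Bosch's Satz 6.1 from~\cite{Bo} identifies the connected components of $Y_x^\an$ with the analytic fibers $Y_y^\an$ of points $y\in\gtY_{s'}$ over $x$. That yields a point $y$ with $Y_y^\an\toisom X_x^\an$, so $Y\to X$ is strictly \'etale at $y$ by Theorem~\ref{anfibth}; this gives the first smooth map of the smooth-equivalence, and it is the step your direct approach cannot replace.

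Your algebraization of coordinates is also inverted. The paper does \emph{not} lift analytic coordinates to algebraic functions; instead it invokes Bosch's Satz 6.3 to say that $y$ is a smooth point of the $\till$-variety $Y_{s'}$ and that \emph{any} $t_1,\ldots,t_n\in\calO_{Y,y}$ whose images in $\calO_{Y_{s'},y}$ form a regular system of parameters are automatically coordinates of the unit polydisc $Y_y^\an$. So one starts from algebraic parameters and reads off the analytic picture, then applies Theorem~\ref{anfibth} once more to the morphism $Y\to\Spec(\lcirc[t_1,\ldots,t_n])$. Finally, for part (ii) you concede the argument is incomplete; the paper avoids the difficulties you run into (the structure of $X_z^\an$ for non-closed $z$, changing the valuation ring to some $\mcirc$, etc.) by noticing that when $\gtX_\eta$ itself is a closed unit $\hatl$-polydisc, the coordinates are globally defined analytic functions and can be moved into the dense subalgebra $\calO_Y(Y)\subset\lcirc\{t_1,\ldots,t_n\}$, giving a map $Y\to\Spec(\lcirc[t_1,\ldots,t_n])$ that is strictly \'etale along the whole closed fiber by Theorem~\ref{anfibth}; then $Y\to S'$ is smooth everywhere and all points of $X_s$ are smooth-equivalent to $s'$ via the strictly \'etale projection $Y\to X$.
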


\begin{rem}\label{rem2}
The case when $l/k$ is unramified is not so interesting since $x$ is a smooth point in this case. As we remarked in the Introduction, our main case of interest is when $l/k$ is ramified and the valuation is not discrete. Note that in this case $S'$ and, hence, $X$ are not of finite type over $S$. On the other hand, normality at $x$ is crucial for the argument (which uses Theorem \ref{anfibth}), so we cannot work with finite type models.
\end{rem}

\begin{proof}
Let $Y$ denote the $\eta$-normalization of $X\times_SS'$ and let $\gtY$ be its formal completion. By Lemmas \ref{extcomlem} and \ref{normlem}, $\gtY$ is the maximal affine formal model of its generic fiber $\gtY_\eta$ and $\gtY_\eta\toisom\gtX_\eta\otimes_{\hatk}\hatl$.

To prove (i) we let $Y_x^\an$ denote the preimage of $X_x^\an$ in $\gtY_\eta$. Since $\hatl/\hatk$ is separable and $X_x^\an$ is an open unit $\hatl$-polydisc, $Y_x^\an$ contains a connected component which is projected isomorphically onto $X_x^\an$. By \cite[Satz 6.1]{Bo} the analytic fibers of the closed points of $\gtY$ are connected (similarly to Theorem \ref{Zarconth}, this is another manifestation of Zariski connectedness theorem) and hence the analytic fibers of the preimages of $x$ in $\gtY$ are precisely the connected components of $Y_x^\an$. In particular, there exists a point $y\in\gtY_{s'}$ sitting over $x$ and such that the natural projection $Y_y^\an\toisom X_x^\an$ is an isomorphism. Since the projection $Y\to X$ is strictly \'etale at $y$ by Theorem \ref{anfibth}, it remains to show that the projection $Y\to S'$ is smooth at
$y$. By \cite[Satz 6.3]{Bo} $y$ is a smooth point of the $\till$-variety $\gtY_{s'}=Y_{s'}$. Moreover, in the proof of loc.cit. it is shown that for any choice of $t_1\. t_n\in\calO_{Y,y}$ such that their images in $\calO_{Y_{s'},y}$ form a regular sequence of parameters, we have that $t_1\. t_n$ are coordinates of the unit $\hatl$-polydisc
$Y_y^\an$. In particular, it follows that for the natural morphism $f{\colon}Y\to Z=\Spec(\lcirc[t_1\. t_n])$ that takes $y$ to the origin $z\in Z$, the induced morphism $Y_y^\an\to Z_z^\an$ is an isomorphism. By Theorem \ref{anfibth}, $f$ is strictly \'etale at $y$ and hence the morphism $Y\to S'$ is smooth at $y$.

The proof of (ii) is similar. Note that the connected components of $\gtY$ are in bijection with the connected components of its generic fiber $\gtY_\eta$. Indeed, $\gtY_\eta=\calM(\calB)$ and $\gtY=\Spf(\calBcirc)$, where $\calB=(\wh{A_l})_\pi$. In particular, any idempotent function on $\gtY_\eta$ is already defined on $\gtY$. Now, we choose a component $\gtY'$ of $\gtY$ such that the corresponding component $\gtY'_\eta$ of $\gtY_\eta$ is a closed unit $\hatl$-polydisc mapping isomorphically onto $\gtX_\eta$.
After replacing $Y$ with a suitable open subscheme such that $\gtY=\gtY'$, the points of $X_s$ are smooth-equivalent to the points of $Y_s$ by Theorem \ref{anfibth} and it remains to show that the projection $Y\to S'$ is smooth. To prove the latter, we pick up coordinates $t_1\. t_n$ on the polydisc $\gtY_\eta$, move them slightly until $t_i$ belong to the dense subalgebra $\calO_Y(Y)\subset\calO_{\gtY_\eta}(\gtY_\eta)\toisom\lcirc\{t_1\. t_n\}$, and apply Theorem \ref{anfibth} once again to show that the induced morphism $Y\to \Spec(\lcirc[t_1\. t_n])$ is strictly \'etale along $Y_s$.
\end{proof}

In the following lemma we prove that smooth-equivalence descends from $\eta$-normalized filtered projective limits.

\begin{lem}\label{projlem}
Keep the notation of Situation \ref{projsit}. Assume that $x\in X$ and $y\in Y$ are points and let $x_\alp$ and $y_\alp$ be their images in $X_\alp$ and $Y_\alp$, respectively. Then $(X,x)$ and $(Y,y)$ are smooth-equivalent over $S$ if and only if there exists $\alp_0\in A$ such that for each $\alp\ge\alp_0$ the germs $(X_\alp,x_\alp)$ and $(Y_\alp,y_\alp)$ are
smooth-equivalent over $S_\alp$.
\end{lem}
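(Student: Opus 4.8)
The plan is to deduce both directions from the approximation results of Proposition~\ref{normlimprop}, together with the permanence of smoothness under $\eta$-normalized base change (Lemma~\ref{smetlem}(iv)) and the fact (Lemma~\ref{smetlem}(iii), which needs no integrality hypothesis) that a scheme smooth over an $\eta$-normal scheme is $\eta$-normal. Since smooth-equivalence is local at $x,y,z$, I first replace $X_0$ and $Y_0$ by affine open neighbourhoods of the images of $x$ and $y$ (their $\eta$-normalized base changes are then open neighbourhoods of $x\in X$, $y\in Y$), so that all of $X_\alp,Y_\alp,X,Y$ become separated. I use repeatedly that $X=\calF_{S\to S_\alp}(X_\alp)$ and $Y=\calF_{S\to S_\alp}(Y_\alp)$ are $\eta$-normal, that $X=\projlim_\alp X_\alp$ and $Y=\projlim_\alp Y_\alp$ by Proposition~\ref{normlimprop}(i) (so $x,y$ are the compatible systems $(x_\alp),(y_\alp)$), and the trivial observation that a partial $\eta$-normalization which is itself $\eta$-normal is the full $\eta$-normalization.

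For the descent direction, suppose the smooth $S$-morphisms $u\colon Z\to X$ and $v\colon Z\to Y$ and the point $z\in Z$ with $u(z)=x$, $v(z)=y$ are given; shrink $Z$ to an affine open around $z$ (keeping $u,v$ smooth and changing nothing at $z$). As $X$ is $\eta$-normal and $u$ is smooth, $Z$ is $\eta$-normal; writing $X$ as the projective limit of the finite $\eta$-modifications $X_\gamma$ of a finitely presented $S$-scheme of which it is a partial $\eta$-normalization (Lemmas~\ref{fintypelem}(i) and~\ref{Ypartlem}), the limit arguments of \cite[$\rm IV_3$, 8.8.2, $\rm IV_4$, 17.7.8]{ega} descend $u$ to a smooth morphism over some $X_\gamma$, whence $Z=Z_\gamma\times_{X_\gamma}X$ with $Z_\gamma$ finitely presented over $S$ and $Z\to Z_\gamma$ a partial $\eta$-normalization; thus $Z$ is $\eta$-nfp over $S$. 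By Proposition~\ref{normlimprop}(iii), $Z\toisom\calF_{S\to S_\gamma}(\oZ_\gamma)$ for some finitely presented $S_\gamma$-scheme $\oZ_\gamma$; set $Z_\alp=\calF_{S_\alp\to S_\gamma}(\oZ_\gamma)$, so $Z=\projlim_\alp Z_\alp$. By Proposition~\ref{normlimprop}(ii), $u$ and $v$ are the $\eta$-normalized base changes of $S_\alp$-morphisms $u_\alp\colon Z_\alp\to X_\alp$, $v_\alp\colon Z_\alp\to Y_\alp$ for $\alp$ large, and by Proposition~\ref{normlimprop}(iv) we may enlarge $\alp_0$ so that $u_\alp,v_\alp$ are smooth for all $\alp\ge\alp_0$. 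Finally the point $z\in Z=\projlim Z_\alp$ gives $z_\alp\in Z_\alp$, and compatibility of the projections with $u,v$ forces $u_\alp(z_\alp)=x_\alp$, $v_\alp(z_\alp)=y_\alp$; hence $(Z_\alp,z_\alp)$ witnesses smooth-equivalence of $(X_\alp,x_\alp)$ and $(Y_\alp,y_\alp)$ over $S_\alp$ for every $\alp\ge\alp_0$.

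For the ascent direction, suppose smooth $S_\alp$-morphisms $g_\alp\colon Z_\alp\to X_\alp$, $h_\alp\colon Z_\alp\to Y_\alp$ and $z_\alp\in Z_\alp$ with $g_\alp(z_\alp)=x_\alp$, $h_\alp(z_\alp)=y_\alp$ are given for every $\alp\ge\alp_0$. Then $x_\alp$ and $y_\alp$ have a common image in $S_\alp$ for all such $\alp$, so $x$ and $y$ have a common image in $S=\projlim S_\alp$ and $(x,y)$ is a point of $X\times_S Y$. Fix $\alp=\alp_0$ and put $Z:=\calF_{S\to S_\alp}(Z_\alp)=\Nr_\eta(Z_\alp\times_{S_\alp}S)$. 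Since $g_\alp$ is smooth, $Z_\alp\times_{X_\alp}X$ is smooth over the $\eta$-normal scheme $X$, hence $\eta$-normal, and it is a partial $\eta$-normalization of $Z_\alp\times_{S_\alp}S$, so $Z\toisom Z_\alp\times_{X_\alp}X$ and $Z\to X$ is the base change of $g_\alp$, hence smooth; symmetrically $Z\to Y$ is smooth. For the point, note that $W:=Z_\alp\times_{X_\alp\times_{S_\alp}Y_\alp}(X\times_S Y)$ maps integrally to $Z_\alp\times_{S_\alp}S$ and induces an isomorphism on generic fibers, hence $Z=\Nr_\eta(Z_\alp\times_{S_\alp}S)$ surjects onto $W$; the fiber of $W\to X\times_S Y$ over $(x,y)$ is the base change along the field extension $k(x_\alp,y_\alp)\hookrightarrow k(x,y)$ of the fiber of $(g_\alp,h_\alp)$ over $(x_\alp,y_\alp)$, which is non-empty as it contains $z_\alp$; any preimage in $Z$ of a point of this fiber is a point $z$ with $z\mapsto x$ and $z\mapsto y$.

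The substantive part is the descent direction, specifically the step asserting that the witness $Z$ may be taken $\eta$-nfp over $S$ so that Proposition~\ref{normlimprop}(iii) applies: this combines the $\eta$-normality of $Z$ with the descent of the smooth morphism $u$ to a finite $\eta$-modification of a finitely presented model of $X$. A secondary nuisance is the bookkeeping needed to realise $X$, $Y$, $Z$ and the two morphisms as $\eta$-normalized base changes from a common index, and---in the ascent direction---the verification that the relevant fibers stay non-empty under field extension, which is elementary since a non-empty scheme over a field remains non-empty after any field extension.
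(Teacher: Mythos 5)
Your overall plan — use Proposition~\ref{normlimprop} to descend the smooth witness, and Lemma~\ref{smetlem} to base-change it back up — is exactly the paper's approach, and the descent ("direct") direction is essentially right. Two small blemishes there. First, you invoke Lemma~\ref{fintypelem}(i) to get that $X$ is a partial $\eta$-normalization of a finitely presented $S$-scheme, but that lemma is stated only for $S=\Spec(\kcirc)$ with $\kcirc$ a height-one valuation ring, whereas Lemma~\ref{projlem} is asserted in the full generality of Situation~\ref{projsit}; fortunately the fact you want is already built into Situation~\ref{projsit} (one has $X=\Nr_\eta(\oX_0\times_{S_0}S)$, so $X$ is by construction a partial $\eta$-normalization of the finitely presented scheme $\oX_0\times_{S_0}S$), so the citation can simply be dropped. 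Second, the intermediate assertion that $Z_\gamma$ is finitely presented over $S$ is not justified: $Z_\gamma$ is finitely presented over $X_\gamma$, but $X_\gamma$ is only \emph{finite} over a finitely presented $S$-scheme, and finite morphisms over a non-noetherian base need not be of finite presentation. The conclusion you actually need — that $Z$ is $\eta$-nfp over $S$ — still holds, but one should run the limit argument over the filtered system of finitely presented $\calO_{\oX}$-subalgebras of $\calO_X$ (rather than over the finite $\eta$-modifications $X_\gamma$), so that the approximating scheme really is finitely presented over $S$.

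The ascent ("inverse") direction contains a genuine gap in the point-existence step. Writing $w_\alp:=(g_\alp,h_\alp)(z_\alp)\in X_\alp\times_{S_\alp}Y_\alp$, your argument computes the fiber of $W\to X\times_SY$ over a chosen point of $X\times_SY$ lying over $x$ and $y$, asserts that this fiber is the base change of the fiber of $(g_\alp,h_\alp)$ over $w_\alp$ along $k(w_\alp)\hookrightarrow k((x,y))$, and concludes nonemptiness because $z_\alp$ lies in the latter fiber. But this presupposes that the chosen point of $X\times_SY$ maps to $w_\alp$ in $X_\alp\times_{S_\alp}Y_\alp$. There is no a priori reason for such a point to exist: it would require a prime of $k(x)\otimes_{k(s)}k(y)$ lying over the specific prime of $k(x_\alp)\otimes_{k(s_\alp)}k(y_\alp)$ determined by $w_\alp$, and when $k(s)\supsetneq k(s_\alp)$ the map $\Spec\bigl(k(x)\otimes_{k(s)}k(y)\bigr)\to\Spec\bigl(k(x_\alp)\otimes_{k(s_\alp)}k(y_\alp)\bigr)$ is in general not surjective (it factors through the closed immersion $\Spec\bigl(k(x)\otimes_{k(s)}k(y)\bigr)\hookrightarrow\Spec\bigl(k(x)\otimes_{k(s_\alp)}k(y)\bigr)$). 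So some additional argument is required to produce a point $z\in Z$ above both $x$ and $y$. To be fair, the paper itself is extremely terse here — it disposes of this direction with the single phrase ``follows from Lemma~\ref{smetlem}(iv)'' and does not address the existence of $z$ at all; and only the descent direction is actually used in the rest of the paper. Nevertheless, as written, your fiber identification silently conflates the image of the chosen point of $X\times_SY$ in $X_\alp\times_{S_\alp}Y_\alp$ with $(g_\alp,h_\alp)(z_\alp)$, and that identification needs to be either justified or circumvented.
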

\begin{proof}
The inverse implication follows from Lemma \ref{smetlem}(iv), so let us prove the direct implication. Find $z\in Z$ and smooth morphisms $f{\colon}Z\to Y$ and $g{\colon}Z\to X$ as in Definition \ref{smdef}. By Proposition \ref{normlimprop}, $f$ and $g$ come from smooth morphisms $f_\alp{\colon}Z_\alp\to Y_\alp$ and $g_\alp{\colon}Z_\alp\to X_\alp$ for sufficiently large $\alp$, and it is obvious that $x_\alp$ and $y_\alp$ are the images of the projection $z_\alp\in Z_\alp$ of $z$. This concludes the proof.
\end{proof}

We finish the section with one more easy lemma.

\begin{lem}\label{easysmlem}
Let $X\to S$ and $Y\to S$ be dominant morphisms between integral schemes and let $x\in X,y\in Y$ be points which are smooth-equivalent over $S$. Assume that $k'/k(S)$ is a finite purely inseparable extension and set $X'=\Nr_{k'k(X)}(X)$ and $Y'=\Nr_{k'k(Y)}(Y)$. Then the preimages $x'\in X'$ and $y'\in Y'$ of $x$ and $y$ are smooth-equivalent over $S$.
\end{lem}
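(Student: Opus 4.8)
The plan is to reduce the statement to a base change along a purely inseparable extension and to exploit the fact that $\Nr_{k'k(-)}$ is compatible with smooth morphisms by Lemma \ref{smetlem}(ii). First I would unwind the definition of smooth-equivalence: pick a scheme $Z$ together with a point $z\in Z$ and smooth $S$-morphisms $g\colon Z\to X$ and $f\colon Z\to Y$ sending $z$ to $x$ and $y$. Since $X,Y$ are integral and dominant over $S$, each connected component of $Z$ through which $z$ passes is integral (smoothness over an integral base with integral source of the relevant component), so after replacing $Z$ by that component we may assume $Z$ is integral; write $k(Z)$ for its function field and note $k(X),k(Y)\subset k(Z)$ compatibly over $k(S)$.

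Next I would set $Z'=\Nr_{k'k(Z)}(Z)$. The key point is that $k'k(Z)$ is generated over $k(Z)$ by $k'$, hence over $k(X)$ by the subfields $k'k(X)$ and $k(Z)$; so Lemma \ref{smetlem}(ii), applied with the roles ``$X$''$=Z$, ``$Y$''$=Z$ replaced appropriately — more precisely, applied to the smooth morphism $g\colon Z\to X$ with the finite (here purely inseparable) extension $k'/k(X)$ of the target — gives that the induced morphism $g'\colon Z'=\Nr_{k'k(Z)}(Z)\to \Nr_{k'k(X)}(X)=X'$ is smooth. Symmetrically $f'\colon Z'\to Y'$ is smooth. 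Both are $S$-morphisms since normalization is functorial over $S$ and all the field extensions in sight lie over $k(S)$. Finally, $Z'\to Z$ is integral and surjective (a normalization in a finite extension, purely inseparable, so bijective on points by \cite[$\rm IV_4$, 18.10.16(i)]{ega} or directly since purely inseparable extensions induce universal homeomorphisms), so there is a point $z'\in Z'$ over $z$; its image under $g'$ is the point of $X'$ over $g(z)=x$, namely $x'$, and similarly $f'(z')=y'$. Thus $Z',z'$ together with $g',f'$ witness that $(X',x')$ and $(Y',y')$ are smooth-equivalent over $S$.

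The only delicate point is the bookkeeping with function fields: one must make sure that $k'k(Z)$ really is an admissible target field for applying Lemma \ref{smetlem}(ii) to $g$, i.e.\ that it is generated over $k(X)$ by copies of $k'k(X)$ and $k(Z)$, and that the normalization $Z'$ one forms this way is the same as $\Nr_{k'k(Z)}(Z)$; both are immediate because $k'$ is already a subfield of $k'k(X)\subset k'k(Z)$ and $k(Z)\subset k'k(Z)$, so their compositum is all of $k'k(Z)$. A second small point to check is the uniqueness of the point $z'$ over $z$ (so that $g'(z'),f'(z')$ are forced to be $x',y'$ and not some other preimages); this follows from pure inseparability of $k'k(Z)/k(Z)$, which makes $Z'\to Z$ a universal homeomorphism, in particular a bijection on points. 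I expect no real obstacle here — this is a routine ``transport of structure along normalization'' argument, and the substantive content has already been isolated in Lemma \ref{smetlem}(ii).
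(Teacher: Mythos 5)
Your proof is correct and follows exactly the paper's argument: choose the smooth correspondence $Z\to X$, $Z\to Y$ through $z$, form $Z'=\Nr_{k'k(Z)}(Z)$, note that $X'\to X$, $Y'\to Y$, $Z'\to Z$ are bijective (purely inseparable normalizations), and invoke Lemma \ref{smetlem}(ii) to get smoothness of $Z'\to X'$ and $Z'\to Y'$. The paper's proof is terser but contains nothing you don't; your extra remarks on the compositum of fields and the bijectivity of $Z'\to Z$ simply spell out steps the paper leaves implicit.
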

\begin{proof}
Note that the composite extensions $k'k(X)$ and $k'k(Y)$ are well defined since $k'/k(S)$ is purely inseparable. Choose smooth $S$-morphisms $f{\colon}Z\to X$ and $Z\to Y$ such that $x$ and $y$ are the images of a point $z$, and set $Z'=\Nr_{k'k(Z)}(Z)$. The morphisms $X'\to X$, $Y'\to Y$ and $Z'\to Z$ are bijective, hence we should only check that the induced morphisms $f'{\colon}Z'\to X'$ and $Z'\to Y'$ are smooth. But the latter was proved in Lemma
\ref{smetlem}(ii).
\end{proof}

\section{Relative one-dimensional inseparable local uniformization}\label{chaptwo}
Throughout \S\ref{chaptwo}, $k$ is a valued field of height one and $p=\cha(\tilk)$. We allow the case of $p=0$ for the sake of completeness. Most of our work is trivial in this case but one has to use the exponential characteristic $p=1$ in the formulas, e.g. $k^{1/p^\infty}=k$. The main result of \S\ref{chaptwo} is Theorem \ref{dim1unif} which establishes inseparable local uniformization of non-Abhyankar valuations on curves over valuation rings of height one. This result will be deduced by decompletion from Theorem \ref{4th}, which provides inseparable local uniformization of terminal points on analytic curves.

\subsection{Discs over deeply ramified analytic fields}\label{discsec}

Throughout \S\S\ref{discsec}-\ref{termsec}, $k$ is analytic. Consider the $k$-analytic space $\bfA=\bfA^1_k$ with a fixed coordinate $T$. If $k$ is algebraically closed then the structure of $\bfA$ is described in \cite[\S1.4.4]{berbook}. In particular, the points of $\bfA$ are divided into four classes as follows. Type $1$ points are the Zariski closed points; they are parameterized by the elements of $k$, and we say that they are of radius $0$. Given an element $a\in k$ and a number $r>0$, let $E(a,r)\subset\bfA$ denote the closed disc of radius $r$ with center at $a$. This disc has a unique maximal point which will be denoted by $p(a,r)$. Type $2$ and $3$ points are the points of the form $p(a,r)$ of rational (i.e. from $\sqrt{|k^\times|}=|k^\times|$) or irrational radius $r>0$, respectively. Any type $4$ point $x$ is obtained as the intersection of a decreasing sequence $E_i=E(a_i,r_i)$ of discs with no common Zariski closed points. The number $r=\lim_i r_i$ is called the radius of $x$; it is positive by completeness of $k$.

For a general analytic field $k$ the space $\bfA$ is homeomorphic to the quotient $\bfA^1_\whka/\Gal(k^s/k)$ (see also \cite[\S4.2]{berbook} or \cite[\S3.6]{berihes}).
Zariski closed points come from $k^a$; such a point $a\in\bfA$ is completely determined by the monic generator $f_a(T)$ of its annihilator $m_a\subset k[T]$. By a closed disc $E=E_k(a,r)$ of radius $r=r(E)>0$ and with center at a Zariski closed point $a$ we mean the image of $E_\whka(\alpha,r)$, where $\alpha$ is any root of $f_a(T)$. By {\em type} of a point $x\in \bfA$ we mean the type of any of its preimages in $\bfA^1_\whka$. The type $1$ points are parameterized by $\whka/\Gal(k^s/k)$; these are exactly the points $x\in \bfA$ such that $\calH(x)\subseteq\whka$. A point $x\in \bfA$ is of type $2$ (resp. $3$) if and only if $F_{\calH(x)/k}=1$ (resp. $E_{\calH(x)/k}=1$). This happens if and only if $x$ is the maximal point of a disc of rational (resp. irrational) radius. Finally, any type $4$ point coincides with the intersection of all discs containing it, and $x$ is of type $4$ if and only if $\calH(x)$ is transcendentally immediate over $k$ and not contained in $\whka$. We define the radius $r(x)$ of a point $x$ as the infimum of the radii of discs containing $x$. Points of type $1$ are exactly the points of zero radius. The following remark will not be used in the sequel, so we state it without proof.

\begin{rem}
Another definition of radius was given in \cite[3.6]{berihes}: for a Zariski closed point $a$ with monic generator $f(T)$ of $m_a$ and a disc $E=\bfA^1_k\{s^{-d}f(T)\}$, where $d=\deg(f)$, one defines $r_\inv(E)=s$. The latter quantity is an interesting invariant of $E$. For example, $r_\inv$ depends only on the algebra $\calA=\calO(E)$, the coordinate $T\in\calA$ and the degree $[(\calA/T\calA):k]$. Note for the sake of comparison that $r=r(E)$ depends also on the embedding $k\into\calA$. For example, $r$ is not preserved when one deforms $k$ in $\calA$ while $r_\inv$ is preserved. However, it surprisingly turns out that opposite to an incorrect remark in \cite{berihes}, $r_\inv$ of a type $1$ point can be positive. Moreover, a deformation of $k$ in $\calA$ can change the type of a point (only not Zariski closed points of types $1$ and $4$ can switch their type).
\end{rem}

\begin{defin}
Let $X\subset\bfA^1_k$ be a $k$-disc (open or closed). By $k$-degree of $X$ we mean the number $\min_{x\in X}[\calH(x):k]$, and $X$ is called {\em split} if its degree is $1$, i.e. $X$ has a $k$-point. We say that $X$ is {\em almost split} if it is an intersection of split discs (so an open almost split disc is always split).
\end{defin}

A disc is isomorphic to a unit $l$-disc if and only if it is {\em $l$-split} (i.e. is defined over $l$ and split) and is of {\em integral} radius $r$ (i.e. $r\in|l^\times|$). Note that almost split but not split discs exist if and only if there exists $\alp\in k^a$ such that $\inf_{a\in k}|a-\alp|$ is not achieved. In particular, such discs can exist only when $k$ is not stable (otherwise $k(\alp)$ is a cartesian $k$-vector space and the infimum is achieved, see \cite[Prop. 3.6.2/4]{BGR}). If $E_k(a,r)\subset\bfA$ is a disc then its preimage in $\bfA^1_\whka$ equals to $\cup_{i=1}^d E(\alpha_i,r)$, where $\alpha_1\. \alpha_d$ are the roots of the monic generator $f=f(T)$ of $m_a$ and $d=\deg(f)$. In particular, the preimage is a disjoint union of at most $d$ discs of radius $r$. It follows that a disc $E_k(a,r)$ is isomorphic to an $l$-split disc for $l=k(\alpha_1)$ if and only if $r_f:=\min_{1<i\le d}|\alpha_1-\alpha_i|>r$. As a consequence, we obtain the following version of Krasner's lemma.

\begin{lem}
\label{kraslem} Let $a,f,\alpha_1\.\alp_d$ and $r_f$ be as above.

(i) Suppose that $K$ is an analytic $k$-field and $x\in K$ is an element such that $|f(x)|<R_f$ for $R_f=r_f\prod_{i=2}^d|\alp_1-\alpha_i|$. Then the embedding $k\into K$ extends to an embedding $l\into K$. In particular, if $x\in\whka$ satisfies $|x-\alpha_1|<r_f$ then $\alpha_1\in\wh{k(x)}$.

(ii) A disc $E=E_k(a,r)$ is defined over a non-trivial extension $k'/k$ if and only if $r<\max_{1\le i\le d}|\alpha_1-\alpha_i|$.
\end{lem}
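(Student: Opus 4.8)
This is a version of Krasner's lemma in the Berkovich setting, and the plan is to reduce everything to the geometry of the roots $\alpha_1\. \alpha_d$ inside $\bfA^1_\whka$. For part (i), I would first note that it suffices to treat the last assertion (the ``in particular''), since the general claim follows from it by applying the completed residue field $\calH(x)$ in place of $K$: the condition $|f(x)|<R_f$ means $\prod_{i=1}^d|x-\alpha_i|<r_f\prod_{i=2}^d|\alpha_1-\alpha_i|$, and by the triangle inequality $|x-\alpha_i|\le\max(|x-\alpha_1|,|\alpha_1-\alpha_i|)$, so a short case analysis forces $|x-\alpha_j|<r_f$ for at least one $j$; relabelling we may assume $j=1$. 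So I am reduced to: if $x\in\whka$ and $|x-\alpha_1|<r_f$ then $\alpha_1\in\wh{k(x)}$. For this I would run the standard Krasner argument: set $L=\wh{k(x)}$ and consider the conjugates of $\alpha_1$ over $L$ inside $\whka$. If $\sigma$ is such a conjugate, then $|\sigma\alpha_1-x|=|\sigma\alpha_1-\sigma x|=|\alpha_1-x|<r_f$ (using that $\sigma$ fixes $x$ and is an isometry for the unique extension of the absolute value to $\whka$), hence $|\sigma\alpha_1-\alpha_1|\le\max(|\sigma\alpha_1-x|,|x-\alpha_1|)<r_f=\min_{1<i\le d}|\alpha_1-\alpha_i|$. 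But $\sigma\alpha_1$ is one of the $\alpha_i$, so $\sigma\alpha_1=\alpha_1$. Since this holds for every conjugate, $\alpha_1$ is purely inseparable over $L$; and as $\alpha_1$ is separable over $k$ (one may first replace $f$ by the minimal polynomial of $\alpha_1$, or assume $f$ separable, after passing to a $p$-power of $x$ if necessary — here the perfectness/inseparability bookkeeping is the one fussy point), it is separable over $L$ as well, forcing $\alpha_1\in L$.

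For part (ii), I would translate ``$E$ is defined over $k'$'' into a statement about the orbit of the center. Recall that the preimage of $E=E_k(a,r)$ in $\bfA^1_\whka$ is $\bigcup_{i=1}^d E(\alpha_i,r)$, a disjoint union of discs of radius $r$ permuted transitively by $\Gal(k^s/k)$ (transitively because $f$ is irreducible). The disc $E$ descends to $k'$ precisely when the $\Gal(k^s/k^{\prime s})$-stabilizer of the component $E(\alpha_1,r)$ is strictly smaller than the full Galois group, equivalently when some non-trivial $\sigma$ moves $E(\alpha_1,r)$ off itself, i.e. $E(\sigma\alpha_1,r)\cap E(\alpha_1,r)=\emptyset$, i.e. $|\sigma\alpha_1-\alpha_1|>r$. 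Running over all $\sigma$, the components $E(\alpha_i,r)$ are pairwise distinct as soon as $r<\max_{1\le i\le d}|\alpha_1-\alpha_i|$ (if $r\ge$ that max, all the discs $E(\alpha_i,r)$ coincide and $E$ is already ``as small as it gets'', i.e. only defined over $k$ itself among subfields); and when some components are distinct, the action is non-trivial, so $E$ descends to the fixed field of a proper subgroup, giving a non-trivial $k'/k$. I would spell out the easy direction (if $E$ is defined over $k'\subsetneq k^s$ then the orbit of $E(\alpha_1,r)$ has size $\le[k':k]<d$... wait, more carefully: $d=[k(\alpha_1):k]$ may exceed $[k':k]$, so the right statement is that the orbit under $\Gal(k^s/k)$ has size strictly less than $d$, forcing two of the $E(\alpha_i,r)$ to coincide, hence $|\alpha_i-\alpha_j|\le r$ for some $i\ne j$, and choosing $\alpha_1$ appropriately among them — or rather, relabelling so that the coinciding pair involves $\alpha_1$ — one gets $r\ge|\alpha_1-\alpha_i|$ for that index, but this is a bit delicate since we want a statement about $\max$). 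Let me instead phrase (ii) as: the minimal field of definition of $E$ is $k$ itself iff $r\ge\max_i|\alpha_1-\alpha_i|$ (all root-discs collapse to one), and otherwise the distinct root-discs are permuted non-trivially and $E$ descends to a proper subfield.

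The main obstacle I anticipate is \emph{not} the ultrametric estimates, which are routine, but getting the separable/inseparable bookkeeping right in part (i): the clean Krasner argument needs $\alpha_1$ separable over $k$, whereas $f$ is just the monic generator of $m_a\subset k[T]$ and $a$ is an arbitrary Zariski closed point, so $f$ can have repeated roots in characteristic $p$. The fix is to observe that $f(T)=g(T^{p^e})$ for a separable $g$ and a suitable $e\ge0$, that the $\alpha_i$ are the $p^e$-th roots of the roots $\beta_j$ of $g$, and that $|\alpha_i-\alpha_j|^{p^e}=|\beta-\beta'|$ relates the two sets of ``separation radii''; then one either runs the argument for $g$ and extracts $p^e$-th roots at the end (harmless since $\whka$ is perfect when $\chark=p$... actually $\whka=\wh{k^a}$ and $k^a$ is already perfect, so this is fine), or one simply replaces $x$ by a $p$-power to land in the separable situation. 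I would also double-check the precise constant $R_f=r_f\prod_{i=2}^d|\alpha_1-\alpha_i|$: expanding $|f(x)|=\prod_{i=1}^d|x-\alpha_i|$ and using $|x-\alpha_i|\le\max(|x-\alpha_1|,|\alpha_1-\alpha_i|)$, if $|x-\alpha_1|\ge r_f$ then each factor is $\le\max(|x-\alpha_1|,\ldots)$ and one checks $|f(x)|\ge R_f$ fails only when $|x-\alpha_1|<r_f$; this inequality-chasing, while elementary, is where I would be most careful to match the stated bound exactly.
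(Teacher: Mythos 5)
For part (i) your approach is correct but takes a genuinely different route from the paper's. The paper's proof is geometric and uses no Galois theory at all: the estimate $|f(x)|<R_f$ places the image of $\calM(K)\to\bfA$ in $E(a,r)$ for some $r<r_f$, and such a disc was shown just before the lemma to be $l$-split, so $l\into\calO(E(a,r))\to K$ directly. Your argument is the classical algebraic Krasner argument, which is more elementary and also works, but two points should be fixed. First, the ``reduction'' of the general claim to the ``in particular'' is logically off: the ``in particular'' hypothesizes $x\in\whka$, whereas a general $x\in K$ may be transcendental over $k$, so $\calH(x)\not\subset\whka$ and the ``in particular'' simply does not apply with $\calH(x)$ in place of $K$. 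What actually works is running the Krasner argument directly with base field $\wh{k(x)}\into K$ inside a completed algebraic closure containing $K$ and $k^a$; the ``in particular'' is then a corollary, not a lemma from which the general case follows. Second, the inseparable bookkeeping you anticipate as the main obstacle is moot for part (i): if $f$ is inseparable then $\alpha_1$ appears with multiplicity $\ge p$ in the list $\alpha_1\.\alpha_d$, so $r_f=0=R_f$ and the hypothesis $|f(x)|<R_f$ is vacuous. You may therefore assume $f$ separable from the start, so the clean Krasner argument applies with no $p^e$-th root gymnastics.

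For part (ii) your idea matches the paper's --- translate ``defined over $k'$'' into disconnectedness of geometric pullbacks of $E$ and then into the metric arrangement of the discs $E(\alpha_i,r)$ --- but the sketch has a genuine gap and a sign error. The crucial observation you omit is that $E$ is geometrically reduced (being an affinoid domain in $\bfA^1_k$), which forces any $k'\subset\calO(E)$ to be \emph{separable} over $k$; without this, connectedness of $E\otimes_k\whka$ when $r\ge\max_i|\alpha_1-\alpha_i|$ rules out separable $k'$ but says nothing about purely inseparable ones, so the ``only if'' direction is not established. In the ``if'' direction your orbit count goes the wrong way: $E$ being defined over a non-trivial $k'$ is equivalent to the $\Gal(k^s/k)$-orbit of the component $E(\alpha_1,r)$ having size at least \emph{two} (i.e. two of the $E(\alpha_i,r)$ are \emph{distinct}), not to the orbit having size less than $d$ (which would force two to coincide, the opposite conclusion). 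Finally, the converse direction ($r<\max\Longrightarrow E$ defined over a non-trivial extension) needs to actually produce a finite separable $m$ over which $E$ disconnects; the paper does this by approximating $\alpha_1,\alpha_i$ within $r$ by $\beta,\beta'\in k^s$ using density of $k^s$ in $\whka$ and taking $m=k(\beta,\beta')$, a step that a generic appeal to Galois descent does not supply.
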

\begin{proof}
To prove (i) we consider the morphism $\calM(K)\to\bfA$ induced by $x$ and note that its image is a point contained in the disc $E(a,r)$ for some $r<r_f$. Indeed, one easily sees that $|f(p(a,r_f))|=R_f$, hence a point $y\in\bfA$ is in the open disc $D(a,r_f)$ with center at $a$ and of radius $r_f$ if and only if $|f(y)|<R_f$. This gives a homomorphism $l\into\calO(E(a,r))\to K$ and so $l\into K$.

In (ii) we observe that $\bfA^1_k$ is geometrically reduced, hence so is $E$. In particular, only separable extension $k'$ may be contained in $\calO(E)$. Furthermore, $E$ is defined over a non-trivial separable extension $k'$ if and only if $E\otimes_k m$ is not connected for a sufficiently large finite separable extension $m/k$. If $r\ge\max_{1\le i\le d}|\alpha_1-\alpha_i|$ then even the preimage of $E$ in $\bfA^1_\whka$ is connected and hence no such $m$ exists. Conversely, if $r<|\alpha_1-\alpha_i|$ for some $i$ then by density of $k^s$ in $k^a$ we can find $m/k$ as above with $\beta,\beta'\in m$ such that $|\beta-\alp_1|<r$ and $|\beta'-\alp_i|<r$. Then the preimage of $E$ in $\bfA^1_m$ is disconnected because $E_m(\beta,r)$ and $E_m(\beta',r)$ are two distinct connected
components.
\end{proof}

\begin{cor}\label{impcor}
Assume that $\cha(k)=p>0$. The Galois groups of $k$ and of its completed perfection $K=\wh{k^{1/p^\infty}}$ are canonically isomorphic. In particular, for any finite extension $L/K$ the field $l=L\cap k^s$ satisfies $[L:K]=[l:k]$ and $L=lK$.
\end{cor}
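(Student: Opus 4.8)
The plan is to factor the inclusion $k\subset K$ through $k\subset k^{1/p^\infty}\subset K=\wh{k^{1/p^\infty}}$ and treat the two steps separately: the first is a purely inseparable algebraic extension, the second is a completion, and each induces an equivalence between the categories of finite separable extensions. Two preliminary remarks. (1) $K$ is perfect: Frobenius is bijective on $k^{1/p^\infty}$ and satisfies $|x^p|=|x|^p$, so it is a homeomorphism and extends to $K$; if $x_n\in k^{1/p^\infty}$ and $(x_n^p)$ converges then $(x_n)$ is Cauchy, so Frobenius stays surjective on $K$. In particular every finite $L/K$ is separable. (2) $k^{1/p^\infty}$ is Henselian, being a purely inseparable algebraic extension of the complete (hence Henselian) field $k$.

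For the completion step, let $L/K$ be finite of degree $n$, say $L=K(\beta)$ with separable minimal polynomial $g\in K[T]$. Since $k^{1/p^\infty}$ is dense in $K$, choose a monic $g_1$ of degree $n$ with coefficients in some $k^{1/p^m}$, so close to those of $g$ that $g_1$ is separable and has a root $\beta_1$ near $\beta$; then $|g(\beta_1)|=|(g-g_1)(\beta_1)|$ is arbitrarily small, so Lemma \ref{kraslem}(i), applied over the analytic field $K$, gives $L=K(\beta)\subseteq K(\beta_1)$, whence $K(\beta_1)=L$ and $g_1$ is irreducible over $K$, a fortiori over $k^{1/p^\infty}$. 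Thus $L_0:=k^{1/p^\infty}[T]/(g_1)$ is a finite separable extension of $k^{1/p^\infty}$ of degree $n$ with $L_0K=L$. Conversely, a Henselian valued field is separably closed in its completion (a short Krasner argument, using that completion commutes with finite extensions over a Henselian base), so the Henselian field $L_0$ is separably closed in $L=\wh{L_0}$; hence $L_0$ is recovered as the maximal subextension of $L/k^{1/p^\infty}$ separable over $k^{1/p^\infty}$, and $L_0\mapsto L_0K$ is an equivalence of the categories of finite separable extensions of $k^{1/p^\infty}$ and of $K$.

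The purely inseparable step is elementary field theory. Given such $L_0$, let $l:=L_0\cap k^s$ be the maximal subextension of $L_0/k$ separable over $k$; then $L_0/l$ is purely inseparable. Since $l\subseteq k^s$ while $k^{1/p^\infty}/k$ is purely inseparable, $l$ and $k^{1/p^\infty}$ are linearly disjoint over $k$, so $lk^{1/p^\infty}\subseteq L_0$ has degree $[l:k]$ over $k^{1/p^\infty}$; but $L_0/lk^{1/p^\infty}$ is at once separable (a subextension of $L_0/k^{1/p^\infty}$) and purely inseparable (as $L_0/l$ is), hence trivial. Therefore $L_0=lk^{1/p^\infty}$, $[l:k]=[L_0:k^{1/p^\infty}]=n$, and $L=L_0K=lK$. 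Finally any element of $L$ separable over $k$ is separable over $k^{1/p^\infty}$, hence lies in the maximal such subextension $L_0=lk^{1/p^\infty}$, and $lk^{1/p^\infty}\cap k^s=l$ by pure inseparability of $lk^{1/p^\infty}/l$; so $l=L\cap k^s$. Composing the two equivalences, $l\mapsto lK$ is an equivalence of the categories of finite separable extensions of $k$ and of $K$, which is the asserted canonical isomorphism $\Gal(K^s/K)\toisom\Gal(k^s/k)$, and the displayed identities are the ``in particular''.

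The real content is the completion step: comparing finite separable extensions of the complete field $K$ with those of the incomplete but Henselian field $k^{1/p^\infty}$. Existence of $L_0$ is Krasner's lemma in the form of Lemma \ref{kraslem}(i); uniqueness and full faithfulness rest on the fact that a Henselian field is separably closed in its completion, itself a Krasner-type estimate together with the compatibility of completion with finite extensions over a Henselian base. The purely inseparable descent is routine, and the identification $l=L\cap k^s$ requires only tracking which subextensions are separable over $k$ versus over $k^{1/p^\infty}$.
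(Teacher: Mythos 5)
Your proof is correct and follows essentially the same route as the paper: factor through $k^{1/p^\infty}$, use Krasner's lemma (Lemma \ref{kraslem}(i)) both to descend a finite extension of $K$ to a finite separable extension of $k^{1/p^\infty}$ and to see that no nontrivial separable extension of $k$ embeds in $K$, then descend across the purely inseparable step by elementary field theory. You spell out in more detail the steps the paper leaves implicit (that $K$ is perfect, that $k^{1/p^\infty}$ is Henselian and separably closed in its completion, and the recovery $L_0=L\cap(k^{1/p^\infty})^s$), and you phrase the conclusion as an equivalence of Galois categories rather than as injectivity plus surjectivity of $\Gal_K\to\Gal_k$, but the substance is the same.
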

\begin{proof}
Note that $L/K$ is separable because $K$ is perfect. By Krasner's lemma, any finite extension $L/K$ is obtained by completing a finite extension of $k'=k^{1/p^\infty}$. In its turn, $k'$ is induced from a finite separable extension of $k$, hence $L=lK$ for a finite separable extension $l/k$. It follows that the natural homomorphism $\Gal_K\to\Gal_k$ is injective. To prove that this homomorphism is surjective we have to show that $K\cap
k^s=k$. Suppose on the contrary that $K\cap k^s$ contains an element $\alp\in k^s\setminus k$. Then $\alp$ can be approximated by elements of $k'$ to any precision, and Lemma \ref{kraslem} (i) would imply that $\alp\in k'$, that is absurd.
\end{proof}

Also, one can deduce from Lemma \ref{kraslem} that any disc $E=E(a,r)$ containing a Zariski closed point $x$ with tamely ramified extension $\calH(x)/k$ is an $l$-split disc for $l=\calO(E)\cap k^a$. In addition, $l$ embeds into $\calH(x)$ and hence is tamely ramified. Indeed, replacing $k$ with $l$ we can assume that $\calO(E)$ does not contain non-trivial extensions of $k$, and then $r\ge\max|\alp-\alp_i|$ by Lemma \ref{kraslem}(ii), where $\alp=T(x)$ and $\calH(x)=\calH(\alp)$. By tameness of $\calH(x)$ we have that $\max_i|\alp-\alp_i|=\inf_{c\in k}|c-\alp|$ and hence $E$ is $k$-split. For the sake of comparison, we now consider a typical example of a disc of degree $p$ whose center is wildly ramified.

\begin{exam}
\label{exwilram} Let $\alpha$ be such that $l=k(\alpha)$ is a wildly ramified Galois extension of $k$ of degree $p$. Set $R=\inf_{c\in k}|\alpha-c|$ and $r=|\alpha-\alpha_2|$, where $\alp_2\neq\alp$ is a conjugate of $\alp$. Usually $r<R$ and it is always the case in the discretely valued case. For any $s$ with $r\le s< R$, the disc $E_s=E(\alpha,s)$ is neither $k$-split nor $l$-split.
\end{exam}

The following class of valued fields will be very important in the sequel. Recall that a valued field $k$ is called {\em deeply ramified} if it is not discretely valued and $\kcirc=(\kcirc)^p+p\kcirc$ (that is, the Frobenius is surjective on $\kcirc/p\kcirc$). In particular, if $p>1$ then this condition simply means that $k$ is perfect, and if $\cha(\tilk)=0$ then this condition means that $k$ is not discretely valued. We refer to \cite[6.6.6]{GR} for many equivalent (and non-trivial) characterizations of this condition. Here we note only that any $a$ in a deeply ramified $k$ can be approximated by a $p$-th power up to $|pa|$. Indeed, since $k$ is not discrete we can find $c\in k$ such that $|p|<|c^pa|\le 1$. Hence $|c^pa-b^p|\le|p|$ for some $b\in k$ and we obtain that $|c^pa|$ is a $p$-th power in $|k^\times|$. Thus $|k^\times|$ is $p$-divisible and we could actually take $c$ with $|c^pa|=1$ achieving that $|a-(\frac bc)^p|\le|pa|$.

\begin{lem}\label{conjlem}
Assume that $k$ is deeply ramified, and let $l=k(\alp)$ be a wildly ramified Galois extension of degree $p$ with a conjugate $\alpha_2\neq\alp$ of $\alp$. Then $|\alpha-\alpha_2|=\inf_{c\in k}|\alpha-c|$.
\end{lem}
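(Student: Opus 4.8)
The plan is to show that the infimum $R=\inf_{c\in k}|\alpha-c|$ cannot be strictly larger than $r=|\alpha-\alpha_2|$; once we know $R\le r$, the reverse inequality $r\le R$ is automatic (apply the conjugation automorphism $\sigma\in\Gal(l/k)$ with $\sigma(\alpha)=\alpha_2$ to any approximation $\alpha-c$ with $c\in k$ to get $|\alpha_2-c|=|\alpha-c|$, hence $|\alpha-\alpha_2|\le\max(|\alpha-c|,|\alpha_2-c|)=|\alpha-c|$, so $r\le R$). So the whole content is proving $R\le r$, i.e. that one cannot approximate $\alpha$ by elements of $k$ better than to precision $r$.

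First I would argue by contradiction: suppose $R>r$. Replacing $\alpha$ by $\alpha-c$ for a suitable $c\in k$ (which changes neither $R$ nor the set of conjugate differences) we may assume $|\alpha|=R$ and, rescaling by an element of $k^\times$ using that $|k^\times|$ is $p$-divisible (deep ramification), we may further normalize $|\alpha|=1$, so all conjugates $\alpha_i$ of $\alpha$ satisfy $|\alpha_i|=1$ as well, while $|\alpha-\alpha_i|\le r<1$ for every conjugate $\alpha_i\neq\alpha$ (they all lie in the same open disc of radius $<R=1$ around $\alpha$, since any $k$-point is at distance $\ge R$ from $\alpha$ but $\alpha_i$ is at distance $\le r$). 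Now consider the norm element $N=N_{l/k}(\alpha)=\prod_{i=1}^p\alpha_i\in k$; it has absolute value $1$. Using deep ramification I would approximate $\alpha^p$, or rather the relevant symmetric function, by a $p$-th power: the key computation is that $\alpha^p - N$ lies in $k$ (it is a symmetric function of the conjugates up to adding the other elementary symmetric functions, which are small) and is small — more precisely, expanding $\prod_i(\alpha-(\alpha-\alpha_i)) $ one sees that $\alpha^p\equiv N\pmod{\text{terms of size}\le r}$ because every other term in the expansion of $\prod_i\alpha_i=\prod_i(\alpha-(\alpha-\alpha_i))$ carries at least one factor $\alpha-\alpha_i$ of size $\le r$. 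Hence $|\alpha^p-N|\le r$ with $N\in k$.

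From $|\alpha^p - N|\le r$ and $|N|=1$, deep ramification lets me write $N = b^p + p\cdot(\text{something in }\kcirc)$ after possibly rescaling, or more directly: since $k$ is deeply ramified and $|N|=1$, there is $b\in k$ with $|N-b^p|\le|p|$. Then $|\alpha^p-b^p|\le\max(r,|p|)$. Now I distinguish: if $p>1$ (the genuinely wild case), $|p|<1$ need not be comparable to $r$, but in a wildly ramified degree-$p$ extension the different forces $r=|\alpha-\alpha_2|$ to satisfy $r^{p-1}\le|p|\cdot(\text{unit})$ roughly — in any case the point is that $|\alpha^p-b^p|\le\max(r,|p|)$, and then since $\alpha^p-b^p=(\alpha-b)\prod_{\zeta\neq 1}(\alpha-\zeta b)$ where $\zeta$ runs over $p$-th roots of unity in $k^a$... but these may not be in $k$. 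The cleaner route: $|\alpha-b|^p = |\alpha^p - b^p + (\text{lower order})|$, and in characteristic considerations $(\alpha-b)^p=\alpha^p-b^p$ when $\cha k=p$, giving $|\alpha-b|=|\alpha^p-b^p|^{1/p}\le\max(r,|p|)^{1/p}$. I would check this beats $R=1$, contradicting $R>r$ provided $r$ and $|p|$ are both $<1$, which they are. The main obstacle is handling the mixed-characteristic case $\cha(k)=0,\ \cha(\tilk)=p$ cleanly: there $(\alpha-b)^p\neq\alpha^p-b^p$ and one must control the cross terms $\binom{p}{j}\alpha^{p-j}b^j$ using $|p|<1$ together with the relation between $r$, $R$ and $|p|$ coming from wild ramification (the fact that $v(\alpha-\alpha_2)$ is tied to the different of $l/k$). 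I expect that is where the real work lies, and I would isolate it as the crux — likely invoking deep ramification in the strong form of \cite[6.6.6]{GR} to get that $\kcirc/p\kcirc$ being perfect yields approximation of $\alpha$ itself (not just $\alpha^p$) by $k$-elements to precision $|p\alpha|=|p|$, which already contradicts $R>r$ once one knows $r\ge|p|^{1/(p-1)}$ or the like. So the structure is: reduce to $R\le r$; normalize; produce a $k$-element close to $\alpha$ using deep ramification and the norm; derive the contradiction with $R>r$.
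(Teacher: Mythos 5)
Your plan matches the paper's proof in structure and in every essential ingredient: Krasner's lemma for the easy inequality $r\le R$, reduction by contradiction, translating $\alpha$, the norm computation $|N-\alpha^p|\le r\cdot|\alpha|^{p-1}$, and deep ramification to produce $b\in k$ with $|b^p-N|\le|pN|$. So this is not a different route; the question is whether you close the argument.

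The genuine gap is the normalization step. You write ``we may assume $|\alpha|=R$'' and then rescale to get $R=1$. But in the situation of the lemma the extension $l/k$ is immediate with defect $p$ (over a deeply ramified $k$, a wildly ramified degree-$p$ extension has $e=f=1$), and for such extensions the infimum $R=\inf_{c\in k}|\alpha-c|$ is not attained. So you cannot set $|\alpha|=R$; you can only choose a translate with $t:=|\alpha|$ arbitrarily close to, but strictly bigger than, $R$. The paper keeps $t$ as a free parameter close to $s$ rather than collapsing it to $s$, and this slack is precisely what absorbs both the $r$-term and the $|p|$-term in the final estimate: one picks $t$ in the (nonempty) window where both $t(r/t)^{1/p}<s$ and $|p|^{1/p}t<s$ hold. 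Your version, with $R$ frozen at $1$, cannot be run as stated; the contradiction ``$\max(r,|p|)^{1/p}<1=R$'' would have to be replaced by ``$<R$'' with $R<1$, and one must check that the rescaled $r$ and the unchanged $|p|$ stay below the rescaled $R$ as the translate varies. That is fixable, but it is exactly the bookkeeping your write-up omits.

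By contrast, the difficulty you flag --- the mixed-characteristic cross terms $\binom{p}{j}\alpha^{p-j}b^j$ --- is not where the work is. Once the parameter $t$ is kept loose, those terms are dominated by $|p|t^p$ and are killed by the same choice of $t$ that gives $|p|^{1/p}t<s$; no use of the different of $l/k$ or of the relation $r^{p-1}\lesssim|p|$ is needed, and the paper does not use it. You correctly foresee that $(\alpha-b)^p\ne\alpha^p-b^p$ matters, but the resolution is the elementary one you sketch (bound the middle binomial terms by $|p|t^p$), not a deeper ramification-theoretic input. Two smaller inaccuracies: $\alpha^p-N$ does not lie in $k$ (you correct yourself immediately, but the assertion as written is false); and ``$|k^\times|$ is $p$-divisible'' is needed to know $|\alpha-c|\in|k^\times|$ (so that rescaling by $k^\times$ is possible), which is worth saying, since a priori $|\alpha-c|$ lives only in $|l^\times|$.
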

\begin{proof}
Let $\alp=\alp_1,\alp_2\. \alp_p\in k^s$ be the conjugates of $\alp$, $r=|\alpha-\alpha_2|$ and $s=\inf_{c\in k}|\alpha-c|$. Then $r=|\alpha-\alpha_i|$ for any $1<i\le p$ by Galois conjugation (because $G$ is cyclic of order $p$), and so $r\le s$ by Lemma \ref{kraslem}(ii). Now, let us assume that the assertion of the lemma fails and $r<s$. Replacing $\alpha$ with its translate $\alp-c$ for $c\in k$ preserves the value of $r$, and we can achieve in this way that $t:=|\alp|$ is as close to $s$ as we want. In particular, we may and will assume that $|p|^{1/p}t<s$. Let $a\in k$ be the norm of $\alp$, then $a=\prod^p_{i=1} (\alp-(\alp-\alp_i))$ hence expanding the right hand side expression,
taking $\alp^p$ to the left hand side and estimating the remaining terms we obtain that $|a-\alp^p|\le rt^{p-1}$, or, that is equivalent, $|a^{1/p}-\alp|<t(r/t)^{1/p}$. Since $(r/t)^{1/p}$ is smaller than the fixed number $(r/s)^{1/p}<1$ and $t$ can be made very close to $s$, we can achieve that $t(r/t)^{1/p}<s$, and, in particular, $|\alp-a^{1/p}|<s$. To prove the lemma by a contradiction it remains to recall that $a^{1/p}$ can be
approximated by elements of $k$ with good enough precision. Namely, there exists $b\in k$ such that $|b^p-a|\le |pa|$. But then $|b-a^{1/p}|\le|pa|^{1/p}=|p|^{1/p}t<s$ and hence $|b-\alp|<s$, which is absurd.
\end{proof}

\begin{prop}\label{conjprop}
Assume that $k$ is deeply ramified, and let $\alpha\in k^a$ be an element with conjugates $\alp=\alp_1,\alp_2\.\alp_d$. Then $\max_{1\le i\le d}|\alpha-\alpha_i|=\inf_{c\in k}|\alpha-c|$.
\end{prop}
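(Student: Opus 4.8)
The plan is to reduce Proposition \ref{conjprop} to Lemma \ref{conjlem} (the degree-$p$, wildly ramified case) by induction on the degree $d=[k(\alpha):k]$ and on the structure of the extension $k(\alpha)/k$. Write $s=\inf_{c\in k}|\alpha-c|$ and $m=\max_{1\le i\le d}|\alpha-\alpha_i|$. One inequality, $m\le s$, is immediate from Lemma \ref{kraslem}(ii): the disc $E=E_k(a,s')$ for any $s'<s$ contains no $k$-point (by definition of $s$), hence cannot be $k$-split, hence $s'<\max_i|\alpha-\alpha_i|=m$; letting $s'\to s$ gives $s\le m$... wait, that gives $s\le m$. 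Actually Lemma \ref{kraslem}(ii) says $E_k(a,r)$ is defined over a nontrivial extension iff $r<\max_i|\alpha-\alpha_i|$; if it is $k$-split it is in particular not defined over a nontrivial extension, so $r\ge m$. Since for $r<s$ the disc $E_k(a,r)$ is \emph{not} $k$-split, we get nothing directly; rather, for $r\ge s$ it may be $k$-split, so $m\le s$ is the easy direction (any $r$ with $m\le r$ can give a $k$-split disc, and conversely $k$-splitness forces $r\ge m$, while $r\ge s$ suffices for a $k$-point — hence $m\le s$). So the real content is $s\le m$, equivalently: $\alpha$ can be approximated by elements of $k$ to within $m=\max_i|\alpha-\alpha_i|$.

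First I would reduce to the case where $k(\alpha)/k$ is Galois. Replace $k$ by the maximal subextension $l\subset k(\alpha)$ with $l/k$ separable and... no — better: pass to a Galois closure. Let $K/k$ be the Galois closure of $k(\alpha)^{\mathrm{sep}}$-part; more carefully, since $\bfA^1_k$ is geometrically reduced the inseparable part is harmless, and $\max_i|\alpha-\alpha_i|$ is computed using all conjugates over $k$, which all lie in $k^s$ (the differences $\alpha-\alpha_i$ vanish to infinite order only if $\alpha_i=\alpha$). So I may assume $k(\alpha)/k$ is separable, and then embed into its Galois closure $K/k$ with group $G$. The quantity $m$ is $\max_{g\in G}|\alpha-g\alpha|$ over $g$ not fixing $\alpha$, and $s=\inf_{c\in k}|\alpha-c|$; both are unchanged. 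Now I argue by induction on $|G|$. If $G$ has a nontrivial proper normal subgroup, or more simply: take a maximal subgroup $H\subsetneq G$ containing $\mathrm{Stab}(\alpha)$, set $\beta$ to be a suitable "partial trace" or primitive element for the fixed field $k'=K^{H'}$ where $H'\supseteq \mathrm{Stab}(\alpha)$ is chosen so that $[k':k]$ is prime, say $=q$. The tame case ($q\ne p$): by the remark following Lemma \ref{kraslem} (discs through tamely ramified closed points are split), $k'/k$ tame forces the relevant maximum to equal the infimum already at this level; the wild case $q=p$ is exactly Lemma \ref{conjlem}. Then one descends the approximation: if $\alpha$ over $k'$ can be approximated to within $m'=\max_i|\alpha-\alpha_i|$ over $k'$, and every element of $k'$ can be approximated by elements of $k$ to within the "conjugate spread" of $k'/k$ which by the base case equals $\inf_{c\in k}|\beta-c|\le$ the relevant bound, one chains the two approximations.

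Concretely the chaining step is: pick $c'\in k'$ with $|\alpha-c'|$ close to $s_{k'}:=\inf_{c'\in k'}|\alpha-c'|$, then pick $c\in k$ with $|c'-c|$ close to $\inf_{c\in k}|c'-c|$, which by the inductive/base case equals $\max_{h}|c'-hc'|$ over the nontrivial $k$-automorphisms of $k'$. The point is that $\max_h|c'-hc'|\le m$: each $hc'$ is the image of $c'$ under some $g\in G$, and $|c'-gc'|$ can be bounded by the spread of the conjugates of $\alpha$ using the ultrametric inequality and the fact that $c'$ was chosen near $\alpha$ (so $gc'$ is near $g\alpha$, giving $|c'-gc'|\le\max(|c'-\alpha|,|\alpha-g\alpha|,|g\alpha-gc'|)\le\max(s_{k'},m)$, and $s_{k'}\le m$ is part of the induction since $k'(\alpha)=K\supsetneq k'$ with smaller Galois group... actually $s_{k'}\le m_{k'}\le m$). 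Then $|\alpha-c|\le\max(|\alpha-c'|,|c'-c|)$ can be forced below any $s'>s$... I mean below $m+\varepsilon$, giving $s\le m$.

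The main obstacle I expect is making the two-step approximation genuinely converge to the bound $m$ rather than to something weaker like $2m$ or $\sqrt{|p|}^{-1}m$: the non-archimedean ultrametric helps (so $\max$ not sum), but one must verify that each auxiliary infimum $\inf_{c\in k}|c'-c|$ really is bounded by $m$ and not merely by $s_{k'}$ which could a priori be larger if the induction is set up carelessly. The clean fix is to organize the whole argument as a single downward induction on $|G|$: reduce to $G$ having prime order $q$ (pass to the fixed field of a maximal subgroup containing $\mathrm{Stab}(\alpha)$, handle separately the degenerate case $\mathrm{Stab}(\alpha)$ not contained in any such), invoke the tame remark when $q\ne p$ and Lemma \ref{conjlem} when $q=p$ for the base, and let the inductive hypothesis on the smaller group $\mathrm{Gal}(K/k')$ supply the approximation of $\alpha$ over $k'$ — so that the only new estimate at each stage is the prime-degree one, where the ultrametric keeps everything at level exactly $m$.
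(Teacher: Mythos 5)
Your overall strategy matches the paper's: decompose $k(\alpha)/k$ into a tower of "elementary" extensions, establish those elementary cases as a base, and induct up the tower. But your inductive step is a genuinely different argument from the paper's. Where you chain two approximations ($\alpha \leadsto c'\in k'$ by the inductive hypothesis, then $c'\leadsto c\in k$ by the base case, glued by the ultrametric), the paper uses a conjugation trick: if $r_1<r_0$, pick $\beta\in k_1$ with $|\alpha-\beta|<r_0$, apply the base case to $\beta$ over $k$ to find a conjugate $\beta_2$ at distance exactly $r_0$, then let $g\in\Gal$ take $\beta$ to $\beta_2$ and transport $\alpha$ to $\alpha_i=g\alpha$; the ultrametric then forces $|\alpha-\alpha_i|=r_0$ directly, producing the extremal conjugate with no $\varepsilon$-passage. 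Your chaining does work — your own worry that it might only yield something like $2m$ is unfounded, since the strong triangle inequality keeps every term at level $\le m$ (the key inequalities being $|c'-gc'|\le\max(|c'-\alpha|,\,|\alpha-g\alpha|,\,|g(\alpha-c')|)\le\max(s_{k'}+\varepsilon,\,m)$ and $s_{k'}=m_{k'}\le m$ by the inductive hypothesis). So you have a valid alternative to the paper's conjugation step, arguably more mechanical but less elegant in that it does not exhibit the extremal conjugate.

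There is, however, a gap in your reduction step. You want to pass to $k'=K^{H'}$ with $[k':k]$ prime, where $H'\supseteq\Stab(\alpha)$; but a finite group need not have any subgroup of prime index containing a prescribed subgroup (think of an unramified extension whose Galois group is simple non-abelian, e.g.\ $A_5$ with $\Stab(\alpha)$ trivial: the maximal subgroups give indices $5$, $6$, $10$, not all prime). The paper sidesteps this by stating the three base cases as: (i) wild Galois of degree $p$, (ii) tame ramified of prime degree $\neq p$, and crucially (iii) \emph{unramified of arbitrary degree with $\till/\tilk$ having no proper subextensions}. Such a tower always exists by the inertia filtration (maximal unramified subextension first, then the solvable inertia group contributes prime-degree steps), but the unramified layer need not itself split into prime-degree pieces. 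Your proposal should therefore allow minimal unramified steps of arbitrary degree as a fourth kind of elementary extension; since you already invoke the tame remark (which includes the unramified case), this is a small fix — but as written, "chosen so that $[k':k]$ is prime" is not always achievable, and the argument does not go through without the unramified base case of composite degree.

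One further small point: the remark after Lemma \ref{kraslem} that you cite for the tame base case is not quite self-contained — the paper derives it \emph{using} the tame version of the proposition, rather than the other way round. The paper proves cases (ii) and (iii) by direct computation (tame: defectlessness plus an argument with $r$-th roots of unity; unramified: defectlessness plus separability of $\tilde{l}/\tilde{k}$), and you should do the same rather than invoking a remark that presupposes the result.
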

\begin{proof}
Set $l=k(\alp)$. First, we assume that $l/k$ has no non-trivial subextensions and establish the following three cases: (i) $l/k$ is wildly ramified and Galois, (ii) $l/k$ is tamely ramified but not unramified, (iii) $l/k$ is unramified. By basic Galois theory of valued fields, $[l:k]=p$ in case (i), $l/k$ is of prime degree $r\neq p$ in case (ii), and $\till/\tilk$ is a separable extension without non-trivial subextensions in case (iii). Case (i) was established in Lemma \ref{conjlem}. In cases (ii) and (iii) $l/k$ is defectless, hence the infimum $\inf_{c\in k}|\alpha-c|$ is achieved for some $c$. Replacing $\alp$ with $\alp-c$ we do not change $\max_{1\le i\le d}|\alpha-\alpha_i|$ and achieve that $|\alp|=\inf|\alp-k|$. In case (ii) this implies that $|\alp|\notin|k^\times|$ and $|\alp^r|\in|k^\times|$, say $|\alp^r-a|<|a|$ for some $a\in k$. A simple computation then shows that the conjugates of $\alp$ satisfy the inequalities $|\alp_i-\xi_r^i\alp|<|\alp|$ with $\xi_r$ a primitive $r$-th root of unity, and hence $|\alp-\alp_i|=|\alp|$, as claimed. In case (iii), $|k(\alp)^\times|=|k^\times|$ hence replacing $\alp$ with $\alp/a$ for some $a\in k$ we can also achieve that $|\alp|=1$. Then $\tilalpha$ generates $\till$ over $\tilk$ because there are no intermediate extensions, and one easily sees that $\tilalpha_i$ are precisely the conjugates of $\tilalpha$, which are all distinct by separability of $\till/\tilk$. Hence $|\alp-\alp_i|=1$, as claimed.

Now, consider the general case. By the theory of valued fields there exists a tower of fields $k_n/k_{n-1}/\dots/k_0=k$ such that $l\subset k_n$ and all extensions $k_{i+1}/k_i$ are as in cases (i), (ii) or (iii). The proposition is already proved for $n=1$, and using induction on $n$ we can assume that the proposition is known for any extension which embeds in a similar tower of a smaller length. For $j\in\{0,1\}$ set $r_j=\inf_{c\in k_j}|\alpha-c|$. Note that $r_0\ge\max_{1\le i\le d}|\alpha-\alpha_i|$ by Krasner's lemma, and hence we should only establish the opposite inequality, that is, find $i$ with
$|\alp-\alp_i|=r_0$. Obviously $r_1\le r_0$ and let us first assume that the exact equality holds. The proposition is assumed to hold for $\alpha$ over $k_1$ by the induction assumption, hence $r_0=r_1=\max|\alpha-\alpha_{i_j}|$ where the maximum is taken over the conjugates of $\alp$ over $k_1$.

So, we can assume that $r_1<r_0$. Then there exists $\beta\in k_1$ such that $|\alpha-\beta|<r_0$, and it follows that $\inf_{c\in k}|\beta-c|=r_0$. Since the proposition is known to hold for $\beta$ by one of the three above cases, we obtain that $|\beta-\beta_2|=r_0$ for a conjugate $\beta_2$ of $\beta$. Then $|\alpha-\beta|<|\alpha-\beta_2|$ and by conjugation there exists a conjugate $\alpha_i$ such that $|\alpha-\beta|=|\alpha_i-\beta_2|$. It then follows that $|\alpha-\alp_i|=|\beta-\beta_2|=r_0$ as required.
\end{proof}

\begin{cor}\label{perfcor}
If $k$ is deeply ramified then any disc $X=E(\alp,r)$ is isomorphic to an almost $l$-split disc for a finite extension $l/k$.
\end{cor}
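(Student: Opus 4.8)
The plan is to deduce Corollary \ref{perfcor} by passing to the finite extension of $k$ over which the disc is maximally defined and then invoking Proposition \ref{conjprop}. Given $X=E(\alp,r)$ with $\alp\in k^a$, I would set $l:=\calO(X)\cap k^a$. This is a finite extension of $k$ (the same fact was already used in the discussion preceding Lemma \ref{kraslem} for tamely ramified discs), and by \cite[6.6.6]{GR} the property of being deeply ramified is inherited by algebraic extensions, so $l$ is again deeply ramified and analytic. Over $l$ the disc $X$ becomes $E_l(\beta,r)$ for a suitable Zariski closed point with conjugates $\beta=\beta_1\.\beta_m$ over $l$; by the very choice of $l$ the algebra $\calO(E_l(\beta,r))$ contains no nontrivial extension of $l$, so Lemma \ref{kraslem}(ii), applied over $l$, gives $r\ge\max_{1\le i\le m}|\beta-\beta_i|$.

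The next step is to apply Proposition \ref{conjprop} over $l$ — legitimate precisely because $l$ is deeply ramified — which yields $\max_i|\beta-\beta_i|=\inf_{c\in l}|\beta-c|$, and hence $\inf_{c\in l}|\beta-c|\le r$. Consequently, for every $s>r$ one can choose $c\in l$ with $|\beta-c|<s$, so $c$ is an $l$-point of $E_l(\beta,s)$ and that disc is $l$-split. Since a closed disc is the intersection of the larger closed discs containing it — here $E_l(\beta,r)=\bigcap_{s>r}E_l(\beta,s)$, which is transparent after intersecting the corresponding finite unions of discs in $\bfA^1_\whka$ — the disc $X\cong E_l(\beta,r)$ is an intersection of $l$-split discs, i.e. almost $l$-split; when $r>\inf_{c\in l}|\beta-c|$ it is even $l$-split, and the open disc case is handled identically (matching the parenthetical in the definition that an open almost split disc is split).

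I expect the only point requiring care to be the reduction step: confirming that $l=\calO(X)\cap k^a$ is genuinely a finite extension of $k$ and that base change identifies $X$ with a bona fide closed disc $E_l(\beta,r)$ over $l$ whose function algebra has $l$ as its field of constants, so that Lemma \ref{kraslem}(ii) is applicable over $l$. Both are implicit in the material preceding Lemma \ref{kraslem}, where the analogous reasoning is carried out for tamely ramified discs. The genuinely new inputs are merely the replacement of the tameness identity $\max_i|\beta-\beta_i|=\inf_c|\beta-c|$ by Proposition \ref{conjprop}, which is its deeply ramified substitute, together with the stability of deep ramification under finite extensions; no new metric estimates should be needed.
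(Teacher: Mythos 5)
Your proof is correct and follows essentially the same route as the paper's: replace $k$ by $l=\calO(X)\cap k^a$, apply Lemma \ref{kraslem}(ii) to get $r\ge\max_i|\beta-\beta_i|$, invoke Proposition \ref{conjprop} to identify the right-hand side with $\inf_{c\in l}|\beta-c|$, and conclude almost-splitness by intersecting the split discs $E_l(\beta,s)$ for $s>r$. You make explicit two points the paper leaves tacit — that $l$ inherits deep ramification (so Proposition \ref{conjprop} applies over $l$) and that $E_l(\beta,r)=\bigcap_{s>r}E_l(\beta,s)$ — but the argument is otherwise the same.
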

\begin{proof}
Replacing $k$ with $l=k^a\cap\calO(X)$ we can assume that it is algebraically closed in $\calO(X)$, and then we have to show that $X$ is almost $k$-split. By Krasner's lemma (see Lemma \ref{kraslem} (ii)), if $\alpha_i$'s are the conjugates of $\alpha$ then $r\ge s:=\max_i|\alpha-\alpha_i|$. But $s=\inf_{c\in k}|c-\alpha|$ by Proposition \ref{conjprop}, and hence the disc is almost split. (It is not split if and only if $r=s$ and the infimum is not achieved.)
\end{proof}

\begin{cor}\label{discrcor}
Let $x\in\bfA^1_k$ be a point of radius $r$. For $s>r$ we let $E(x,s)$ denote the unique closed disc of radius $s$ that contains $x$. If $k$ is deeply ramified then there exists a discrete subset $S$ of the interval $(r,\infty)$ such that for any $s\in(r,\infty)\setminus S$ the disc $E(x,s)$ is $l$-split for a finite extension $l/k$.
\end{cor}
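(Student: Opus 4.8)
The plan is to upgrade Corollary \ref{perfcor} from ``almost $l$--split'' to ``$l$--split'' away from a discrete set of radii, by tracking how the ``field of constants'' of $E(x,s)$ varies with $s$. Throughout I would fix a lift $\tilx\in\bfA^1_{\whka}$ of $x$, so that the preimage of $E(x,s)$ in $\bfA^1_{\whka}$ is the finite $\Gal(k^s/k)$--orbit of the $\whka$--disc $E(\tilx,s)=E_\whka(\beta,s)$ for any center $\beta$.

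For $s>r$ let $l_s$ be the algebraic closure of $k$ in $\calO(E(x,s))$. Since $\bfA^1_k$, hence $E(x,s)$, is geometrically reduced, $l_s/k$ is finite and separable, and it is again deeply ramified, a finite extension of a deeply ramified field being deeply ramified (see \cite[6.6.6]{GR}); equivalently, $l_s$ is the fixed field of the open stabilizer $H_s\subseteq\Gal(k^s/k)$ of $E(\tilx,s)$. Two elementary observations: (a) $s\mapsto l_s$ is non-increasing, i.e.\ $l_{s'}\subseteq l_s$ for $r<s<s'$, because $E(x,s)\subset E(x,s')$ and restriction of analytic functions $\calO(E(x,s'))\to\calO(E(x,s))$ is injective; hence (b) $s\mapsto[l_s:k]$ is a non-increasing, positive-integer-valued function of $s\in(r,\infty)$, so it has only finitely many jumps on every compact subinterval of $(r,\infty)$. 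Let $S\subset(r,\infty)$ be its set of jump points; then $S$ is discrete.

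It remains to show that $E(x,s)$ is $l_s$--split for $s\notin S$. Such an $s$ lies in the interior of a maximal interval $(s_1,s_2)$ on which $l_\bullet$ is constant, say $l_\bullet\equiv L:=l_s$, with common stabilizer $H=\Gal(k^s/L)$. Choose $s'\in(s_1,s)$ and, using density of $k^a$ in $\whka$, pick a center $\beta\in k^a$ of $E(\tilx,s')$. Since $H$ stabilizes $E(\tilx,s')=E_\whka(\beta,s')$, every $L$--conjugate $\beta_i=\sigma\beta$ ($\sigma\in H$) again lies in $E_\whka(\beta,s')$, so $|\beta-\beta_i|\le s'$ for all $i$; equivalently, since $L$ is the full field of constants, Lemma \ref{kraslem}(ii) over $L$ gives $\max_i|\beta-\beta_i|\le s'$. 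By Proposition \ref{conjprop}, valid because $L$ is deeply ramified and $\beta\in L^a=k^a$, we get $\inf_{c\in L}|\beta-c|=\max_i|\beta-\beta_i|\le s'<s$. As $\beta\in E(\tilx,s')\subset E(\tilx,s)$ we have $E(\tilx,s)=E_\whka(\beta,s)$, so $\beta$ realizes $E(x,s)$ as the $L$--disc $E_L(\beta,s)$; and since $\inf_{c\in L}|\beta-c|<s$ there is $c\in L$ with $|c-\beta|<s$, so exactly the argument in the proof of Corollary \ref{perfcor} shows that $E(x,s)$ is $L$--split, with $L/k$ finite. This proves the corollary with the above $S$.

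I expect the main obstacle to be this last step: excluding ``almost split but not split'' discs (in the sense of Corollary \ref{perfcor}) at radii interior to an interval of constancy of $l_\bullet$. The content is that once the field of constants has stabilized near $s$, the optimal approximation distance $\inf_{c\in L}|\beta-c|$ is already bounded by the radius $s'<s$ of a slightly smaller disc in the nest, hence lies strictly below $s$, so a genuine $L$--point already sits inside $E(\tilx,s)$. This is precisely where deep ramification is used, through Proposition \ref{conjprop}; it is the phenomenon that in the deeply ramified case the ``bad interval'' $[r,R)$ of Example \ref{exwilram} collapses to at most the single point $r$. The remaining ingredients --- finiteness and monotonicity of $l_s$, and discreteness of $S$ --- are routine once $l_s$ is identified with the fixed field of the stabilizer of $E(\tilx,s)$.
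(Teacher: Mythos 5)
Your proof is correct and follows essentially the same route as the paper's: both rely on Corollary \ref{perfcor} as the base input, both identify the field of constants $l_s=k^a\cap\calO(E(x,s))$, and both get discreteness of the bad set from the monotonicity of $s\mapsto l_s$ together with finiteness of $[l_s:k]$ on any compact subinterval of $(r,\infty)$. The only cosmetic differences are that (a) you define $S$ as the jump set of $[l_s:k]$ rather than as the set of ``critical radii'' where $E(x,s)$ is almost but not $l_s$-split (your $S$ may be slightly larger, which is harmless), and (b) at a non-jump $s$ you re-derive $\inf_{c\in L}|\beta-c|\le s'$ from Proposition \ref{conjprop}, whereas one can get the same inequality in one line by simply invoking Corollary \ref{perfcor} at the auxiliary radius $s'$ with constant field $L=l_{s'}$.
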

\begin{proof}
Take $S$ to be the set of {\em critical radii} $s$ for which $E(x,s)$ is almost $m$-split but not $m$-split for a finite extension $m/k$. If $s_1>s_2$ are two critical radii then the corresponding fields are strictly embedded $m_1\subsetneq m_2$. Since $m_1\subset\calO(E(x,s_2))$ and $\calO(E(x,s_2))\cap k^s$ is finite over $k$, we obtain that each closed subinterval of $(r,\infty)$ contains finitely many elements of $S$. So, $S$ is discrete in $(r,\infty)$, as required. (Note that $S$ does not have to be discrete at $r$ because $\calH(x)\cap k^s$ can be infinite over $k$ for a point $x$ of type 1 or 4.)
\end{proof}

Assume that $m/k$ is a finite extension, $Y=\calM(m\{T\})$, $X=\calM(k\{T'\})$ and $f\: Y\to X$ is a morphism. Then $f$ is given by the image $f(T)\in m\{T\}$ of $T'$ and $f$ is generically \'etale (i.e. \'etale outside of a Zariski closed set) if and only if $f'(T)$ does not vanish identically. Note that the latter happens if and only if $f(T)$ is not of the form $h(T^p)$. Assume now that $f$ is non-constant. Then we can split it into a composition of a power of Frobenius $\Fr^n:Y\to Y$, $T\mapsto T^{p^n}$ and a generically \'etale morphism $g\:Y\to X$. We say that $y\in Y$ is a {\em critical} point of $f$ if $g$ is not \'etale at $\Fr^n(y)$. The geometrical meaning of critical points is as follows: the cardinality of non-empty geometric fibers is constant outside of a Zariski closed subset of $X$, where it drops. This set is the image of the set of critical points.

\begin{lem}\label{preimdisclem}
Let $m/k$ and $f\:Y\to X$ be as above and assume that $k$ is deeply ramified. Let $x\in X$ be a terminal point of radius $r=r(x)$ and $y\in f^{-1}(x)$. For any $r<s\le 1$ let $X_s=E(x,s)$ be the disc around $x$ of radius $s$ and let $Y_s$ denote the connected component of $f^{-1}(X_s)$ that contains $y$. Then each $Y_s$ is a disc. Moreover, if $y$ is not critical then there exists $r(f)>r$ and a discrete set $S(f)\subset(r,r(f))$ such that for any $s\in|k^\times|$ with $r<s<r(f)$ and $s\notin S(f)$, $Y_s$ is isomorphic to a unit $m(s)$-split disc.
\end{lem}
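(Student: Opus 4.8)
The plan is to analyze the morphism $f\:Y\to X$ locally around the point $y$ by combining the geometry of analytic discs with the deep ramification hypothesis. First I would deal with the claim that each $Y_s$ is a disc. Since $X_s=E(x,s)$ is a closed disc, $f^{-1}(X_s)$ is a closed analytic subspace; its connected component $Y_s$ containing $y$ is a closed analytic domain in $Y=\calM(m\{T\})$, hence a subdomain of an $m$-analytic disc. To conclude it is itself a disc I would argue by tracking the radial structure: using Corollary \ref{discrcor} applied over $m$ (which is also deeply ramified, being finite over the deeply ramified $k$ — or, if not, one passes to a finite extension; note a finite extension of a deeply ramified field is deeply ramified), for all but a discrete set of radii $s$ the disc $X_s$ is $l$-split over some finite $l/k$, and its preimage under a finite morphism decomposes into discs. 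For the remaining (critical) radii one uses a limit/continuity argument: $Y_s$ is the increasing union (resp.\ filtered intersection) of $Y_{s'}$ for nearby generic $s'$, and a filtered union of discs sharing the point $y$ is again a disc. The key input is that $f$ restricted to $Y_s\to X_s$ is finite, so connected preimages of discs are discs away from critical radii, and the exceptional radii form a discrete set by Corollary \ref{discrcor}.

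Next I would address the sharper statement for non-critical $y$. Split $f$ as $\Fr^n\circ g$ with $g$ generically \'etale. Since $y$ is not critical, $g$ is \'etale at $\Fr^n(y)$; choose a small enough radius $r(f)>r$ so that $g$ is \'etale on the connected component of $g^{-1}(X_s)$ containing $\Fr^n(y)$ for all $s<r(f)$, which is possible because \'etaleness is an open condition and the fibers $Y_s$ shrink to the point(s) over $x$ as $s\to r$. Then the component $Y_s'$ of $g^{-1}(X_s)$ containing $\Fr^n(y)$ maps finite \'etale onto $X_s$. When $X_s$ is $m(s)$-split (which holds for $s\in|k^\times|$ generic by Corollary \ref{discrcor}, after enlarging the base field to some finite $m(s)/m$), it is isomorphic to a unit $m(s)$-disc $\calM(m(s)\{T\})$; a connected finite \'etale cover of a unit polydisc over $m(s)$, by the structure theory of such covers (it is again split since the base has an $m(s)$-point lifting and the cover is connected), is isomorphic to a unit $m(s)'$-split disc for a finite extension $m(s)'/m(s)$ — and since $\Fr^n$ is a homeomorphism sending unit discs to unit discs, $Y_s=(\Fr^n)^{-1}(Y_s')$ is then also a unit split disc over a finite extension. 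Absorbing the finitely many field extensions into a single notation $m(s)$ finishes the claim. The set $S(f)$ is taken to be the union of the critical radii from Corollary \ref{discrcor} for the relevant discs together with the (finitely many in each compact subinterval, hence discrete) radii where $g$ degenerates; since we already restrict to $s<r(f)$ where $g$ is \'etale, in fact $S(f)$ is just the critical-radii set, which is discrete in $(r,r(f))$.

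The main obstacle I expect is the passage through critical radii to show that $Y_s$ is a disc for \emph{every} $s\in(r,1]$, not merely the generic ones. Away from critical radii the finite-\'etale or Krasner-type arguments are clean; at a critical radius the finite morphism $f$ can ramify over the boundary circle of $X_s$, and one must verify that this ramification does not destroy the property of $Y_s$ being a disc. The remedy is to realize $Y_s$ at a critical radius as a filtered union $\bigcup_{s'<s}Y_{s'}$ (for the open discs) or as $\bigcap_{s'>s}Y_{s'}$, together with the observation that the topological/analytic type is controlled: a nested union of discs all containing $y$ is a disc, and a nested intersection of discs is a disc (or a single point, excluded here since $s>r$). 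One subtle point to check is that these nearby $Y_{s'}$ can all be taken with $s'$ avoiding $S(f)$, which is possible precisely because $S(f)$ is discrete, so generic radii accumulate at any given $s$. A second, minor technical point is ensuring the center point $y$ genuinely lies in each $Y_s$ and that the $Y_s$ are nested as $s$ varies, which follows from the fact that the $X_s$ are nested around $x$ and $f$ maps $y$ to $x$.
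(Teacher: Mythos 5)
Your proposal takes a genuinely different route from the paper, but it contains gaps at precisely the places where the deep-ramification hypothesis must do real work, so as written it does not go through.

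The paper's proof is a direct power-series computation. After reducing to $\whka$, one writes the restriction of $f$ on a shrinking disc $E(\alpha_s,t(s))$ as $f_s(T)$ and shows (Claim 2) that, for $s$ close to $r$, the \emph{dominant non-constant term} of $f_s(T)$ is $c_sT^d$ with $d=p^n$ and $|c_s|$ independent of $s$. The non-criticality of $y$ enters through $d=ep^n$ with $e=1$; the type-4 case uses divided-power derivatives $\partial_lf$ to rule out $d=mp^n$, $m>1$, $(m,p)=1$. With that, $t(s)$ is pinned down by $|c_s|\,t(s)^{p^n}=s$, and $p$-divisibility of $|m(s)^\times|$ (i.e.\ deep ramification) gives $t(s)\in|m(s)^\times|$. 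That computation is the whole content of the second half of the lemma.

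Your proposal replaces this by the following chain: factor $f=g\circ\Fr^n$, note $g$ is \'etale on $Y'_s$ (the component of $g^{-1}(X_s)$ through $\Fr^n(y)$) for $s$ small, appeal to ``structure theory'' to say a connected finite \'etale cover of a unit $m(s)$-disc is a unit $m(s)'$-split disc, and transport back through $\Fr^n$. Three points need repair. First, the parenthetical justification that the cover is ``split since the base has an $m(s)$-point lifting'' only concerns rational points, not the \emph{radius}; the assertion that the cover disc has integral radius is a nontrivial statement, and any correct proof of it has to invoke both that prime-to-$p$ \'etale covers of a closed disc are trivial and that $|l^\times|$ is $p$-divisible for deeply ramified $l$ — neither of which you mention, and the second of which is the heart of the lemma. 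Second, once you've made $Y'_s$ a unit disc, passing to $Y_s=(\Fr^n)^{-1}(Y'_s)$ takes $p^n$-th roots of the radius; ``$\Fr^n$ sends unit discs to unit discs'' only covers the forward direction, and the \emph{preimage} has integral radius precisely because $|m(s)^\times|$ is $p$-divisible — again, an appeal to deep ramification that is silently omitted. Third, the claim that $g:Y'_s\to X_s$ is (finite \'etale and) for small $s$ of a controlled degree is really where the ``dominant term'' argument lives: one must show that for $s$ near $r$ the local degree of $g$ stabilizes at $1$, and establishing this at a type-4 point is exactly what the paper's divided-power-derivative computation accomplishes. Appealing to openness of the \'etale locus alone does not yield the degree statement.

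Finally, for the first half (``each $Y_s$ is a disc''), the paper just cites the elementary fact that $Y\{s^{-1}f\}$ is a disjoint union of discs about the roots of $f$, which holds for \emph{every} $s$; your detour through Corollary~\ref{discrcor} and a filtered union/intersection argument is unnecessary and, as you note yourself, the intersection step isn't clean (a nested intersection of discs can degenerate). The direct root-factorization argument is preferable.
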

\begin{proof}
The claim that $Y_s$ is a disc is well known (the proof of this reduces to the easy claim that $Y\{s^{-1}f\}$ is a disjoint union of discs with centers at the roots of $f$). Let $t(s)$ denote the radius of $Y_s$; obviously, $t(s)$ is a monotonically increasing function and $t(r)$ is the radius of $y$. By Corollary \ref{discrcor}, the disc $Y_s$ is $m(s)$-split outside of a discrete set $S(f)$. So, it remains to prove

Claim 1. {\em If $s\in ((r,1)\cap |k^\times|)\setminus S(f)$ is close enough to $r$ then $t(s)\in|m(s)^\times|$.} Note that $|m(s)^\times|$ is $p$-divisible (because $k$ is deeply ramified) hence it suffices to show that for a sufficiently small $s$ we have that $t(s)=as^{1/p^n}$ for $a\in|m(s)^\times|$. For any $s\in(r,1)\setminus S(f)$ choose an isomorphism $\psi_s:\calO(Y_s)\toisom m(s)\{t(s)^{-1}T\}$ and let $f_s(T)$ be the image of the coordinate of $X$. Then Claim 1 reduces to

Claim 2. {\em For small enough $s$ the dominant non-constant term of $f_s(T)$ is of the form $c_sT^d$ with $d=p^n$, where $d$ and $|c_s|$ are fixed.} Here we use the standard lexicographical order on monomials: $aT^n>bT^m$ if either $|a|t(s)^n>|b|t(s)^m$ or they are equal and $n>m$. Note that the claim makes sense since $d$ and $|c_s|$ are invariants of $Y_s$ and $t(s)$ (i.e. they are independent of the choice the choice of $\psi_s$). Claim 2 can be checked over $\whka$. Indeed, after applying $\wtimes_k\whka$ any disc splits into a disjoint union of discs of the same radius, so instead of $x$, $y$, $X$ and $Y$ it suffices to prove the claim for compatible liftings of $x$ and $y$ to $X\wtimes_k\whka$ and $Y\wtimes_k\whka$ and connected components containing these liftings. So, we assume in the sequel that $k=k^a$. In particular, all discs are split.

If $y$ is of type $1$ then $r=0$ and $y$ is (now) Zariski closed. So we can assume that $y=0$ and we can choose $T$ to be the coordinate on all discs around $0$. Let $f=g\circ\Fr^n$ with generically \'etale $g$. Then $f(T)=a_0+a_dT^d+\dots$, where $d=ep^n$ and $e$ is the ramification degree of $g$ at $\Fr^n(y)$. For small enough $s$, $a_dT^d$ becomes the dominant non-constant term of the power expansion of $f=f_s$ on $E(0,t(s))$. In particular, if $y$ is not critical then $e=1$ and $d=p^n$, as claimed. Invariance of $|c_s|=|a_d|$ is obvious.

Assume now that $x$ is of type $4$. Fix a coordinate $T$ on $Y_1$ and let $f(T)=f_1(T)=\sum a_iT^i$. For each other $s$ choose a coordinate on $Y_s$ of the form $T-\alp_s$ for some $\alp_s\in k$. In particular, $Y_s=E(\alp_s,t(s))$ and $f_s(T)=f(T+\alp_s)$. Since $f$ is non-constant, $u:=\inf_{b\in k}|(f-b)(y)|>0$. We can safely remove from $f(T)$ all terms with $|a_i|<u$ achieving that $f$ is a polynomial of degree $N$. For a polynomial $h(T)=\sum_{i=0}^nh_iT^i$ let $\partial_lh=\sum_{i=l}^{n}\binom{i}{l}h_iT^{i-l}$ denote its $l$-th divided power derivative. Clearly, it is compatible with linear changes of variables, so $\partial_lf_s(T)=\partial_lf(T+\alp_s)$. Since $y$ is not Zariski closed, we can choose small enough $s$ so that for each $0\le l\le n$, if $\partial_lf$ does not vanish identically then it has no zeros on $Y_s$.

Let $c_sT^d$ be the dominant non-constant term of $f_s(T)$ on $E(0,t(s))$. We claim that $d=p^n$. Indeed, if $d=mp^n$ with $m>1$, $(p,m)=1$ then $|\binom{d}{p^n}|=1$ and we obtain that $\binom{d}{p^n}a_dT^{d-p^n}$ is the dominant term of $\partial_{p^n}f_s(T)$. Since $d-p^n>0$, this implies that $\partial_{p^n}f$ has a root in $\alp_s+E(0,t(s))=Y_s$, a contradiction. It remains to show that $d=p^n$ and $|c_s|$ does not change when we pass to a smaller disc $Y_{s'}$. This is a straightforward check that we only outline: one simply writes $f_{s'}(T)=f_s(T+\alp_{s'}-\alp_s)$ with $|\alp_s-\alp_{s'}|\le t(s)$, opens the brackets using binomial coefficients, and checks that the dominant term will be $c_{s'}T^d$ with $|c_s-c_{s'}|<|c_s|$.
\end{proof}

We say that an analytic $k$-field $K$ is {\em $k$-split} if $\inf_{c\in k}|T-c|=\inf_{c\in k^a}|T-c|$ for any $T\in K$ (the second infimum is computed in the analytic field $\wh{k^aK}$, which is unique up to a (non-unique) isometry).

\begin{cor}\label{splitcor}
Assume that $k$ is deeply ramified. Then $K$ is $k$-split if and only if $k$ is algebraically closed in $K$.
\end{cor}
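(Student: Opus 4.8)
The plan is to fix $T\in K$ and compare $\inf_{c\in k}|T-c|$ with $\inf_{c\in k^a}|T-c|$ (the latter computed in $\wh{k^aK}$) directly, using Proposition \ref{conjprop} to control the distance from an algebraic element to $k$ in terms of its conjugates. Recall first that a deeply ramified field is perfect: in positive characteristic the condition $\kcirc=(\kcirc)^p+p\kcirc=(\kcirc)^p$ says precisely that $\kcirc$, hence $k$, is perfect, while in characteristic zero perfectness is automatic. Consequently $k^a=k^s$ and every finite subextension of $k^a/k$ is Galois over $k$.

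The implication ``$K$ is $k$-split $\Rightarrow$ $k$ is algebraically closed in $K$'' I would argue in contrapositive form: if $\alpha\in(k^a\cap K)\setminus k$, then $\alpha$ is separable over $k$ of degree $\ge 2$, so it has a conjugate $\alpha_i\ne\alpha$ and $\max_i|\alpha-\alpha_i|>0$. By Proposition \ref{conjprop} this maximum equals $\inf_{c\in k}|\alpha-c|$, whereas $\inf_{c\in k^a}|\alpha-c|=0$ (take $c=\alpha$). Hence, taking $T=\alpha$, the two infima differ and $K$ is not $k$-split.

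For the converse, assume $k$ is algebraically closed in $K$ and fix $T\in K$. Since $\inf_{c\in k}|T-c|\ge\inf_{c\in k^a}|T-c|$ is trivial, it suffices to prove $\inf_{c\in k}|T-c|\le|T-\beta|$ for every $\beta\in k^a$. Let $\beta=\beta_1\.\beta_d$ be the conjugates of $\beta$ over $k$ and put $s=\max_i|\beta-\beta_i|$. The ultrametric inequality gives $|T-c|\le\max(|T-\beta|,|\beta-c|)$, so taking the infimum over $c\in k$ and invoking Proposition \ref{conjprop},
$$\inf_{c\in k}|T-c|\le\max\bigl(|T-\beta|,\inf_{c\in k}|\beta-c|\bigr)=\max(|T-\beta|,s).$$
Thus the whole statement reduces to the single inequality $|T-\beta|\ge s$.

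Proving $|T-\beta|\ge s$ is the one genuinely non-formal step, and it is where both hypotheses enter. Let $l$ be the Galois closure of $k(\beta)$ over $k$. Because $k$ is perfect and algebraically closed — hence separably closed — in $K$, the ring $l\otimes_kK$ is a field, which I denote $lK$, and $\Gal(l/k)$ acts on it fixing $K$ pointwise. As $lK/K$ is a finite extension of the complete field $K$, the valuation of $lK$ is the unique extension of that of $K$, so this Galois action is by isometries and extends to $\wh{lK}\subseteq\wh{k^aK}$. Now if $|T-\beta|<s$, choose $\sigma\in\Gal(l/k)$ with $\sigma(\beta)=\beta_i$ for some $i$ with $|\beta-\beta_i|=s$; applying $\sigma$, which fixes $T$, gives $|T-\beta_i|=|T-\beta|<s$, whence $s=|\beta-\beta_i|\le\max(|\beta-T|,|T-\beta_i|)=|T-\beta|<s$, a contradiction. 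Therefore $|T-\beta|\ge s$, which finishes the proof. The main obstacle is thus not any computation but this Galois-descent step — making sure the Galois group of a finite piece of $k^a/k$ genuinely acts by $K$-isometries on the relevant completed compositum, which is exactly what perfectness of deeply ramified fields and the ``algebraically closed in $K$'' hypothesis provide; everything else is a formal manipulation of the ultrametric inequality together with Proposition \ref{conjprop}.
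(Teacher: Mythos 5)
Your proof is correct, and the interesting direction genuinely differs from the paper's argument. The paper proceeds by contraposition: if $K$ is not $k$-split it finds $T\in K$, $\alpha\in k^a$ with $|T-\alpha|<\inf_{c\in k}|T-c|$, invokes Proposition~\ref{conjprop} to show $|T-\alpha|<|\alpha-\alpha'|$ for some conjugate $\alpha'$, and then applies Krasner's Lemma~\ref{kraslem}(ii) to conclude that the disc $E(\alpha,|T-\alpha|)$ is defined over a non-trivial extension $k'/k$; since $T$ maps $\calM(K)$ into that disc, $k'\into K$. You instead argue directly: assuming $k$ algebraically closed in $K$, you reduce $k$-splitness to the inequality $|T-\beta|\ge\max_i|\beta-\beta_i|$ and prove it by observing that $l\otimes_kK$ is a field (linear disjointness of a Galois $l/k$ with $K$ when $l\cap K=k$), that the unique extension of the valuation to the finite extension $lK$ of the complete field $K$ forces $\Gal(lK/K)\cong\Gal(l/k)$ to act by $K$-isometries, and that this Galois invariance together with the ultrametric inequality yields a contradiction if $|T-\beta|<\max_i|\beta-\beta_i|$. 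Both proofs rest on Proposition~\ref{conjprop}, but yours replaces the disc/Krasner machinery with a Galois-isometry argument; this is a bit more self-contained for this particular corollary, whereas the paper's version dovetails with the disc-centered toolkit it reuses in \S\ref{termsec}. A minor remark: your ``easy'' direction also calls on Proposition~\ref{conjprop}, but that is unnecessary — if $\alpha\in(k^a\cap K)\setminus k$ then $\inf_{c\in k}|\alpha-c|>0$ simply because $k$ is complete and hence closed in $\wh{k^aK}$, which is all the paper's ``obviously'' refers to.
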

\begin{proof}
Obviously, if $K$ is $k$-split then $k^a\cap K=k$. Conversely, assume that $K$ is not $k$-split, and let $T\in K$ and $\alp\in k^a$ be such that $|T-\alp|<\inf_{c\in k}|T-c|$. Note that $T$ induces a morphism from $\calM(K)$ to $E=E(\alp,|T-\alp|)$, hence it suffices to show that $E$ is defined over a non-trivial extension of $k$. Since $|T-\alp|<\inf_{c\in k}|\alp-c|$, Proposition \ref{conjprop} implies that $|T-\alp|<|\alp-\alp'|$ for a conjugate $\alp'$ of $\alp$, and applying Lemma \ref{kraslem} (ii) we obtain that $E$ is as we need.
\end{proof}

\begin{rem}
(i) The corollary implies the Ax-Sen theorem for a deeply ramified analytic field $k$. (Recall that the latter states that for any $K\into\whka$, $K\cap k^a$ is dense in $K$).

(ii) It can happen that $k$ is algebraically closed in $K$ but the latter is not {\em strictly split} in the following sense: there exists $T\in K$ such that $\inf_{c\in k}|T-c|$ is not achieved but $\inf_{c\in k^a}|T-c|$ is achieved (both are equal by Corollary \ref{splitcor}). For example, if $x$ is the maximal point of an almost split but not split disc then $\calH(x)$ is split but not strictly split over $k$.
\end{rem}

We will also need the following well known fact, which seems to be missing in the literature.

\begin{lem}\label{weilem}
Let $C=\calM(\calA)$ be a $k$-affinoid curve with a connected compact analytic domain $X\into C$ such that $X\otimes_kl=\coprod_{i=1}^n X_i$ is a disjoint union of $m_i$-split discs for finite extensions $l/k$ and $m_i/l$. Then $X$ is a Weierstrass affinoid domain in $C$.
\end{lem}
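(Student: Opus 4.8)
The plan is to prove that $X$ is a Weierstrass domain in $C$ by using the description of the Shilov boundary and the structure of Weierstrass domains. Recall that a connected affinoid subdomain $X=\calM(\calB)\into C=\calM(\calA)$ is Weierstrass precisely when it can be carved out by finitely many inequalities of the form $|f_i|\le 1$ with $f_i\in\calA$; equivalently, the image of $\calA$ is dense in $\calB$. The hypothesis tells us that after a finite base change $l/k$ we have $X\otimes_k l=\coprod_{i=1}^n X_i$ with each $X_i$ an $m_i$-split disc. First I would reduce to the geometrically connected case and then extract the key fact that $X$ has a single Shilov boundary point. Indeed each $X_i$, being an $m_i$-split disc $\calM(m_i\{t^{-1}T\})$, has exactly one Shilov boundary point (its maximal point), and since $X$ is connected, after possibly enlarging $l$ so that $X\otimes_k l$ has the same number of connected components as $X$ is geometrically connected, one checks that $X$ itself has a single Shilov boundary point $\xi$ (the points over $\xi$ in $X\otimes_k l$ are the maximal points of the $X_i$'s, permuted transitively by $\Gal(l/k)$ when $X$ is geometrically connected; in general the Shilov boundary of $X$ maps onto the set of $\Gal$-orbits).

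Next I would invoke the characterization of Weierstrass domains via the theory of reductions (Berkovich's or the formal-models approach). The key input is the following: a connected affinoid domain $X$ in $C$ with a single Shilov point, such that the corresponding reduction $\wt X$ is a single point lying on $\wt C$ whose local ring dominates that of a closed point, is a Weierstrass domain. More concretely, I would argue as follows. Choose a coordinate: on each $X_i$ the split disc structure gives a function $T_i\in\calO(X\otimes_k l)$ with $X_i=\{|T_i|\le 1\}$ and $|T_i(\xi_i)|=1$ exactly at the maximal point. By averaging over $\Gal(l/k)$ (taking the norm $N_{l/k}$ of an appropriate function, or more carefully a Galois-equivariant combination), one produces a function $g\in\calA$ whose restriction to $X$ defines $X$ as $\{x\in C:|g(x)|\le 1\}$ locally near $X$, so that $X$ becomes a Laurent domain $\{|g|\le 1\}$ cut out from a slightly larger affinoid; then the single-Shilov-point property forces the $|g^{-1}|$ part to be unnecessary, giving a Weierstrass presentation. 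The clean way to phrase this: $X$ has a neighborhood basis of affinoid domains, the reduction $\wt{\calA^\circ\to\calB^\circ}$ is an isomorphism onto an open subscheme after the relevant localization, and one applies \cite[7.3.4/7]{BGR} (a connected affinoid subdomain is Weierstrass iff it is Laurent and Runge, combined with the single Shilov point condition).

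The main obstacle will be the descent step: the split-disc structure is only available after the finite extension $l/k$, and one must produce the defining functions over $k$ itself. The issue is that $X\otimes_k l$ may be disconnected and its components permuted by $\Gal(l/k)$ (and further split after passing to $m_i$), so the naive pullback of a coordinate is not Galois-invariant. The remedy is to work with the Shilov boundary, which is intrinsic and behaves well under finite (not necessarily Galois) base change: the preimage of the Shilov boundary of $X$ in $X\otimes_k l$ is the Shilov boundary of $X\otimes_k l$, which by the hypothesis is exactly $\{\xi_1,\dots,\xi_n\}$. Hence $X$ has at most $n$ Shilov points, and connectedness together with an analysis of how the reductions glue shows there is exactly one, say $\xi$, with $\wt{\calH(\xi)}$ of transcendence degree $1$ over $\wt k$ corresponding to a point of the reduction $\wt C$. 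Once we know $X$ has a single Shilov point and is connected, the fact that it is Weierstrass is a statement purely about $C$ and $X$ over $k$: it follows from the standard classification of affinoid subdomains of a curve together with \cite[6.4.3, 7.3.4]{BGR}, since a connected affinoid subdomain of a curve with a single Shilov boundary point and whose reduction has a single generic point is always Weierstrass (one builds the defining function as a generator of the corresponding prime in the reduction $\wt\calA$ and lifts it). I would also remark that the affineness/compactness assumptions guarantee $\calA$ and $\calB$ are genuine affinoid algebras so that the reduction theory applies.
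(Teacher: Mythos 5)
Your approach is genuinely different from the paper's. The paper never constructs a distinguished defining function $g$. Instead, after enlarging $l$ so that either some $X_i$ is a whole connected component of $C_l$ (in which case $X$ is a connected component of $C$ and hence a Weierstrass domain via the idempotent $1_D$ of the complementary factor of $\calA$), or each $X_i$ sits strictly inside a larger disc $X'_i\subset C_l$, the paper observes that $\partial(X)=\{x\}$ is a single point, picks \emph{any} $f\in\calA$ with a zero in $X$ that does not vanish identically on $X$, sets $r=|f(x)|$, and checks that each $X_i$ is a connected component of $X'_i\{r^{-1}f\}$. Thus $X$ is a connected component of $C\{r^{-1}f\}$, hence a Weierstrass domain in $C\{r^{-1}f\}$ by the idempotent argument, hence Weierstrass in $C$ by transitivity of Weierstrass domains. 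Your plan instead tries to produce a particular $g\in\calA$ with $X=C\{g\}$ (locally around $X$) by Galois-averaging a split-disc coordinate and/or lifting a generator of a prime from the reduction $\wt\calA$, and then invoking a ``single Shilov point $\implies$ Weierstrass'' criterion.

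Two gaps would have to be closed. (a) The criterion ``a connected affinoid subdomain of a curve with a single Shilov boundary point and irreducible reduction is Weierstrass'' is not a standard citable fact; it is essentially the geometrically connected case of the lemma itself, and your sketch of its proof (lift a generator of the corresponding prime in $\wt\calA$) glosses over real content: $\wt\calA$ is just a finite-type $\tilk$-algebra which need not be factorial, the relevant prime need not be principal, and even granting a lift $g$ one must still show that $C\{g\}$ (or some Laurent refinement of it) equals $X$ rather than something strictly larger. (b) The Galois-averaging step is not carried out: the coordinate $T_i$ lives only on $X_i\subset C_l$, so $N_{l/k}(T_i)$ is not obviously a well-defined element of $\calA$; the $X_i$ are $m_i$-split for \emph{different} $m_i$, so the data being averaged is not Galois-equivariant in an evident way; and even given a Galois-invariant $g\in\calA$, the claim that $C\{g\}$ cuts out $X$ near $X$ remains unproved. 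The paper sidesteps both issues by not distinguishing any particular $f$: \emph{any} $f$ with a zero works, because one only needs $X$ to be a connected component of $C\{r^{-1}f\}$, which reduces to the elementary idempotent case. The single-boundary-point observation, which you both use, is not by itself enough to conclude.
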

\begin{proof}
Enlarging $l$ (and $n$) if necessary we can achieve that all discs are $l$-split and of integral radius. One easily sees that either $X_i$ coincides with a connected component of $C_l=C\otimes_kl$ or $X_i$ is contained in a larger disc $X_i\subsetneq X'_i\subset C_l$.
Indeed: choose a coordinate $T$ on $X_i$ and move it slightly so that $T\in\calO_{C_l,x_i}\subset\calH(x_i)$, where $x_i$ is the maximal point of $X_i$. Then $T$ induces an isomorphism of a neighborhood of $X_i$ in $C_l$ onto an analytic domain in $\bfA^1_l$ containing the disc $T(X_i)$, and our claim follows. If $X_i$ is a connected component of $C_l$ then $X$ is a connected component of $C$ and we have that $A=B\times D$, where $X=\calM(B)$. In particular, $X$ is the Weierstrass domain given by the idempotent of $D$, e.g. $X=C\{|1_D|\le 1/2\}$.

Assume, now, that $X_i$ are contained in larger discs $X'_i$. Note that the preimage of the boundary $\partial(X)$ in $X_i$ is contained in $\partial(X_i)=\{x_i\}$ by \cite[2.5.8(iii)]{berbook}. Hence $\partial(X)=\{x\}$ and $\{x_1\. x_n\}$ is the preimage of $x$ in $C_l$. Now, choose any function $f\in\calA$ that has a zero in $X$ and does not vanish identically on it, and set $r=|f(x)|$. Then $f$ can be viewed as a function on each disc $X'_i$ such that $|f(x_i)|=r$ and $f$ has a zero in $X_i$ (by connectedness of $X$).
It follows that each $X_i$ is a connected component of $X'_i\{r^{-1}f\}$ and hence of $C_l\{r^{-1}f\}$. Therefore, $X$ is a connected component of $C\{r^{-1}f\}$. As we proved above, this implies that $X$ is a Weierstrass domain in  $C\{r^{-1}f\}$, and by transitivity of Weierstrass domains we obtain the assertion of the lemma.
\end{proof}

\subsection{Analytic inseparable uniformization of terminal points}\label{termsec}
An extension of analytic fields $K/k$ (we automatically assume that the valuations agree) is called {\em one-dimensional} if for some choice of $x\in K\setminus\whka$, $K$ is finite over the closure of $k(x)$ in $K$. The latter field will be denoted $\ol{k(x)}$ in the sequel; it is isomorphic to the completion $\wh{k(x)}$. It is proved in \cite[6.3.4]{temst} that such a $K$ is finite over any subfield $\ol{k(y)}$ with $y\in K\setminus\whka$. In \cite[\S6.2]{temst} one-dimensional fields are divided into types as follows: if $F=F_{K/k}$ and $E=E_{K/k}$ then the sum $E+F$ does not exceed one, and we say that $K$ is of {\em type 2} (resp. {\em 3}, resp. {\em 4}) if $F=1$ (resp. $E=1$, resp. $E=F=0$). In particular, $K$ is of type 4 if and only if it is {\em transcendentally immediate}. In addition, {\em type 1} fields will refer to subfields of $\whka$.

In the sequel we will work with a good strictly $k$-analytic curve $C$. Note that by Noether normalization, $C$ is a finite cover of a disc locally at any point $x\in C$. Though all our results hold without the goodness and strictness assumptions, we impose them for the reader's convenience; such generality covers our applications, but requires less familiarity with analytic geometry. We classify points on $C$ according to the types of their completed residue fields. One can easily see that this classification agrees on $\bfA_k^1$ with the classification from \S\ref{discsec}. Note that for a point $x\in C$ the following conditions are equivalent: $m_x\neq 0$, $x$ is Zariski closed, $x$ corresponds to a classical rigid point, $[\calH(x):k]<\infty$. In particular, if $x$ is Zariski closed then it is of type $1$, and the converse is true for an algebraically closed $k$. Also, $\calO_{C,x}=\kappa(x)$ if and only if $x$ is not Zariski closed.

Finally, we say that $x\in C$ is a {\em terminal point} if it is either of type $4$ or of type $1$. Thus, $x$ is terminal if and only if $\calH(x)/k$ is transcendentally immediate. It follows from \cite[2.5.2(d)]{berbook} that any terminal point $x$ is {\em inner}, i.e. $x\in\Int(C)$.

Recall that for any field $K$ of positive characteristic $p$, its {\em $p$-rank} is the number $n$ (possibly infinite) such that $p^n=[K:K^p]$.

\begin{lem}\label{cur2lem}
Assume that $k$ is perfect, and let $C$ be a good strictly $k$-analytic curve with a point $x$ that is not Zariski closed.

(i) Assume that $\cha(k)=p>0$. Then the $p$-rank of $\kappa(x)$ equals to one, and the $p$-rank of $\calH(x)$ equals to zero for $x$ of type $1$ and equals to one for $x$ of any other type.

(ii) If $x$ is of type $1$ and $\cha(k)>0$ then neither $\kappa(x)$ is algebraically closed in $\calH(x)$ nor $\calH(x)$ is separable over $\kappa(x)$. In other cases, both $\kappa(x)$ is algebraically closed in $\calH(x)$ and $\calH(x)/\kappa(x)$ is separable.

(iii) Assume that $x$ is inner and not of type $1$ (for example, any point of type $4$). Then $x$ possesses a neighborhood $C'$ embeddable into $\bfA^1_k$ if and only if $\calH(x)$ is topologically generated by one element, i.e. $\calH(x)=\ol{k(T)}$ for an appropriate choice of $T\in\calH(x)$.
\end{lem}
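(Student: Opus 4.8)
The plan is to prove the three assertions together, using the single reduction that locally $C$ is a finite cover of a disc. Set $\calO=\calO_{C,x}=\kappa(x)$; since $x$ is not Zariski closed this is a field, dense in $\calH(x)=\wh{\kappa(x)}$, henselian (local rings of good analytic spaces are henselian) and of rank one. By Noether normalization $x$ has an affinoid neighbourhood admitting a finite morphism onto an affinoid subdomain of $\bfA^1_k$; let $y$ be the image of $x$. Then $\calH(x)/\calH(y)$ and $\kappa(x)/\kappa(y)$ are finite and $x,y$ have the same type, so, since $p$-rank is literally preserved by finite extensions ($[L:L^p]=[K:K^p]$ for $L/K$ finite), for Part (i) I would reduce to $C=\bfA^1_k$; Parts (ii) and (iii) will be argued directly on $C$ but will \emph{use} Part (i). For (iii) the point is that embeddability of a neighbourhood of $x$ is exactly the statement that this finite morphism can be chosen to be an open immersion.

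\emph{Part (i).} For $\kappa(x)$: it is the filtered union of the affinoid algebras $\calO(W)$ of its affinoid neighbourhoods; each $\calO(W)$ is finite over a one-variable affinoid Tate algebra, which over the perfect field $k$ is free of rank $p$ over its subring of $p$-th powers, so $\calO(W)$ has $p$-rank one, and the colimit $\kappa(x)=\varinjlim_W\calO(W)$ then has $p$-rank one as well. For $\calH(x)$ I would base change along $k\to\wh{k^a}$ (the preimage of $x$ has the same type and a finite completed residue extension), so assume $k=\wh{k^a}$, which is perfect (Frobenius is a surjective isometry, $|a^{1/p}-b^{1/p}|=|a-b|^{1/p}$). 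If $x$ is of type $1$ it is then Zariski closed, $\calH(x)\subseteq\wh{k^a}$ is a fixed field of continuous automorphisms commuting with Frobenius, hence perfect, of $p$-rank $0$. If $x$ is of type $2,3$ or $4$, then $\calH(x)=\ol{k(T)}$ for a coordinate $T$, $\calH(x)^p=\ol{k(T^p)}$, and $[\calH(x):\calH(x)^p]\le[k(T):k(T^p)]=p$ because $\ol{k(T^p)}(T)$ is complete (finite over a complete field) and contains $k(T)$. The content is that $\calH(x)$ is not perfect: working on a closed disc $E(a,r)\ni x$ with $a\in k$ and $x$-radius $\varrho=|T-a|_x>0$ (positive since $x$ is not a $k$-point), an approximation of $T$ by elements of $k(T^p)$ would produce $g\in k\{r^{-1}(T-a)\}$ with $g^p$ — a power series in $(T-a)^p$ — arbitrarily close to $T$, whereas in $T-g^p$ the linear term $(T-a)$ has coefficient $1$ and all other nonconstant terms have degree divisible by $p$, so $|T-g^p|_x\ge\varrho$, a contradiction. (Alternatively this is visible from the description of one-dimensional analytic fields in \cite[\S6.2]{temst}.)

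\emph{Part (ii).} Suppose $x$ is not of type $1$ (this includes all of characteristic $0$). If $\calH(x)/\kappa(x)$ were inseparable, choose $t\in\calH(x)\setminus\kappa(x)$ with $t^p\in\kappa(x)$; then $t^p$ is not a $p$-th power in $\kappa(x)$, so by Part (i) $\{t^p\}$ is a full $p$-basis of $\kappa(x)$, giving $\kappa(x)=\kappa(x)^p(t^p)\subseteq\calH(x)^p$, i.e.\ $\kappa(x)^{1/p}\subseteq\calH(x)$; passing to completions ($\wh{\kappa(x)^{1/p}}=\calH(x)^{1/p}$, Frobenius being a homeomorphism) yields $\calH(x)^{1/p}\subseteq\calH(x)$, contradicting Part (i). Hence $\calH(x)/\kappa(x)$ is separable, and since $\kappa(x)$ is henselian of rank one, Krasner's lemma (cf.\ Lemma \ref{kraslem}) shows any $\alpha\in\calH(x)$ separably algebraic over $\kappa(x)$ — approximable by elements of $\kappa(x)$ to within the least distance between its conjugates — already lies in $\kappa(x)$; so $\kappa(x)$ is algebraically closed in $\calH(x)$. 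If instead $x$ is of type $1$ and $\cha(k)=p>0$, then by Part (i) $\calH(x)$ is perfect while $\kappa(x)$ is not, so $\kappa(x)\subsetneq\kappa(x)^{1/p}\subseteq\calH(x)$ is a nontrivial purely inseparable algebraic extension of $\kappa(x)$ inside $\calH(x)$, whence $\kappa(x)$ is not algebraically closed in $\calH(x)$ and $\calH(x)/\kappa(x)$ is not separable.

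\emph{Part (iii).} One direction is trivial: a neighbourhood embedded in $\bfA^1_k$ with coordinate $T$ has $\calH(x)=\ol{k(T)}$. For the converse, assume $\calH(x)=\ol{k(T_0)}$ with $T_0\in\calH(x)$. First I would replace $T_0$ by an element $T$ of the dense subfield $\kappa(x)=\calO_{C,x}$ with $\ol{k(T)}=\calH(x)$ still — the set of topological generators of a one-dimensional analytic field being open (a Krasner-type argument as in Lemma \ref{kraslem}), a sufficiently close approximation works. Then $T$ defines a morphism $\phi$ from an affinoid neighbourhood $W$ of $x$ to $\bfA^1_k$ with $\phi(x)=y$ and $\calH(y)=\ol{k(T)}=\calH(x)$; since $T^{1/p}\notin\calH(x)$ by Part (i), $\{T\}$ is a separating basis of $\calH(x)$ over $k$, so $dT$ generates the rank-one module of analytic differentials at $x$ and $\phi$ is unramified there. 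Because $x$ is inner, after shrinking $W$ the morphism $\phi$ becomes finite with $\phi^{-1}(y)\cap W=\{x\}$ (the fibre avoids $\partial W$), hence flat, hence étale at $x$; being moreover injective near $x$ with $[\kappa(x):\kappa(y)]=[\calH(x):\calH(y)]=1$, it is a local isomorphism at $x$, identifying a neighbourhood of $x$ with an analytic subdomain of $\bfA^1_k$. I expect this converse to be the main obstacle: promoting an abstract topological generator of $\calH(x)$ to a genuine local coordinate in $\calO_{C,x}$ and then checking that the induced map is a local isomorphism is where interiorness of $x$ and the tameness encoded in $T^{1/p}\notin\calH(x)$ (i.e.\ Part (i)) are genuinely used, while Parts (i) and (ii) are comparatively routine once the reductions above are set up.
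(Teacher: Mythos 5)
Your overall reductions track the paper's proof fairly closely: Noether normalization to reduce to a morphism onto a disc, $p$-rank computations there, (ii) via (i) plus Krasner, and for (iii) promoting a topological generator of $\calH(x)$ to an honest coordinate and using finiteness at an inner point. The one genuine gap is in your ``elementary'' argument that $\calH(x)$ is not perfect when $x$ is of type~$4$. With $a\in k$ and $\varrho=|T-a|_x$, you expand $T-g^p$ around $a$, note that the linear term $(T-a)$ has coefficient $1$ while all other non-constant terms have degree divisible by~$p$, and conclude $|T-g^p|_x\ge\varrho$. That inference implicitly uses $|\sum a_i(T-a)^i|_x=\max_i|a_i|\varrho^i$, i.e.\ that $x$ is the Gauss point $p(a,\varrho)$. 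For types $2$ and $3$ this is arrangeable (take $a$ to be the exact center, which lies in $k=\wh{k^a}$), but it is impossible for type~$4$: there is no center $a\in k$ realizing the infimum $r(x)$, and for every $a\in k$ one has $|T-a|_x>r(x)$, with points $b\in k$ achieving $|T-b|_x<\varrho$. Already the constant $g=b^{1/p}\in k$ gives $g^p=b$ and $|T-g^p|_x=|T-b|_x<\varrho$, defeating the bound — the ``coefficient $1$'' observation says nothing once $x$ fails to be the Gauss point of $E(a,\varrho)$, since the linear term can cancel against the constant (and in general against the degree-$p^m$ terms). This is exactly the step where the paper invokes the substantive Theorem~\cite[6.2.8]{temst}; your parenthetical fallback to~\cite[\S 6.2]{temst} is the correct way to close the gap, but the displayed argument does not replace it.

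The rest is sound and mostly parallel to the paper, with some small differences worth noting. In the $\kappa(x)$ computation, ``$\calO(W)$ has $p$-rank one'' should be phrased via $\Frac(\calO(W))$, and after passing to the colimit you still need the observation $T^{1/p}\notin\kappa(x)$ to rule out $p$-rank~$0$ (it follows since after the reduction $\Frac(\calO(W))=\Frac(k\{s^{-1}T\})$, which does not contain $T^{1/p}$). For $\calH(x)$ of type~$1$ you describe it as a fixed field of Galois automorphisms commuting with Frobenius; the paper instead uses Ax--Sen (density of the algebraic elements $k^a\cap\calH(x)$), which is cleaner, but both work. For (ii), you argue directly on $C$ from the $p$-rank equalities of (i), while the paper reduces (ii) to the disc alongside (i); the two routes are equivalent. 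Finally, your Part~(iii) reaches the conclusion by unramifiedness (through $dT$ and $T^{1/p}\notin\calH(x)$) plus flatness, rather than by the paper's appeal to \cite[3.4.1]{berihes}; this does work here because $\calO_{C,x}=\kappa(x)$ is a field, so the finite local homomorphism $\kappa(y)\into\kappa(x)$ is automatically flat — but you should say so explicitly rather than implicitly invoking regularity of $C$, since the lemma does not assume $C$ rig-smooth.
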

\begin{proof}
We deal with (i) and (ii) first. Note that (i) implies the first part of (ii), so we will deal only with its second part. Recall that $\kappa(x)$ is separably closed in $\calH(x)$ by \cite[2.3.3]{berihes}, hence we have to consider only the case when $\cha(k)>0$. Note that we can replace $C$ with an affinoid neighborhood of $x$. Moreover, we claim that if $C\to Y$ is a finite map taking $x$ to $y$ and (i) and (ii) are satisfied for $y$ then they are satisfied for $x$ too. For (i) this is clear because $\kappa(x)$ is finite over $\kappa(y)$, hence they have equal $p$-rank. Assume that (ii) holds for $y$. Note that $\calH(x)$ is the composite $\kappa(x)\calH(y)$ and for any finite extension $l/\kappa(y)$ we have that $l$ is algebraically closed in $l\calH(y)$ and $l\calH(y)$ is separable over $l$. So, (ii) holds for $x$.

Using Noether normalization theorem we can assume that $y$ is a point in a disc $Y=\calM(k\{s^{-1}T\})$ of radius $s>1$. Moreover, if $x$ is of type $2$ (resp. $3$) then we can achieve that $y$ is the maximal point of a disc $E(0,r)$ with $r=1$ (resp. $r\notin\sqrt{|k^\times|}$). For any connected rational affinoid domain $\calM(\calA)$ in $Y$, the ring $\calA$ is an integral domain and the subring $k(T)\cap\calA$ of $L:=\Frac(\calA)$ is dense in $\calA$, hence $L(T^{1/p})$ is the only inseparable $p$-extension of $L$ and the $p$-rank of $L$ is one. It follows that the $p$-rank of $\kappa(y)$ cannot exceed one. Since $T^{1/p}$ is not contained in $\kappa(y)=\calO_{E,y}$, the latter has $p$-rank one. The $p$-rank can only drop under completions, hence the $p$-rank of $\calH(y)$ cannot exceed $1$. In addition, since the $p$-rank does not exceed one, $\calH(y)$ is separable over $\kappa(y)$ if and only if $\kappa(y)$ is algebraically closed in $\calH(y)$.

Now we will use arguments that separate types. If $y$ is of type $1$ then $\calH(y)\subset\whka$ hence $l=k^a\cap\calH(y)$ is dense in $\calH(y)$ by the Ax-Sen theorem. Since $l$ is perfect, $\calH(y)$ has zero $p$-rank. In particular, $\calH(y)$ is not separable over $\kappa(y)$ and we obtain (i) and (ii) for type $1$. Assume that $y$ is not of type $1$. Since $T^{1/p}\notin\kappa(y)$ and any inseparable extension of $\kappa(y)$ contains $T^{1/p}$ (because the $p$-rank is one), it suffices to show that $T^{1/p}\notin\calH(y)$. Note that it suffices to check that $T\notin\calH(y')^p$, where $y'$ is any preimage of $y$ in $Y\wtimes_k\whka$, hence we can assume that $k=k^a$. Then the type $4$ case is proved in \cite[6.2.8]{temst}. The case of type $2$ (resp. $3$) follows from the observation that $\tilT$ (resp. $|T|=r$) is not a $p$-th power in $\wHy=\tilk(\tilT)$ (resp. $|\calH(y)^\times|=|k^\times|\oplus r^\bfZ$).

Now, let us prove (iii). If $x\in C'\subset\bfA^1_k$ then any coordinate on $\bfA^1_k$ topologically generates $\calH(x)$. Conversely, let us assume that $\calH(x)=\ol{k(T)}$. Since $\calH(x)$ is one-dimensional, $T\notin\whka$ and it follows from \cite[6.3.3]{temst} that $\calH(x)=\ol{k(T')}$ for any $T'$ with $|T-T'|<\inf|T-k^a|$. In particular, moving $T$ slightly we can assume that $T\in\kappa(x)$, and then $T$ induces a morphism $f{\colon}C'\to\bfA_k^1$ from a neighborhood of $x$.

Note that $x\in\Int(C')\subset\Int(C'/\bfA^1_k)$ by \cite[2.5.8(iii)]{berbook}. Since $f$ is not locally constant at $x$ by our assumption on $T$, the fiber over $y=f(x)$ is discrete. Hence $f$ is finite at $x$ by \cite[3.1.10]{berihes}. Since $f$ induces an isomorphism $\calH(y)\toisom\calH(x)$ and $\kappa$'s are algebraically closed in $\calH$'s, $f$ also induces an isomorphism $\kappa(y)\toisom\kappa(x)$. It follows that $f$ is a local isomorphism at $x$ (see the first step of the proof of \cite[3.4.1]{berihes}).
\end{proof}

\begin{cor}\label{cur2cor}
If $k$ is a perfect analytic field then for any one-dimensional field $K$ there exists a projective $k$-analytic curve $C$ with a point $x$ such that $\calH(x)\toisom K$.
\end{cor}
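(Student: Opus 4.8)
The plan is to realize $K$ as the completion of the function field of a curve over $k$ equipped with a suitable rank-one valuation. By definition of a one-dimensional field there is an element $T\in K\setminus\whka$ with $K$ finite over $F:=\ol{k(T)}$, and $F\toisom\wh{k(T)}$. Since $K/F$ is finite it is generated over $F$ by finitely many elements $\alpha_1\.\alpha_r\in K$; I would set $L:=k(T,\alpha_1\.\alpha_r)\subseteq K$. First one checks that $L/k$ is a finitely generated extension of transcendence degree one: indeed $\trdeg_k(K)=\trdeg_k(F)=1$, so $T$ is transcendental over $k$ and $L$, being finitely generated and algebraic over $k(T)$, is in fact finite over $k(T)$. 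Moreover $\wh L=K$, where the completion is taken for the valuation restricted from $K$: the complete field $\wh L$ contains $\wh{k(T)}=F$ and all the $\alpha_i$, hence contains $F[\alpha_1\.\alpha_r]=K$, while $\wh L\subseteq K$ since $K$ is complete. In particular the absolute value of $K$ restricts to a rank-one valuation $v$ on $L$ extending that of $k$.

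Next, since $k$ is perfect, $L$ is the function field of a unique smooth projective curve $\calC$ over $k$; put $C:=\calC^\an$, a projective strictly $k$-analytic curve. The inclusion $k(T)\into L$ corresponds to a finite morphism $\calC\to\bfP^1_k$, hence to a finite morphism $C\to\bfP^1_k{}^{\an}$. The restriction $v_0$ of $v$ to $k(T)$ is a rank-one valuation extending the valuation of $k$, and any such valuation is a point $y_0$ of $\bfP^1_k{}^{\an}$ (a bounded multiplicative seminorm on $k[T]$ if $|T|_{v_0}\le 1$, and on $k[T^{-1}]$ otherwise), with $\calH(y_0)\toisom\wh{k(T)}=F$. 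The fibre of $C\to\bfP^1_k{}^{\an}$ over $y_0$ is $\calM$ of a finite $\calH(y_0)$-algebra, whose points are exactly the valuations of $L$ extending $v_0$; the point $x$ of this fibre determined by $v$ then satisfies $\calH(x)\toisom\wh{(L,v)}=\wh L\toisom K$. (When $\cha(k)=0$ one has $p=1$ and the only simplification is that no inseparability is present at any stage.)

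The only genuine content is in two places. The first is the density of $L$ in $K$: this is exactly what forces one to include both $k(T)$ (which is dense in $F$) and the generators $\alpha_i$ into $L$, and it is what yields $\wh L=K$ rather than merely $\wh L\subseteq K$. The second, which I expect to be the main point to get right, is that the valuation $v$ legitimately corresponds to a point of the analytic curve $C$ with the asserted completed residue field; I would deduce this from properness of $\calC$ — equivalently, from finiteness of $C\to\bfP^1_k{}^{\an}$ together with the explicit description of $\bfP^1_k{}^{\an}$ recalled in \cite[\S1.4.4]{berbook} — and from the fact that the completed residue field at a point of $\calC^\an$ lying over the generic point of $\calC$ is the completion of $L=k(\calC)$ along the corresponding valuation. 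Everything else is formal.
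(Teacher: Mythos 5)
The proposal contains a genuine gap at its very first substantive step. You choose generators $\alpha_1,\dots,\alpha_r$ of $K$ over $F=\overline{k(T)}\cong\wh{k(T)}$ and claim that $L=k(T,\alpha_1,\dots,\alpha_r)$ is ``finitely generated and algebraic over $k(T)$.'' This does not follow: the $\alpha_i$ are algebraic over the \emph{completion} $\wh{k(T)}$, which is a vastly larger field, and an element algebraic over $\wh{k(T)}$ need not be algebraic over $k(T)$ (compare: every element of $\bfQ_p$ is algebraic of degree one over $\bfQ_p=\wh{\bfQ}$, but most are transcendental over $\bfQ$). The supporting assertion $\trdeg_k(K)=\trdeg_k(F)=1$ is false for the same reason: when $k$ is non-trivially valued, $\wh{k(T)}$ has infinite transcendence degree over $k$. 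So your $L$ need not be the function field of a $k$-curve, and the argument breaks down before the geometry starts.

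The fix is not purely formal. One must \emph{replace} the $\alpha_i$ by nearby elements that are genuinely algebraic over $k(T)$ and still generate $K$ over $F$; the natural tool is Krasner's lemma, which requires $K/F$ to be \emph{separable}. But whether one can choose $T$ so that $K/\ol{k(T)}$ is separable is exactly the nontrivial content the paper isolates: it is proved by a $p$-rank computation in Lemma \ref{cur2lem} (choose $T\notin K^p$; then $K_0=\ol{k(T)}$ has $p$-rank one and every inseparable extension of $K_0$ contains $K_0(T^{1/p})$, which is not in $K$). Your proof asserts at the end that ``everything else is formal,'' but the separability step -- which your algebraization would also need -- is not addressed anywhere. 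Once separability is established, the paper takes a shorter route: it invokes Berkovich's result \cite[3.4.1]{berihes} to produce an \'etale morphism $C\to\bfA^1_k$ realizing the extension $K/K_0$ at a point, with no need to algebraize $K$ at all. Your idea of going back to a smooth projective curve over $k$ can be made to work (after Krasner and after separability), and has the minor advantage of delivering projectivity for free, but as written it skips the two points where the actual mathematics lives.
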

\begin{proof}
Choose $T\in K\setminus\whka$. If $K$ is not perfect then we can also choose $T$ so that $T\notin K^p$. Recall that $K$ is finite over the subfield $K_0=\ol{k(T)}$ and $K_0\toisom\calH(y)$, where $y$ is the point on $\bfA^1_k$ corresponding to the norm that $K_0$ induces on $k[T]$. It follows that $K/K_0$ is separable because the $p$-rank of $K_0$ is one by Lemma \ref{cur2lem} and hence any inseparable extension of $K_0$ contains $K_0(T^{1/p})$. Now, by \cite[3.4.1]{berihes} there exists an \'etale morphism $C\to\bfA^1_k$ and a point $x$ over $y$ such that $\calH(x)/\calH(y)$ is isomorphic to the extension $K/K_0$.
\end{proof}

It follows easily from the stable reduction theorem that if $k$ is algebraically closed and $C$ is a smooth $k$-analytic curve then any terminal point $x\in C$ has a neighborhood isomorphic to a disc, see \cite[4.3.1]{berbook}. (Note that other points have more complicated basic neighborhoods.) This statement is easy for type one points, but is a surprisingly deep fact for a type 4 point $x$. By Lemma \ref{cur2lem}, it is equivalent to a claim that any type $4$ field is of the form $\ol{k(T)}$, and the first direct proof of the latter result was given by Matignon (unpublished).

Another direct proof of this result was given by the author in \cite[6.3.1]{temst}. An important feature of that proof is that it works in the more general case when the ground field $k$ is deeply ramified. This enables us to describe in Theorems \ref{type4th} and \ref{terminalth} terminal points over any such $k$. We will consider in the proofs only the case when $p=\cha(\tilk)>0$ since it is substantially more difficult and it is the case we
will need for the applications. The author does not know about any other proof of such a description of terminal points; in particular, it cannot be deduced straightforwardly from the stable reduction theorem. Note also that even the description of type 1 points is not so obvious over a general perfect field because, as we will see, its proof makes use of the Ax-Sen theorem.

\begin{theor}\label{type4th}
Let $k$ be a deeply ramified field and let $K$ be a one-dimensional analytic $k$-field of type $4$. Then $K$ contains a $k$-finite subfield $l$ and an element $T$ such that $K=\ol{l(T)}$.
\end{theor}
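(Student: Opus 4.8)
The plan is to realize $K$ as the completed residue field $\calH(x)$ of a terminal point $x$ of type $4$ on a suitable $k$-analytic curve, and then use the geometry developed in \S\ref{discsec} to find a split disc neighborhood after a finite base change $l/k$. First I would use Corollary \ref{cur2cor}: since $K$ is one-dimensional of type $4$, there is a projective $k$-analytic curve $C$ with a point $x$ such that $\calH(x)\toisom K$; moreover $K$ is transcendentally immediate over $k$, so $x$ is a type $4$ point, hence terminal, hence inner by \cite[2.5.2(d)]{berbook}. By Noether normalization $x$ has a neighborhood that is a finite cover of a disc; equivalently, by Lemma \ref{cur2lem}(iii) (noting the $p$-rank of $K$ is one by Lemma \ref{cur2lem}(i), so $K$ is topologically generated by one element — indeed, pick $T\in K\setminus\whka$ with $T\notin K^p$ when $K$ is imperfect and argue as in Corollary \ref{cur2cor} that $K$ is separable over $\ol{k(T)}$ and then realize it by an \'etale morphism to $\bfA^1_k$), we may assume $x$ lies on an \'etale cover $C'\to\bfA^1_k$.

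Next I would reduce to the case where $x$ itself lies on $\bfA^1_k$, i.e. $C'\to\bfA^1_k$ is a local isomorphism at $x$. This is where the difficult Theorem \cite[6.3.1]{temst} enters — as the excerpt emphasizes, for a deeply ramified $k$ any type $4$ field is of the form $\ol{k(T)}$ — so that $\calH(y)\toisom\calH(x)\toisom K$ where $y=f(x)\in\bfA^1_k$, and $f$ is a local isomorphism at $x$ by the argument in Lemma \ref{cur2lem}(iii). Thus we may simply assume $C=\bfA^1_k$ and $x$ is a type $4$ point of radius $r=r(x)>0$ (positive by completeness, as recalled in \S\ref{discsec}). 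Now I would apply Corollary \ref{discrcor}: since $k$ is deeply ramified, there is a discrete subset $S\subset(r,\infty)$ such that for each $s\in(r,\infty)\setminus S$ the disc $E(x,s)$ is $l_s$-split for some finite extension $l_s/k$. Pick any such $s$ close to $r$, put $l=l_s$, so $E=E(x,s)$ is $l$-split: there is an $l$-point, hence a coordinate identifying $E$ with a disc $\calM(l\{t(s)^{-1}T\})$ centered at a point of $l$; translating, we may take this center at $0$, so $x$ corresponds to a type $4$ point of $\bfA^1_l$ inside this disc, and $T$ is a well-defined element of $\calO_{E,x}\subset\calH(x)=K$.

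Finally I would identify the generator. Since $x$ lies in the interior of the split disc $E$ over $l$, the coordinate $T\in\calO_{E,x}$ induces, after a slight perturbation so that $T\in\calO_{C,x}=\kappa(x)$ (using \cite[6.3.3]{temst} that $\calH(x)=\ol{k(T')}$ for $T'$ close enough to $T$, applied over $l$), a morphism from a neighborhood of $x$ in $E$ to $\bfA^1_l$ which is a local isomorphism at $x$ by the argument of Lemma \ref{cur2lem}(iii) (the fiber over the image point is discrete, so the map is finite by \cite[3.1.10]{berihes}, and it induces an isomorphism on $\calH$'s and hence on $\kappa$'s, so it is a local isomorphism by \cite[3.4.1]{berihes}). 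Therefore $\calH(x)=\ol{l(T)}$, i.e. $K=\ol{l(T)}$ with $l$ a finite extension of $k$ contained in $K$, as required. The main obstacle is the appeal to \cite[6.3.1]{temst} needed to pass from an arbitrary type $4$ analytic curve through $x$ to a disc neighborhood (equivalently, to know that the type $4$ field $K$ is topologically generated by one element over $k$); everything afterward is essentially bookkeeping with split discs and local isomorphisms. One should also be careful that all the auxiliary base changes $l/k$ keep $l$ inside $K=\calH(x)$ — this is automatic since $l$ is realized as $k^a\cap\calO(E)$ for the chosen disc $E$ around $x$, which embeds into $\calH(x)$.
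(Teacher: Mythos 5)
Your proposal has a circular step that skips the entire content of the theorem. You write that ``the difficult Theorem \cite[6.3.1]{temst} enters --- as the excerpt emphasizes, for a deeply ramified $k$ any type $4$ field is of the form $\ol{k(T)}$.'' But that is precisely the conclusion of Theorem~\ref{type4th} with $l=k$; if \cite[6.3.1]{temst} said this, the theorem would be trivial and there would be nothing to prove. In fact, as the paper's proof makes explicit, \cite[6.3.1(i)]{temst} is invoked only under two additional hypotheses: $K$ must be $k$-split, and $k$ must coincide with its maximal tamely ramified extension $k^{\tr}$ (so that $\tilK=\tilk$ is algebraically closed). The whole point of the theorem --- and the reason its conclusion is $K=\ol{l(T)}$ with a finite extension $l/k$ rather than simply $K=\ol{k(T)}$ --- is to bootstrap away those hypotheses. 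That bootstrap is exactly the missing content of your proposal.

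Concretely, Step 1 of the paper's argument removes the condition $k=k^{\tr}$: one base-changes to $k_m=\wh{k^{\tr}}$ and uses Corollary~\ref{splitcor} to keep $K_m$ split, gets a $k_m$-split disc from the base case, descends the affinoid domain to a finite extension $l/k$ (via \cite[1.4]{BL0}), passes to the Galois closure of $l$, shows via Corollary~\ref{discrcor} and Galois-equivariance that $x$ has a $\Gal(l/k)$-invariant open $l$-split disc neighborhood, and then invokes Ducros's theorem \cite[Th.~3.6]{Duc} that a tamely ramified form of an open disc is a trivial form. Step 2 removes the splitness assumption by replacing $k$ with the (possibly infinite) field $L=\ol{K\cap k^a}$, noting $K$ is $L$-split by Corollary~\ref{splitcor} and the Ax--Sen theorem, applying Step 1 over $L$, and finally using finiteness of $K/\ol{k(T)}$ (from \cite[6.3.4]{temst}) to cut $L$ down to a $k$-finite $l$. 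None of this machinery --- the $k^{\tr}$-descent, the Galois-equivariant shrinking, Ducros's theorem, the Ax--Sen argument --- appears in your proposal. And once the circularity is granted, your subsequent invocation of Corollary~\ref{discrcor} to find an $l$-split disc is in fact superfluous: if $K=\ol{k(T)}$ were already known, $l=k$ and that $T$ would do, so the split-disc bookkeeping changes nothing. The short version: your reduction ``we may simply assume $C=\bfA^1_k$'' via Lemma~\ref{cur2lem}(iii) is exactly the claim $K=\ol{k(T)}$ that the theorem is supposed to establish; the difficult part is making that true after a finite base change, and your proof does not address it.
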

\begin{proof}
First, assume that $K$ is $k$-split and $k$ coincides with the maximal tamely ramified extension $k^\tr$. Then $\tilK=\tilk$ is algebraically closed and applying \cite[6.3.1(i)]{temst} we obtain that the theorem holds with $l=k$. Our next aim is to remove the condition $k=k^\tr$.

Step 1. {\it The theorem holds when $K$ is $k$-split.} It follows from Corollary \ref{splitcor} that $K_m=\wh{k^\tr K}$ is split over $k_m=\wh{k^\tr}$, hence $K_m=\ol{k_m(T_m)}$ by the above case. By Corollary \ref{cur2cor} there exists a $k$-affinoid curve $C=\calM(\calA)$ with a point $x$ such that $\calH(x)\toisom K$.
Then the curve $C_m=C\wtimes k_m$ contains a point $x_m$ sitting over $x$ and such that $\calH(x_m)\toisom K_m$. Furthermore, $x_m$ is the only preimage of $x$ in $C_m$ because $k$ is algebraically closed in $\calH(x)$. It follows from Lemma \ref{cur2lem} that $x_m$ possesses a neighborhood $C'_m$ isomorphic to a disc of integral radius, and since $K_m$ is $k_m$-split, it must be a $k_m$-split disc. So, $C'_m\toisom\calM(k_m\{T'\})$.

It is a standard fact that the affinoid domain $C'_m$ can be defined already over a finite extension $l/k$ (see, for example, \cite[1.4]{BL0}), i.e. $C'_m$ is the preimage of an
affinoid domain $C'_l=\calM(\calA'_l)$ in $C_l:=C\otimes_k l$. Since $k_m\{T'\}=k_m\{T''\}$ for any $T''\in\ k_m\{T'\}$ with $|T'-T''|<1$, we can move $T'\in k_m\{T'\}\toisom\calA'_l\wtimes_l k_m$ and enlarge $l\subset k_m$ so that $T'\in\calA'_l$. Then a natural homomorphism $\phi{\colon}l\{T'\}\to\calA'_l$ arises, and it has to be an isomorphism because $\phi\wtimes_l k_m$ is the isomorphism $k_m\{T'\}\toisom\calO(C'_m)$. In particular, $C'_l$ is an $l$-split disc and $\calH(x_l)=\ol{l(T')}$, where $x_l\in C'_l$ is the preimage of $x$.

We thus descended from the infinite base change $C'_m$ to a finite base change $C'_l$, but it remains to descend further to $C$. The extension $k_m/k$ is Galois and hence we can replace $l$ with its Galois closure (which is contained in $k_m$). Note that $x_l$ is fixed by $G=\Gal_{l/k}$ because $K\cap k^a=k$. Since $l$ is deeply ramified and $\calH(x_l)\cap k^a=lK\cap k^a=l$, Corollary \ref{discrcor} implies that $x_l$ is the intersection of $l$-split open discs $C''_l\subset C'_l$ that contain it. Choosing $C''_l$ small enough we achieve that $gC''_l\subset C'_l$ for any $g\in G$. If two open discs in $C'_l$ are not disjoint then one of them is contained in another one, therefore $C''_l:=\cap_{g\in G}gC'_l$ coincides with some $gC''_l$. So, $gC''_l$, and hence $C''_l$, is $G$-invariant.
The image $C''\subset C$ of $C''_l$ is an open neighborhood of $x$ which is a tamely ramified form of an open disc, i.e. $C''\otimes_kl\toisom C''_l$ is an open $l$-split disc for a tamely ramified extension $l/k$. It was proved by A. Ducros that any such form $C''$ is itself isomorphic to an open $k$-split disc -- this is the assertion of \cite[Th. 3.6]{Duc}. Since $C''$ is a $k$-split disc and $x\in C''$ we obtain that $K=\ol{k(T)}$.

Step 2. {\it The theorem holds in general.} If $L\subset K$ is the completion of the field $K\cap k^a$ (which can be infinite over $k$) then $L$ is algebraically closed in $K$ by the Ax-Sen theorem. Note that $K$ is a one-dimensional $L$-field of type $4$ because it is finite over a subfield of the form $\ol{L(T')}$ with $T'\notin\whka=\wh{L^a}$. In addition, $K$ is $L$-split by Corollary \ref{splitcor}, so $K=\ol{L(T)}$ by the first stage. It remains to recall that the extension $K/\ol{k(T)}$ is finite by \cite[6.3.4]{temst}, hence $K$ coincides with $\ol{l(T)}$ already for a $k$-finite subfield $l\into L$.
\end{proof}

Now we are in a position to describe the local structure of terminal points over a deeply ramified field. Recall that a good analytic space $X$ is called {\em regular} if all its local rings are regular, and it is called {\em rig-smooth} if $X_K=X\wtimes_k K$ is regular for any analytic $k$-field $K$ (one can show that for perfect $k$ both notions coincide). Note that in \cite{berbook} rig-smooth spaces were called smooth, but now smoothness is used to denote rig-smooth spaces without boundary (thus rig-smoothness corresponds to smoothness in rigid geometry). An important difference between rig-smoothness and smoothness is that the former is inherited by analytic subdomains. We remark that in the following theorem the rig-smoothness assumption is needed only to include the (rather obvious) case of Zariski closed points because a reduced analytic curve over a perfect field is automatically rig-smooth at all other points.

\begin{theor}\label{terminalth}
Let $k$ be a deeply ramified analytic field and let $C$ be a rig-smooth $k$-analytic curve with a terminal point $x$. Then $x$ has a neighborhood $C'$ which is isomorphic to a closed unit $l$-disc for a finite extension $l/k$.
\end{theor}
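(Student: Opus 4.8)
The plan is to reduce the assertion to the analysis of discs over deeply ramified fields carried out in §\ref{discsec}, using Theorem \ref{type4th} to control the residue field at $x$, Lemma \ref{cur2lem} to realise a neighbourhood of $x$ inside an affine line, and a finite base change to descend the resulting disc. First dispose of the case when $x$ is Zariski closed; then $x$ is of type $1$ and $l:=\calH(x)$ is finite over $k$. This is where rig-smoothness enters: $\calO_{C,x}$ is regular of dimension one, so after the (finite, separable) base change $C\wtimes_k l$ the point lifts to an $l$-rational point of a rig-smooth $l$-curve, a generator of its maximal ideal lifted to a neighbourhood identifies a small affinoid neighbourhood with a closed $l$-disc, and rescaling the radius — possible because $l$ is deeply ramified, hence $|l^\times|$ is dense — turns it into a closed unit $l$-disc. (At a non-Zariski-closed point a reduced curve over the perfect field $k$ is automatically rig-smooth, so this is the only use of the hypothesis.)

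Now assume $x$ is not Zariski closed, so that $\calO_{C,x}=\kappa(x)$ is dense in $\calH(x)$; suppose first that $x$ is of type $4$. By Theorem \ref{type4th} there are a finite (hence separable) extension $l/k$ and an element $T\in\calH(x)$ with $\calH(x)=\overline{l(T)}$. Since $l$ embeds in $\calH(x)$, the fibre of the finite étale map $\pi\:C_l:=C\wtimes_k l\to C$ over $x$ contains a point $x_l$ with $\calH(x_l)\toisom\calH(x)$ over $l$; it is an inner, type $4$ point of the $l$-curve $C_l$ whose residue field is topologically generated by one element over $l$, so by Lemma \ref{cur2lem}(iii) it has a neighbourhood $C'_l$ that embeds as an analytic domain into $\bfA^1_l$. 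Inside $\bfA^1_l$ the point $x_l$ is a type $4$ point of radius $r=r(x_l)>0$; by Corollary \ref{discrcor}, applied to the deeply ramified field $l$, the disc $E(x_l,s)$ is split over a finite extension $m(s)/l$ for all $s\in(r,\infty)$ outside a subset that is discrete in $(r,\infty)$, and $m(s)$ is constant on each component of the complement. Picking $s$ in such a component, close enough to $r$ that $E(x_l,s)\subseteq C'_l$, and with $s\in|m(s)^\times|$ (possible since $m(s)$ is deeply ramified, so $|m(s)^\times|$ is dense), we obtain a neighbourhood $D_l:=E(x_l,s)$ of $x_l$ in $C_l$ isomorphic to a closed unit $m$-disc.

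It remains to push $D_l$ down to a neighbourhood of $x$. Since $l$ embeds in $\calH(x)$, the point $x_l$ is isolated in $\pi^{-1}(x)=\calM(\calH(x)\otimes_k l)$ and has residue degree one over $x$, so after shrinking $D_l$ (take $s$ still closer to $r$) the connected component $D'_l$ of $\pi^{-1}(U)$ through $x_l$ lies in $D_l$, where $U:=\pi(D_l)$ is a neighbourhood of $x$, and $\pi$ restricts to a finite étale degree-one map $D'_l\to U$, i.e.\ an isomorphism. When $l/k$ can be taken Galois (for instance, after replacing $l$ by its Galois closure when the latter embeds in $\calH(x)$) one has $D'_l=D_l$: for small $D_l$ the $[l:k]$ conjugates of $D_l$ are pairwise disjoint, so $\pi^{-1}(U)=\coprod_g g(D_l)$ is a trivial torsor over $U$ and $\pi$ carries $D_l$ isomorphically onto $U$, a closed unit $m$-disc neighbourhood of $x$. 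In general one must still check that $D'_l$ is a closed unit disc over a finite extension of $k$; this, rather than any analytic geometry, is the main technical point, and it is dealt with by rerunning the disc analysis on $D'_l$ itself, which is rig-smooth and contains a terminal point over $x$. The remaining case, $x$ of type $1$ and not Zariski closed, is handled in the same way, with the Ax--Sen theorem (Corollary \ref{splitcor}) and, where needed, a finite morphism onto a type $1$ point of $\bfA^1_k$ together with Lemma \ref{preimdisclem} replacing the embedding of Lemma \ref{cur2lem}(iii). The essential input throughout is Theorem \ref{type4th}, which replaces the wild ramification concentrated at the type $4$ point by a single topological generator; granting it, what remains is the disc bookkeeping sketched above.
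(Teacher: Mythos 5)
Your overall road map agrees with the paper's — split by type and use Theorem \ref{type4th} as the main input for type~$4$ — but you insert a base change to $C\wtimes_k l$ and then try to descend the disc $D_l$ back to a neighbourhood of $x$, and that descent step is both unnecessary and the site of a genuine gap. You assert that after shrinking, the connected component $D'_l$ of $\pi^{-1}(U)$ through $x_l$ ``lies in $D_l$''; the inclusion actually goes the other way (one always has $D_l\subseteq D'_l$, since $D_l$ is connected, contains $x_l$, and maps into $U$), and what you need is equality, i.e.\ that $D_l$ is a connected component of $\pi^{-1}(U)$ — which is exactly what requires proof and is never given. Your fallback of ``rerunning the disc analysis on $D'_l$'' is circular: $D'_l$ is again a rig-smooth curve with a terminal point, and you would be proving the theorem from scratch with no obstruction removed.

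The observation you miss, and the one the paper uses to avoid descent altogether, is this: since $k'/k$ is finite and separable (separable because $k$ is perfect when $\cha(k)>0$) and $\kappa(x)$ is separably closed in $\calH(x)$ by \cite[2.3.3]{berihes}, one has $k'\subseteq\kappa(x)\subseteq\calO_{C,x}$. Hence a small enough affinoid neighbourhood of $x$ already contains $k'$ in its coordinate ring, so it is \emph{intrinsically} a $k'$-analytic curve — there is nothing to descend. Lemma \ref{cur2lem}(iii) then embeds a yet smaller neighbourhood into $\bfA^1_{k'}$, and Corollaries \ref{perfcor} and \ref{discrcor} give an $l$-split disc of integral radius. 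Your treatment of the type~$1$ cases also deviates from the paper and is not quite right: there the paper picks $T\in\kappa(x)\setminus k^a$ that is \emph{not a $p$-th power} (so that $\kappa(x)/\kappa(f(x))$ is separable), uses $T$ to produce a finite \'etale morphism $f\colon C'\to\bfA^1_k$ (Lemma \ref{anfinlem} and \cite[3.3.6]{berihes}), applies Ax--Sen to get $\kappa(x)=l\kappa(f(x))$ for a finite extension $l/k$, and concludes via the local rigidity of \'etale maps \cite[3.4.1]{berihes}; your suggestion to substitute Lemma \ref{preimdisclem} does not apply as stated since $f(x)$ is a type~$1$ point of $\bfA^1_k$, not the centre of a disc you control. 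The Zariski closed case is handled by the same \'etale localization and again needs no base change.
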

\begin{proof}
First we note that it suffices to find a neighborhood of $x$ which is isomorphic to a domain in $\bfA^1_k$. Then Corollary \ref{discrcor} would imply that $x$ lies in an $l$-split disc, and, moreover, the radius can be chosen integral because $|k^\times|$ is not discrete (it is even $p$-divisible, since $k$ is deeply ramified). Normalizing the coordinate we can achieve that the disc is a unit $l$-disc. We will need the following simple lemma.

\begin{lem}\label{anfinlem}
Let $k$ be an analytic field, let $C$ be a good $k$-analytic curve with a terminal point $x$ such that the local ring $\calO_{C,x}$ is an integral domain, and let $T\in\calO_{C,x}$ be an element which is not algebraic over $k$. Then $T$ induces a map $f{\colon}C'\to\bfA^1_k$ from a neighborhood of $x$ and $f$ is finite at $x$.
\end{lem}
Recall that by definition \cite[3.1.1]{berihes}, $f$ is finite at $x$ if it induces a finite morphism $U\to V$ where $U$ (resp. $V$) is a neighborhood of $x$ (resp. $f(x)$).
\begin{proof}
Shrinking $C$ we can assume that it is reduced and irreducible. Obviously, $T$ induces a morphism $f{\colon}C'\to\bfA^1_k$, and it was observed earlier that $x$ is inner with respect to $f$. Since $f$ is not locally constant at $x$ by our assumption on $T$, the fiber of $y=f(x)$ is discrete. Hence $f$ is finite at $x$ by \cite[3.1.10]{berihes}.
\end{proof}

We now prove the theorem by dealing separately with three cases. Set $K=\calH(x)$. The case of a Zariski closed $x$ (i.e. $K/k$ is finite) is the easiest one. Any regular parameter $T\in m_x$ induces a morphism $f{\colon}C'\to Y=\bfA^1_k$ on an appropriate neighborhood of $x$ such that $f$ takes $x$ to the origin $y$ and is finite at $x$. Then $\calO_{Y,y}\to\calO_{C,x}$ is a finite homomorphism of one-dimensional regular local rings, which takes the regular parameter $t\in\calO_{Y,y}$ to the regular parameter $T$ and induces a separable extension $\calH(x)/k$ of the residue fields because $k$ is perfect.
Hence $\calO_{C,x}$ is \'etale over $\calO_{Y,y}$, and by \cite[3.3.6]{berihes} $f$ is \'etale at $x$. By \cite[3.4.1]{berihes} locally at $x$ the morphism $f$ is determined by the field extension $\calH(x)/k$, hence $C$ and $Y\otimes_k K=\bfA^1_K$ are locally isomorphic at $x$ and at the origin, respectively. So, the theorem holds true with $l=K$.

Next, we assume that $x$ is of type $1$ and is not Zariski closed. In particular, $\calO_{C,x}=\kappa(x)\into\whka$. Choose any element $T\in\kappa(x)\setminus k^a$. If $\cha(k)=p$ then by Lemma \ref{cur2lem}(i) we can manage that $T$ is not a $p$-th power. Let $f{\colon}C'\to Y=\bfA^1_k$ be the morphism induced by $T$ on an appropriate neighborhood $C'$ of $x$, and set $y=f(x)$. Then $f$ is finite at $x$ by Lemma \ref{anfinlem}, and, moreover, it is \'etale at $x$ by \cite[3.3.6]{berihes}. Indeed, $m_y=0$ and the finite extension of the residue fields $\kappa(x)/\kappa(y)$ is separable because in the positive characteristic case $T\in\kappa(y)$ and $T^{1/p}\notin\kappa(x)$. We claim that there exists a finite extension $l/k$ such that $\kappa(x)=l\kappa(y)$. First, we note that since $K\cap k^a$ is dense in $K\subset\whka$ by the Ax-Sen theorem, there exists a finite extension $l/k$ such that $K=l\calH(y)$. Let us check that one also has that $\kappa(x)=l\kappa(y)$. Indeed, $\kappa$'s are separably closed in $\calH$'s and $l/k$ is separable, hence $l\subset\kappa(x)$ and then the separable extension $\kappa(x)/l\kappa(y)$ is trivial because $l\kappa(y)$ is separably closed in its completion $l\calH(y)=K=\calH(x)$. Now, the same argument with \cite[3.4.1]{berihes} as was used for Zariski closed points implies that $C$ at $x$ and $\bfA^1_l$ at a preimage of $y$ are locally isomorphic.

Finally, we assume that $x$ is of type $4$. This is the most difficult case but the main work has already been done in Theorem \ref{type4th}, which implies that $K=\ol{k'(T)}$ for a finite extension $k'/k$ and some $T\in K$. Since $k'$ is separable over $k$, it is contained already in $\kappa(x)$. It follows that a sufficiently small neighborhood of $x$ is defined over $k'$, and then a smaller neighborhood embeds into $\bfA^1_{k'}$ by Lemma \ref{cur2lem}(iii). So, $x$ admits a neighborhood isomorphic to a $k'$-disc. By Corollaries \ref{perfcor} and \ref{discrcor}, we can take this disc to be an $l$-split disc of an integral radius for a finite extension $l/k'$. The theorem is proved.
\end{proof}

Theorem \ref{terminalth} provides an inseparable local uniformization of terminal points. As was explained in the Introduction, we will have to use a stronger simultaneous uniformization result in order to run induction in the proof of our main result in \S\ref{insepunifsec}. Therefore, we have to strengthen Theorem \ref{terminalth} to the following technically looking statement. For simplicity, we exclude the mixed characteristic case now, but see Remark \ref{4rem} below.

\begin{theor} \label{4th}
Let $k$ be an equicharacteristic analytic field, $g{\colon}\oC\to C$ be a morphism of rig-smooth $k$-analytic curves and $x\in C$ be a terminal point with a finite fiber $g^{-1}(x)=\{x_1\. x_n\}$. Then there exists a finite purely inseparable extension $k'/k$, finite separable $k'$-fields $l_1\. l_n$ and an affinoid neighborhood $C'$ of $x$ such that each $C'_i\otimes_k k'$ is isomorphic to an $l_i$-split disc, where $C'_i$ is the connected component of $g^{-1}(C')$ that contains $x_i$. Moreover, if $x_i$ is not critical (e.g. $x_i$ is not Zariski closed) then one can achieve that $C'_i\otimes_k k'$ is isomorphic to a unit $l_i$-disc. Finally, if a $k$-field $k_p\subset k^{1/p^\infty}$ is dense in the completed perfection $\ol{k^{1/p^\infty}}$ then one can take $k'\subset k_p$.
\end{theor}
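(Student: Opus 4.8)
The plan is to reduce to the case of a perfect (hence, being analytic, deeply ramified) ground field, settle that case by combining Theorem~\ref{terminalth} with Lemma~\ref{preimdisclem}, and then descend back to a finite purely inseparable extension.

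\emph{Reduction to perfect $k$.} Put $k_\infty=\ol{k^{1/p^\infty}}$; if a dense $k_p\subseteq k^{1/p^\infty}$ is given we work inside $k_p$, which only affects which finite level the final descent reaches. Since $k$ is analytic, $k_\infty$ is perfect, hence deeply ramified. Base changing along $k\to k_\infty$ yields $\hat g\colon\oC_{k_\infty}\to C_{k_\infty}$, a morphism of rig-smooth $k_\infty$-analytic curves, and as $k_\infty/k$ is purely inseparable the projections to $\oC$ and $C$ are homeomorphisms; thus $x$ and each $x_i$ have unique preimages $\hat x,\hat x_i$ with $\hat g^{-1}(\hat x)=\{\hat x_1\.\hat x_n\}$. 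Moreover $\hat x$ (resp. each $\hat x_i$) is terminal, since $\calH(\hat x)=\wh{\calH(x)k_\infty}$ is transcendentally immediate over $k_\infty$ because $\calH(x)/k$ is and $k_\infty/k$ is algebraic; and a point is Zariski closed, equivalently non-critical for the pertinent morphism of discs, exactly when its image in $\oC$ is. So it suffices to prove the statement over $k_\infty$ — allowing the role of $k'$ to be played by a finite extension of $k_\infty$ — and then descend.

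\emph{The perfect case.} Let now $k$ be perfect and deeply ramified. By Theorem~\ref{terminalth}, $x$ has a neighborhood in $C$ isomorphic to a unit $b$-disc for some finite (hence separable) $b/k$, and each $x_i$ has a neighborhood $U_i$ in $\oC$ isomorphic to a unit $a_i$-disc for some finite $a_i/k$; since terminal points are inner and each $x_i$ is isolated in the finite fiber $g^{-1}(x)$, after shrinking we may assume $g$ is finite at each $x_i$ and $g(U_i)$ lies in the chosen disc neighborhood of $x$. Replacing $a_i$ by $a_ib$ and $U_i$ by its base change (so that the source of $g|_{U_i}$ is a disc over a finite extension of the field $b$ over which the target disc is defined), the restriction $g|_{U_i}$ becomes a morphism of discs of exactly the shape treated in Lemma~\ref{preimdisclem}, with ground field $b$, which is deeply ramified. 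For $s>r(x)$ put $C'=E(x,s)$ and let $C'_i$ be the connected component of $g^{-1}(C')$ through $x_i$; for $s$ small enough $C'_i\subseteq U_i$, so $C'_i$ is the component ``$Y_s$'' of Lemma~\ref{preimdisclem} and is therefore a disc. Corollary~\ref{discrcor} applied to $C'$ and to each $C'_i$, together with the second assertion of Lemma~\ref{preimdisclem} for the non-critical $x_i$, produces finitely many discrete exceptional subsets of $(r(x),\infty)$; as $|b^\times|$ is $p$-divisible we may choose a single $s\in|b^\times|$ avoiding all of them, and then $C'$ is an $m$-split disc of integral radius and each $C'_i$ is an $l_i$-split disc — a unit $l_i$-split disc whenever $x_i$ is non-critical — for finite extensions $m,l_i$ of $b$, hence of $k$.

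\emph{Descent.} Finally, by Corollary~\ref{impcor} the fields $b,m,l_i$ are of the form $\ell k_\infty$ for finite \emph{separable} extensions $\ell/k$, and all the data produced (the affinoid $C'$, the components $C'_i$, and the finitely many functions realizing the split-disc structures) live over $\ell k^{1/p^{n}}$ for some $n$, where $\ell/k$ is the relevant finite separable extension and $k^{1/p^n}\subseteq k_p$. Taking $k'=k^{1/p^n}$ (purely inseparable over $k$, contained in $k_p$), taking the $l_i$ to be the corresponding finite separable extensions of $k'$, and pulling back along the homeomorphism $C_{k'}\to C$ gives $C'$, $k'$, $l_1\.l_n$ as desired. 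The main obstacle is precisely this bookkeeping: keeping $k'/k$ purely inseparable while pushing the separable ramification into the $l_i$, arranging that the neighborhoods of the various $x_i$ are simultaneously the connected components of the preimage of one neighborhood $C'$ of $x$, and ensuring the finitely many discrete sets of bad radii coming from Lemma~\ref{preimdisclem} and Corollary~\ref{discrcor} can be avoided by one common choice of $s$ — all managed by shrinking $C'$ and using finiteness of $g^{-1}(x)$.
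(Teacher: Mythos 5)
Your proof follows the paper's strategy exactly: establish the statement over a perfect (hence deeply ramified) analytic field via Theorem~\ref{terminalth} and Lemma~\ref{preimdisclem}, then pass to the completed perfection $k''=\ol{k^{1/p^\infty}}$ and descend to a finite level. The perfect case is handled in essentially the same way the paper's Step~1 does, including the shrinking so that the fiber over the small disc $C'=E(x,s)$ decomposes into components landing in the chosen disc neighborhoods $U_i$, and the choice of an integral radius $s$ avoiding the discrete bad sets.

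The one place you are materially thinner than the paper is the final descent. The paper isolates this as Lemma~\ref{newlem}, and the point of that lemma is not just bookkeeping: the coordinate $T$ realizing the isomorphism $C'_i\wtimes_k k''\toisom\calM(l''\{r^{-1}T\})$ lives a priori in the \emph{completed} tensor product $\calA\wtimes_k k''$, not in $\bigcup_n \calA\otimes_k k^{1/p^n}$. So the assertion that ``the finitely many functions realizing the split-disc structures live over $\ell k^{1/p^n}$ for some $n$'' is not automatic. What makes it work is a density/perturbation argument: any $T'$ with $|T-T'|<r$ is again a coordinate, so one can move $T$ into the dense subalgebra $\calA\otimes_k k_p$ and then into some $\calA\otimes_k k'$ with $[k':k]<\infty$; one then has to check that the resulting map $l'\{r^{-1}T\}\to\calA\otimes_k k'$ is an isomorphism by base changing back up to $k''$. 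Corollary~\ref{impcor}, which you do invoke, only handles the descent of the field $l''$ to $l$ (giving $l''=lk''$); it does not by itself descend the disc structure. You should either cite Lemma~\ref{newlem} at this point or reproduce its perturbation argument, and similarly you should justify that the image of $E(x,s)\subset C_{k_\infty}$ under the homeomorphism $C_{k_\infty}\to C$ descends to an affinoid domain of $C\otimes_k k'$ for finite $k'$ (the paper appeals to Galois invariance and [BL0, 1.4] for this).
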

\begin{proof}
The particular case when $\oC=C$ and $k$ is perfect was established in Theorem \ref{terminalth}. We will drop these two assumptions in two stages.

Step 1. {\em The theorem holds when $k$ is perfect.} By Theorem \ref{terminalth}, $x$ lies in a unit $l$-disc $E=\calM(l\{T\})$ for a finite extension $l/k$. Similarly, for each $1\le i\le n$ we can find a neighborhood $C_i\subset g^{-1}(E)$ of $x_i$ which is isomorphic to a unit $l_i$-disc for a finite extension $l_i/k$. Let $X_r\subset E$ be the disc of radius $r$ containing $x$, where $r$ is taken between the radius $r(x)$ of $x$ and $1$. Consider the preimage of $X_r$ under the morphism $C_i\to E$, and let $X_{r,i}$ be its connected component containing $x_i$. Fix $i$. By Lemma \ref{preimdisclem}, for any $r\in|l^\times|$ sufficiently close to $r(x)$ and not contained in a discrete set $S_i$ we have that $X_{r,i}$ is isomorphic to an $l_i(r)$-split disc and for each non-critical $x_i$ it is even the unit $l_i(r)$-disc. It remains to note that $\{X_{r,i}\}_{r(x)<r\le 1}$ is a decreasing family of discs whose intersection is $x_i$. In particular, $X_{r,i}$ is strictly smaller than $C_i$ for sufficiently small $r$'s, and then $X_{r,i}$ is the connected component of $g^{-1}(X_r)$ that contains $x_i$. Thus, we can set $C'=X_r$, where $r\in|l^\times|$ is sufficiently close to $r(x)$ and is not contained in the discrete set $\cup_{i=1}^nS_i$.

Step 2. {\em The general case.} If $k_p$ is not specified in the theorem then we make the default choice $k_p=k^{1/p^\infty}$. In particular, $k'':=\ol{k_p}$ is the completed perfection of $k$ in any case. Set $Y=C\wtimes_k k''$, $\oY=\oC\wtimes_k k''$ and $h=g\wtimes_k k''$, and let $y\in Y$ and $y_i\in Y_i$ be the preimages of $x$ and $x_i$ under the homeomorphisms $Y\to C$ and $\oY\to\oC$, respectively. By the previous step, $y$ possesses a neighborhood $Y'$ such that each $y_i$ is contained in a connected component $\oY_i\subset h^{-1}(Y')$ which is isomorphic to an $l''_i$-split disc for a finite extension $l''_i/k''$. The image of $Y'$ in $C$ is easily seen to be an affinoid domain which we denote $C'$ (for example, the preimage of $C'$ in $C\wtimes_k\whka$ is an affinoid domain preserved by the action of $\Gal_{k^s/k}$, hence the argument from Step 1 in the proof of Theorem \ref{type4th} and \cite[6.3.3/3]{BGR} imply that $C'$ is affinoid). We claim that $C'$ is a neighborhood of $x$ as required. The connected component $C'_i\subset g^{-1}(C)$ containing $x_i$ is the image of $\oY_i$ in $\oC$, hence $\oY_i\toisom C'_i\wtimes_k k''$. Our assertion now follows from Lemma \ref{newlem} below.
\end{proof}

\begin{lem} \label{newlem}
Assume that $X=\calM(\calA)$ is a $k$-affinoid space and $k_p/k$ is a purely inseparable extension such that $k_p$ is dense in $k''=\ol{k^{1/p^\infty}}$ and $X''=X\wtimes_k k''$ is isomorphic to a (resp. unit) $l''$-split disc, where $l''/k''$ is a finite extension. Then there exists a field $m$ such that $k\subseteq m\subseteq k_p$, $[m:k]<\infty$ and for any field $k'$ with $m\subseteq k'\subseteq k_p$ the space $X'=X\otimes_k k'$ is isomorphic to a (resp. unit) $l'$-split disc, where $l'/k'$ is a finite separable extension.
\end{lem}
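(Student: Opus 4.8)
The plan is to descend the isomorphism $X'' \cong (\text{$l''$-split disc})$ from the infinite purely inseparable extension $k''$ to a sufficiently large finite subextension $m/k$ inside $k_p$. First I would recall that $X'' = X\wtimes_k k''$ and write the disc isomorphism as an isomorphism of $k''$-affinoid algebras $\calA\wtimes_k k'' \toisom l''\{s^{-1}T\}$ (or $l''\{T\}$ in the unit case), where the chosen coordinate $T$ lies in $\calA\wtimes_k k''$. Since $\calA\wtimes_k k''$ is the completion of the filtered union of the $\calA\otimes_k k'$ over finite $k\subseteq k'\subseteq k_p$ (because $k''$ is the completed perfection and $k_p$ is dense in it), I can move $T$ slightly — using that $l''\{s^{-1}T\} = l''\{s^{-1}T'\}$ for any $T'$ with $|T-T'|$ small enough — so that $T$ already belongs to $\calA\otimes_k k'$ for some finite $k'$; this is the same standard maneuver used in Step~1 of the proof of Theorem~\ref{type4th}.

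The second step is to also descend the field $l''$. By Corollary \ref{impcor} applied to the perfect field $k''$ (legitimate since we are in equicharacteristic $p$; in the $p=1$ case there is nothing to prove), the finite separable extension $l''/k''$ is of the form $l''=l'k''$ for a finite separable extension $l'$ of $k_p$, hence — enlarging the finite subextension $m$ if necessary so that $l'$ is defined over and separable over $m$ — we get a finite separable $l'/k'$ for every $k'$ with $m\subseteq k'\subseteq k_p$, with $l'\otimes_{k'}k'' \cong l''$. After these two enlargements the coordinate $T$ and the structural data of the split disc are all defined over $m$, so a natural homomorphism of $k'$-affinoid algebras $\phi\colon l'\{s^{-1}T\}\to \calA\otimes_k k'$ (pick $s\in|l'^\times|$, possible after enlarging $m$ so the radius becomes rational over $l'$; in the unit case $s=1$) arises. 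The key point is that $\phi\wtimes_{k'}k''$ is the given isomorphism $l''\{s^{-1}T\}\toisom \calA\wtimes_k k''$, and since $k''/k'$ is faithfully flat, $\phi$ is itself an isomorphism. Therefore $X'=\calM(\calA\otimes_k k')$ is an $l'$-split disc (a unit one in the unit case), as required.

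The main obstacle I anticipate is the bookkeeping needed to guarantee that a \emph{single} finite $m$ works simultaneously for \emph{all} $k'$ between $m$ and $k_p$: one must check that the coordinate $T$, the subfield $l'$, its separability over the base, and the radius $s\in|l'^\times|$ all remain defined and have the right properties after base change from $m$ to any such $k'$. This is handled by the observation that each of these is a finite piece of data (finitely many elements of $\calA\otimes_k k_p$ and a finite field extension), so all of it is already defined over some finite subextension $m$, and base-changing further only preserves it — base change of a split disc along $k'\to k''$ stays a split disc, and separability and the value-group condition are insensitive to purely inseparable base change. I would also remark that faithful flatness of $k''$ over $k'$ is what upgrades the statement ``$\phi$ becomes an isomorphism after $\wtimes_{k'}k''$'' to ``$\phi$ is an isomorphism,'' which is the only nontrivial input besides the approximation/density argument; the parenthetical ``resp.'' clauses (unit disc) are treated identically by carrying along the normalization of the radius.
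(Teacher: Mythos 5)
Your proposal follows the same overall strategy as the paper: descend $l''$ to a finite separable extension via Corollary \ref{impcor}, move the coordinate $T$ into $\calA\otimes_k k'$ for a finite subextension $k'\subseteq k_p$ by density, and conclude that $\phi$ is an isomorphism because $\phi\wtimes_{k'}k''$ is (your appeal to faithful flatness of $k''/k'$ is a reasonable way to phrase this, as is your handling of the unit case by setting $s=1$). There is, however, a gap in the construction of $\phi\colon l'\{s^{-1}T\}\to\calA\otimes_k k'$. To write $\phi$ down you must know that $l'=lk'$, which you obtain as a subfield of $l''\subset\calA\wtimes_k k''$, actually lies inside the subalgebra $\calA\otimes_k k'$, and the phrase ``the structural data of the split disc are all defined over $m$'' does not justify this. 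The issue is not whether $l'$ is defined over $m$ as an abstract field --- that follows immediately from $l=l''\cap k^s$ being finite over $k$ --- but whether the particular embedding $l''\into\calA\wtimes_k k''$ restricts to an embedding $l\into\calA$.

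The paper's first paragraph is devoted precisely to this. Since $l''=lk''$ is the completed perfection of $l$, the projection $(X\otimes_k l)\wtimes_l l''\to X\otimes_k l$ is a homeomorphism; since $X''$ is defined over $l''$, the space $X''\otimes_{k''}l''\toisom(X\otimes_k l)\wtimes_l l''$ has a connected component $Z''$ mapping isomorphically onto $X''$; the homeomorphism carries $Z''$ to a connected component $Z$ of $X\otimes_k l$, and $Z\to X$ is an isomorphism because its base change $Z\wtimes_k k''\toisom X''$ is, which exhibits the embedding $l\into\calA$. Some argument of this kind must precede the construction of $\phi$. Note that a direct Hensel/Krasner approximation of a primitive element of $l$ by elements of $\calA\otimes_k k'$ does not obviously close up either: the derivative of the minimal polynomial evaluated at the approximation is a unit in $\calA\wtimes_k k''$, but not evidently a unit in the subring $\calA\otimes_k k'$, so the Newton step cannot be carried out there.
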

\begin{proof}
By Corollary \ref{impcor}, $l=l''\cap k^s$ is finite over $k$ and $lk''=l''$ is the completed perfection of $l$. In particular, the morphism $\phi{\colon}(X\otimes_k l)\wtimes_l l''\to X\otimes_k l$ is a homeomorphism. Since $X''$ is defined over $l''$, there exists a connected component $Z''\subset (X\otimes_k l)\wtimes_l l''\toisom X''\otimes_{k''}l''$ which is projected isomorphically onto $X''$. This component is mapped by $\phi$ onto a connected component $Z\into X\otimes_kl$, and we observe that the projection $p{\colon}Z\to X$ is an isomorphism because $p\wtimes_kk''$ is the isomorphism $Z''\toisom X''$. The existence of such $Z$ implies that $X$ is defined over $l$, in the sense that $l$ embeds into $\calA$.

Next, choose a coordinate $T$ on the $l''$-split disc $X''$, i.e. fix an isomorphism $X''\toisom\calM(l''\{r^{-1}T\})$. (In the case of the unit disc we take $r=1$.) Any other element $T'\in l''\{r^{-1}T\}$ with $|T-T'|<r$ is a coordinate on $X''$ too, and, obviously, $\calA\otimes_k k_p$ is dense in $\calA\wtimes_k k''\toisom l''\{r^{-1}T\}$. Hence we can move $T$ so that $T\in\calA\otimes_k k_p$, and then $T\in\calA\otimes_k k'$ already for a $k$-finite subfield $k'\subset k_p$. Set $l'=lk'$ and note that $\calA\otimes_k k'\into l''\{r^{-1}T\}$ contains $l'\{r^{-1}T\}$ as a subalgebra. Moreover, the embedding $\phi{\colon}l'\{r^{-1}T\}\into\calA\otimes_k k'$ is actually an isomorphism because its base change $\phi\wtimes_{k'}k''$ is the isomorphism $l''\{r^{-1}T\}\toisom\calA\wtimes_k k''$. So, $X\otimes_k k'\toisom\calM(l'\{r^{-1}T\})$ is an $l'$-split disc, and clearly we can take $m=k'$.
\end{proof}

\begin{rem}\label{4rem}
It seems that Theorem \ref{4th} holds for any base field $k$ with any field $k_p\subset k^a$ such that $\wh{k_p}$ is deeply ramified, and the proof is essentially the same. For example, if $k$ is embedded in the completed algebraic closure of a valued field $\bfQ_p(T_1\. T_n)$ then one can take $k_p$ equal to either $k(1^{1/p^\infty},T_1^{1/p^\infty}\. T_n^{1/p^\infty})$ or $k(p^{1/p^\infty},T_1^{1/p^\infty}\. T_n^{1/p^\infty})$.
\end{rem}

\subsection{Decompletion}\label{decomplsec}
Throughout this section $k$ is a valued field of height $1$ and positive characteristic $p$, and $S=\Spec(\kcirc)$ with generic point $\eta=\Spec(k)$. Let $K/k$ be a finitely generated extension of valued fields of transcendence degree one and let $C=\Spec(A)$ be an affine {\em normalized $S$-model} of $\Kcirc$ in the sense that $C$ is a normal nft $S$-scheme with generic point $\Spec(K)\to C$ and such that $\Kcirc$ is centered on $C$. We assume that $K$ is of height one and that the extension $K/k$ is transcendentally immediate. Note that $C$ is $\eta$-nfp over $S$ by Lemma \ref{fintypelem}. Finally, let $K_1/K\. K_n/K$ be finite extensions of valued fields.

\begin{theor}\label{dim1unif}
Keep the notation of \S\ref{decomplsec}. Then there exists an affine normalized $S$-model $C'$ which refines $C$ and finite extensions of valued fields $l/k$ and $m_i/l$ for $1\le i\le n$ such that $l/k$ is purely inseparable, $m_i/l$ are separable, and the following conditions hold. Let $L_i$ denote the field $lK_i$ with the valuation extending that of $K_i$ and let $z_i$ denote the center of $\Lcirc_i$ on $\Nr_{L_i}(C')$. Then $z_i$ is smooth-equivalent over $S$ to the closed point of $S_i=\Spec(\mcirc_i)$.
\end{theor}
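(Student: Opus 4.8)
The idea is to translate Theorem~\ref{dim1unif} into a statement about the analytic generic fiber $\gtC_\eta$ and its preimages under the fibered analogue of $g$, and then invoke the analytic uniformization results of \S\ref{termsec} together with the descent-from-projective-limits machinery of \S\S\ref{smsec}. First I would replace $k$ by its $(\pi)$-adic completion $\hatk$ in spirit only: the valuation $\Kcirc$ of height one, being transcendentally immediate over $k$, induces a point $x$ on the analytic generic fiber $\gtC_\eta$ (via the map $\psi_x$ of Remark~\ref{complrem}), and the hypothesis that $K/k$ is transcendentally immediate means exactly that $\calH(x)$ is transcendentally immediate over $\hatk$, i.e.\ $x$ is a \emph{terminal} point (type $1$ or $4$) of the analytic curve $\gtC_\eta$ in the sense of \S\ref{termsec}. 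The finite extensions $K_i/K$ give a finite cover $\oC=\coprod\Nr_{K_i}(C)$ of $C$, hence — after passing to analytic generic fibers and using Lemma~\ref{extcomlem} and Lemma~\ref{normlem} — a morphism $g^\an\colon\oC{}^\an_\eta\to\gtC_\eta$ of rig-smooth $\hatk$-analytic curves (rig-smoothness because $C$ and the $\Nr_{K_i}(C)$ are normal, hence geometrically reduced after base change, and one-dimensional), with the points $x_i$ over $x$ corresponding to the valuations on the $K_i$ extending $\Kcirc$.

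Next I would apply Theorem~\ref{4th} to $g^\an$ and the terminal point $x$: since $k$ has positive characteristic $p$ (equicharacteristic case), there is a finite purely inseparable extension $k'/\hatk$, finite separable $k'$-fields $l_i'$, and an affinoid neighborhood $C'{}^\an$ of $x$ in $\gtC_\eta$ so that each connected component $C_i'{}^\an$ of $(g^\an)^{-1}(C'{}^\an)$ containing $x_i$ becomes, after $\wtimes_{k'}$, an $l_i'$-split disc — and even a \emph{unit} $l_i'$-disc when $x_i$ is not critical. One must arrange that this $k'$ descends to a finite purely inseparable extension $l/k$ with $\hatl = $ (completion of $l$), which is possible because $k'$ can be taken inside a dense $k$-subfield $k_p$ of $\ol{k^{1/p^\infty}}$, so one chooses $l\subset k^{1/p^\infty}$ finite over $k$ with $\hatl\supseteq k'$; similarly the separable $l_i'/k'$ descend to finite separable $m_i/l$ (via Corollary~\ref{impcor} and Lemma~\ref{newlem}, noting $m_i$ can be taken with $\widehat{m_i}=l_i'\cap(k^s\text{-completion})$). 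Here the terminal point $x_i$ of type $4$ is never critical and type $1$ non-Zariski-closed points are not critical either, while Zariski-closed $x_i$ (type $1$, finite residue field extension) are handled by the "$l_i$-split disc" conclusion rather than the unit-disc one; in all cases one lands in the hypotheses of Theorem~\ref{smprop}.

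Now I would pass back from the analytic picture to schemes. The affinoid neighborhood $C'{}^\an\subset\gtC_\eta$ is, by construction of analytic generic fibers and by Lemma~\ref{bijlem2}, the analytic fiber $X^\an_z$ (or the generic fiber of a suitable partial $\eta$-normalization) over a closed point $z$ of a suitable normalized $S$-model refining $C$; after shrinking I obtain an affine normalized $S$-model $C'$ refining $C$ such that the analytic fiber over the center of $\Kcirc$ on $\Nr_{L_i}(C')$ — call it $z_i$, where $L_i=lK_i$ — has analytic fiber isomorphic (after $\wtimes_{\hatl}\widehat{m_i}$, which by Lemma~\ref{extcomlem} is the analytic generic fiber of $\Nr_{L_i}(C')\times_{S}S_i$) to an open unit $\widehat{m_i}$-polydisc (one-dimensional), respectively whose analytic \emph{generic} fiber is a closed unit $\widehat{m_i}$-disc in the Zariski-closed case. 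Theorem~\ref{smprop}(i) (resp.\ (ii)) then yields that $(\,\Nr_{L_i}(C'),z_i)$ is smooth-equivalent over $S_i$ — hence over $S$ — to $(S_i,s_i)$ where $S_i=\Spec(\mcirc_i)$. Finally, to produce a \emph{single} model $C'$ working simultaneously for all $i$ and to reconcile the various shrinkings, I would use Lemma~\ref{Ypartlem} to write the relevant partial $\eta$-normalization as a projective limit of finite $\eta$-modifications and Lemma~\ref{projlem} (descent of smooth-equivalence from $\eta$-normalized limits) to conclude that all the required smooth-equivalences already hold over one model $C'$ of finite type in the tower; one also invokes Lemma~\ref{easysmlem} to propagate smooth-equivalence through the purely inseparable extension $l/k$.

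\textbf{Main obstacle.} The genuinely hard input is Theorem~\ref{4th}, whose proof rests on the deep structure theory of terminal (type~$4$) points over deeply ramified fields (ultimately \cite[6.3.1]{temst}); but granting that, the delicate bookkeeping step here is the passage back and forth between the completed/analytic world and the scheme-theoretic models — specifically, checking that the finite purely inseparable $k'/\hatk$ and the separable $l_i'/k'$ furnished by the analytic theorem genuinely descend to finite extensions $l/k$ and $m_i/l$ of the \emph{non-complete} valued field $k$ with matching completions (this is where Corollary~\ref{impcor} and Lemma~\ref{newlem} are essential), and that after all the shrinking one still has an affine normalized $S$-model refining the original $C$. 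The rest is a careful assembling of Lemmas~\ref{extcomlem}, \ref{normlem}, \ref{bijlem2}, \ref{projlem}, \ref{easysmlem} and Theorems~\ref{anfibth}, \ref{smprop}.
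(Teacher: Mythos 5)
Your overall plan matches the paper's proof almost step for step: translate $\Kcirc$ into a terminal point on the analytic generic fiber $\gtC_\eta$ via Remark~\ref{complrem}, treat $\coprod_i\Nr_{K_i}(C)$ as a covering curve, apply Theorem~\ref{4th} to get split discs after a purely inseparable base change, algebraize back to a refined $S$-model, and conclude with Theorem~\ref{smprop}. But there is one genuine gap and one step that is waved at rather than carried out.

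The gap is in your justification for invoking Theorem~\ref{4th}, which requires the analytic curves to be \emph{rig-smooth}. You write that rig-smoothness holds ``because $C$ and the $\Nr_{K_i}(C)$ are normal, hence geometrically reduced after base change, and one-dimensional.'' Both inferences fail over an imperfect $k$. A normal $k$-curve need not be geometrically reduced: e.g.\ for $k=\bbF_p(t)$ the ring $k(t^{1/p})[x]$ is normal (even regular), yet $k(t^{1/p})[x]\otimes_k\ok$ has nilpotents since $k(t^{1/p})\otimes_k\ok\cong\ok[y]/(y-t^{1/p})^p$. And even geometric reducedness is not rig-smoothness for a $1$-dimensional affinoid. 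This is exactly why the paper's proof begins with a Step~1 that is entirely absent from your plan: one passes to a finite purely inseparable extension $F/k$ chosen so that $\Nr_{FK}(\oC_\eta)$ is $F$-smooth, and then verifies the non-trivial compatibility that if the theorem holds for $(F,\,\Nr_{FK}(C),\,FK_i)$ with output model $C'_F=\Spec(A_F)$, then $C'=\Spec(A_F\cap K)$ works for the original data (this uses that $A_F^{p^n}\subset A_F\cap K$ for large $n$, hence $A_F\cap K$ is again normalized of finite type and $\Nr_{FK}(A_F\cap K)=A_F$). You cannot avoid this step; without it the hypotheses of Theorem~\ref{4th} are not met.

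The second issue is the algebraization of the affinoid neighborhood $W=C'{}^\an$ to an affine normalized $S$-model refining $C$. You acknowledge the need to check that ``after all the shrinking one still has an affine normalized $S$-model refining the original $C$,'' but the citation of Lemma~\ref{bijlem2} does not do the job: that lemma compares analytic generic fibers of a scheme and its partial $\eta$-normalization, it does not produce a model whose generic fiber is a prescribed affinoid subdomain. What makes the algebraization work in the paper is Lemma~\ref{weilem}: since $W\otimes_\hatk\hatl$ is an $\hatm_i$-split disc, $W$ is a \emph{Weierstrass} domain $\gtC_\eta\{f_1,\dots,f_n\}$, one then rescales by $\pi$ and perturbs the $g_i=\pi f_i$ into $A$ using $\hatA\cong\calAcirc$ (Lemma~\ref{normlem}), and sets $V=\Nr_K(\Spec(A[g_1/\pi,\dots,g_n/\pi]))$. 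This Weierstrass step is essential and should be made explicit. Finally, the invocations of Lemmas~\ref{projlem} and~\ref{easysmlem} at the end are superfluous for this theorem (they belong to the proof of Theorem~\ref{equivunif}): since smoothness of a $Z\to X$ is an absolute property, smooth-equivalence over $S_i$ already gives smooth-equivalence over $S$ with no extra work.
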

\begin{proof}
Set $C_i=\Nr_{K_i}(C)$ and let $\gtC$ and $\gtC_i$ denote the formal $\pi$-adic completions of $C$ and $C_i$ (as usual, $\pi$ is a non-zero element of $\kcirccirc$). Also we denote by $\gtC_\eta$ and $\gtC_{i,\eta}$ the analytic generic fibers as defined in \S\ref{agfsec}. In order to use uniform and simultaneous notation for $C$ and all $C_i$'s it will be also convenient to set $C_0=C$ and $\oC=\coprod_{i=0}^nC_i$ and to define $\ogtC$ as the formal completion of $\oC$. We start the proof with three preliminary steps.

Step 1. {\it Reduction to the case when $\oC_\eta$ is $k$-smooth and $\ogtC_\eta$ is rig-smooth over $\hatk$.} The $k$-curve $\oC_\eta$ can be made smooth by finite purely inseparable extension of the base field and subsequent normalization; that is, there exists a finite and purely inseparable extension $F/k$ such that the curve $\Nr_{FK}(\oC_\eta)$ is $F$-smooth. We claim that it suffices to prove the theorem for $F$, $C_F=\Nr_{FK}(C)$ and $FK_i$'s instead of $k$, $C$ and $K_i$'s. Indeed, assume that $m_i/l/F$ and $C'_F=\Spec(A_F)$ satisfy the assertion of the theorem for the former triple (so, $C'_F$ is a model of $FK$ and $A_F\subset FK$). Then $m_i/l/k$ and $C'=\Spec(A')$, where $A'=A_F\cap K$, satisfy all assertions of the theorem. Indeed, the only non-obvious claims here are that $A'$ is the normalization of a finitely generated $\kcirc$-algebra and $\Nr_{FK}(A')=A_F$, but both follow from the fact that $A'\supset A_F^{p^n}$ for large enough $n$ because $FK/K$ is purely inseparable. So, we can extend the ground valued field $k$ to $F$, achieving that the generic fibers are $k$-smooth. We thereby achieve that $\oC_\eta$ is $k$-smooth, and we claim that $\ogtC_\eta$ is then rig-smooth. Indeed, the Stein space $X=(\oC_\eta\otimes_k\hatk)^\an$ is rig-smooth by a GAGA type result \cite[3.4.3]{berbook} (it then even follows that $X$ is smooth because it has no boundary), and $\ogtC_\eta$ is an affinoid domain in $X$ by Remark \ref{genfibaffrem}(i).

Step 2. {\it Use of Theorem \ref{4th} and algebraization of extensions of $\hatk$.} Let $x$ be the center of $\Kcirc$ on $C$ and let $\hatx\in\gtC_\eta\into\ogtC_\eta$ be the point that corresponds to $\Kcirc$ via the map $\psi_x{\colon}C^{\bir,1}_x\to C_x^\an$ described in Remark \ref{complrem}. Also, we associate to each $\Kcirc_i$ a point $\hatx_i\in\gtC_{i,\eta}\into\ogtC_\eta$ in a similar way. The field $\calH(\hatx)\toisom\hatK$ is transcendentally immediate over $\hatk$, in particular, $\hatx$ is a terminal point. By Theorem \ref{4th} there exists a connected $\hatk$-affinoid neighborhood $W$ of $\hatx$, a finite purely inseparable extension $\oll/\hatk$ and finite separable extensions $\om_i/\oll$ such that the following condition holds: the preimage
of $W$ in $\ogtC_\eta$ contains connected components $W_i\ni\hatx_i$ such that $W_{i,l}:=W_i\otimes_{\hatk}\oll$ is a closed unit $\om_i$-disc. Since $k^{1/p^\infty}$ is dense in $(\hatk)^{1/p^\infty}$, Theorem \ref{4th} also states that we can choose $\oll$ of the form $\hatl$ for a finite purely inseparable extension $l/k$. The algebraization of $\om_i$'s is possible by Krasner's lemma; that is, there exist finite separable extensions $m_i/l$ such that $\om_i=\hatm_i$. Finally, we set $m=m_0$ and note that $W_0\toisom W$ because $C$ is the zeroth connected component of $\oC$, and hence $W\otimes_{\hatk}\oll$ is a closed unit $\om$-disc.

Step 3. {\em The affinoid domain $W$ algebraizes to an affine normalized $S$-model $V=\Spec(B)$ of $\Kcirc$, in the sense that the $W\toisom\gtV_\eta$ and the embedding $W\into\gtC_\eta$ is the analytification of a refinement of models $V\to C$.} Since $C=\Spec(A)$, we have that $\gtC_\eta=\calM(\calA)$ for the $\hatk$-affinoid algebra $\calA=\hatA_\pi$. Since $W\otimes_\hatk\oll$ is an $m$-split disc, Lemma \ref{weilem} asserts that $W$ is a Weierstrass domain in $\gtC_\eta$, say $W=\gtC\{f_1\. f_n\}$ with $f_i\in\calA$. Choose $\pi\in\kcirc\setminus\{0\}$ such that $g_i=\pi f_i\in\calAcirc$. Clearly, $W=\gtC\{g_1/\pi\. g_n/\pi\}$, and the same equality holds if we modify $g_i$'s by adding to them elements from $\pi\calAcirc$. Since $\hatA=\calAcirc$ by Lemma \ref{normlem}, we can achieve that $g_i\in A$. Set, now, $D=A[g_1/\pi\. g_n/\pi]$, $B=\Nr_K(D)$ and $V=\Spec(B)$. Clearly, $A_\pi\toisom B_\pi$ and so $V_\eta\toisom C_\eta$. In addition, $\calB:=\hatB_\pi$ is isomorphic to $\hatD_\pi$ by Lemma \ref{normlem}, and it remains to note that $\hatD\toisom\calAcirc\{g_1/\pi\. g_n/\pi\}$, and so $\hatD_\pi\toisom\calA\{g_1/\pi\. g_n/\pi\}$ and $\gtV_\eta=\calM(\hatB_\pi)\toisom\calM(\hatD_\pi)\toisom W$.

Now, we are prepared to prove the theorem. We have already introduced $l$, so set $L_i=lK_i$ as in the formulation of the theorem, and consider the schemes $C'=V$, $C'_i=\Nr_{K_i}(C')$ and $C'_{i,l}=\Nr_{L_i}(C')$ with formal completions $\gtC'$, $\gtC'_i$ and $\gtC'_{i,l}$. Note that $\gtC'_{i,\eta}$ is the preimage of $\gtC'_\eta=\gtV_\eta\toisom W$ in $\gtC_{i,\eta}$ because $C'_i$ is the $\eta$-normalization of $C_i\times_CC'$ (we use here that $C'_\eta\toisom C_\eta$ by Step 3). In particular, $W_i$ is a connected component of $\gtC'_{i,\eta}$ by Step 2. Each field $K_i$ is separable over $k$ by $k$-smoothness of $\oC_\eta$, hence $K_i\otimes_k l\toisom L_i$. Taking into account that $\lcirc=\Nr_l(\kcirc)$ because $l/k$ is purely inseparable, we deduce that $C'_{i,l}=\Nr_{L_i}(C'\otimes_{\kcirc}\lcirc)$. Therefore, its analytic generic fiber is $\gtC'_{i,l,\eta}\toisom\gtC'_{i,\eta}\otimes_{\hatk}\hatl$ by Lemma \ref{extcomlem}, and we obtain that $W_{i,l}$ is a connected component of $\gtC'_{i,l,\eta}$. By Lemma \ref{normlem}, $\gtC'_{i,l}$ is the maximal affine formal model of its generic fiber $\gtC'_{i,l,\eta}$, hence $\gtC'_{i,l}$ contains a connected component $\gtW_{i,l}$ with the generic fiber $W_{i,l}$. Let $Z_i$ be the closed subset of $C'_{i,l}$ that corresponds to $\gtW_{i,l}$. By Theorem \ref{smprop}(ii) any point of $Z_i$ is smooth-equivalent over $S$ to the closed point of $\Spec(\mcirc_i)$. It remains to note that $\Lcirc_i$ is centered on $Z_i$ because the corresponding analytic point of $\gtC'_{i,l,\eta}$ is the preimage of $\hatx_i$ and is, therefore, contained in $W_{i,l}$. So, $C'$ and $m_i/l/k$ are as required.
\end{proof}

We will also need the following lemma which will help us to treat valuations of height larger than one. Consider the following situation: $X=\Spec(A)$ is an affine normal nft $S$-scheme and $x\in X_\eta$ is a closed point of the generic fiber. Assume that the finite $k$-field $m=k(x)$ is provided with a valuation extending that of $k$ and such that the closed immersion $i_x{\colon}\Spec(m)\to X_\eta$ extends to a morphism $i{\colon}S_m\to X$, where $S_m=\Spec(\mcirc)$.

\begin{lem}\label{valuniflem}
Keep the above notation and assume that $x$ is a simple $k$-smooth point. Then there exists an affine nft $S$-scheme $X'$ and a morphism $f{\colon}X'\to X$ such that $f_\eta$ is an isomorphism, the closed immersion $i_x{\colon}\Spec(m)\to X'_\eta$ extends to a lifting $i'{\colon}S_m\to X'$ of $i$, and the image of the closed point of $S_m$ under $i'$ is smooth-equivalent to the closed point of $S_m$.
\end{lem}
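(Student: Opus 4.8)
The plan is to reduce to the hypotheses of Theorem~\ref{smprop}(i) by exhibiting the center of $\mcirc$ as a point whose analytic fiber, after base change along $\mcirc$, is an open unit $\hatm$-polydisc; the key input is the rig-smoothness of the analytic generic fiber at a rigid point attached to $\mcirc$.

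\emph{Reductions and the analytic point.} Since $x$ is simple $k$-smooth, and the property of being a normal nft $S$-model is preserved under refinement (a modification of such a model is again one), after replacing $X$ by a suitable refinement we may assume that $X$ is integral, that $X_\eta$ is affine and $k$-smooth of dimension $d$, and that the center $\bar z$ of $\mcirc$ on $X$ is a closed point of $X_s$. By the GAGA argument of Step~1 in the proof of Theorem~\ref{dim1unif} (see Remark~\ref{genfibaffrem}(i)), $\gtX_\eta$ is then rig-smooth over $\hatk$. As $m=k(x)$ is finite over the height-one field $k$, the valuation $\mcirc$ has height one, and Remark~\ref{complrem} attaches to it a point $\hat z$ of the analytic fiber $X^\an_{\bar z}\subseteq\gtX_\eta$ with $\calH(\hat z)\cong\hatm$, the completion of $m$ along $\mcirc$; since $[\hatm:\hatk]<\infty$, this $\hat z$ is a rigid point of $\gtX_\eta$.

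\emph{Base change along $\mcirc$ and the polydisc.} Put $S_m=\Spec(\mcirc)$. Since $\mcirc\otimes_\kcirc k=m$, the scheme $X\times_S S_m$ is of finite type over $S_m$ with geometrically reduced generic fiber $X_\eta\otimes_k m$; as $m/k$ is separable this generic fiber is a disjoint union of smooth $m$-varieties, and we let $K$ be the function field of the component carrying the section of $i$ (so $m\subseteq K$) and $Y=\Nr_K(X\times_S S_m)$, a normal $S_m$-scheme which is $\eta$-nfp over $S_m$. By Lemmas~\ref{extcomlem} and~\ref{normlem} the analytic generic fiber $\gtY_\eta$ of $Y$ over $\hatm$ is the corresponding component of $\gtX_\eta\wtimes_\hatk\hatm$, rig-smooth over $\hatm$, and lying over $\hat z$ it carries a rigid point $\hat\zeta$ in the diagonal factor of $\hatm\otimes_\hatk\hatm$, so that $\calH(\hat\zeta)\cong\hatm$ with trivial residue extension over $\hatm$. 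Then $\calO_{\gtY_\eta,\hat\zeta}$ is a $d$-dimensional regular local ring with residue field $\hatm$; choosing $u_1\. u_d$ generating its maximal ideal and running the analytic inverse function theorem as in the proof of Theorem~\ref{smprop}(i) (through \cite[Satz~6.3]{Bo}), a sufficiently small closed polydisc $\{|u_i|\le\rho\}$ with $\rho\in|m^\times|$ pulls back under these coordinates to a neighborhood of $\hat\zeta$ which is a closed unit $\hatm$-polydisc, whose reduction-center has analytic fiber an open unit $\hatm$-polydisc.

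\emph{Algebraization and conclusion.} By Lemma~\ref{weilem} and the algebraization procedure of Step~3 in the proof of Theorem~\ref{dim1unif} (using Lemma~\ref{normlem}), the Weierstrass data cutting out this polydisc descends to an affine normalized refinement $f\colon X'\to X$ with $f_\eta$ an isomorphism, for which the corresponding $Y'=\Nr_K(X'\times_S S_m)$ has a point $\zeta'$, lying over the center $z'$ of $\mcirc$ on $X'$, with $(Y')^\an_{\zeta'}$ an open unit $\hatm$-polydisc mapping isomorphically onto $(X')^\an_{z'}$. By Theorem~\ref{anfibth} (applied exactly as in the proof of Theorem~\ref{smprop}(i), after passing to a finite-type approximation of $S_m$ if necessary) the morphism $Y'\to X'$ is strictly \'etale at $\zeta'$, hence smooth there; and by Theorem~\ref{smprop}(i), applied over the base $S_m$ with the admissible trivial extension of valued fields, the pair $(Y',\zeta')$ is smooth-equivalent over $S_m$, hence over $S$, to $(S_m,s_m)$. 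Composing the two smooth correspondences at $\zeta'$ shows that $z'=i'(\text{closed point of }S_m)$ is smooth-equivalent over $S$ to $s_m$; finally $i$ lifts to $i'\colon S_m\to X'$ because $\mcirc$ still dominates $\calO_{X',z'}$, since $X'_\eta\cong X_\eta$ and $z'$ is the center of $\mcirc$ on $X'$. This $X'$ has all the required properties. The delicate step is the last one: one must choose the Weierstrass datum (equivalently, the refinement $X'$) so that its preimage in $\gtX_\eta\wtimes_\hatk\hatm$ separates $\hat\zeta$ from the other points of the fibre of $\gtX_\eta\wtimes_\hatk\hatm\to\gtX_\eta$ over $\hat z$ — this is what makes the finite cover $(Y')^\an_{\zeta'}\to(X')^\an_{z'}$ trivial and lets Theorem~\ref{anfibth} apply — and this separation is achieved by shrinking $\rho$, in the spirit of the critical-radius estimates of \S\ref{decomplsec} in the one-dimensional case.
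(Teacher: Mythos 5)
Your proposal takes a genuinely different route from the paper's, and the detour introduces gaps that the paper's argument simply never encounters.

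The paper works entirely on $\gtX_\eta$ over $\hatk$ and never base changes $X$ along $S_m$. Since $\hatx$ is a Zariski closed rigid point with $\calH(\hatx)\cong\hatm$ and $\gtX_\eta$ is rig-smooth there, the key input is \cite[3.4.1]{berihes}: an \'etale morphism $U\to\bfA^n_\hatk$ built from regular parameters $T$ of $\calO_{\gtX_\eta,\hatx}$ is, locally at $\hatx$, determined by the residue-field extension $\hatm/\hatk$, hence locally isomorphic to $\bfA^n_\hatm\to\bfA^n_\hatk$. Consequently, for $r$ small the Weierstrass domain $U\{r^{-1}T\}$ \emph{already} is a unit $\hatm$-polydisc as a $\hatk$-analytic space; one then checks it is Weierstrass in all of $\gtX_\eta$, algebraizes it to $X'$, verifies $|f_j(x)|\le|\pi|$ in $m$ so that $i$ extends to $i'$, and concludes with Theorem \ref{smprop}(ii), whose hypothesis ($\gtX'_\eta$ a closed unit $\hatm$-polydisc) holds on the nose. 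No separation of preimages ever arises because no base change to $\hatm$ is performed.

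Your plan instead base changes to $S_m$, passes to the "diagonal" rigid point $\hat\zeta$ on $\gtY_\eta$, and builds the polydisc there, then has to redescend. Three steps are problematic. First, you say the Weierstrass data on $\gtY_\eta$ "descends to an affine normalized refinement $X'\to X$", but the defining functions $u_i$ live on $\gtY_\eta$, not $\gtX_\eta$; you never explain why they can be taken to come from $\gtX_\eta$ (they can, precisely because you should take $u_i$ to be the images of regular parameters at $\hatz$ in $\gtX_\eta$ — but that observation is exactly the paper's shortcut, and once you make it, the base change is superfluous). Second, the "delicate step" you flag — separating $\hat\zeta$ from the other preimages of $\hatz$ so that $(Y')^\an_{\zeta'}\to(X')^\an_{z'}$ becomes an isomorphism — is attributed to "the critical-radius estimates of \S\ref{decomplsec}", but those are one-dimensional (curves over valuation rings, Lemma \ref{preimdisclem}); the present setting is $d$-dimensional, and the correct justification is just that the finitely many rigid preimages of $\hatz$ form a discrete set, hence a small enough Weierstrass domain around $\hatz$ has a single preimage component — a different and far more elementary argument. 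Third, "after passing to a finite-type approximation of $S_m$ if necessary" is unjustified: $Y'$ is nft over $S$ because $S_m\to S$ is integral, so Theorem \ref{anfibth} applies directly without any approximation. Finally, you assert $i$ lifts because $z'$ "is the center of $\mcirc$ on $X'$", but $X'\to X$ is not proper (it is an open piece of a blow-up), so the center's existence is exactly what needs proof; the paper verifies it explicitly from $|f_j(x)|\le|\pi|$. In sum: the approach would work if completed, but it reproduces the paper's construction through an unnecessary base change, and the steps you leave to "separation" and "finite-type approximation" need to be replaced by correct arguments.
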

\begin{proof}
Consider the homomorphism $A\to\mcirc$ corresponding to $i$ and apply the same construction as was used in Remark \ref{complrem}, i.e. complete it and invert a non-zero $\pi\in\kcirccirc$. In this way, we obtain a character $\calA\to\hatm$ which gives rise to a smooth $\hatm$-point $\hatx\in\gtX_\eta=\calM(\calA)$ which is Zariski closed because $\hatm$ is finite over $\hatk$. Let $T=(T_1\. T_n)$ be a system of regular parameters of $\calO_{\gtX_\eta,\hatx}$., The morphism $U\to\bfA^n_\hatk$, which $T$ induces on a sufficiently small affinoid neighborhood $U$ of $\hatx$, is \'etale at $\hatx$ by \cite[3.3.6]{berihes}. Then \cite[3.4.1]{berihes} implies that locally at $\hatx$, $f$ is determined by the field extension $\hatm/\hatk$, and hence it is locally isomorphic to the projection $\bfA_\hatm^n\to\bfA^n_\hatk$. It follows that for sufficiently small $r\in|k^\times|$ the Weierstrass domain $U\{r^{-1}T\}$ is isomorphic to a unit $\hatm$-polydisc, i.e. is of the form $\calM(\hatm\{T_1\. T_n\})$.

We claim that for small $r$'s each $U\{r^{-1}T\}$ is a Weierstrass domain in $\gtX_\eta$. Indeed, since $\hatx$ is Zariski closed, it possesses a fundamental system of Weierstrass neighborhoods in $\gtX_\eta$; in particular, we can find such a neighborhood $W'\subset U$. Obviously, $W'$ contains some $W:=U\{r^{-1}T\}$, and then $W=W'\{r^{-1}T\}$ is a Weierstrass neighborhood of $\hatx$ in $W'$, and we obtain that $W$ is a Weierstrass neighborhood of $\hatx$ in $\gtX_\eta$ by the transitive property of Weierstrass domains.

Now, we can act exactly as in the end of the proof of Theorem \ref{dim1unif}. First we algebraize $W$. By the definition of Weierstrass domains, $W$ is of the form $\gtX_\eta\{f/\pi\}$ where $\pi\in\kcirccirc$ and $f=(f_1\. f_m)\subset\calA$. Multiplying $f$ and $\pi$ by a large power of $\pi$ we achieve that $f\subset\calAcirc=\hatA$. Furthermore, we can add to each $f_j$ any element whose spectral norm is less than $|\pi|$ and hence we can harmlessly assume that $f_j\in A$. Then, we claim that $X'=\Nr(\Spec(A[f/\pi]))$ is as required. Obviously, $X'_\eta\toisom X_\eta$. Since $\hatx\in W\toisom\gtX'_\eta$ one has that $|f_j(\hatx)|\le |\pi|$. Hence $|f_j(x)|\le|\pi|$ in $m$, and so $f_j(x)/\pi\in\mcirc$. Existence of $i$ means that the image of $A$ in $m=k(x)$ lies in $\mcirc$. We have just shown that the images of $f_j/\pi$ in $m$ lie in $\mcirc$, hence the image of $A[f/\pi]$ is contained in $\mcirc$, and we obtain that $i$ lifts to $i'{\colon}S_m\to X'$. Finally, $W$ is a unit $\hatm$-polydisc, hence any point of the closed fiber $X'_s$ is smooth-equivalent to the closed point of $S_m$ by Theorem \ref{smprop}(ii).
\end{proof}

\section{Inseparable local uniformization}\label{insepunifsec}
We prove Theorem \ref{insepunif} in \S\ref{insepunifsec}. Strictly speaking, we deduce the theorem from the (relatively easy) case of Abhyankar valuation, which will be proved in a much stronger form in \S\ref{abhsec}. Our formulation and proof of the latter result involve logarithmic geometry, so, for expository reasons, we prefer to postpone dealing with it until \S\ref{simulsec} (no circular reasoning occurs here). We will establish the height one case of the Theorem in \S\ref{honesec} and will conclude the proof by induction on height in \S\ref{indsec}.

\subsection{Height one case}\label{honesec}
We will prove Theorem \ref{insepunif} by induction on the transcendence degree. However, to make the induction work we have to prove a more general statement (see Remark \ref{indrem} below). We will uniformize valuations by log smooth points $x$ of pairs $(X,D)$ where $X$ is normal and the closed subset $D\subset X$ is a $\bfQ$-Cartier divisor (i.e. it underlies a Cartier divisor of $X$). Log smoothness of $x$ means that it is a log smooth point of the log scheme $(X,M(D))$ or a toroidal point of $(X,X\setminus D)$ (see \S\ref{logsec} for references and comments on these notions, in particular, see the definition of log smooth points of simplicial shape and Remark \ref{logsmoothrem}).

\begin{theor}\label{equivunif}
Assume that $K/k$ is a finitely generated extension of valued fields such that $k$ is trivially valued and the height of $K$ is most one. Assume also that $X$ is a normal affine $k$-model of $K$ and $K_1/K\. K_n/K$ are finite extensions of valued fields. Given finite purely inseparable extensions $l/k$ and $L/lK$ and an affine model $X'$ of $\Kcirc$ with a $\bfQ$-Cartier divisor $D'\subset X'$ containing the center of $\Kcirc$ consider the following objects: fields $L_i=LK_i$ with the unique extension of $\Kcirc_i$, their models $X_i=\Nr_{L_i}(X')$, the preimages $D_i\subset X_i$ of $D'$ and the centers $x_i\in X_i$ of $\Lcirc_i$. Then there exists a choice of $l/k$, $L/lk$, $X'$ and $D'$ such that $X'$ refines $X$, each $x_i$ is a log smooth point of simplicial shape of the $l$-pair $(X_i,D_i)$, and $x_1$ is even a simple $l$-smooth point (in particular, $D_1$ is a normal crossings divisor at $x_1$).
\end{theor}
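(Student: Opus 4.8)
The plan is to prove Theorem~\ref{equivunif} by induction on $\trdeg_k(K)$, reducing the induction step to the one–dimensional situation handled by Theorem~\ref{dim1unif} together with the local–analytic tools of \S\ref{chapone}, and reducing the base case ($\trdeg_k(K)=0$ or, more substantially, the Abhyankar case) to Theorem~\ref{Abhth}. Since $k$ is trivially valued and $K$ has height at most one, the valuation $\Kcirc$ is either trivial (nothing to do) or of height exactly one, so I assume the latter. The key geometric input is de~Jong–style fibration by curves: after refining $X$ we may choose a dominant morphism $X\to X_0$ with one–dimensional generic fibers, where $X_0$ is a $k$-model of a subfield $K_0\subset K$ with $\trdeg_k(K_0)=\trdeg_k(K)-1$, and such that the valuation $\Kcirc$ restricts to a valuation $\Kcirc_0$ of $K_0$ whose center lies on $X_0$. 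There are two cases according to whether $K/K_0$ is transcendentally immediate (the ``hard'' defect direction) or $K_0$ is chosen so that the valuation on $K_0$ ``carries'' the transcendence information; in practice one wants to arrange, for the fibration, that $E_{K/k}+F_{K/k}$ is realized on the base $K_0$ so that $K/K_0$ is transcendentally immediate of transcendence degree one, which is exactly the hypothesis of Theorem~\ref{dim1unif}.

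The induction step then runs as follows. By the inductive hypothesis applied to $K_0$ (with the auxiliary finite extensions obtained by restricting/compositing the $K_i$ appropriately — this bookkeeping is where the ``simultaneous'' strengthening pays off, cf.\ Remark~\ref{indrem}), we get a purely inseparable $l_0/k$, a purely inseparable $L^{(0)}/l_0K_0$, a refined model $X_0'$ of $\Kcirc_0$ and a $\bfQ$-Cartier divisor $D_0'$ such that the relevant centers are log smooth points of simplicial shape on the $l_0$-pair $(\Nr(X_0'),D_0')$ and the ``first'' center is simple $l_0$-smooth. Base-changing the fibration $X\to X_0$ along $X_0'\to X_0$ and $\eta$-normalizing, we obtain a curve $C$ over a valuation ring that is (smooth-locally over $S=\Spec(\Kcirc_0^{\,\sh}\text{-type ring})$) of the shape to which Theorem~\ref{dim1unif} applies: the base point of the fibration, being a simple smooth point, makes the local ring there look like a valuation ring of height one over the trivially valued residue situation, and the fiber $C$ is a curve over it with $K/K_0$ transcendentally immediate of transcendence degree one. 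Theorem~\ref{dim1unif} produces finite $l/l_0$ purely inseparable, finite separable $m_i/l$, and a refined normalized model $C'$ so that the center $z_i$ of each $\Lcirc_i$ on $\Nr_{L_i}(C')$ is smooth-equivalent over $S$ to the closed point of $\Spec(\mcirc_i)$. I would then use Lemma~\ref{valuniflem} (to lift the valuative datum on the base from a simple smooth point to a genuine uniformizing model) and Lemma~\ref{projlem} (to descend smooth-equivalence from the projective limit back to an actual finite-type model), combined with Lemma~\ref{easysmlem} to handle the purely inseparable composita $L/lK$, in order to glue the base uniformization $(X_0',D_0')$ and the fiber uniformization $C'$ into a model $X'$ of $\Kcirc$ with divisor $D'$ (take $D'$ to be the union of the pullback of $D_0'$ and the vertical divisor coming from the special fiber of the curve fibration). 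Tracking the combinatorics of the monoids/charts (Appendix~\ref{monoidsec}) shows that adding one ``curve direction'' to a log smooth point of simplicial shape again yields a log smooth point of simplicial shape, and that the distinguished index $i=1$ stays simple smooth because both the base point and the fiber point are simple smooth there.

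For the base of the induction, $\trdeg_k(K)=0$ means $K/k$ is finite, the valuation is trivial, and $\Kcirc=K$; then $X=\Spec(K)$ and the statement is vacuous after an obvious finite purely inseparable extension. More to the point, the genuine base of the defect induction — the Abhyankar case, where $D_{K/k}=0$ — is not handled by the curve fibration at all; there one invokes Theorem~\ref{Abhth}, which gives simultaneous inseparable local log uniformization of Abhyankar valuations via toroidal/logarithmic methods, and this supplies exactly the log-smooth-of-simplicial-shape conclusion with the first center simple smooth. So the overall logical skeleton is: induct on transcendence defect $D_{K/k}$ (not just on $\trdeg$), with base case $D=0$ given by Theorem~\ref{Abhth}, and induction step peeling off one transcendentally immediate curve direction via Theorem~\ref{dim1unif}.

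I expect the main obstacle to be the gluing/compatibility step: matching the log structure (and the simplicial-shape condition on the monoids) across the curve fibration, and — more seriously — the fact that $K$ is not henselian, so that the center of $\Kcirc$ on the total space of the fibration need not ``see'' the uniformization of the base locally; one must pass to models that are only smooth-equivalent to the desired valuation-ring spectra and then use the projective-limit descent of smooth-equivalence (Lemmas~\ref{projlem}, \ref{smetlem}, \ref{easysmlem}) to return to a finite-type $k$-model. Controlling this non-henselian descent, and ensuring that the purely inseparable field extensions $l/k$ and $L/lK$ accumulated at each stage remain finite and can be amalgamated, is the technical heart of the argument; it is presumably why the statement must be proved in the strengthened simultaneous form with the auxiliary extensions $K_i$ carried along throughout the induction.
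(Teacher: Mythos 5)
Your high-level skeleton is essentially the paper's: induction on the transcendence defect $D_{K/k}$ (not the transcendence degree), with the Abhyankar case $D=0$ supplied by Theorem~\ref{Abhth}, and an induction step that fibers $X$ by curves over an affine model of a subfield $\ok\subset K$ with $\trdeg_\ok(K)=1$ and $K/\ok$ transcendentally immediate, feeding the fiber into Theorem~\ref{dim1unif} and descending smooth-equivalence from $\projlim$ via Lemma~\ref{projlem}. You also correctly observe that the simultaneous strengthening is forced because the base must be uniformized for extensions that arise from the fiber; this is precisely Remark~\ref{indrem}.

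However, your proposed \emph{order} of operations is reversed, and this is a genuine gap, not a cosmetic one. You apply the inductive hypothesis to $K_0=\ok$ \emph{first}, using ``auxiliary finite extensions obtained by restricting/compositing the $K_i$'' --- but there are no such extensions: $\ok\subset K\subset K_i$, so the $K_i$ give nothing finite over $\ok$. The finite extensions $m_i/\ok$ that the base induction must uniformize are \emph{produced} by Theorem~\ref{dim1unif}: they are the fields over which the relevant discs become split. These are only known after the fiber has been treated. The paper therefore runs Steps 1--3 (fiber $X$ by curves, apply Theorem~\ref{dim1unif} to $C=\Nr_K(X\times_YS)$, descend smooth-equivalence to finite type via Lemma~\ref{projlem}) to obtain that the centers $x_{i,\alpha}$ are smooth-equivalent over $Y_\alpha$ to points $y_{i,\alpha}\in\Nr_{\om_i}(Y_\alpha)$, and only \emph{then} (Step 4) applies the induction hypothesis to $Y$ with the fields $\om_i/\ok$. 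If you uniformize the base first, you have fixed the divisor $E$ and the inseparable extension $l_0/k$ before you know which $m_i$ must be accommodated; you would have to re-run the base induction with the new fields, at which point you are back to the paper's order anyway.

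Two further, smaller issues. First, Lemma~\ref{valuniflem} is not needed for the height-one statement; it is the tool for the induction on height in \S\ref{indsec}, where one composes valuations. Invoking it here is extraneous and suggests a conflation of the two arguments. Second, the log-smoothness of $(X_i,D_i)$ at $x_i$ is not obtained by adding a ``vertical divisor'' from the special fiber of the curve fibration to the pullback of $D_0'$; rather, $D_i$ is simply the preimage of the base divisor $E$, and log smoothness is transferred from $(Y_i,E_i,y_i)$ to $(X_i,D_i,x_i)$ through the smooth-equivalence via Lemma~\ref{descentlem} (which says strictness of log structures descends along smooth morphisms to normal targets). The fiber direction has already been made smooth by Theorem~\ref{dim1unif} (smooth-equivalence to the closed point of $\Spec(\omcirc_i)$, which carries no log structure beyond the base), so no extra divisor component is wanted; adding one would generically destroy the simple-smoothness assertion for $x_1$.
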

\begin{rem}
The case of $n=1$ in Theorem \ref{equivunif} covers our needs, but we establish the general simultaneous log uniformization because the proof is essentially the same.
\end{rem}
\begin{proof}
Note that the case of valued fields of height zero reduces to the classical theorem on the existence of a separating transcendence basis, so we can assume that $K$ and all $K_i$'s are of height one.

Step 0. {\it A general setup.} Our proof runs by induction on the transcendence defect $D_{K/k}$ of $K$ over $k$. The induction base $D_{K/k}=0$ corresponds to the case of Abhyankar valuations, which will be established in \S\ref{simulsec}: it is a particular case of Theorem \ref{Abhth}. Thus, in the sequel we assume that $D=D_{K/k}>0$ and the theorem is proved for smaller $D$'s.

It suffices to prove the theorem for any affine model of $\Kcirc$ which is finer than $X$, so we will replace $X$ with a refinement a few times during the proof. Note also that if $F/K$ is a finite purely inseparable extension then $X'=\Nr_F(X)$ is an affine model of $\Fcirc$ and for any normal affine refinement $Y'=\Spec(B)$ of $X'$, the scheme $Y=\Spec(B\cap K)$ is an affine refinement of $X$ satisfying $\Nr_F(Y)\toisom Y'$. Indeed,
$B^{p^n}\subset B\cap K$ for a large $n$, hence $\Nr_F(B\cap K)=B$. In addition, $\Nr_F(C)=B$ for a finitely generated $k$-subalgebra $C\subset B\cap K$, and so $B\cap K=\Nr_K(C)$ is finitely generated over $k$. The above observation implies that it suffices to prove the theorem for $F$, $X'$ and $FK_i$'s instead of the original $K$, $X$ and $K_i$'s, i.e. we can replace the field $K$ with a finite purely inseparable extension and update $X$ and $K_i$'s accordingly during the proof.

Step 1. {\it Fiber $X$ by curves and apply Theorem \ref{dim1unif}.} Since $D_{K/k}>0$, it follows from Remark \ref{Abhrem} that there exists a valued subfield $\ok\into K$ containing $k$ and such that $\trdeg_{\ok}(K)=1$ and $K/\ok$ is transcendentally immediate; in particular, $D_{\ok/k}=D-1$. Choose an affine $k$-model $Y$ of $\ok^\circ$ and refine $X$ so that the embedding $\ok\into K$ induces a morphism $X\to Y$. Set $S=\Spec(\okcirc)$ and $\oeta=\Spec(\ok)$, and consider $C=\Nr_K(X\times_Y S)$, which is an integral nft scheme over $S$ and with $K\toisom k(C)$. The morphism $\Spec(\Kcirc)\to X$ factors through $C$ because $\okcirc$ is centered on $Y$, and so Theorem \ref{dim1unif} applies to $C$, $K_i/K$ and $S$. Thus, we can find towers $\om_i/\oll/\ok$ of finite extensions of valued fields with separable $\om_i/\oll$ and purely inseparable $\oll/\ok$ and a refinement $f_C{\colon}C'\to C$ of affine normalized $S$-models of $\Kcirc$ such that the center $z_i$ of $\oll K_i$ on $C_i:=\Nr_{\oll K_i}(C')$ is smooth-equivalent to the closed point $s_i$ of $S_i:=\Spec(\omcirc_i)$. The situation is illustrated by the following commutative diagram, where $S_\oll=\Spec(\olcirc)$ and the dotted arrow symbolizes that the points are smooth-equivalent.
\begin{equation}\label{diag1}
\xymatrix{
&z_i\ar[r]\ar@{.>}[ld]|-{\rm sm}&  C_i \ar[d]\ar[r] & C' \ar[d]\ar^{f_C}[r] & C \ar[dl] \\
s_i\ar[r] & S_i \ar[r] & S_\oll \ar[r] & S & }
\end{equation}

Step 2. {\it Refine $X$ and $Y$ and extend $K$ so that the following conditions are satisfied  in diagram (\ref{diag1}): the $\oeta$-fiber of $X$ is geometrically normal, $f_C$ is an identity and $\oll=\ok$.} Since $C'=\Spec(A)$, where $A$ is the normalization of a subring $\okcirc[f_1\. f_n]\subset\Kcirc$, we can use $f_i$'s to define an affine refinement $X'\to X$ such that $C'=\Nr_K(X'\times_Y S)$. Refining $X$ in this way, we achieve that $C'\toisom C$. Next, we extend the field $K$ by replacing it with $L:=\oll K$. Then $X$ is replaced with $X_L:=\Nr_L(X)$ and we can just replace $Y$ and $C$ with $Y_\oll:=\Nr_\oll(Y)$ and $\Nr_L(X_L\times_{Y_l}S_\oll)\toisom\Nr_L(C)$. At this stage the above diagram simplifies as follows
\begin{equation}\label{diag2}
\xymatrix{
&z_i\ar[r]\ar@{.>}[ld]|-{\rm sm}&  C_i \ar[d]\ar[r] & C \ar[dl] \\
s_i\ar[r] & S_i \ar[r] & S & }
\end{equation}
where $C_i:=\Nr_{K_i}(C)$. Finally, we can achieve that $C_\oeta=X_\oeta$ is geometrically normal by an additional purely inseparable extension of $\ok$ (choose a finite purely inseparable extension $\oll/\ok$ such that $\Nr(X_\oeta\otimes_\ok\oll)$ is geometrically normal, replace $K$ with $\oll K$, etc.).

Note that $C=\Nr_K(X\times_Y S)\toisom\Nr_\oeta(X\times_Y S)$ because $X_\oeta$ is normal, and similarly $C_i\toisom\Nr_\oeta(X_i\times_Y S)$ where $X_i=\Nr_{K_i}(X)$. Set also $Y_i=\Nr_{\om_i}(Y)$. Actually, it will be equivalent in the sequel to perform either normalization or $\oeta$-normalization, and we prefer to switch to the language of $\oeta$-normalizations. Now, diagram (\ref{diag2}) is obtained by the $\oeta$-normalized base change with respect to the morphism $S\to Y$ from the following diagram, where $x_i$ and $y_i$ are the centers of $\Kcirc_i$ and $\om_i$, respectively, and no smooth-equivalence is established so far
\begin{equation}\label{diag3}
\xymatrix{
&x_i\ar[r]\ar@{.>}[ld]|-{\rm ?}&  X_i \ar[d]\ar[r] & X \ar[dl] \\
y_i\ar[r] & Y_i \ar[r] & Y & }
\end{equation}

Step 3. {\it Refine $Y$ and replace the other entries of diagram (\ref{diag3}) with the $\oeta$-normalized base changes so that $x_i$ and $y_i$ become smooth-equivalent.} In the sequel, it will be convenient to refine $Y$ as described below. Let $\{Y_\alp\}_{\alp\in A}$ be the projective family of all affine refinements of $Y$ (i.e. they are $k$-models of $\okcirc$). This family is filtered and $S\toisom\projlim_\alp Y_\alp$. Note that $X_\alp:=\Nr_\oeta(X\times_Y Y_\alp)$ is a normalized $k$-model of $\Kcirc$ which refines $X$ and satisfies $\Nr_\oeta(X_\alp\times_{Y_\alp}S)\toisom C$; in particular, such refining has no impact on diagram (\ref{diag2}). Thus, we can freely refine $Y$ by replacing $Y$, $X$, $Y_i$ and $X_i$ with $Y_\alp$, $X_\alp$, $Y_{i,\alp}:=\Nr_{\om_i}(Y_\alp)\toisom\Nr_\oeta(Y_i\times_Y Y_\alp)$ and $X_{i,\alp}:=\Nr_{K_i}(X_\alp)\toisom\Nr_\oeta(X_i\times_Y Y_\alp)$, respectively. Note that $C_i$ is the projective limit of $X_{i,\alp}$'s by Proposition \ref{normlimprop}(i) and similarly $\Nr_{\om_i}(S)$ is the projective limit of $Y_{i,\alp}$'s. Recall that $S_i$ is open in $\Nr_{\om_i}(S)$ (this is even true for any valuation ring of finite height). Finally, let $x_{i,\alp}\in X_{i,\alp}$ and $y_{i,\alp}\in Y_{i,\alp}$ be the centers of $K_i$ and $\om_i$, respectively. Obviously, they are the images of $z_i$ and $s_i$, respectively, hence by Lemma \ref{projlem} there exists $\alp$ such that the points $x_{i,\alp}$ and $y_{i,\alp}$ are smooth-equivalent over $Y_\alp$ for each $1\le i\le n$. Refining everything with respect to the morphism $Y_\alp\to Y$ we finish the Step.

\begin{rem}\label{indrem}
Now, each $K_i$ is centered on a point which does not have to be smooth yet, but is at least smooth-equivalent to the point $y_i$ living in a smaller dimension. Naturally, we have to invoke the induction hypothesis at this stage. We will smoothen $y_i$ by an additional refinement, but we have to refine $Y$ rather than $Y_i$. This explains why we could not prove Theorem \ref{insepunif} in its original form and had to strengthen its assertion at least to a descent version of inseparable local uniformization (the $n=1$ case of Theorem \ref{equivunif}).
\end{rem}

Step 4. {\it Smoothen the points $y_i$ by an additional refining of $Y$ and a purely inseparable extension of $\ok$.} We will only consider log smooth points of simplicial shape, so usually we will omit the words "of simplicial shape". Since $D_{\ok/k}=D-1$, the induction assumption applies to the scheme $Y$ and the extensions $\om_i/\ok$ of valued fields. So, there exists an affine refinement, which without loss of generality can be denoted $Y_\alp\to Y$, a $\bfQ$-Cartier divisor $E\subset Y_\alp$ and finite purely inseparable extensions of valued fields $l/k$ and $\oll/l\ok$  that satisfies the assertion of the theorem. Explicitly, consider the schemes $Y_{i,\alp}=\Nr_{\oll\om_i}(Y_\alp)$ with the preimages $E_{i,\alp}\into Y_{i,\alp}$ of $E$ and let $y_{i,\alp}\in Y_{i,\alp}$ be the centers of the valued field $\oll\om_i$. Then we can achieve that each $y_{i,\alp}$ is a log smooth point of the $\oll$-pair $(Y_{i,\alp},E_{i,\alp})$ and $y_{1,\alp}$ is even an $\oll$-smooth point of $Y_{1,\alp}$.

Refining $Y$ we can assume that $Y=Y_\alp$ because we have already seen that such operation preserves everything in the construction of diagram (\ref{diag3}) (smooth-equivalence is preserved because $\oeta$-normalized base changes preserve smoothness by Lemma \ref{smetlem}(iv)). Next, we extend $\ok$ as follows: replace $\ok$, $\om_i$, $K$, $K_i$ with $\oll$, $\oll\om_i$, $\oll K$, $\oll K_i$, respectively; replace $Y$, $Y_i$, $X$, $X_i$ with their normalizations in these fields, respectively, and update $x_i$ and $y_i$, accordingly. Also, let $E_i\subset Y_i$ and $D_i\subset X_i$ be the preimages of $E$. Then (the new) $y_i$'s are log smooth and $y_1$ is smooth over $l$ by the construction, and $x_i$ are still smooth-equivalent to $y_i$ by Lemma \ref{easysmlem} (we can take $Y$ for the base scheme $S$ in the lemma). In particular, $x_1$ is $l$-smooth, and, replacing $l$ with a purely inseparable extension, we can also arrange that $x_1$ is a simple $l$-smooth point. (Note that the "last $K_i$" is of the form $LK_i$ for a purely inseparable extension $L/K$ accumulated in the process of proof, and similarly for the "last $X$", which accumulated refinements of the original $X$ and extensions of $K$.)

It remains to show that each $l$-pair $(X_i,D_i)$ is log smooth at $x_i$. Fix a $Y$-scheme $Z_i$, a point $z_i\in Z_i$ and smooth $Y$-morphisms $Z_i\to Y_i$ and $Z_i\to X_i$ taking $z_i$ to $y_i$ and $x_i$, respectively. Let $T_i$ be the preimage of $E$ in $Z_i$. Then the morphisms $(Z_i,T_i)\to (X_i,D_i)$ and $(Z_i,T_i)\to (Y_i,E_i)$ satisfy the assumptions of Lemma \ref{descentlem} and we obtain that $(Z_i,T_i)$ is log smooth at $z_i$ and $(X_i,D_i)$ is log smooth at $x_i$. For expository reasons, Lemma \ref{descentlem} will be given in \S\ref{simulsec}.
\end{proof}

\subsection{Induction on height}\label{indsec}
In this section, we prove Theorem \ref{insepunif} for valued fields of any (automatically finite) height. We do not prove the descent or simultaneous versions, but the only obstacle is that we do not have an appropriate version of Lemma \ref{valuniflem}. (It seems plausible that after developing basic tools of log analytic geometry, it will be easy to extend Lemma \ref{valuniflem} in that direction.)

Our proof runs by induction on the height $h$ of $\Kcirc$. Since the case of $h\le 1$ was established earlier, we should establish the step of the induction. So, we assume that the statement of the theorem holds true for $K$'s of smaller height. Let $\Fcirc$ be the localization of $\Kcirc$ whose height is $h-1$, then by $F$ we denote the valued field $(K,\Fcirc)$ (so $K=F$ as abstract fields). The image of $\Kcirc$ in $\tilF$ is a valuation ring. We denote it by $\tilFcirc$, and provide $\tilF$ with the corresponding valuation. Note that the valued field $\tilF$ is of height $1$, and the valuation on $K$ is composed from the valuations on $F$ and $\tilF$ in the sense that the preimage of $\tilFcirc$ in $\Fcirc$ coincides with $\Kcirc$.

Step 0. {\it Extending $K$ and refining $X$.} Obviously, it suffices to prove Theorem \ref{insepunif} for any model $X'$ of $\Kcirc$. In particular, we will freely replace $X$ with finer models of $\Kcirc$ throughout the argument. More generally, we can safely replace $k$, $K$ and $X$ with $l$, $L$ and $X'$, where $l/k$ and $L/lK$ are finite and purely inseparable and $X'$ is a model of $\Lcirc$ that refines $\Nr_L(X)$. This is shown exactly as in Step 0 from the proof of Theorem \ref{equivunif}.

Step 1. {\it Reduction to the case when $X$ is normal and there exists a morphism $g{\colon}X\to Y$ with an integral affine $k$-variety $Y$ such that $\Fcirc$ is centered on a simple smooth closed point $x$ of the generic fiber $X_\eta$.} Choose a subset $b=\{b_1\. b_d\}\subset\Fcirc$ such that $d=\trdeg_k(\tilF)$ and $\tilb$ is a transcendence basis of $\tilF$ over $k$. It then follows that $\Fcirc$ contains a subfield $\ok=k(b)$, and hence $F$ induces a trivial valuation on $\ok$. Provide $\ok$ with the valuation induced from $K$ and choose $Y$ to be any affine $k$-model of $\okcirc$. Then it is easy to see that there exists a refinement $X'\to X$ of affine $k$-models of $\Kcirc$ such that the embedding $i{\colon}\ok\into K$ induces a morphism $f{\colon}X'\to Y$. Thus, refining $X$ we can assume that $i$ induces a morphism $X\to Y$.

Let $x$ be the center of $\Fcirc$. Since $k(x)\subset\tilF$ and $\tilF$ is algebraic over $\ok$, we have that $x$ is a closed point of $X_\eta$ (where $\eta$ is the generic point of $Y$). Note that any refinement $X'_\eta\to X_\eta$ of affine $\ok$-models of $\Fcirc$ can be extended to a refinement $X'\to X$ of affine $k$-models of $\Kcirc$ and the induction assumption applies to the $\ok$-variety $X_\eta$ and the valued field $F$. In particular, there exists finite purely inseparable extensions $\oll/\ok$ and $L/\oll K$ such that the valuation ring $\Nr_L(\Fcirc)$ (which is the only extension of $\Fcirc$ to $L$) is centered on a closed simple $\oll$-smooth point $x_L\in \Nr_L(X_\eta)$. The latter variety is the generic fiber of the projection $\Nr_L(X)\to\Nr_\oll(Y)$ and by Step 0, it suffices to prove Theorem \ref{insepunif} for $L$ and $\Nr_L(X)$ instead of $X$ and $K$. So, we simply replace $\ok$, $K$, $X$ and $Y$ with $\oll$, $L$, $\Nr_L(X)$ and $\Nr_\oll(Y)$, and the conditions of Step 1 are now satisfied.

So, far we copied Step 0 and the fibration part of Step 1 from the proof of Theorem \ref{equivunif}. The remaining argument is also similar to \S\ref{honesec}, though a reference to Lemma \ref{valuniflem} will be used instead of the reference to Theorem \ref{dim1unif}.

Step 2. {\it The theorem holds true if the condition of Step 1 is satisfied.} The field $\om:=k(x)$ embeds into $\tilF$ because $F$ is centered on $x$, hence the valuation on $\tilF$ induces a height one valuation on $\om$, which agrees on $\ok\subset\om$ with the valuation induced by the embedding $\ok\into K$. In the sequel, we regard $\om$ and $\ok$ as valued fields. Note that $\okcirc$ is centered on $Y$ and its center is the image of the center of $K$ on $X$. Set $S=\Spec(\okcirc)$, $\eta=\Spec(\ok)\into S$ and $S_\om=\Spec(\omcirc)$. Then $X_S=\Nr_K(X\times_Y S)$ is an integral nft scheme over $S$ and its $\eta$-fiber is isomorphic to $X_\eta$ (we use that $X_\eta$ is normal because $X$ is so). Furthermore, the morphism $\Spec(\Kcirc)\to X$ obviously factors through $X_S$, and we obtain, in particular, a morphism from the closed subscheme $\Spec(\tilFcirc)\into\Spec(\Kcirc)$ to $X_S$. The image of the generic point of $\Spec(\tilFcirc)$ coincides with the image of the closed point of $\Spec(\Fcirc)$. Hence this point is $x$ and the morphism $\Spec(\tilFcirc)\to X_S$ factors through $S_\om$. In particular, $S$ and the induced $S$-morphism $i{\colon}S_\om\to X_S$ satisfy the condition of Lemma \ref{valuniflem}. Applying the lemma we find an affine morphism $f_S{\colon}X'_S\to X_S$ such that $f_S$ induces an isomorphism of the $\eta$-fibers, $i$ lifts to a morphism $i'{\colon}S_\om\to X'_S$ and the image $z_S$ of the closed point of $S_\om$ under $i'$ is smooth-equivalent to the closed point of $S_\om$. Note that $z_S$ is the center of $\Kcirc$ on $X'_S$ because $\Kcirc$ is composed from $\Fcirc$ and $\tilFcirc$, $\Fcirc$ is centered on $x$ and $\tilFcirc$ cuts off $\omcirc$ from $\om$.

Now, the argument from Step 2 in \S\ref{honesec} shows that there exists an affine refinement $X'\to X$ which induces $f_S$ in the sense that $X'_S\toisom\Nr_K(X'\times_Y S)$. So, refining $X$ we can achieve that $X'_S\toisom X_S$ (thus eliminating $X'_S$ and $f_S$ from the picture). Following the argument from Step 3 in \S\ref{honesec}, we deduce from Lemma \ref{projlem} that refining $Y$ via $Y'\to Y$ and updating $X$ as $\Nr_\eta(Y'\times_Y X)$ we can achieve that $\Kcirc$ is centered on a point $z\in X$ which is smooth-equivalent to the center $y_\om$ of $\omcirc$ on $Y_\om:=\Nr_\om(Y)$.

By Theorem \ref{equivunif} applied to $Y$, $\okcirc$ and $\omcirc$ (instead of $X$, $\Kcirc$ and $\Kcirc_1$ in the formulation of Theorem \ref{equivunif}), we find finite purely inseparable extensions $l/k$ and $\oll/l\ok$ and a refinement $Y'\to Y$ such that the valued field $\oll\om$ (which is the valued extension of $\om$) is centered on an $l$-smooth point of $\Nr_{\oll\om}(Y')$. Set $X'=\Nr_\eta(Y'\times_Y X)$ and perform the last update of our data by replacing $k$, $\ok$, $\om$, $K$, $Y$, $Y_\om$ and $X$ with $l$, $\oll$, $\oll\om$, $\oll K$, $\Nr_\oll(Y')$, $\Nr_{\oll\om}(Y')$ and $\Nr_{\oll K}(X')$, respectively. After this update, $\om$ is centered on $l$-smooth point $y_\om\in Y_\om$ and it also follows from Lemma \ref{easysmlem} that the center of $\Kcirc$ on $X$ is smooth-equivalent to $y_\om$. So, the center of $\Kcirc$ on $X$ is $l$-smooth, and enlarging $l$ we can even make it a simple $l$-smooth point. This establishes induction on height in the proof of Theorem \ref{insepunif}. (Clearly, the "last $K$" is a purely inseparable extension of the original $K$ accumulated during the proof, and similarly for the "last $X$", which accumulated refinements of the original $X$ and extensions of $K$).

\section{Simultaneous local log uniformization of Abhyankar valuations}\label{simulsec}

To finish the proof of Theorems \ref{insepunif} and \ref{equivunif} we have yet to prove Theorem \ref{equivunif} for Abhyankar valuations. We have been postponing that proof until this section because it involves techniques, including logarithmic geometry, that are not used in the rest of the paper. Although the proof is rather elementary, it involves a relatively heavy terminology, that may make it difficult to follow. So, let us outline the main idea before going into details.

\subsection{An outline of the method}
In order to uniformize an Abhyankar $K$ we choose an Abhyankar basis $B=B_E\coprod B_F$ and set $K_B=k(B)$. If $K=K_B$ then $K$ can be uniformized by toric geometry (i.e. essentially combinatorially). Namely, we will see that in this case $\Kcirc$ is the filtered union of regular local rings $\calO_{B,\oM}=\cup_\oM k(B_F)[\oM]_m$, where $\oM$ runs through free monoids in the valuation monoid $\Lam_B\cap\Kcirc$, $\Lam_B$ is the lattice in $K^\times$ generated by $B_E$ and $m$ is the ideal of $k(B_F)[\oM]$ generated by $\oM\setminus\{1\}$. We will construct an affine toric model $\bfA_{B,\oM}$ such that $\calO_{B,\oM}$ is the local ring of the center $\eta_{B,\oM}\in\bfA_{B,\oM}$ of $\Kcirc$, and for a large enough $\oM$ a neighborhood of $\eta_{B,\oM}$ will turn out to be finer than any fixed model of $\Kcirc$. In particular, this is enough to uniformize $K$ when $K=K_B$.

In general, we will consider the "toroidal" models $X_{B,\oM}=\Nr_K(\bfA_{B,\oM})$ with centers $x_{B,\oM}$ of $\Kcirc$ whose local rings will be denoted $A_{B,\oM}$. In principle, since the extension $K/K_B$ is defectless by the stability theorem, the extension $\Kcirc/\Kcirc_B$ admits a nice "toroidal description", and by approximation the latter is also valid for an extension $A_{B,\oM}/\calO_{B,\oM}$ with large enough $\oM$. However, we will mainly consider the especially simple case when the extension is unramified. Over a perfect ground field, this can always be achieved by an appropriate choice of $B$.

So far, we outlined a method to reprove the results of \cite{KK1}. This is not enough, however, because we should establish in Theorem \ref{equivunif} a descent form of inseparable local uniformization. Thus, we should uniformize $K$ by refining a model of $\Lcirc$ and normalizing it in $K$, where $K/L$ is finite. Since $K/L$ may be ramified (and even wildly ramified) we have to study the situation deeper. The above argument shows that $K$ and $L$ can be uniformized by the choice of appropriate bases $B$ and $B'$, respectively. Then we will show that for large enough $M$ the toroidal model $X_{B,\oM}$ is essentially independent of the Abhyankar basis. In particular, $A_{B,\oM}=A_{B',\oM}$ and we see that the refining work could be done already on the model of $\Lcirc$. At this stage it costs no extra-work to establish simultaneous local log uniformization for finitely many extensions of valued fields $K_i/L$, so the latter is the assertion of our main Theorem \ref{Abhth} on Abhyankar valuations.

\subsection{Some facts from log geometry}\label{logsec}
All our work can be done in the framework of toroidal geometry, whose basics can be found in \cite{KKMS}. We find it more convenient, however, to work within the framework of log geometry. Although for normal varieties they are rather close, the latter is better suited for the work with general schemes (e.g., this language may be applied to study local uniformization of Abhynakar valuations in mixed characteristic). We refer to \cite{K} or \cite{Ka} for basics of logarithmic geometry. Actually, we will work only with log structures induced from toroidal embeddings. We remark also that some basic notation and results concerning monoids are collected in \S\ref{toricsec}.

Let $X$ be a normal scheme of finite type over a field $k$ and let $D\subset X$ be a closed subset with complement $j{\colon}U\into X$. Consider the (\'etale) log structure $M(D):=j_*\calO^\times_U\cap\calO_X\into\calO_X$ induced by $D$ (where all sheaves are in the \'etale topology). Note that $D$ is a $\bfQ$-Cartier divisor if and only if $U$ is the locus of triviality of $M(D)$, hence $M(D)$ determines $D$ in this case. Thus, it is essentially equivalent to work with the pair $(X,D)$ or to work with the log scheme $(X,M(D))$ whenever $D$ is a $\bfQ$-Cartier divisor, and we will not consider the log structure $M(D)$ otherwise.

It is well known (see \cite[3.7]{K}) that $j$ is a toroidal embedding (i.e. \'etale-locally on $X$ it is isomorphic to the embedding of the open toric orbit into a toric variety) if and only if $D$ is $\bfQ$-Cartier (cf. Example \ref{torchart} below) and the log scheme $(X,M(D))$ is log smooth over the scheme $\Spec(k)$ provided with the trivial log structure. To simplify notation we will say that a pair $(X,D)$ is {\em log smooth} at a point $x\in X$ if $D$ is $\bfQ$-Cartier and $(X,M(D))$ is log smooth locally at $x$.

\begin{exam}\label{torchart}
Recall that a toric monoid $P$ is a finitely generated integral saturated monoid without torsion, see \S\ref{toricsec}. We associate to such $P$ a {\em toric chart} $\bfA_P:=\Spec(k[P])$ which is a toric variety (in particular, it is normal): the torus $\Spec(k[P^\gp])$ acts on $\bfA_P$ and the embedding $k[P]\into k[P^\gp]$ corresponds to the open immersion $j{\colon}\Spec(k[P^\gp])\into\bfA_P$. The image of $j$ is the only open orbit of the action and its complement is a toric divisor $D_P$. Note that $D_P$ is $\bfQ$-Cartier, in the obvious way. Note also that $I:=P\setminus P^\times$ is the maximal ideal of $P$ and $k[I]$ is a prime ideal of $k[P]$ giving rise to a closed subset $V_p\subset\bfA_P$ contained in $D_P$. Actually, $V_P$ is the only closed orbit of the torus action and we will call it the {\em center} of the chart. The pair $(\bfA_P,D_P)$ is log smooth at any point of $D_P$ and for the corresponding log structure $M=M(D_P)$ the monoids $\oM_x$ for $x\in\bfA_P$ are quotients of $\oP$, and $\oP\toisom\oM_x$ if and only if $x\in V_P$.
\end{exam}

\begin{lem}\label{torchartlem}
Let $X$ be a normal scheme of finite type over a field $k$, let $D\subset X$ be a $\bfQ$-Cartier divisor, and let $x\in X$ be a point. Then the pair $(X,D)$ is log smooth at $x$ if and only if \'etale-locally it is isomorphic to \'etale localization of a pair $(\bfA_P,D_P)$ at a point $x_P\in V_P$ for a toric monoid $P$. Any such $P$ is unique up to an isomorphism and $\oP$ is unique up to unique isomorphism.
\end{lem}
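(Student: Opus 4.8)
The plan is to prove the two directions of the equivalence and the uniqueness statement separately, relying on the étale-local structure theory of log smooth morphisms over a trivially log-structured base. For the ``if'' direction, note that Example \ref{torchart} already records that $(\bfA_P,D_P)$ is log smooth at every point of $D_P$; since log smoothness and the $\bfQ$-Cartier property are étale-local conditions, any pair étale-locally isomorphic to an étale localization of $(\bfA_P,D_P)$ at a point of $V_P$ is log smooth at $x$, provided $D$ is $\bfQ$-Cartier --- and $D$ is $\bfQ$-Cartier because $D_P$ is and the locus of triviality of $M(D)$ is the preimage of the triviality locus of $M(D_P)$, which is exactly the complement of $D_P$.

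For the ``only if'' direction, suppose $(X,D)$ is log smooth at $x$, i.e. $(X,M(D))\to\Spec(k)$ (with $k$ trivially log-structured) is log smooth locally at $x$. First I would invoke the chart criterion for log smoothness (Kato's theorem, \cite[3.5]{Ka} or \cite[3.7]{K}): étale-locally at $x$ there is a fine chart $\oM_x\to\calO_X$ for the log structure, and a strict étale morphism $X\to\Spec(k[\oM_x])$ --- here I use that the base log structure is trivial so the relevant chart is a single monoid $Q$ with $X\to\Spec(k[Q])$ strict étale and $Q$ mapping to $\oM_x$. Since $M(D)$ is a divisorial log structure on a normal variety, $\oM_x$ is an integral, saturated, finitely generated, torsion-free monoid, i.e. a toric monoid; set $P=\oM_x$. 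Then $X\to\bfA_P=\Spec(k[P])$ is strict étale at $x$, and strictness forces $\oM_{X,x}\toisom\oM_{\bfA_P,x_P}$ for the image point $x_P$; by the last sentence of Example \ref{torchart} this isomorphism of sharp monoids $\oP\toisom\oM_{\bfA_P,x_P}\toisom P$ holds precisely when $x_P\in V_P$. This produces the desired étale-local isomorphism between $(X,D)$ near $x$ and $(\bfA_P,D_P)$ near $x_P\in V_P$ (the divisor $D$ pulls back to $D_P$ because a strict morphism identifies the log structures, hence their triviality loci, hence the divisors).

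For uniqueness: if $(\bfA_P,D_P)$ near $x_P\in V_P$ and $(\bfA_{P'},D_{P'})$ near $x_{P'}\in V_{P'}$ are étale-locally isomorphic as pairs (compatibly with the chosen points), then the isomorphism is automatically strict for the divisorial log structures, so it induces an isomorphism $\oM_{\bfA_P,x_P}\toisom\oM_{\bfA_{P'},x_{P'}}$; since $x_P\in V_P$ and $x_{P'}\in V_{P'}$, these sharp monoids are canonically $\oP$ and $\oP'$ respectively, giving a canonical isomorphism $\oP\toisom\oP'$. The monoid $P$ itself is then recovered from $\oP$ together with the unit group, which is determined by the torus acting on the chart; more concretely $P\toisom\oP$ canonically because $P$ is already sharp (a toric monoid arising as $\oM_x$ has no units), so $P$ is unique up to a (possibly non-canonical, coming from the choice of the étale chart) isomorphism and $\oP$ up to the canonical one just produced. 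I expect the main obstacle to be bookkeeping the precise version of Kato's chart criterion in the normal-variety setting and verifying carefully that the divisor $D$ (not just the log structure) is matched under the strict étale morphism --- this is where the $\bfQ$-Cartier hypothesis is essential, since it is exactly what makes $M(D)$ remember $D$, and one must check this compatibility passes through the étale localization cleanly. The toric-monoid facts needed ($\oM_x$ integral, saturated, finitely generated, torsion-free for a divisorial log structure on a normal scheme) are collected in \S\ref{toricsec} and can be cited rather than reproved.
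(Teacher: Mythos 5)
Your overall strategy (Kato's criterion for both directions, sharp monoid invariance for uniqueness) matches the paper's, but the ``only if'' direction contains a genuine error that also breaks your uniqueness argument. You set $P=\oM_x$ and claim $X\to\bfA_P=\Spec(k[\oM_x])$ is strict \'etale at $x$. But $\oM_x$ is sharp, so $V_{\oM_x}=\Spec(k)$ is a single closed point, and $\dim\bfA_{\oM_x}=\rk(\oM_x^\gp)$; unless $x$ is a closed point lying in a zero-dimensional stratum, this rank is strictly less than $\dim_xX$ and no \'etale map $X\to\bfA_{\oM_x}$ can exist. (Concretely: if $X$ is a smooth surface, $D$ a smooth curve, and $x$ the generic point of $D$, then $\oM_x\cong\bfN$ and $\bfA_{\oM_x}=\bfA^1_k$.) What Kato's theorem \cite[3.7]{K} actually produces is a strict \'etale chart $X\to\bfA_P$ in which $P$ is a toric monoid with $\oP\toisom\oM_x$ but $P^\times\neq 1$ in general; indeed $P\toisom\oM_x\oplus L$ for a lattice $L$ of rank $\dim_xX-\rk(\oM_x^\gp)$. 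You in fact introduce the correct chart monoid --- the $Q$ in your parenthetical aside --- but then discard it in favor of $\oM_x$, which is what goes wrong.

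This propagates into the uniqueness step, where you assert ``$P\toisom\oP$ canonically because $P$ is already sharp.'' It is not sharp in general, so this collapse does not occur, and the claim that $P$ is determined needs a separate argument. The paper's argument is: $\oP\toisom\oM_x$ is canonically determined (this part you have right); then one uses the (non-canonical) splitting $P\toisom\oP\oplus L$ valid for any toric monoid, and observes that $\rk(L)=\rk(P^\gp)-\rk(\oP^\gp)$ is forced because $\rk(P^\gp)$ must equal the dimension of the irreducible component through $x$ (\'etaleness preserves dimension). So $L$ is determined up to isomorphism, hence $P$ is determined up to a non-canonical isomorphism, which is exactly what the lemma claims --- and the ``non-canonical'' is essential, since the splitting of $L$ off $P$ has no preferred choice.
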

\begin{proof}
Everything except uniqueness of $P$ follows from \cite[3.7]{K}. To prove uniqueness we note that $\oP=P/P^\times$ is naturally isomorphic to $\oM_x=M_x/M^\times_x$ for $M=M(D)$, so $\oP$ does not depend on the choice of the chart. Recall that $P\toisom\oP\oplus L$ for a lattice $L$ (see \S\ref{toricsec}). Since $\rk(P^\gp)=\rk(L)+\rk(\oP^\gp)$ equals to the dimension of the irreducible component of $x$, we obtain that $\rk(L)$ is determined by $x$ and so $P$ is unique up to a non-canonical isomorphism.
\end{proof}

In the sequel, when we consider a log smooth pair we automatically assume that the ambient scheme is normal. By {\em monoidal chart} of a log smooth pair $(X,D)$ at a point $x$ we mean an embedding $P\into\calO^\sh_{X,x}$ (where $\calO^\sh$ denotes the strict henselization of a local ring $\calO$) which induces an \'etale morphism $(U,D\times_X U)\to(\bfA_P,D_P)$, where $U$ is a sufficiently small \'etale neighborhood of $x$. The above lemma implies that such charts exist and $P$ is unique (up to a non-unique isomorphism).

\begin{lem}\label{descentlem}
Assume that $f:Y\to X$ is a smooth morphism between normal $k$-varieties, and $D\subset X$ is a $\bfQ$-Cartier divisor with $E=f^{-1}(D)$. Let $(X,M_X=M(D))$ and $(Y,M_Y=M(E))$ be the associated (\'etale) log schemes then

(i) The morphism $(Y,M_Y)\to(X,M_X)$ is strict.

(ii) $(Y,M_Y)$ is log smooth at a point $y\in Y$ if and only if $(X,M_X)$ is log smooth at $x=f(y)$.
\end{lem}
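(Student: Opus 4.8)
The statement concerns a smooth morphism $f\colon Y\to X$ of normal $k$-varieties, a $\bfQ$-Cartier divisor $D\subset X$, its preimage $E=f^{-1}(D)$, and the associated \'etale log structures $M_X=M(D)$, $M_Y=M(E)$. Since smoothness, strictness and log smoothness are all \'etale-local on source and target, the plan is to reduce everything to an explicit local computation and then invoke Lemma \ref{torchartlem}.

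First I would prove (i). Recall $M(D)=j_*\calO^\times_{X\setminus D}\cap\calO_X$ and likewise for $M(E)$, both taken in the \'etale topology. Since $f$ is smooth it is flat, so pullback of Cartier divisors behaves well and $E$ is again a $\bfQ$-Cartier divisor; moreover $Y\setminus E=f^{-1}(X\setminus D)$, so writing $j\colon X\setminus D\into X$ and $j'\colon Y\setminus E\into Y$ one has a cartesian square. The claim is that the natural map $f^*M_X\to M_Y$ of log structures is an isomorphism, i.e. $f^{-1}(j_*\calO^\times_{X\setminus D})\cdot\calO_Y = j'_*\calO^\times_{Y\setminus E}$ inside $\calO_Y$ (in the \'etale topology). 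The key local fact is that a rational function $g$ on $Y$ is a unit away from $E$ and regular on $Y$ iff its divisor is supported on $E$; since $f$ is smooth, the components of $E$ are exactly the preimages of components of $D$ (with multiplicity one, as $f$ is smooth hence $E=f^*D$ as divisors), and the valuation of a pulled-back function $f^*h$ along a component $E_i$ over $D_i$ equals the valuation of $h$ along $D_i$. Combined with the fact that a function regular in codimension one on a normal scheme is regular, this shows the stalk of $M_Y$ at any geometric point $\bar y$ over $\bar x$ is generated by the image of the stalk of $M_X$ at $\bar x$ together with the units $\calO_{Y,\bar y}^\times$. That is precisely the assertion that $f^*M_X\to M_Y$ is an isomorphism, hence $(Y,M_Y)\to(X,M_X)$ is strict.

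For (ii) I would argue as follows. If $(X,M_X)$ is log smooth at $x$, then since log smoothness is stable under base change and composition with strict smooth morphisms, and $(Y,M_Y)\to(X,M_X)$ is strict (by (i)) with underlying morphism $f$ smooth, $(Y,M_Y)\to(X,M_X)$ is log smooth; composing with $(X,M_X)\to\Spec(k)$ log smooth gives $(Y,M_Y)$ log smooth at $y$. Conversely suppose $(Y,M_Y)$ is log smooth at $y=f(y)$ above $x$. By Lemma \ref{torchartlem} applied to $(Y,E)$ there is, \'etale-locally at $y$, an \'etale morphism to $(\bfA_P,D_P)$ with $y$ over $V_P$, so $\oM_{Y,\bar y}\cong\oP$ is a toric monoid. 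By (i), $\oM_{X,\bar x}\to\oM_{Y,\bar y}$ is an isomorphism, so $\oM_{X,\bar x}$ is itself a toric monoid $Q\cong\oP$. Now I need to promote this to a chart for $(X,D)$ at $x$: choose any set-theoretic lift $Q\to M_{X,\bar x}$ (possible since $Q$ is free-plus-torsion-free as in \S\ref{toricsec}, and $M_{X,\bar x}^\times\to\calO_{X,\bar x}^\times$, or rather the extension $1\to M_X^\times\to M_X\to\oM_X\to 1$, splits \'etale-locally over a saturated finitely generated $\oM_X$), giving $Q\to\calO^{\sh}_{X,x}$ and hence a morphism $g\colon V\to\bfA_Q$ from an \'etale neighborhood $V$ of $x$, sending $x$ into $V_Q$. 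It remains to check $g$ is \'etale at $x$: pulling back along $f$ gives a morphism $Y\times_X V\to\bfA_Q=\bfA_P$ which by construction agrees \'etale-locally at $y$ with the chart of $(Y,E)$, hence is \'etale at $y$; since $f$ (and the base change $Y\times_XV\to V$) is smooth and surjective near $y$, flatness and a fiber-dimension count (or faithfully flat descent of \'etaleness) force $g$ to be \'etale at $x$. Then by Lemma \ref{torchartlem} again $(X,D)$ is log smooth at $x$.

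\textbf{Main obstacle.} The routine directions are the ones that use stability of log smoothness under base change and composition with strict smooth morphisms. The genuinely delicate point is the converse in (ii): producing a monoidal chart for $(X,D)$ out of the isomorphism $\oM_{X,\bar x}\cong\oM_{Y,\bar y}$ and the chart upstairs, and in particular verifying that the resulting morphism to $\bfA_Q$ is \'etale at $x$ — this is where one must descend \'etaleness along the smooth surjection $f$, and where one must be a little careful that the lift $Q\to\calO^{\sh}_{X,x}$ can be chosen compatibly with the chosen lift $P\to\calO^{\sh}_{Y,y}$ (which is what makes the pulled-back morphism literally equal to the given chart near $y$, not just \'etale-locally isomorphic to it). Everything else is bookkeeping with valuations on normal schemes and the definition of $M(\cdot)$.
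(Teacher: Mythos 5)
Your argument for part (i) has a gap precisely where the real work is. To show $\oM_{X,\ox}\to\oM_{Y,\oy}$ is surjective, one must produce, for each $a\in M_{Y,\oy}$, a function $h\in M_{X,\ox}$ with $a=u\cdot f^*h$; the valuation discussion you give describes what the divisor of $h$ would have to be but does not show such an $h$ exists. Two points are skipped: (a) near $\oy$ there may be several irreducible components $E_{j}$ of $E$ dominating the same component $D_i$ of $D$, and $a$ can a priori have different orders of vanishing along them, in which case no $h$ exists at all --- eliminating this requires an \'etale shrinking that makes the fiber $Y_\xi$ over the generic point $\xi$ of each $D_i$ geometrically integral, which you do not carry out; and (b) even granting constant valuations, producing a regular $h$ with the prescribed divisor requires that Weil divisor on $X$ to be locally principal, which needs its own descent argument. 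The paper handles both at once by a different device: after arranging \'etale-locally that $f$ has a section $s$ and the relevant fibers are integral, it sets $b:=f^*s^*(a)$ and proves $a=ub$ by comparing $V(a)$ and $V(b)$ at codimension-one points (the DVR case) and then invoking the $S_1$ property of the normal $Y$. The Hartogs-type fact you quote is only the final step of that chain, not a substitute for it.

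Your converse in (ii) is also off target. You set $Q:=\oM_{X,\ox}\cong\oP$ (a sharp monoid), lift it into $\calO^{\sh}_{X,x}$, and try to show the resulting $g\colon V\to\bfA_Q$ is \'etale by comparing $g\circ f$ with the Lemma~\ref{torchartlem} chart $Y'\to\bfA_P$. But for a sharp $Q$ one has $\rk(Q^\gp)<\dim X$, while in that lemma $\rk(P^\gp)=\dim Y$; so $g$ cannot be \'etale, and $\bfA_Q\ne\bfA_P$ unless $f$ is already \'etale --- the pulled-back morphism and the given chart cannot literally coincide, and the compatibility-of-lifts issue you flag at the end is genuine but left unresolved. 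The paper's route is both shorter and avoids it: since $\oM_{X,\ox}\toisom\oM_{Y,\oy}=:P$ is fine (by log smoothness of $Y$) and saturated (by normality), the surjection $M_{X,\ox}\twoheadrightarrow P$ splits, giving a Kato chart $c_X\colon(X,M_X)\to\Spec(k[P])$; by strictness $c_Y=c_X\circ f$ is automatically a chart for $Y$. Then [K, Th.~3.5] converts log smoothness of $(X,M_X)$ at $x$ and of $(Y,M_Y)$ at $y$ into \emph{smoothness} of $c_X$ and $c_Y$ there, and the equivalence reduces to classical descent of smoothness along the smooth $f$ via [EGA $\mathrm{IV}_4$, 17.7.7]. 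The chart is built on $X$ first and is smooth, not \'etale, so no descent of \'etaleness or matching of lifts ever arises.
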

\begin{proof}
Fix geometric points $\oy\to y\to Y$ and $\ox=f(\oy)$. We should check in (i) that $\oM_{X,\ox}\toisom\oM_{Y,\oy}$. Injectivity is clear, so let us check that an element $a\in\oM_{Y,\oy}$ is in the image of $\oM_{X,\ox}$. Our claim is \'etale-local at $\ox$ and $\oy$. Replacing $Y$ with an \'etale neighborhood of $\oy$ we can achieve that $a$ is defined as an element of $\Gamma(\calO_Y)$ which is invertible on $Y\setminus E$. Furthermore, replacing $X$ with an \'etale neighborhood $X'$ of $\ox$ and replacing $Y$ with a neighborhood of a lift of $\oy$ to $Y\times_XX'$ we can achieve that the fiber $Y_x=Y\times_X\Spec(k(x))$ is geometrically connected and $f$ admits a section $s:X\to Y$. Set $b=f^*s^*(a)$. We claim that $a=ub$ for $u\in\Gamma(\calO^\times_Y)$, and hence $s^*(a)$ gives rise to an element of $\oM_{X,\ox}$ mapping to $a$. Thus, (i) will follow when we prove this claim.

First, let us check the claim for $X=\Spec(R)$, where $R$ is a DVR. Then $D=x$ is the closed point of $X$ and $E=Y_x$ is integral. If $a$ is not a unit then it vanishes along $Y_x$ and hence is divisible by a uniformizer $\pi\in R$. Clearly, $b$ is also divisible by $\pi$ and so it suffices to prove the claim for $a'=a/\pi$ and $b'=b/\pi$ instead of $a$ and $b$. We can proceed inductively until $a$ is a unit, then $b$ is also a unit and we are done.

Assume, now, that $X$ is an arbitrary normal scheme. For any generic point $x\in D$ the local ring $\calO_{X,x}$ is a DVR and applying the above particular case to the base changes $Y\times_X\Spec(\calO_{X,x})\to\Spec(\calO_{X,x})$ we obtain that $a=ub$ at any generic point of $E$. Let $V(a)$ and $V(b)$ be the closed subschemes defined by the vanishing of $a$ and $b$. We have proved that $V(a)$ and $V(b)$ coincide at all points of $Y$ of codimension one. It follows that the closed immersion $V(a,b)\into V(a)$ is generically an isomorphism. Since $X$ is normal, it is $S_2$ and hence $V(a)$ is $S_1$, i.e. $V(a)$ has no embedded components. Therefore, $V(a,b)\toisom V(a)$ and we obtain that $V(a)=V(b)$. Thus, $a=ub$, as claimed.

Let us prove (ii). Choose $\oy$ and $\ox$ to be liftings of $y$ and $x$ and let $P=\oM_{X,\ox}\toisom\oM_{Y,\oy}$. Note that $P$ is fine by the log smoothness assumption and it is saturated by the normality assumption. Since $P$ is saturated, the epimorphism $M_{X,\ox}\to P$ splits (e.g., see an argument in \S\ref{toricsec}). Hence we obtain a homomorphism $\phi:P\to M_{X,\ox}\into\calO^\sh_{X,x}$ and, using that $P$ is fine, we can replace $X$ with an \'etale neighborhood of $\ox$ such that $\phi$ factors through $\calO_X$. The latter induces a $k$-chart $c_X:(X,M_X)\to\Spec(k[P])$ and since $(Y,M_Y)\to(X,M_X)$ is strict by (ii), the composition $c_Y:(Y,M_Y)\to\Spec(k[P])$ is also a $k$-chart. By \cite[Th. 3.5]{K}, $(X,M_X)$ is log smooth at $x$ if and only if $c_X$ is smooth at $x$, and the same is true for $y$. This reduces the question to its analog for smoothness of the usual schemes, which is classical (see \cite[$\rm IV_4$, 17.7.7]{ega}).
\end{proof}

\begin{defin}
A morphism $f:Y\to X$ is called {\em Kummer at} a point $y\in Y$ if the induced homomorphism $\oM_{X,x}\to\oM_{Y,y}$ is Kummer (see \S\ref{toricsec}). In this case, the rank of $f$ at $y$ is defined as the index of $\oM^\gp_{X,x}$ in $\oM^\gp_{Y,y}$. A morphism is {\em Kummer} if it is Kummer at all points of the source. Log schemes $X$ and $Y$ are {\em log isogenous} at points $x\in X$ and $y\in Y$ if there exist morphisms $g:Z\to X$ and $h:Z\to Y$ that are Kummer at a point $z\in g^{-1}(x)\cap h^{-1}(y)$.
\end{defin}

\begin{rem}
Kummer morphisms are typically non-flat. For example, consider the standard orbifold quotient $\Spec(k[x,y])\to\Spec(k[x^2,xy,y^2])$ with the obvious toric log structures given by monic monomials.
\end{rem}

\begin{rem}\label{logsmoothrem}
(i) Note that $X$ is smooth at $x$ and $D$ is normal crossings at $x$ if and only if $(X,D)$ is log smooth and the monoid $\oM_x$ is free.

(ii) If $(Y,E,y)$ is only log isogenous to such $(X,D,x)$ then we can only say that the monoid $\oM_y$ is of simplicial shape. In such case, we say that the log structure of $(Y,E)$ is {\em of simplicial shape} at the point $y$. One easily sees that the converse is also true, and so a log smooth $(Y,E)$ is of simplicial shape at $y$ if and only if there exists a Kummer morphism $(Y',E',y')\to(Y,E,y)$ such that $Y'$ is smooth and $E'$ is normal crossings at $y'$.
\end{rem}

\subsection{Toric charts}\label{toricblyasec}
Let $k$ be a trivially valued ground field and consider a finitely generated Abhyankar extension $K/k$. Note that $\oLam:=|K^\times|$ is a "multiplicative" lattice and $\oLamcirc:=|\Kcirc\setminus\{0\}|$ is a valuation monoid in $\oLam$, as defined in \S\ref{valmonsec}. It is well known (see Theorem \ref{simth}) that $\oLamcirc$ is a filtered union of its free submonoids; in particular, those are cofinal in the family of toric submonoids of $\oLamcirc$. In the sequel, the words "for sufficiently large toric monoid $\oM\subseteq\oLamcirc$ ..." will often be used instead of a more pedantic formulation "there exists a toric monoid $\oM_0\subseteq\oLamcirc$ such that for any toric monoid $\oM$ with $\oM_0\subseteq\oM\subseteq\oLamcirc$ ...".

Choose an Abhyankar transcendence basis $B=B_E\coprod B_F$ of $K$. We will associate to $B$ various objects, and this section is devoted to studying toric geometry related to $k(B)\subseteq K$. Let $K_B$ denote the field $k(B)$ provided with the valuation induced from $K$. Note that the valued subfield $k(B_F)\subseteq K_B$ is trivially valued because the set $\tilB_F$ is algebraically independent over $\tilk$. The value group $\oLam_B:=|K_B^\times|$ is a sublattice of $\oLam$ generated by $|B_E|$, and we also define $P_B^\times$ to be the free "multiplicative" abelian group generated by $B_F$ and set $\Lam_B=P_B^\times\oplus\oLam_B$. There is an obvious injection $i_B{\colon}\Lam_B\into K^\times$, and if $B$ is fixed usually we will simply identify $\Lam_B$ with a subgroup of $K^\times$.

Next portion of notation will be associated with a toric monoid $\oM\subset\oLam$ such that $\oM^\gp=\oLam$. Note that $\oM_B:=\oM\cap\oLam_B$ is a toric monoid and the embedding $\oM_B\into\oM$ is Kummer. We set $M_B=P_B^\times\oplus\oM_B$ and define a toric chart $\bfA_{B,\oM}=\Spec(k[M_B])$ with the toric divisor $D_{B,\oM}$ and the center $V_{B,\oM}$ as in Example \ref{torchart}. In addition, let $\eta_{B,\oM}$ denote the generic point of $V_{B,\oM}$ and let $\calO_{B,\oM}$ be its local ring. Note that though the chart depends only on the monoid $M_B$ we prefer to keep track in the notation for the initial dependency on $B$ and $\oM$.

\begin{lem}\label{raylem}
The local ring $\calO_{B,\oM}$ equals to the localization of the ring $k(B_F)[\oM_B]$ along the ideal generated by $\oM_B$. The following conditions on $B$ and $\oM$ are equivalent:

(i) $\Kcirc_B$ is centered on $\bfA_{B,\oM}$;

(ii) $\Kcirc_B$ is centered on $\eta_{B,\oM}$;

(iii) $\oM\subseteq\oLamcirc$.
\end{lem}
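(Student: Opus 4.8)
The first sentence (the explicit description of $\calO_{B,\oM}$) should be read off directly from the construction: $\bfA_{B,\oM}=\Spec(k[M_B])$ with $M_B=P_B^\times\oplus\oM_B$, so $k[M_B]=k[P_B^\times][\oM_B]$. Since $P_B^\times$ is a free abelian group generated by $B_F$, localizing $k[P_B^\times]$ at $(0)$ gives the rational function field $k(B_F)$; the center $\eta_{B,\oM}$ is the generic point of $V_{B,\oM}$, which is cut out by the monomial ideal generated by $I:=M_B\setminus M_B^\times$. Because $M_B^\times=P_B^\times$, this ideal is generated (after inverting $P_B^\times$) by $\oM_B\setminus\{1\}$. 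So $\calO_{B,\oM}=\bigl(k(B_F)[\oM_B]\bigr)_{\gtm}$ with $\gtm$ the ideal generated by $\oM_B\setminus\{1\}$. I would spell this out in one short paragraph, quoting Example \ref{torchart} for the identification of $V_P$ with the locus $k[I]$.

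For the equivalence of (i), (ii), (iii), the plan is to prove (iii)$\Rightarrow$(ii)$\Rightarrow$(i) directly and (i)$\Rightarrow$(iii) by contraposition. For (iii)$\Rightarrow$(ii): assuming $\oM\subseteq\oLamcirc$, I must show $\Kcirc_B$ dominates $\calO_{B,\oM}$. It is enough to check that $k(B_F)[\oM_B]\subseteq\Kcirc_B$ and that the maximal ideal $\gtm$ maps into $\Kcirccirc_B$. The inclusion $k(B_F)\subseteq\Kcirc_B$ holds because $k(B_F)$ is trivially valued inside $K_B$ (the reductions $\tilB_F$ are algebraically independent over $\tilk$, as noted just before the lemma). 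For the monomials: each generator of $\oM_B$ lies in $\oM_B=\oM\cap\oLam_B\subseteq\oLamcirc$, i.e. has value $\le 1$, so it lies in $\Kcirc_B$; and each generator of $\gtm$ other than units, i.e. each element of $\oM_B\setminus\{1\}$, has value $<1$ (value $=1$ would force it to be a unit in $\oM_B$ since $\oM_B$ is a toric, hence saturated integral, monoid — an element and its "inverse direction" both of value $1$ would both lie in $\oM_B$, making it invertible), hence lies in $\Kcirccirc_B$. Therefore $\Kcirc_B$ dominates $\calO_{B,\oM}$, i.e. is centered on $\eta_{B,\oM}$. The implication (ii)$\Rightarrow$(i) is immediate since $\eta_{B,\oM}\in\bfA_{B,\oM}$.

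For (i)$\Rightarrow$(iii), suppose $\oM\not\subseteq\oLamcirc$, so there is $\lambda\in\oM$ with $|\lambda|>1$. Then $\lambda$, viewed as a monomial in $k[M_B]$, maps under the generic-point inclusion into $K_B$ to an element of value $>1$, hence not in $\Kcirc_B$. But every element of $k[M_B]$ is a $k(B_F)$-combination of monomials from $\oM_B$ with exponents in $\oM_B\subseteq\oM$; I need to argue that some coordinate function on $\bfA_{B,\oM}$ fails to lie in $\Kcirc_B$, which precisely says $\Kcirc_B$ is not centered on $\bfA_{B,\oM}$ (a valuation ring is centered on an affine scheme $\Spec(A)$ iff the image of $A$ in the fraction field lies in the valuation ring). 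Since $\lambda\in k[M_B]$ has image of value $>1$, this is exactly the obstruction. So $\Kcirc_B$ is not centered on $\bfA_{B,\oM}$, contradicting (i); hence $\oM\subseteq\oLamcirc$.

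\textbf{Main obstacle.} The only genuinely delicate point is the claim used in (iii)$\Rightarrow$(ii) that an element of $\oM_B$ with value exactly $1$ must be a unit of $\oM_B$ — equivalently that $\calO_{B,\oM}$ is genuinely dominated, not merely containing $k[M_B]$ in its fraction field. This rests on $\oM$ being a \emph{valuation monoid} in the sense of \S\ref{valmonsec}: for $\lambda\in\oLam$ either $\lambda$ or $\lambda^{-1}$ lies in $\oLamcirc$, and both lie in $\oLamcirc$ iff $|\lambda|=1$. Combined with $\oM^\gp=\oLam$ and toricity (saturation) of $\oM_B$, this gives that $|\lambda|=1$ for $\lambda\in\oM_B$ forces $\lambda^{-1}\in\oLam_B\cap\oLamcirc$, and then a saturation/finite-generation argument places $\lambda^{-1}\in\oM_B$, so $\lambda\in\oM_B^\times$. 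I would isolate this as a one-line consequence of the monoid facts recalled in Appendix \ref{monoidsec} rather than reproving it. Everything else is bookkeeping with monomial algebras and the definition of "centered".
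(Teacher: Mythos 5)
Your overall route is the paper's: compute $\calO_{B,\oM}$ directly from the chart, prove (iii)$\Rightarrow$(ii)$\Rightarrow$(i) directly, and (i)$\Rightarrow$(iii) by contraposition. Two remarks.

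\emph{A genuine gap in (i)$\Rightarrow$(iii).} You take $\lambda\in\oM\setminus\oLamcirc$ and assert that $\lambda$, "viewed as a monomial in $k[M_B]$", has value $>1$ under the generic-point inclusion. But $\lambda\in\oM$ need not lie in $\oM_B=\oM\cap\oLam_B$, hence need not lie in $M_B=P_B^\times\oplus\oM_B$ at all, so there is no such monomial in $k[M_B]$ to speak of. The fix is exactly the move the paper makes: since $\oLam_B$ has finite index in $\oLam$ (equivalently the inclusion $\oM_B\into\oM$ is Kummer, as is noted just before the lemma), some power $\lambda^n$ lies in $\oM_B$, and $\lambda^n\in\oLam$ still has value $>1$. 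This $\lambda^n\in k[M_B]$ is the function that fails to land in $\Kcirc_B$. Without passing to a power, the argument does not go through.

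\emph{Your "main obstacle" is not actually there.} You worry at length that an element $m\in\oM_B$ of value $1$ should be shown to be a unit via a saturation argument. But $\oM_B$ is by construction a submonoid of the value group $\oLam_B=|K_B^\times|$, and $i_B$ is a section of the valuation: $|i_B(m)|=m$ for $m\in\oLam_B$. So "$m\in\oM_B$ has value $1$" literally means $m=1$ in $\oLam_B$. Consequently every $m\in\oM_B\setminus\{1\}$ has $|i_B(m)|=m\ne 1$, and under (iii) (so $m\le 1$) this forces $m<1$, i.e.\ $i_B(m)\in\Kcirccirc_B$. There is no saturation or finite-generation argument to isolate; the statement is a tautology of the identifications. (For the same reason, under hypothesis (iii) the monoid $\oM$ is automatically sharp, since $\oM^\times\subseteq(\oLamcirc)^\times=\{1\}$.) The rest of (iii)$\Rightarrow$(ii) — triviality of the valuation on $k(B_F)$, hence $\calO_{B,\oM}\subseteq\Kcirc_B$ with maximal ideal inside $\Kcirccirc_B$ — is correct and matches the paper.
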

\begin{proof}
By its definition, $\eta_{B,\oM}$ corresponds to the ideal $I:=\oM_Bk[M_B]$. Hence the claim of the lemma about $\calO_{B,\oM}$ is obvious, and $\Kcirc_B$ is centered on $\eta_{B,\oM}$ if and only if $\Kcirccirc_B\cap k[M_B]=I$. If (iii) is violated, say $m\in\oM\setminus\oLamcirc$, then some positive power $m^n$ is in $\oM_B\setminus\oLamcirc$, hence $m^n\in k[M_B]$ and $m^n\notin\Kcirc$. So, $K_B$ is not centered on $\bfA_{B,\oM}$, and we proved that (i) implies (iii). The implication (ii)$\Rightarrow$(i) is obvious, so it remains to show that (iii) implies (ii). Assume that $\oM\subseteq\oLamcirc$. Then any $m\neq 1$ from $\oM_B$ belongs to $\Kcirccirc$, and since the valuation on $k[P_B^\times]\subset k(B_F)$ is trivial we obtain that $\Kcirccirc_B\cap k[M_B]=I$.
\end{proof}

\begin{lem}\label{cuplem}
For a fixed Abhyankar basis $B$ the equality $\Kcirc_B=\cup_{\oM\subset\oLamcirc}\calO_{B,\oM}$ holds, where $\oM$ runs through all toric monoids in $\oLamcirc$.
\end{lem}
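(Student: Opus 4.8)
The claim is that $\Kcirc_B = \cup_{\oM \subseteq \oLamcirc} \calO_{B,\oM}$, where $\oM$ runs over toric submonoids of $\oLamcirc$ with $\oM^{\gp} = \oLam$. The inclusion $\supseteq$ is immediate: each $\calO_{B,\oM}$ is centered at $\eta_{B,\oM}$ and dominated by $\Kcirc_B$ by Lemma \ref{raylem}, hence $\calO_{B,\oM} \subseteq \Kcirc_B$ for all admissible $\oM$. So the whole content is the inclusion $\subseteq$: every element of $\Kcirc_B$ lies in some $\calO_{B,\oM}$.

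\textbf{The approach for $\subseteq$.} Write $K_B = k(B) = k(B_F)(B_E)$, and recall from Lemma \ref{raylem} that $\calO_{B,\oM}$ is the localization of $k(B_F)[\oM_B]$ at the ideal generated by $\oM_B \setminus \{1\}$, where $\oM_B = \oM \cap \oLam_B$ and $M_B = P_B^\times \oplus \oM_B$. Here $k(B_F)$ is trivially valued, so an element $f \in \Kcirc_B$ decomposes, via the group algebra structure $K_B = \Frac(k(B_F)[\oLam_B])$, as a ratio $f = g/h$ with $g, h \in k(B_F)[\oLam_B]$, i.e. finite sums $g = \sum_{\lambda} c_\lambda \lambda$, $h = \sum_\mu d_\mu \mu$ with $c_\lambda, d_\mu \in k(B_F)$ and $\lambda, \mu \in \oLam_B$. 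The key point is that since $|f| \le 1$, the ``leading term'' of $h$ (the $\mu$ minimizing the valuation, which is well-defined as the valuation on $K_B$ restricted to $\oLam_B$ is the monomial valuation and on $k(B_F)$ is trivial) divides the leading term of $g$, so after factoring out that leading monomial $\mu_0$ from both numerator and denominator we may assume $h$ has leading term in $k(B_F)^\times$ (a unit) and $g$ has all its monomials in $\oLamcirc$. Then $g/h = g \cdot h^{-1}$ where $h^{-1}$ expands as a power series whose monomials all lie in the submonoid of $\oLamcirc$ generated by the $\mu \mu_0^{-1}$ appearing in $h$. First I would choose a single finitely generated submonoid $\oN \subseteq \oLamcirc$ of $\oLam_B$ containing all the finitely many monomials $\lambda \mu_0^{-1}$ and $\mu\mu_0^{-1}$ that occur in $g$ and $h$ after this normalization; then $f$ lies in the localization of $k(B_F)[\oN^{\mathrm{sat}}]$ at its maximal monomial ideal, where $\oN^{\mathrm{sat}}$ is the saturation of $\oN$ inside $\oLamcirc$ (still finitely generated, toric, and contained in $\oLamcirc$ by Gordan's lemma / Theorem \ref{simth}-type facts collected in the appendix). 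Finally, enlarging $\oN^{\mathrm{sat}}$ to a toric monoid $\oM$ with $\oM^{\gp} = \oLam$ (possible since $\oLamcirc$ has this property and we may adjoin finitely many generators), and setting $\oM_B = \oM \cap \oLam_B \supseteq \oN^{\mathrm{sat}}$, we get $f \in \calO_{B,\oM}$.

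\textbf{Organizing the argument.} I would carry this out in three steps: (1) reduce an arbitrary $f \in \Kcirc_B$ to the form $g \cdot h^{-1}$ with $h$ having unit leading coefficient and $g$ supported on $\oLamcirc$, using that the valuation is ``monomial'' on $k(B_F)[\oLam_B]$ (trivial on coefficients, the given valuation on the lattice part); (2) observe that $h^{-1} = (\text{unit})^{-1} \sum_{n \ge 0} (1 - h/(\text{leading term}))^n$ converges $\pi$-adically or rather in the valuation sense — but more cleanly, argue algebraically that $h$ is a unit in the localization $k(B_F)[\oM_B]_{(\oM_B)}$ whenever all non-leading monomials of $h$ (normalized) lie in $\oM_B \setminus \{1\}$, so no convergence is needed, just that $h \notin$ the maximal ideal; (3) package the finitely many monomials into a toric $\oM \subseteq \oLamcirc$ with $\oM^\gp = \oLam$ and conclude $f \in \calO_{B,\oM}$.

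\textbf{Main obstacle.} The delicate point is step (1), the normalization: one must argue that among the monomials of $h$ there is a unique one $\mu_0$ of minimal valuation and that $|f| \le 1$ forces all monomials of $g \mu_0^{-1}$ into $\oLamcirc$. Uniqueness of the minimal-valuation monomial of $h$ is exactly the statement that the valuation on $K_B$ is the monomial (Gauss-type) valuation associated to the valuation monoid $\oLamcirc \cap \oLam_B$ on the group ring $k(B_F)[\oLam_B]$ — i.e. that distinct lattice elements have distinct values. This holds because $\oLam_B$ injects into the value group $\oLam$ (the elements of $B_E$ are a $\bfQ$-basis of $\oLam \otimes \bfQ$, hence $\oLam_B \hookrightarrow \oLam$ with torsion-free quotient, so in particular the map $\oLam_B \to \oLam$ is injective and order-compatible): this is guaranteed by $B$ being an Abhyankar basis, condition $(*)$ in \S\ref{valsec}, and Remark \ref{Abhrem2}. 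Granting that, step (1) is a short computation with leading terms, and the rest is bookkeeping about finitely generated submonoids.
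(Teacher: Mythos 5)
Your proposal is correct and follows essentially the same route as the paper: write $f=a/b$ in the group ring $k(B_F)[\oLam_B]$, use injectivity of the valuation on the sublattice $i_B(\oLam_B)\subset K^\times$ (the Abhyankar condition) to identify a unique dominant monomial, normalize by it so that $b$ acquires a nonzero constant coefficient and all remaining monomials of $a$ and $b$ fall into $\oLamcirc$, and then pick a toric $\oM\subset\oLamcirc$ with $\oM^\gp=\oLam$ containing those finitely many monomials so that $b$ is a unit in $\calO_{B,\oM}$. The only differences are cosmetic (your initial power-series digression for $h^{-1}$, which you yourself discard in favour of the localization argument, and the phrase ``minimizing the valuation'', which should read ``maximizing the absolute value'' in the paper's multiplicative convention).
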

\begin{proof}
We know that each $\calO_{B,\oM}$ is contained in $\Kcirc_B$ by Lemma \ref{raylem}(iii). On the other hand, each element of $K_B$ can be represented as $a/b$ for $a=a_1m_1+\dots+a_km_k$ and $b=b_1 n_1+\dots+b_ln_l$, where $a_i,b_j\in k(B_F)$ and the $m_i$'s (resp. $n_j$'s) are different elements of $\oLam_B$. Note that the valuations of $a$ and $b$ are equal to the maximum value of the valuation on the corresponding monomials, for example, $|a|=\max_i|m_i|$ because $|a_i|=1$ and the real numbers $|m_i|$ are all different. Multiply $a$ and $b$ by an appropriate $m\in\oLam_B$ such that $|b|=1$. Then after renumbering the indexes we achieve that $n_1=1$ and $b_1\neq 0$. Assuming now that $a/b$ is an arbitrary element of $\Kcirc_B$, we obtain that $|a|\le|b|=1$ and hence all $m_i$'s and $n_j$'s lie in $\oLamcirc$. Choosing a toric monoid $\oM\subset\oLamcirc$ which contains all $m_i$'s and $n_j$'s, we obtain that $a\in k(B_F)[\oM_B]$ and $b\in b_1+\oM_Bk(B_F)[\oM_B]$. So, $a/b\in\calO_{B,\oM}$ by the first part of Lemma \ref{raylem}, and we obtain that $\Kcirc_B$ is contained in the union of all $\calO_{B,\oM}$'s.
\end{proof}

\subsection{Normalization and independence of the basis}\label{normindsec}
In order to use toric charts to study the geometry of $K$ we set $X_{B,\oM}:=\Nr_K(\bfA_{B,\oM})$ and let $E_{B,\oM}\subset X_{B,\oM}$ denote the preimage of $D_{B,\oM}$. For any toric monoid $\oM\subset\oLamcirc$, $\Kcirc$ is centered on $X_{B,\oM}$ by Lemma \ref{raylem}, so let $x_{B,\oM}\in X_{B,\oM}$ denote the center of $\Kcirc$ and let $A_{B,\oM}$ be the local ring of $x_{B,\oM}$. Note also that $\eta_{B,\oM}$ is the image of $x_{B,\oM}$ in $\bfA_{B,\oM}$ by Lemma \ref{raylem}. We will see that the pair $(X_{B,\oM},E_{B,\oM})$ can be made log smooth at $x_{B,\oM}$ by an appropriate choice of $B$ and $\oM$. Actually, we will see in Proposition \ref{Bprop} that the local structure of $X_{B,\oM}$ at $x_{B,\oM}$ is essentially independent of $B$ (for sufficiently large $\oM$'s in $\oLamcirc$), so log uniformization is obtained by fixing $B$ and then choosing a sufficiently large $\oM$. To simplify notation we will often suppress
$B$ from the notation when it is clear from the context what $B$ is, e.g. we will simply write $X_\oM=X_{B,\oM}$, $x_\oM=x_{B,\oM}$, etc., though the dependency on $B$ and $\oM$ will be assumed. Later on we will have to consider simultaneously another Abhyankar basis $B'=B'_E\coprod B'_F$ and then we will use the notation $A'_\oM$, $\calO'_\oM$, etc., to denote the objects depending on $\oM$ and $B'$.

A final piece of notation is based on \S\ref{ebfibsec}. Consider the natural map of Riemann-Zariski spaces $\psi{\colon}\bfP_K\to\bfP_{k(B)}$. Let $\tilgtx\in\bfP_{k(B)}$ and $\gtx\in\bfP_K$ be the points corresponding to $\Kcirc_B$ and $\Kcirc$, respectively, and let $\gtx=\gtx_1\.\gtx_n$ be the whole fiber $\psi^{-1}(\tilgtx)$. By $Y_\oM\subset\bfP_K$ and $\tilY_\oM\subset\bfP_{k(B)}$ we denote the birational fibers of $A_\oM$ and $\calO_\oM$, respectively. Note that $Z_\oM:=\bfP_K\{k(B_F)\}\{\{i_B(\oM_B)\}\}$ is the preimage of $\tilY_\oM$ in $\bfP_K$ and $Y_\oM$ is one of its connected components by Corollary \ref{birconlem}. The following lemma will be used to separate $\gtx$ from other points of the fiber.

\begin{lem}\label{gtxlem}
(i) The fiber $\psi^{-1}(\tilgtx)$ is discrete. In particular, there exist closed, constructible, and pairwise disjoint sets $\gtX_i\subset\bfP_K$ such that $\gtx_i\in\gtX_i$.

(ii) Fix $\gtX_i$'s as in (i) and set $\gtX=\coprod_{i=1}^n \gtX_i$. Then $Z_\oM\subset\gtX$ and $Y_\oM\subset\gtX_1$ for large enough $\oM$.
\end{lem}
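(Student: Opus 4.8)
<br>

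The plan is to prove both assertions by exploiting the compactness of the Riemann--Zariski space $\bfP_K$ in the constructible topology, together with the basic identification of the map $\psi\:\bfP_K\to\bfP_{k(B)}$ with restriction of valuations. For part (i), I would first argue that $\psi^{-1}(\tilgtx)$ is discrete: since $K/K_B$ is finite (it is a finitely generated Abhyankar, hence defectless, extension and $\trdeg_{k(B)}(K)=0$), the fiber consists of the valuation rings of $K$ extending $\Kcirc_B$, which by Corollary \ref{birconlem} (or rather the theory behind it, applied to the semi-local ring $\Nr_K(\Kcirc_B)$) are exactly the localizations of $\Nr_K(\Kcirc_B)$ at its finitely many maximal ideals. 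In particular there are at most $[K:K_B]$ of them, and they are pairwise incomparable (distinct maximal ideals of a semi-local ring, none contained in another), hence each is both open and closed in the subspace $\psi^{-1}(\tilgtx)$. Since $\bfP_K$ is spectral and $\psi^{-1}(\tilgtx)$ is pro-constructible (it is the preimage under the continuous-in-the-constructible-topology map $\psi$ of a single point, which is pro-constructible in $\bfP_{k(B)}$), standard separation in spectral spaces gives pairwise disjoint closed constructible sets $\gtX_i\ni\gtx_i$ covering the fiber; shrinking, I may also assume $\coprod\gtX_i$ is constructible and the $\gtX_i$ are disjoint.

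For part (ii), fix such $\gtX_i$ and set $\gtX=\coprod_{i=1}^n\gtX_i$, an open (indeed constructible) neighborhood of $\psi^{-1}(\tilgtx)$ in $\bfP_K$. The key observation is that $Z_\oM=\bfP_K\{k(B_F)\}\{\{i_B(\oM_B)\}\}$ is a pro-constructible set (an intersection of constructible sets indexed by the elements of $k(B_F)$ and of $\oM_B$), and that the family $\{Z_\oM\}_\oM$ as $\oM$ ranges over toric submonoids of $\oLamcirc$ is filtered and decreasing with $\bigcap_\oM Z_\oM=\psi^{-1}(\tilgtx)$. Indeed $\bigcap_\oM Z_\oM$ is cut out by $|c|\le1$ for all $c\in k(B_F)$ and $|m|<1$ for all $m\in\oLamcirc_B\setminus\{1\}$ (using that the toric monoids are cofinal in $\oLamcirc$ by Theorem \ref{simth}), and a valuation ring of $K$ satisfying these inequalities is precisely one dominating the local ring $\Kcirc_B=\bigcup_\oM\calO_{B,\oM}$ described in Lemma \ref{cuplem} and Lemma \ref{raylem}, i.e.\ an element of $\psi^{-1}(\tilgtx)$. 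Since $\gtX$ is a neighborhood (open in the Zariski topology, hence ind-constructible, in fact open in the constructible topology) of the pro-constructible set $\bigcap_\oM Z_\oM$, and each $Z_\oM$ is compact in the constructible topology, compactness forces $Z_{\oM_0}\subseteq\gtX$ for some toric monoid $\oM_0$, hence $Z_\oM\subseteq\gtX$ for all $\oM\supseteq\oM_0$.

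It remains to upgrade $Z_\oM\subseteq\gtX$ to $Y_\oM\subseteq\gtX_1$ for large $\oM$. By Corollary \ref{birconlem}, $Z_\oM$ is the disjoint union of the birational fibers of the closed points of $\Nr_K(\calO_{B,\oM})$, these being its connected components, and $Y_\oM$ is the one of them containing $\gtx=\gtx_1$. Now $Z_\oM\subseteq\gtX=\coprod\gtX_i$ with the $\gtX_i$ closed and disjoint, so each connected component of $Z_\oM$ lies entirely in a single $\gtX_i$; in particular $Y_\oM\subseteq\gtX_{j}$ for some $j$, and since $\gtx_1\in Y_\oM\cap\gtX_1$ and the $\gtX_i$ are disjoint, necessarily $j=1$. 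Thus $Y_\oM\subseteq\gtX_1$ for every $\oM\supseteq\oM_0$, as claimed.

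I expect the main obstacle to be the discreteness statement in (i) and the precise identification $\bigcap_\oM Z_\oM=\psi^{-1}(\tilgtx)$: one must be careful that the inequalities defining $Z_\oM$ genuinely force membership in the fiber in the limit (this uses $|c|=1$ for nonzero $c\in k(B_F)$, which holds because $\tilB_F$ is a transcendence basis of $\tilk(\tilB)$ over $\tilk$ so the valuation on $k(B_F)$ is trivial, combined with cofinality of free monoids in $\oLamcirc$ and the explicit description of $\Kcirc_B$ from Lemmas \ref{raylem} and \ref{cuplem}). Once the limit identification and the compactness of $Z_\oM$ in the constructible topology are in place, the rest is a formal argument with spectral spaces and connected components.
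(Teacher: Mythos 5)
Your overall strategy matches the paper's, and part (ii) is essentially correct: you pass to $\bigcap_\oM Z_\oM=\psi^{-1}(\tilgtx)$ using Lemmas \ref{raylem} and \ref{cuplem}, invoke compactness of the $Z_\oM$ in the constructible topology to land inside the open ind-constructible set $\gtX$, and then use Corollary \ref{birconlem} to identify $Y_\oM$ as a connected component of $Z_\oM$, which must lie in a single $\gtX_i$ and hence in $\gtX_1$. The paper runs the same argument, just working first downstairs with $\bigcap_\oM\tilY_\oM=\{\tilgtx\}$ in $\bfP_{k(B)}$ and then pulling back, which is a presentational difference only.

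There is, however, a genuine gap in part (i). You argue that the $\gtx_i$ are pairwise incomparable and hence that $\psi^{-1}(\tilgtx)$ is discrete, and then invoke ``standard separation in spectral spaces'' to produce pairwise disjoint \emph{closed} constructible sets $\gtX_i\ni\gtx_i$. But discreteness of a finite pro-constructible subset is not enough to achieve separation by closed constructible sets in a general spectral space: any closed constructible set containing $\gtx_i$ must contain the entire closure $\ol{\{\gtx_i\}}$, so if two points $\gtx_i$ and $\gtx_j$ have a common specialization (even while being mutually incomparable), no pair of disjoint closed constructible sets can separate them. The paper's proof supplies precisely the missing step: it first observes that the overrings of a valuation ring of $K$ form a chain, so the generalizations of any point of $\bfP_K$ are totally ordered, hence two incomparable points (such as $\gtx_i$ and $\gtx_j$) cannot share a specialization, hence $\ol{\{\gtx_i\}}\cap\ol{\{\gtx_j\}}=\emptyset$. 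Once the closures are known to be pairwise disjoint, the existence of the desired $\gtX_i$ follows by expressing each $\ol{\{\gtx_i\}}$ as the filtered intersection of closed constructible sets and applying compactness. You should add this chain argument; the rest of your proof of (i), including the identification of the fiber via $\Nr_K(\Kcirc_B)$ and the finiteness of $K/K_B$, is fine.
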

\begin{proof}
Assume that $\gtx,\gtx'\in\gtX$ are two points corresponding to valuation rings $\calO,\calO'$. Then $\gtx$ is a specialization of $\gtx'$ if and only if $\calO\subseteq\calO'$. Since all overrings of $\calO$ are localizations, it is easy to see that they form a totally ordered set (with respect to inclusion). In particular, the set of generalizations of $\gtx$ is totally ordered with respect to generalization and we obtain the following corollary: the set $\{\gtx,\gtx'\}$ is discrete if and only if the closures of $\gtx$ and $\gtx'$ are disjoint.

Now, let us check (i). The valuation ring $\calO_i$ corresponding to $\gtx_i$ is an extension of $\Kcirc_B$ to $K$. Since $\calO_i\nsubseteq\calO_j$ for $i\neq j$, the fiber is discrete. Furthermore, the closures of $\gtx_i$ are pairwise disjoint and each closure $\ogtx_i$ is the intersection of all closed constructible subsets containing $\gtx_i$. It follows that if $\gtX_i$ are sufficiently small closed constructible subsets containing $\gtx_i$ then they are pairwise disjoint.

Finally, let us prove (ii). By Lemma \ref{cuplem}, $\cup_{\oM}\calO_\oM=\Kcirc_B$, hence $\cap_\oM\tilY_\oM=\{\tilgtx\}$. It now follows from compactness of the constructible topologies on $\bfP_K$ and $\bfP_{k(B)}$ that the constructible neighborhood $\gtX$ of the fiber $\{\gtx_1\.\gtx_n\}$ contains the preimage of $\tilY_\oM$ for sufficiently large $\oM$. Returning back to the Zariski topology, in which $Y_\oM$ is connected, we obtain that $Y_\oM\subset\gtX_1$ for such $\oM$.
\end{proof}

\begin{cor}\label{cupcor}
For a fixed Abhyankar basis $B$ the equality $\Kcirc=\cup_{\oM\subset\oLamcirc}A_\oM$ holds, where $\oM$ runs through all toric monoids in $\oLamcirc$.
\end{cor}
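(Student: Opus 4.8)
The plan is to show $\bigcup_\oM A_\oM=\Kcirc$ by recognizing the left-hand side as a normal local domain whose only dominating valuation ring is $\Kcirc$. First I would record the easy inclusion: since $x_\oM$ is the center of $\Kcirc$ on $X_\oM$, the valuation ring $\Kcirc$ dominates $A_\oM$, so $A_\oM\subseteq\Kcirc$; moreover for $\oM\subseteq\oM'$ the inclusion $M_B\subseteq M'_B$ gives a toric morphism $\bfA_{B,\oM'}\to\bfA_{B,\oM}$ which lifts to a morphism $X_{\oM'}\to X_\oM$ of $K$-normalizations carrying $x_{\oM'}$ to $x_\oM$ (both are the center of $\Kcirc$), hence a local inclusion $A_\oM\subseteq A_{\oM'}$ of subrings of $K$. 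Thus $R:=\bigcup_\oM A_\oM$ is a filtered union of the normal local domains $A_\oM$ along local inclusions, all compatibly dominated by $\Kcirc$; so $R$ is a normal local subring of $\Kcirc$ with maximal ideal $\Kcirccirc\cap R$ and $\Frac(R)=K$ (each $A_\oM$, being the normalization of $\bfA_{B,\oM}$ in $K$, has fraction field $K$). It remains to prove $R=\Kcirc$.

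Next I would invoke \cite[Ch.~6, \S1, Th.~3]{Bou}: $R$ equals the intersection of all valuation rings of $K$ dominating it. A valuation ring $\calO$ of $K$ dominates $R$ if and only if it dominates every $A_\oM$ — the inclusion of underlying rings is clear, and the condition on maximal ideals localizes because $A_{\oM'}$ dominates $A_\oM$ for $\oM\subseteq\oM'$ and $\Kcirc$ dominates each $A_\oM$. Hence the set of valuation rings of $K$ dominating $R$ is exactly $\bigcap_\oM Y_\oM$, where $Y_\oM$ is the birational fiber of $A_\oM$. So the whole statement reduces to the claim $\bigcap_\oM Y_\oM=\{\gtx\}$.

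For that claim I would combine the material preceding Lemma \ref{gtxlem} with the lemma itself. Lemma \ref{cuplem} gives $\Kcirc_B=\bigcup_\oM\calO_\oM$, whence $\bigcap_\oM\tilY_\oM=\{\tilgtx\}$ (exactly as in the proof of Lemma \ref{gtxlem}(ii), the birational fiber of the valuation ring $\Kcirc_B$ being $\{\tilgtx\}$). Since $Z_\oM=\psi^{-1}(\tilY_\oM)$ and $Y_\oM$ is the connected component of $Z_\oM$ containing $\gtx$, this yields $\bigcap_\oM Y_\oM\subseteq\bigcap_\oM Z_\oM=\psi^{-1}(\tilgtx)=\{\gtx_1\.\gtx_n\}$. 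Now $\gtx=\gtx_1$ lies in every $Y_\oM$ (as $\Kcirc$ dominates $A_\oM$), while Lemma \ref{gtxlem}(ii), applied to closed constructible pairwise disjoint sets $\gtX_i\ni\gtx_i$ furnished by Lemma \ref{gtxlem}(i), shows $Y_\oM\subseteq\gtX_1$ for all sufficiently large $\oM$; hence $\gtx_j\notin Y_\oM$ for $j\ge2$ and such $\oM$. Therefore $\bigcap_\oM Y_\oM=\{\gtx\}$, so $\Kcirc$ is the only valuation ring of $K$ dominating $R$, and $R=\Kcirc$ as desired.

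As for the main obstacle: the first two steps are formal once one trusts that $R$ is a normal local domain with $\Frac(R)=K$, so the heart of the matter is the third paragraph — the bookkeeping that collapses $\bigcap_\oM Y_\oM$ to the single point $\gtx$ — which is where Lemma \ref{gtxlem}(ii) (and, behind it, the compactness of the constructible topology) does the real work. The only slightly delicate point to be careful about is setting up the directed system $(A_\oM)$ correctly, i.e. checking that the schemes $X_\oM$ are genuinely linked by morphisms sending center to center, so that $R$ is local and the domination arguments of the second paragraph are legitimate.
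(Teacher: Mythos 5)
Your proof is correct and follows essentially the same route as the paper's: the union is identified as a normal local ring, Bourbaki's theorem reduces the claim to showing that $\Kcirc$ is the only valuation ring dominating it, and that in turn is deduced from Lemma~\ref{cuplem} and Lemma~\ref{gtxlem}. You fill in the directedness of the system $(A_\oM)$ and the step $\bigcap_\oM Y_\oM\subseteq\bigcap_\oM Z_\oM=\{\gtx_1,\dots,\gtx_n\}$, both of which the paper leaves implicit.
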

\begin{proof}
If $\gtX_i$ are as in Lemma \ref{gtxlem} then for sufficiently large $\oM$ we have that $Y_\oM\subset\gtX_1$. In particular, $Y_\oM$ does not contain $\gtx_i$ with $i>1$, and we obtain that the intersection of all $Y_\oM$'s is just $\gtx$. So, $\Kcirc$ is the only valuation ring centered on all points $x_\oM$, and therefore $\Kcirc=\cup_\oM A_\oM$. (We use here that the $A_\oM$'s are normal local rings, so their union is a normal local ring and hence coincides with the intersection of all valuation rings dominating it by \cite[Ch.6, \S1, Th. 3]{Bou}.)
\end{proof}

\begin{prop}\label{Bprop}
Let $B$ and $B'$ be two Abhyankar bases. Then for any sufficiently large toric monoid $\oM\subset\oLamcirc$ the local rings $A_\oM$ and $A'_\oM$ in $K$ coincide, and for any $m\in \oM\cap\oLam_B\cap\oLam_{B'}$ one has that $i_B(m)=ui_{B'}(m)$ for a unit $u\in A_\oM^\times$. In particular, the divisors on $\Spec(A_\oM)$ induced from $E_{\oM}$ and $E'_{\oM}$ coincide.
\end{prop}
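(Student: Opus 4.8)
\textbf{Proof strategy for Proposition \ref{Bprop}.}
The plan is to reduce the statement to a comparison of the two toric models after enlarging $\oM$ so much that both $A_\oM$ and $A'_\oM$ dominate the common valuation ring $\Kcirc$ and see all the relevant monomials. First I would fix the bases $B$ and $B'$ and introduce the lattices $\oLam_B,\oLam_{B'}\subset\oLam$ together with the free groups $P_B^\times$ (on $B_F$) and $P_{B'}^\times$ (on $B'_F$). Since $K_B/k$ and $K_{B'}/k$ are finitely generated and $K$ is finitely generated over each, there exists a single toric monoid $\oM_0\subset\oLamcirc$ with $\oM_0^\gp=\oLam$ such that: every element of $B_F$ and of $B'_F$ lies in $k(\oM_0$-part$)[\oM_0]$ after suitable monomial renormalization (using Lemma \ref{raylem} and the valuation computation in Lemma \ref{cuplem}), and the finitely many generators of the $k$-algebra $\Kcirc_B$ resp. $\Kcirc_{B'}$ inside $\Kcirc$ lie in the corresponding $A_\oM$ resp. $A'_\oM$. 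By Corollary \ref{cupcor} applied to both bases, $\Kcirc=\cup_\oM A_\oM=\cup_\oM A'_\oM$, so for $\oM\supseteq\oM_0$ large enough we get $A'_\oM\subseteq\Kcirc$ and $A_\oM\subseteq\Kcirc$, and moreover $A_\oM$ contains a $k$-subalgebra generating $\calO'_\oM=k(B'_F)[\oM_{B'}]_{\gtm}$ (and symmetrically).

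Next I would argue for equality $A_\oM=A'_\oM$. By Lemma \ref{gtxlem}, for sufficiently large $\oM$ the birational fiber $Y_\oM$ of $A_\oM$ and the birational fiber $Y'_\oM$ of $A'_\oM$ are both contained in the same constructible neighbourhood $\gtX_1$ of $\gtx$, separated from the other points of the relevant fibers. Since $A_\oM$ and $A'_\oM$ are normal local domains with fraction field $K$, they each equal the intersection of the valuation rings of $K$ that dominate them (by \cite[Ch.6, \S1, Th. 3]{Bou}), i.e. $A_\oM=\cap_{\calO\in Y_\oM}\calO$ and $A'_\oM=\cap_{\calO\in Y'_\oM}\calO$. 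So it suffices to prove $Y_\oM=Y'_\oM$ as subsets of $\bfP_K$. Here I would use Lemma \ref{birfiblem}: an inclusion of birational fibers is equivalent to a domination relation between the normal local rings. Because $A_\oM$ contains a $k$-subalgebra $C'$ with $\Nr_K(C')$ having $\calO'_\oM$-localization, and $A_\oM$ dominates this localization (both are local with the same maximal-ideal trace, since the valuation $\Kcirc$ dominates both), we get $A'_\oM\subseteq A_\oM$ with domination, i.e. $Y_\oM\subseteq Y'_\oM$; the symmetric argument with the roles of $B,B'$ interchanged gives the reverse inclusion, hence $A_\oM=A'_\oM$.

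The remaining point is the comparison of the monomials: for $m\in\oM\cap\oLam_B\cap\oLam_{B'}$ one must show $i_B(m)=u\,i_{B'}(m)$ with $u\in A_\oM^\times$. Both $i_B(m)$ and $i_{B'}(m)$ are elements of $K^\times$ with $|i_B(m)|=|i_{B'}(m)|=|m|$ (the two embeddings are compatible with the valuation on $K$ by construction), so their ratio $u=i_B(m)/i_{B'}(m)$ has valuation $1$, i.e. $u\in\Kcirc^\times$. I then need $u\in A_\oM$ and $u^{-1}\in A_\oM$; since $A_\oM$ is normal and $u$ is a unit in $\Kcirc\supseteq A_\oM$, this will follow once I check $u\in A_\oM$, which I would get by enlarging $\oM$ so that both $i_B(m)$ and $i_{B'}(m)$, together with the finitely many such ratios $u$ arising from generators of the group $\oLam_B\cap\oLam_{B'}$, lie in $A_\oM$ and their inverses do too — again possible by Corollary \ref{cupcor} applied to enough elements. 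Finally, the statement about divisors: $E_\oM$ is the preimage of the toric divisor $D_{B,\oM}=V(\oM_B k[M_B])$, and on $\Spec(A_\oM)$ it is cut out by the ideal generated by $\{i_B(m):m\in\oM_B,\,m\neq 1\}$; the analogous description holds for $E'_\oM$ with $i_{B'}$; since the two generating sets differ by units of $A_\oM$ and, for a cofinal set of $m$, actually span the same ideal (note $\oM_B$ and $\oM_{B'}$ both have $\gp$-group finite-index in $\oLam$, and enlarging $\oM$ one can assume $\oM_B=\oM_{B'}=\oM$ has full rank and both charts pull back the same divisor — the support being the closed fiber of $A_\oM\to$ its henselization, which is intrinsic), the induced Cartier divisors coincide.

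\textbf{Main obstacle.} The delicate step is the second paragraph: showing $Y_\oM=Y'_\oM$, i.e. that after enlarging $\oM$ the two normalizations dominate each other. The subtlety is that $A_\oM$ and $A'_\oM$ are not of finite type over a common base in an obviously compatible way — they are built from different toric charts — so the domination has to be extracted purely from the inclusions of the corresponding rings inside the fixed valuation ring $\Kcirc$ together with the quasi-compactness (in the constructible topology) packaged in Lemma \ref{gtxlem}; one has to be careful that "for sufficiently large $\oM$" can be chosen uniformly to make all finitely many containments $i_B(B_F)\subset A'_\oM$, $i_{B'}(B'_F)\subset A_\oM$, and the unit ratios $u,u^{-1}\in A_\oM$ hold at once, which is where the finite generation of everything in sight is essential.
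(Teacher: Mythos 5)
Your first move — reducing the ring equality $A_\oM=A'_\oM$ to the equality of birational fibers $Y_\oM=Y'_\oM$ via Lemma \ref{birfiblem} and the fact that a normal local domain is the intersection of the valuation rings dominating it — is exactly the paper's reduction, and the multiplicativity argument you give for the units $u=i_B(m)/i_{B'}(m)$ (which have valuation one, hence are units in $A_\oM$ once $A_\oM$ sees the finitely many $u^{\pm 1}$ for a basis of $\oLam_B\cap\oLam_{B'}$) also reflects the actual argument. But the core step, where you argue $A'_\oM\subseteq A_\oM$ for sufficiently large $\oM$ by claiming $A_\oM$ contains a $k$-subalgebra whose $K$-normalization localizes to $A'_\oM$, is not actually a proof. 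The ring $\calO'_\oM=k(B'_F)[\oM\cap\oLam_{B'}]_\gtm$ is not finitely generated as a $k$-algebra (it contains the function field $k(B'_F)$), so there is no finite list of ``generators'' of $\calO'_\oM$ that you can swallow into $A_\oM$ by Corollary \ref{cupcor}. Worse, it is a moving target: as you enlarge $\oM$ so that $A_\oM$ absorbs more of $\Kcirc$, the monoid $\oM\cap\oLam_{B'}$ — and hence $\calO'_\oM$ itself — grows along with it. The assertion that both inclusions eventually hold with the same index $\oM$ on both sides is precisely what the proposition claims, and your argument presupposes it rather than proving it.

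The paper resolves this with a fundamentally different mechanism. It expresses $Y_\oM$ and $Y'_\oM$ as explicit constructible loci inside a fixed constructible neighbourhood $S_0=\gtX_1\cap\gtX'_1$ of $\gtx$, namely $Y_\oM=Z_\oM\cap S$ where $Z_\oM=\bfP_K\{k(B_F)\}\{\{i_B(\oM_B)\}\}$ and $S\subseteq S_0$ can be shrunk as needed by a compactness argument. Then the comparison $Y_\oM=Y'_\oM$ splits into two independent pieces: (a) a comparison of the strict-inequality constraints $\{\{i_B(\oM_B)\}\}$ versus $\{\{i_{B'}(\oM_{B'})\}\}$, handled by the constructible set $S_1$ defined by $|i_B(a_j)|=|i_{B'}(a_j)|$ for a basis of $\oLam_B\cap\oLam_{B'}$ (this is the part your unit argument captures); and (b) a comparison of the non-strict constraints $\{k(B_F)\}$ versus $\{k(B'_F)\}$. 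Part (b) is a genuinely non-trivial lemma that your proposal does not address at all: its proof passes through the separable closures of $\tilk(\tilB_F)$ and $\tilk(\tilB'_F)$ in $\tilK$, lifts to a strictly \'etale extension of $\Kcirc$ to arrange a common reference field, and pushes a constructible comparison back down via a bijection of birational fibers supplied by Theorem \ref{etaleprop}. Without that ingredient, mutual domination of $A_\oM$ and $A'_\oM$ cannot be extracted from the monomial data alone, so the gap is not cosmetic but essential.
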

\begin{proof}
Recall that by Lemma \ref{birfiblem}, the normal local rings $A_\oM$ and $A'_\oM$ coincide if and only if their birational fibers $Y_\oM$ and $Y'_\oM$ coincide. Let $\gtX_i$ be as in Lemma \ref{gtxlem} and let $\oL\subset\oLamcirc$ be such that $Z_\oL\subset\gtX$. Obviously, $Z_\oM\subset\gtX$ for any larger toric monoid $\oM\subset\oLamcirc$, and in the sequel choosing $\oM$ we automatically assume that $\oL\subseteq\oM$. Any connected component of $Z_\oM$ contains a point $\gtx_i$ and hence is contained in $\gtX_i$. In particular, we obtain that $Z_\oM\cap\gtX_1=Y_\oM$. Similarly one can find a closed constructible set $\gtX'_1$ such that $Z'_\oM\cap\gtX'_1=Y'_\oM$. Then $S_0:=\gtX_1\cap\gtX'_1$ is a constructible set containing $\gtx$ and such that $Z_\oM\cap S_0\subset Y_\oM$ and $Z'_\oM\cap S_0\subset Y'_\oM$. A simple compactness argument shows that for any constructible set $S$ containing $\gtx$ we have that $Y_\oM\subset S$ for any sufficiently large $\oM$ (use that $\cap_\oM\oY_\oM=\{\gtx\}$ by Corollary \ref{cupcor} and that each $Y_\oM$ is compact in the constructible topology). So, for any constructible set $S\subseteq S_0$ we obtain that $Y_\oM=Z_\oM\cap S=S\{k(B_F)\}\{\{i_{B}(\oM_{B})\}\}$ for any sufficiently large $\oM$. Arguing similarly for $Y'_\oM$ we obtain that it is enough to find a constructible set $S\subseteq S_0$ containing $\gtx$ and such that $S\{k(B_F)\}\{\{i_{B}(\oM_{B})\}\}=S\{k(B'_F)\}\{\{i_{B'}(\oM_{B'})\}\}$ for any
sufficiently large $\oM\subset\oLamcirc$.

We will see that one can deal separately with strict and non-strict inequalities defining $Z_\oM$. First, we are going to find a constructible set $S_1\subset S_0$ such that $\gtx\in S_1$ and $S_1\{\{i_{B}(\oM_{B})\}\}=S_1\{\{i_{B'}(\oM_{B'})\}\}$. The monoid $\oN=\oM_B\cap\oM_{B'}$ coincides with $\oM\cap\oLam_B\cap\oLam_{B'}$ hence it is isogenous to both $\oM_B$ and $\oM_{B'}$. Since $S_1\{\{f\}\}=S_1\{\{f^n\}\}$, it suffices to find $S_1$ with $S_1\{\{i_{B}(\oN)\}\}=S_1\{\{i_{B'}(\oN)\}\}$. So, we can just pick up any basis $a_1\. a_E$ of $\oLam_B\cap\oLam_{B'}$ and define $S_1$ in $S_0$ by the conditions $|i_B(a_j)|=|i_{B'}(a_j)|$ for $1\le j\le E$. Then $|i_B(n)|=|i_{B'}(n)|$ on $S_1$ for any $n\in\oLam_B\cap\oLam_{B'}$, in particular, $i_B(n)=ui_{B'}(n)$ where $|u|=1$ on $S_1$. So, if $Y_\oM\subset S_1$ then $i_B(n)=ui_{B'}(n)$ for $u\in A_\oM^\times$.

Set $E=k(B_F)$ and $E'=k(B'_F)$, then we have to find a constructible set $S_2\subset S_0$ such that $\gtx\in S_2$ and $S_2\{E\}=S_2\{E'\}$. As soon as we establish existence of such $S_2$ we are done, since the set $S=S_1\cap S_2$ is then as required. The proposition now follows from the following lemma.
\end{proof}

\begin{lem}
Assume that $E$ and $E'$ are $k$-subfields of $\Kcirc$ of transcendence degree $F_{K/k}$.
Then there exists a constructible set $S\subset\bfP_K$ such that $\gtx\in S$ and $S\{E\}=S\{E'\}$.
\end{lem}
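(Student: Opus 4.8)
The plan is to take $S$ to be cut out by strict inequalities only, namely $S=\bfP_K\{\{H\}\}$ for a suitably large finite subset $H\subset\Kcirccirc$ (here $\Kcirccirc$ is the maximal ideal of $\Kcirc$, so automatically $\gtx\in S$ and $S$ is constructible), and to show that on such an $S$ the conditions ``$E\subset\calO$'' and ``$E'\subset\calO$'' become equivalent once $H$ is large enough. By symmetry it suffices to produce a finite $H\subset\Kcirccirc$ such that every $\calO\in\bfP_K$ with $E\subset\calO$ and $H\subset m_\calO$ satisfies $E'\subset\calO$; intersecting the resulting set with the one obtained by interchanging $E$ and $E'$ then gives the desired $S$ with $S\{E\}=S\{E'\}$. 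No compactness argument is needed: the statement is really a finite-approximation assertion about the single valuation ring $\gtx=\Kcirc$.

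The ingredients are elementary valuation theory. Since $E$ and $E'$ are fields contained in $\Kcirc$, the valuation of $K$ is trivial on each, so each maps isomorphically onto its image in $\tilK$; moreover $\trdeg_k(E)=\trdeg_k(E')=F_{K/k}=\trdeg_k(\tilK)$, and $\tilK$ is finitely generated over $k$ because $K/k$ is Abhyankar (Remark \ref{Abhrem2}), so all these fields are finitely generated over $k$. Identify $E$ with $\tilE\subset\tilK$. Then the compositum $\tilC:=\tilE\tilE'\subset\tilK$ is algebraic, hence finite, over $\tilE$; being a domain integral over a field it is a field, hence a finite-dimensional $E$-vector space, and $\tilC=E[\bar e'_1\.\bar e'_b]$ where $E'=k(e'_1\.e'_b)$ with $e'_j\in\Kcirc$ is a chosen set of field generators. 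Fix an $E$-basis $1=\bar b_1\.\bar b_r$ of $\tilC$, lift each $\bar b_i$ to $b_i\in E[e'_1\.e'_b]\subset\Kcirc$, let $P_i\in E[t]$ be a monic polynomial with $\deg P_i\ge1$ and $P_i(0)\ne0$ having $\bar b_i$ as a root (possible since $\tilC$ is a field), let $\gamma_{ij}^l\in E$ be the structure constants $\bar b_i\bar b_j=\sum_l\gamma_{ij}^l\bar b_l$, and write $\bar e'_j=\sum_i c_{ij}\bar b_i$ with $c_{ij}\in E$. Take $H\subset\Kcirccirc$ to consist of the elements $P_i(b_i)$, $b_ib_j-\sum_l\gamma_{ij}^l b_l$, and $e'_j-\sum_i c_{ij}b_i$; each reduces to $0$ in $\tilK$, hence lies in $\Kcirccirc$.

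Now let $\calO\in\bfP_K$ with $E\subset\calO$ and $H\subset m_\calO$. First, each $b_i\in\calO^\times$: since $P_i$ is monic over $\calO\supset E$ and $P_i(b_i)\in m_\calO$, the possibilities $|b_i|>1$ (leading term dominates) and $|b_i|<1$ (the unit $P_i(0)$ dominates) are both excluded. Hence $e'_j=\sum_i c_{ij}b_i+(e'_j-\sum_i c_{ij}b_i)\in\calO$ for all $j$. Next, the $E$-linear map $\tilC\to\calO/m_\calO$ sending $\bar b_i$ to the residue of $b_i$ respects multiplication, because the relations $b_ib_j-\sum_l\gamma_{ij}^l b_l\in m_\calO$ say exactly that the multiplication table is preserved; it is therefore an $E$-algebra homomorphism out of the field $\tilC$, hence injective. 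Finally, for any $q\in k[X_1\.X_b]$ with $q(e'_1\.e'_b)\ne0$, the residue of $q(e'_1\.e'_b)\in\calO$ in $\calO/m_\calO$ equals the image under this homomorphism of $q(\bar e'_1\.\bar e'_b)\in\tilC^\times$ (using the relations $e'_j-\sum_i c_{ij}b_i\in m_\calO$ and the injectivity of $k[e'_1\.e'_b]\into\tilC$), hence is a unit, so $q(e'_1\.e'_b)\in\calO^\times$. Taking ratios of such elements shows the valuation of $\calO$ is trivial on $E'=\Frac(k[e'_1\.e'_b])$, i.e. $E'\subset\calO$.

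The step requiring the most care is the construction and injectivity of the homomorphism $\tilC\to\calO/m_\calO$: one must check that the finitely many relations packaged into $H$ — the multiplication table of $\tilC$ in the basis $\bar b_i$ together with the expansions of the $\bar e'_j$ — genuinely transport the field structure of $\tilC$ into the residue ring of $\calO$, and that the resulting map is injective. Injectivity is what upgrades the a priori weaker conclusion ``$e'_j\in\calO$'' to ``the valuation of $\calO$ is trivial on all of $E'$'', and it is precisely here that the hypothesis $\trdeg_k(E')=\trdeg_k(E)=F_{K/k}$ (equivalently, the finiteness of $\tilC$ over $\tilE$) is used.
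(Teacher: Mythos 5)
Your proof is correct, and it is genuinely different from — and arguably cleaner than — the paper's argument, so let me compare.

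The paper splits into two cases. First, when $\tilK$ is algebraic over $\tilL=\tilE\cap\tilE'$, it lifts $\tilL$ via the two sections $\phi,\phi'$ into $E$ and $E'$ and controls the differences $\phi(c_i)-\phi'(c_i)$, much like the preceding $S_1$ construction. In the general case this fails because $\tilE\cap\tilE'$ need not have full transcendence degree, so the paper instead works with the separable closures $\tilL,\tilL'$ of $\tilE,\tilE'$ in $\tilK$; since these need not lift into $\Kcirc$, it passes to a strictly \'etale extension $\Fcirc/\Kcirc$ where the liftings exist, applies the first case there, and then pushes the resulting constructible set back down to $\bfP_K$ using the homeomorphism of birational fibers under strictly \'etale morphisms (via Theorem \ref{etaleprop}). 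Your approach sidesteps the case split and the \'etale descent entirely. The crucial observation is that one does not need to lift the compositum $\tilC=\tilE\tilE'$ as a field: it suffices to lift an $E$-basis $\bar b_i$ of $\tilC$ and record, in the strict inequalities defining $S$, the monic minimal relations $P_i(b_i)$, the multiplication table $b_ib_j-\sum_l\gamma_{ij}^lb_l$, and the expansions $e'_j-\sum_ic_{ij}b_i$ — and all the structure constants $\gamma_{ij}^l,c_{ij}$ and polynomial coefficients already lie in $E\subset\Kcirc$, so there is nothing to lift. Then the hypothesis $E\subset\calO$ together with $H\subset m_\calO$ forces all $b_i\in\calO^\times$, and the multiplication table produces an injective $E$-algebra map $\tilC\to\calO/m_\calO$, from which triviality of the valuation on $E'$ follows by evaluating polynomials in the $e'_j$. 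This is more elementary (no \'etale localization, no appeal to Theorem \ref{etaleprop}), treats the two fields symmetrically only at the very end, and uses $S$ cut out by strict inequalities alone, which is a pleasant simplification. Both arguments use in an essential way that $\tilK$ is finitely generated over $k$ and that $E,E'$ have full transcendence degree (so that $\tilC/\tilE$ is finite); your write-up makes this dependence explicit in the last paragraph.
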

\begin{proof}
The reduction $\Kcirc\to\tilK$ induces an isomorphism of $E\subset\Kcirc$ onto the field $\tilE\subset\tilK$ and similarly for $E'$. We first consider a particular case when $\tilK$ is algebraic over the field $\tilL:=\tilE\cap\tilE'$; then the argument is similar to the argument on existence of $S_1$ from the above proposition. Let $L$ and $L'$ be the preimages of $\tilL$ in $E$ and $E'$, respectively, and let $\phi{\colon}\tilL\toisom L$ and $\phi'{\colon}\tilL\toisom L'$ be the isomorphisms that invert the reduction. Since $E$ is algebraic over $L$, a valuation ring in $K$ contains $L$ if and only if it contains $E$, in particular, $\bfP_K\{E\}=\bfP_K\{L\}$, and similarly $\bfP_K\{E'\}=\bfP_K\{L'\}$. Thus, our task reduces to finding a constructible set $S$ with $S\{L\}=S\{L'\}$ and $\gtx\in S$. Let $\tilL=k(c_1\. c_l)$ and take $S$ to be the set defined by the conditions $|\phi(c_i)|=|\phi'(c_i)|=1$ and $|\phi(c_i)-\phi'(c_i)|<1$ for $1\le i\le l$. Note that $\gtx\in S$ because $\wt{\phi(c_i)}=c_i=\wt{\phi'(c_i)}$. Furthermore, for any monomial $c^n=\prod_{i=1}^l c_i^{n_i}$ we have that $|\phi(c^n)-\phi'(c^n)|<1$. For any non-zero polynomial $x=\sum_n a_nc^n\in k[c_1\. c_l]$ we thus have that $|\phi(x)-\phi'(x)|<1$ on $S$. As $|\phi(x)|=1$ at any point of $S\{L\}$ it follows that $|\phi'(x)|=1$ on $S\{L\}$. Therefore, for any $y\in\tilL^\times$ we have that $|\phi'(y)|=1$ on $S\{L\}$, and so $S\{L\}=S\{L'\}$.

Now, let us drop the assumption on $\tilE\cap\tilE'$. Let $\tilL$ and $\tilL'$ be the separable closures of $\tilE$ and $\tilE'$ in $\tilK$. If the isomorphisms $\phi{\colon}\tilE\toisom E$ and $\phi'{\colon}\tilE'\toisom E'$ inverting the reductions extend to isomorphisms $\tilL\toisom L$ and $\tilL'\toisom L'$ onto $k$-subfields of $\Kcirc$ then $S$ exists by the previous paragraph. Indeed, $S\{L\}=S\{E\}$ and $S\{L'\}=S\{E'\}$ because $L$ (resp. $L'$) is algebraic over $E$ (resp. $E'$), but the previous paragraph implies that $S\{L\}=S\{L'\}$ because $\tilK$ is purely inseparable over $\tilL$ and $\tilL'$, hence $\tilL\cap\tilL'$ contains $\tilK^{p^n}$ for large $n$ and so $\tilK$ is algebraic over $\tilL\cap\tilL'$. A lifting $\tilL\to\Kcirc$ which extends the embedding $\tilE\toisom E\into\Kcirc$ is always possible after a strictly \'etale extension of $\Kcirc$. To show this fix an extension $L/E$ of trivially valued fields which is isomorphic to $\tilL/\tilE$ and consider the composite extension of valued fields $F=LK$. Then $\Fcirc$ is strictly \'etale over $\Kcirc$ and obviously $L\into\Fcirc$. Enlarging $F$ again, we can assume in addition that there is an embedding $L'\into\Fcirc$ which lifts $\tilL'\into\tilK=\tilF$. Let $\gty\in\bfP_F$ be the point corresponding to $\Fcirc$. We know that there exists a constructible set $S_1\subset\bfP_F$ such that $\gty\in S_1$ and $S_1\{E\}=S_1\{E'\}$, so we have only to "push down" this equality to $\bfP_K$.

The \'etale morphism $\Spec(\Fcirc)\to\Spec(\Kcirc)$ is induced from an \'etale morphism $f{\colon}Z\to Y$ of schemes of finite type over $\bfZ$ by \cite[$\rm IV_4$, 17.7.8]{ega}. Let $z\in Z$ and $y=f(z)$ be the centers of $\Fcirc$ and $\Kcirc$, respectively. Then the morphism $z\to y$ is an isomorphism because so is its pullback to the closed point of $\Spec(\Kcirc)$. Thus, $f$ is strictly \'etale at $z$ and replacing $Y$ and $Z$ with open subschemes we can also achieve that $f$ is strictly \'etale along the Zariski closure of $z$ and induces an isomorphism $\of{\colon}\oz\to\oy$ of the Zariski closures. The birational fibers $Z_\oz^\bir\subseteq\bfP_F$ and $Y_\oy^\bir\subseteq\bfP_K$ over $\oz$ and $\oy$ are constructible because $Y$ and $Z$ are of finite type over $\bfZ$. In particular, we can now replace $S_1$ with the smaller constructible set $S_1\cap Z_\oz^\bir$. Since $\of$ is strictly \'etale along $\oz$, the map $\bfP_F\to\bfP_K$  induces a bijection of constructible sets $\of^\bir{\colon}Z_\oz^\bir\toisom Y_\oy^\bir$ (see Theorem \ref{etaleprop}). In the constructible topology $\of^\bir$ is a continuous bijection between compact spaces and hence a homeomorphism. In particular, $S:=\of(S_1)$ is a constructible subset of $\bfP_K$ and to finish the proof we have now to check that $S\{E\}=S\{E'\}$. But the latter is obvious because the preimages of $S\{E\}$ and $S\{E'\}$ under the bijection $\of^\bir$ are $S_1\{E\}$ and $S_1\{E'\}$, and the latter sets coincide.
\end{proof}

\subsection{Main results on Abhyankar valuations}\label{abhsec}
\begin{theor}\label{toroidalth}
Assume that $K$ is an Abhyankar valued field finitely generated over a trivially valued field $k$, $\tilK$ is separable over $k$ and $B$ is an Abhyankar transcendence basis of $K$ over $k$, and keep other notation of \S\S\ref{toricblyasec}--\ref{normindsec}. Then there exists a toric monoid $\oM_0\subset\oLamcirc$ such that for any toric monoid $\oM$ with
$\oM_0\subseteq\oM\subseteq\oLamcirc$ the following conditions are satisfied:

(i) The pair $(X_{B,\oM},E_{B,\oM})$ is log smooth at $x_{B,\oM}$ and the projection $$f_{B,\oM}{\colon}(X_{B,\oM},E_{B,\oM})\to(\bfA_{B,\oM},D_{B,\oM})$$ is Kummer at $x_{B,\oM}$.

(ii) Let $(T,N_T)$ denote the log scheme associated with $(X_{B,\oM},E_{B,\oM})$ and let $t=x_{B,\oM}$. Then $\oM\toisom\oN_{T,t}$.

(iii) If $\tilB_F$ is a separating transcendence basis of $\tilK$ and $|B_E|$ is a basis of $|K^\times|$ then $f_{B,\oM}$ is \'etale at $x_{B,\oM}$.
\end{theor}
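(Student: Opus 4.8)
The plan is to prove (iii) first, to deduce (i) and (ii) from it for a conveniently chosen Abhyankar basis, and then to transport (i) and (ii) to an arbitrary basis $B$ via the independence result Proposition~\ref{Bprop}.

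For (iii), assume $\tilB_F$ is a separating transcendence basis of $\tilK/k$ and $|B_E|$ is a basis of $|K^\times|=\oLam$, so that $\oLam_B=\oLam$ and $\oM_B=\oM$ for every toric $\oM\subseteq\oLamcirc$. By the toric description of $\bfA_{B,\oM}$ (Lemma~\ref{raylem}, Example~\ref{torchart}) the residue field of $\calO_{B,\oM}$ is $k(B_F)\toisom\widetilde{K_B}$, and since $\Kcirc=\cup_\oM A_{B,\oM}$ (Corollary~\ref{cupcor}) and $[\tilK:\widetilde{K_B}]<\infty$, for $\oM$ large the residue field of $A_{B,\oM}$ is $\tilK$. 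The morphism $f_{B,\oM}$ is finite (as $\bfA_{B,\oM}$ is normal and $K$ is finite over $\Frac(k[M_B])$), so $x_{B,\oM}$ is isolated in its fiber over $\eta_{B,\oM}$, and $\bfA_{B,\oM}$ is normal; hence Theorem~\ref{etaleth}(iii) reduces (iii) to the equality of the henselian degree $n_{x_{B,\oM}/\eta_{B,\oM}}$ with the separable residue degree $[\tilK:\widetilde{K_B}]$ (this degree being separable since $\tilK/\widetilde{K_B}=\tilK/k(\tilB_F)$ is). First I would pass to the henselization of $\bfA_{B,\oM}$ at $\eta_{B,\oM}$ and identify $n_{x_{B,\oM}/\eta_{B,\oM}}$ with the henselian local degree $e_1f_1d_1$ of the place of $K$ at $x_{B,\oM}$ over that of $K_B$; here $e_1=1$ because $|K^\times|=\oLam=\oLam_B=|K_B^\times|$, $f_1=[\tilK:\widetilde{K_B}]$, and $d_1=1$ by the stability theorem (Remark~\ref{Abhrem2}), the trivially valued $k$ being stable, hence $K_B$ stable and $K/K_B$ defectless. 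This yields (iii).

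Next, under the hypotheses of (iii), shrink $X_{B,\oM}$ and $\bfA_{B,\oM}$ to the open locus where $f_{B,\oM}$ is \'etale; there $E_{B,\oM}=f_{B,\oM}^{*}D_{B,\oM}$ is $\bbQ$-Cartier (as $f_{B,\oM}$ is unramified), Lemma~\ref{descentlem}(ii) shows $(X_{B,\oM},E_{B,\oM})$ is log smooth at $x_{B,\oM}$ (since $(\bfA_{B,\oM},D_{B,\oM})$ is log smooth at its center $\eta_{B,\oM}$), and Lemma~\ref{descentlem}(i) gives $\oN_{X_{B,\oM},x_{B,\oM}}\toisom\oN_{\bfA_{B,\oM},\eta_{B,\oM}}=\oM_B=\oM$; being \'etale, $f_{B,\oM}$ is Kummer at $x_{B,\oM}$. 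For a general Abhyankar basis $B$, use that $\tilK/k$ is separable to choose a separating transcendence basis $\tilC_F$ of $\tilK/k$, lift it to $C_F\subset\Kcirc$ by elements of value $1$, lift a $\bbZ$-basis of $\oLam$ to $C_E\subset K^\times$, and set $C=C_F\coprod C_E$; then $C$ satisfies condition~(*) of \S\ref{valsec} and $\#C=F_{K/k}+E_{K/k}=\trdeg_k(K)$ (as $K/k$ is Abhyankar), so $C$ is an Abhyankar basis meeting the hypotheses of (iii), whence the previous step gives (i) and (ii) for $C$. Applying Proposition~\ref{Bprop} to $B$ and $C$ produces $\oM_0$ such that for $\oM_0\subseteq\oM\subseteq\oLamcirc$ the local rings $A_{B,\oM}$ and $A_{C,\oM}$ coincide inside $K$ and the divisors they carry from $E_{B,\oM}$ and $E_{C,\oM}$ agree; hence $(X_{B,\oM},E_{B,\oM})$ has the same local structure at $x_{B,\oM}$ as $(X_{C,\oM},E_{C,\oM})$ at $x_{C,\oM}$, so (i) and the isomorphism of (ii) descend from $C$ to $B$. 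Finally $f_{B,\oM}$ is Kummer at $x_{B,\oM}$ because the homomorphism $\oN_{\bfA_{B,\oM},\eta_{B,\oM}}\to\oN_{X_{B,\oM},x_{B,\oM}}$ it induces is the inclusion $\oM_B=\oM\cap\oLam_B\into\oM$, which is injective with $\oM$ equal to the saturation of $\oM_B$ in $\oM^\gp=\oLam$ (toric monoids being saturated and $\oLam/\oLam_B$ finite).

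The hard part will be the henselian degree computation in (iii). This is where the (difficult) stability theorem is genuinely used — to force $d_1=1$ and so exclude a defect extension, which would otherwise make $f_{B,\oM}$ non-\'etale while leaving the value group and residue field unchanged. It is also delicate because $\calO_{B,\oM}$ is a regular local ring of dimension $E_{K/k}$, not a valuation ring, so one must carefully relate its henselization to the henselization of the valued field $K_B$ along $\Kcirc_B$; and the recurring ``for sufficiently large $\oM$'' assertions rely on the approximation results of \cite[$\rm IV_3$, \S8]{ega} distilled in Corollary~\ref{cupcor} and Lemma~\ref{gtxlem}.
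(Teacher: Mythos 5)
Your overall architecture matches the paper's: prove (iii) first, then deduce (i) and (ii) for a well-chosen basis $B'$ satisfying the hypotheses of (iii), and transport to an arbitrary $B$ via Proposition~\ref{Bprop}. The last steps are essentially identical to the paper's, including the choice of a separating transcendence basis of $\tilK/k$ lifted into $\Kcirc$ to produce the auxiliary basis, the use of Proposition~\ref{Bprop} to identify the local rings and the induced divisors, and the Kummer conclusion via $\oM_B\into\oM$.

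Where you genuinely diverge is in the proof of (iii). The paper argues directly at the level of ring extensions: the stability theorem gives that $K/K_B$ is unramified, so $\Spec(\Kcirc)\to\Spec(\Kcirc_B)$ is \'etale; since $\Kcirc_B=\cup_\oM\calO_{B,\oM}$ (Lemma~\ref{cuplem}), the approximation theorem \cite[$\rm IV_4$, 17.7.8]{ega} lets one descend this \'etale morphism to an \'etale $Y\to\Spec(\calO_{B,\oM})$ for large $\oM$, and then the normality argument identifies $\Spec(A_{B,\oM})$ with a localization of $Y$. Your route goes through Theorem~\ref{etaleth}(iii) and a henselian degree computation. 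This is workable, but the step you flag as delicate is indeed the crux: $n_{x_{B,\oM}/\eta_{B,\oM}}$ is defined relative to the henselization of the regular local ring $\calO_{B,\oM}$, not of the valuation ring $\Kcirc_B$, so you cannot literally ``pass to the henselization'' and read off $e_1f_1d_1$. What is needed is Lemma~\ref{lem1} with $X'=\Spec(\Kcirc_B)$, which expresses $n_{x_{B,\oM}/\eta_{B,\oM}}$ as a sum of henselian degrees $n_{\Kcirc_i/\Kcirc_B}$ over those extensions $\Kcirc_i$ of $\Kcirc_B$ to $K$ that dominate $A_{B,\oM}$; then Lemma~\ref{gtxlem}(ii) (equivalently, Corollary~\ref{cupcor}) must be invoked to show that for $\oM$ large this set of extensions reduces to the single ring $\Kcirc$, so that the sum collapses. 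Only then does stability (Remark~\ref{Abhrem2}) give $d=1$, and the equality of value groups $e=1$ plus the separable residue extension give $n_{x_{B,\oM}/\eta_{B,\oM}}=[\tilK:\tilK_B]$ as required. In short: your proof of (iii) is a legitimate alternative that trades the paper's \'etale descent through the filtered colimit for the birational/henselian degree calculus developed in \S\ref{bircritsec}, at the cost of needing both Lemma~\ref{lem1} and the uniqueness statement from Lemma~\ref{gtxlem}; the paper's approach sidesteps this bookkeeping. Both proofs rest on the same genuinely hard input, the stability theorem.
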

\begin{proof}
We start with (iii). In this case, the extension $K/K_B$ is unramified because $K_B$ is stable by Remark \ref{Abhrem2}, $|K_B^\times|=|K^\times|$ and $\tilK$ is separable over $k(\tilB_F)=\tilK_B$. Since $\Kcirc_B$ is the union of the rings $\calO_\oM$ by Lemma \ref{cuplem} and $\Kcirc$ is \'etale over $\Kcirc_B$, \cite[$\rm IV_4$, 17.7.8]{ega} implies that the \'etale morphism $\Spec(\Kcirc)\to\Spec(\Kcirc_B)$ is induced from an \'etale morphism $Y\to Z:=\Spec(\calO_\oM)$ for sufficiently large $\oM$. Clearly, we can assume that $Y$ is irreducible, and then it is $Z$-isomorphic to an open subscheme of $\Nr_K(Z)$ (we use that $Z$ and, hence, $Y$ is normal). Therefore, the localization of $Y$ at the center of $\Kcirc$ is $Z$-isomorphic to $\Spec(A_\oM)$; in particular, the morphism $\Spec(A_\oM)\to\Spec(\calO_\oM)$ is local-\'etale. Since $A_\oM$ and $\calO_\oM$ are the local rings of $x_\oM$ and its image $\eta_\oM$, we obtain that $f_\oM$ is \'etale at $x_\oM$. Since $A_\oM$ is log smooth at $\eta_\oM$ and the stalk of the sharp monoidal structure at $\eta_\oM$ is $\oM$, the same is true for $X_\oM$ and $x_\oM$.

To prove (i) and (ii) we choose a basis $B'$ as in (iii): this is possible because $\tilK$ is separable over $k$ and hence admits a separating transcendence basis. Then by Proposition \ref{Bprop} the pairs $(X_{B,\oM},E_{B,\oM})$ and $(X_{B',\oM},E_{B',\oM})$ are locally isomorphic at the points $t=x_{B,\oM}$ and $t'=x_{B',\oM}$ for sufficiently large $\oM$'s. The second pair is log smooth at $t'$ by (iii), hence the first pair is log smooth at $t$. Finally, if $(T',N_{T'})$ denotes the log scheme associated with $(X_{B',\oM},E_{B',\oM})$ then we have that $$\oN_{T,t}\toisom\oN_{T',t'}\toisom\oM_{B'}\toisom\oM$$
Thus, $f_{B,\oM}$ induces the map $\oM_B\into\oM$ on the stalks of sharp monoids and hence
it is Kummer at $x_{B,\oM}$.
\end{proof}

We are now in a position to prove simultaneous log uniformization for Abhyankar valuations. Assume that $k$ is a trivially valued field, $K$ is a finitely generated Abhyankar valued $k$-field, $X$ is an affine $k$-model of $\Kcirc$, $x\in X$ is the center of $\Kcirc$, and $K_1/K\. K_n/K$ are finite extensions of valued fields. For an affine refinement $f{\colon}X'\to X$ let $x'\in X'$ denote the center of $\Kcirc$. Furthermore, given a finite purely inseparable extension $l/k$ we provide each field $L_i=lK_i$ with the valuation extending that of $K_i$, set $X_i=\Nr_{L_i}(X')$, and define $x_i\in X_i$ as the center of $\Lcirc_i$ on $X_i$. Finally, $E'$ will denote a $\bfQ$-Cartier divisor on $X'$, and then $E_i$ will be the preimages of $E'$ under the finite morphisms $f_i{\colon}X_i\to X'$.

\begin{theor}\label{Abhth}
Let $k$, $K$, $X$ and $K_1\. K_n$ be as above.

(i) There exists a finite purely inseparable extension $l/k$, an affine refinement $X'\to X$ and a $\bfQ$-Cartier divisor $E'\subset X'$ such that the pairs $(X_i,E_i)$ and $(X',E')$ are log smooth at $x_i$ and $x'$, respectively, and each projection $f_i$ is Kummer at $x_i$. In addition, one can achieve that $x_1$ is a simple $l$-smooth point and all $x_i$'s are of simplicial shape.

(ii) If each $\tilK_i$ is separable over $k$ then the claim of (i) holds true for $l=k$.
\end{theor}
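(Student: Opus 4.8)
The plan is to deduce both statements from Theorem~\ref{toroidalth} together with the basis--independence results Proposition~\ref{Bprop} and Corollary~\ref{cupcor}, reducing (i) to (ii) at the outset. By Remark~\ref{Abhrem2} each $\tilK_i$ is finitely generated over $k$, hence there is a finite purely inseparable extension $l/k$ such that $\wt{lK_i}$ is separable over $l$ for every $i$. Exactly as in Step~0 of the proof of Theorem~\ref{equivunif}, replacing $k,K,X,K_i$ by $l,lK,\Nr_{lK}(X),lK_i$ is harmless: the relation $(A')^{p^n}\subseteq A'\cap K$ for a purely inseparable $K'/K$ turns a refinement of the new model into a refinement of $X$ inducing the prescribed normalizations, and Lemma~\ref{easysmlem} together with the bijectivity of purely inseparable normalizations matches up the centers and the log structures. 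So it suffices to prove (ii), and I now assume each $\tilK_i/k$ is separable.

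Since $\trdeg_k(K_i)=\trdeg_k(K)$ while $E_{K_i/k}\ge E_{K/k}$, $F_{K_i/k}\ge F_{K/k}$ and $E_{K_i/k}+F_{K_i/k}\le\trdeg_k(K_i)$, we get $E_{K_i/k}=E_{K/k}$, $F_{K_i/k}=F_{K/k}$, so $D_{K_i/k}=0$ and every $K_i/k$ is Abhyankar; for the same reason an Abhyankar transcendence basis of $K/k$ is also one of each $K_i/k$. The core of the argument is to produce, for each $i$, an Abhyankar basis $B_i$ of $K/k$ (and arbitrarily large free toric monoids $\oM\subseteq|\Kcirc\setminus\{0\}|$) such that $\Nr_{K_i}(\bfA_{B_i,\oM})$ is log smooth at the center of $\Kcirc_i$, of simplicial shape there, with the projection to $(\bfA_{B_i,\oM},D_{B_i,\oM})$ Kummer, and --- for $i=1$ --- the center a simple $k$-smooth point. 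One first chooses a basis $B^{(i)}$ of $K_i$ that is good in the sense of Theorem~\ref{toroidalth}(iii) ($\tilB^{(i)}_F$ a separating transcendence basis of $\tilK_i/k$, $|B^{(i)}_E|$ a $\bfZ$-basis of $|K_i^\times|$; both possible by separability of $\tilK_i/k$ and Remark~\ref{Abhrem2}), so that by Theorem~\ref{toroidalth}(iii) the associated $K_i$-model is simple $k$-smooth. Using that $K_i/K$ is defectless (the stability theorem, Remark~\ref{Abhrem2}), one then adapts the lattices $|K^\times|\subseteq|K_i^\times|$ and the residue fields to extract from $B^{(i)}$ an Abhyankar basis $B_i$ \emph{of $K$} whose toroidal model, normalized in $K_i$, is $K_i$-isomorphic near the center of $\Kcirc_i$ to the good model for large $\oM$; log smoothness of $(\Nr_{K_i}(\bfA_{B_i,\oM}),E_i)$ and the Kummer property of the projection then follow from Theorem~\ref{toroidalth}(i),(ii) applied to $K_i$ together with Proposition~\ref{Bprop}.

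For the assembly, apply Proposition~\ref{Bprop} to $K$ with the finitely many Abhyankar bases $B_1,\dots,B_n$: for a single sufficiently large $\oM$ the local rings $A_{B_i,\oM}$ at the center of $\Kcirc$ all coincide, together with the induced divisors, and by Corollary~\ref{cupcor} (which gives $\Kcirc=\bigcup_\oM A_{B,\oM}$) one realizes an affine neighbourhood $X'$ of that germ inside $X_{B_1,\oM}$ so that $X'$ refines $X$; put $E'=E_{B_1,\oM}\cap X'$. Then $X_i=\Nr_{K_i}(X')$ is $\Nr_{K_i}(\bfA_{B_i,\oM})$ localized suitably, so $(X',E')$ is log smooth at $x'$, each $(X_i,E_i)$ is log smooth at $x_i$ of simplicial shape, each $f_i$ is Kummer at $x_i$, and $x_1$ is a simple $k$-smooth point --- which is (ii), and with the first paragraph also (i). Statement (ii) with $l=k$ is what this argument directly yields.

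The delicate point --- and the part I expect to require the most care --- is the construction of the adapted bases $B_i$ together with the uniform choice of $\oM$: the ``large $\oM$'' thresholds coming from Theorem~\ref{toroidalth} and Proposition~\ref{Bprop} applied to $K_i$ a priori live in $|K_i^\times|$, which may strictly contain $|K^\times|$, so there is no literal common toric monoid. One must check that Corollary~\ref{cupcor}, Lemma~\ref{gtxlem} and the proof of Proposition~\ref{Bprop} survive when the toric monoids are restricted to the cofinal family of toric submonoids of $|\Kcirc\setminus\{0\}|$; the only input used is that the relevant intersections of birational fibers in $\bfP_{K_i}$ collapse to the point of $\Kcirc_i$, which still holds because $\Kcirc_i=\bigcup_{\oM\subseteq|\Kcirc\setminus\{0\}|}\Nr_{K_i}(A_{B,\oM})$ by the integral--closure argument behind Corollary~\ref{cupcor}. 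Since $|K^\times|$ has finite index in each $|K_i^\times|$ and there are finitely many $i$, a single large free toric monoid in $|\Kcirc\setminus\{0\}|$ then serves for $K$ and all the $K_i$ at once.
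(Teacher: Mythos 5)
Your proposal takes a genuinely different route from the paper, and it contains two real gaps.

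First, the reduction of (i) to (ii). You write ``By Remark~\ref{Abhrem2} each $\tilK_i$ is finitely generated over $k$, hence there is a finite purely inseparable extension $l/k$ such that $\wt{lK_i}$ is separable over $l$ for every $i$.'' This ``hence'' is unjustified: even granting finite generation of $\tilK_i$ over $k$, it is not automatic that the residue field of $lK_i$ equals $l\tilK_i$ (defect or wild ramification in $lK_i/K_i$ could make $\wt{lK_i}$ strictly larger). This is exactly the content of the paper's Theorem~\ref{insth}, which requires the generalized stability theorem and a non-trivial d\'evissage through a tower of degree-$p$ extensions. You must cite and use that result rather than assert the implication.

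Second, the ``core of the argument'' is not carried out. You choose for each $i$ a basis $B^{(i)}$ of $K_i$ good in the sense of Theorem~\ref{toroidalth}(iii), and then claim to ``extract from $B^{(i)}$ an Abhyankar basis $B_i$ of $K$ whose toroidal model, normalized in $K_i$, is $K_i$-isomorphic near the center of $\Kcirc_i$ to the good model.'' No such extraction is produced, and there is a real obstruction: a basis $B_i\subset K$ has $|B_{i,E}|$ generating only $|K^\times|$, which typically has index $>1$ in $|K_i^\times|$; so the hypotheses of part (iii) will never hold for $(K_i,B_i)$. Identifying the local rings by Proposition~\ref{Bprop} does not help directly either, because as stated that proposition compares models built on a \emph{common} toric monoid $\oM$, whereas your two models for $K_i$ use monoids living in the two different lattices $\oLam$ and $\oLam_i$. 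You acknowledge this as the ``delicate point,'' but the delicacy is not merely about uniform thresholds --- it is that the adapted-basis strategy pushes against the group-index obstruction.

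The paper sidesteps both of these cleanly. It fixes a \emph{single} Abhyankar basis $B$ of $K$ (automatically an Abhyankar basis of every $K_i$), applies only parts (i)--(ii) of Theorem~\ref{toroidalth} to each $(K_i,B)$, and observes that if $\oM_i$ is the saturation in $\oLam_i^\circ$ of a toric monoid $\oM\subset\oLamcirc$, then $\oM_i\cap\oLam_B^\circ=\oM_B$, so all the charts $\bfA_{B,\oM_i}$ literally coincide --- no basis adaptation and no comparison of models over different lattices is needed. Smoothness of $x_1$ and simplicial shape of the $x_i$ are then forced purely combinatorially: take a free $\oM'_1\subset\oLam_1^\circ$ containing $\oM_1$ (Theorem~\ref{simth}), replace $\oM$ by $\oM'_1\cap\oLamcirc$, and resaturate. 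This makes $\oM_1$ free and the other $\oM_i$ of simplicial shape, and then Theorem~\ref{toroidalth}(ii) translates this into the required geometric statements at the $x_i$. If you keep your framework, you should abandon the adapted-bases step and instead follow the monoid-saturation device; otherwise you need to supply the missing construction and extend Proposition~\ref{Bprop} to allow different value lattices, which is substantial extra work.
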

It is well known that one cannot expect all $x_i$'s to be smooth even when $X$ is a surface, $n=2$ and $K=K_1$. A counterexample was given by Abhyankar in \cite{Abh}.
\begin{proof}
We will need the following result which follows from Theorem \ref{insth} proved below: there exists a finite purely inseparable extension $l/k$ such that all fields $\wt{lK_i}$ are separable over $l$ (note that in the situation of (ii) we can just take $l=k$). Fix $l$ as above. Then it suffices to prove the theorem for $l$, $L=lK$, $X_L=\Nr_L(X)$ and $L_i=lK_i$ instead of the original $k$, $K$, $X$ and $K_1\. K_n$ (similarly to Step 1 from Theorem \ref{dim1unif}, we use that any refinement $X'_L\to X_L$ of normal affine $l$-models of $\Lcirc$ is the $L$-normalization of a refinement $X'\to X$ of affine $k$-models of $\Kcirc$). So, it suffices to establish (ii), and we assume in the sequel that each $\tilK_i$ is separable over $k$ and $l=k$.

Find an Abhyankar transcendence basis $B$ of $K$. Obviously, $B$ is also an Abhyankar transcendence basis of each $K_i$. Thus, we can associate to $B$ and each $K_i$ a sufficiently large toric monoid $\oM_i\subset\oLam^\circ_i=|\Kcirc_i\setminus\{0\}|$ that satisfies the assertion of Theorem \ref{toroidalth}(i). In the same way, we can associate to $B$ and $K$ a toric monoid $\oM\subset\oLam^\circ=|\Kcirc\setminus\{0\}|$ that satisfies \ref{toroidalth}(i) and contains $\cup_{i=1}^n(\oM_i\cap\oLam^\circ)$. Then we enlarge each $\oM_i$ by replacing it with the saturation of $\oM$ in $\oLam^\circ_i$.

Let us now explicate the assertion of \ref{toroidalth}(i) in our situation. By $(X_i,E_i)$ we denote the pair corresponding to $B$, $\oM_i$ and $K_i$ in \ref{toroidalth}(i). Similarly, the pair corresponding to $B$, $\oM$ and $K$ will be denoted $(X',E')$, temporarily allowing $X'$ to be not related to $X$. Recall that the affine chart $\bfA_{B,\oM_i}$ depends only on $B$ and the monoid $\oM_i\cap\oLam^\circ_B$, where $\oLam^\circ_B=|k(B)^\circ\setminus\{0\}|$. By our construction, the latter monoid equals to $\oM_B=\oM\cap\oLam^\circ_B$, and hence is independent of $i$. In particular, all affine charts are equal to $\bfA_{B,\oM}=\Spec(k(B)[\oM_B])$, and we obtain that $X'=\Nr_K(\bfA_{B,\oM})$ and $X_i=\Nr_{K_i}(\bfA_{B,\oM})$. Also, $E'$ and $E_i$'s are the preimages of the toric divisor $D_{B,\oM}$ of the chart $\bfA_{B,\oM}$. Thus, the condition of \ref{toroidalth}(i) tells that each pair $(X_i,E_i)$ is log smooth at $x_i$ and the projection $X_i\to\bfA_{B,\oM}$ is Kummer at $x_i$. The same is true for $(X',E')$ and $x'$, hence each projection $f_i$ is Kummer at $x_i$.

By Theorem \ref{simth}, there exists a free monoid $\oM'_1\subset\oLam^\circ_1$ which contains $\oM_1$. Replacing $\oM$ with the larger toric monoid $\oM'_1\cap\oLamcirc$ and replacing all $\oM_i$'s with the saturations of the new $\oM$ we keep all above conditions and, in addition, achieve that each $\oM_i$ is of simplicial shape and $\oM_1$ is even a free toric monoid. In particular, $x_1$ is a smooth point and, since $k(x_1)\subset\tilK_1$ is separable over $k$, $x_1$ is even a simple $k$-smooth point.

It remains to achieve, in addition to all the above properties, that $X'$ admits a morphism to $X$ compatible with the generic points. By Corollary \ref{cupcor} taking a sufficiently large $\oM$ we can also achieve that the local ring $A_{B,\oM}$ of $x'$ contains any finite subset of $\Kcirc$. Since $X=\Spec(A)$, where $A=k[a_1\. a_n]\subset\Kcirc$, we rechoose $\oM$ so that in addition to all the above properties we have that $A\subset A_{B,\oM}$. Since $A_{B,\oM}=\calO_{X',x'}$ contains $A$, a neighborhood of $x'$ admits a morphism to $X$. So, just shrinking $X'$ and updating the $X_i$'s accordingly, we achieve that $X'$ is affine, admits a morphism to $X$ and satisfies all the other properties listed in the theorem.
\end{proof}

To complete the proof of Theorem \ref{Abhth} it remains to establish the following result.

\begin{theor}\label{insth}
If $K$ is a finitely generated Abhyankar valued field over $k$ then there exists a finite purely inseparable extension $l/k$ such that for any finite purely inseparable extension $l'/l$ the field $\wt{l'K}$ is separable over $l'$.
\end{theor}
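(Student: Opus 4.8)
The statement is essentially a stabilization assertion for the inseparable behavior of the residue field, and I would prove it by reducing to a uniform bound coming from the fact that $\tilK$ is a finitely generated extension of $k$. The key invariant to control is the $p$-rank, or more precisely the ``imperfection degree'' of $\wt{l'K}$ relative to $l'$. Since $K/k$ is finitely generated, so is $\tilK/k$ (this is part of Remark \ref{Abhrem2}), and a finitely generated field extension has only finitely many ``levels'' of inseparability that can be absorbed by enlarging the base. The plan is to make this quantitative: let $d=\trdeg_k(\tilK)=F_{K/k}$. Then for any purely inseparable $l'/k$, $\wt{l'K}$ is again finitely generated over $l'$ of transcendence degree $d$, and it is separable over $l'$ precisely when a separating transcendence basis exists, which in turn is governed by whether $\Omega_{\wt{l'K}/l'}$ has dimension $d$ (equivalently, whether $\wt{l'K}\otimes_{l'} l'^{1/p}$ is reduced, i.e. $\wt{l'K}$ and $l'$ are linearly disjoint inside $(\wt{l'K})^{1/p}$). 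So I must show this holds once $l'$ contains a suitable finite $l$.

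\textbf{Key steps.} First I would observe that the formation of $\tilK$ commutes, up to the operations in play, with purely inseparable base change: for a finite purely inseparable $l'/k$, the composite field $l'K$ (with the unique extended valuation, as $l'K/K$ is purely inseparable) has residue field $\wt{l'K}$ which is a finite purely inseparable extension of $\tilK\cdot l'$ inside some fixed perfect closure; more precisely $\wt{l'K}\subseteq(\tilK)^{1/p^N}$ for $N$ with $l'^{p^N}\subseteq k$, and $\wt{l'K}\supseteq \tilK\cdot l'^{1/p^M}$ for suitable $M$ -- the precise containments can be extracted by picking a transcendence basis and chasing $p$-th roots, but the point is that $\wt{l'K}$ sits between $\tilK\cdot l'$ and a bounded perfect-closure layer. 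Second, I would invoke the standard fact that for a finitely generated field extension $F/k$ there is a finite purely inseparable $l/k$ such that $F\cdot l$ is separable over $l$, and moreover this persists: $F\cdot l'$ is separable over $l'$ for every purely inseparable $l'\supseteq l$. This is because separability of a finitely generated extension is equivalent to linear disjointness from $k^{1/p}$ (MacLane's criterion / \cite[$\rm IV_4$]{ega}), and once $F$ is linearly disjoint from $l^{1/p}$ over $l$ one checks linear disjointness from $l'^{1/p}$ over $l'$ by a direct descent along the tower of $p$-power extensions, using that $l'/l$ is itself purely inseparable. Applying this with $F=\tilK$ gives a finite purely inseparable $l_0/k$ with $\tilK\cdot l'$ separable over $l'$ for all purely inseparable $l'\supseteq l_0$. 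Third, I would enlarge $l_0$ to a finite purely inseparable $l/k$ so as to also absorb the bounded discrepancy between $\wt{l'K}$ and $\tilK\cdot l'$ noted in step one: since that discrepancy is a purely inseparable extension of bounded ``height'' $N$ depending only on $[l':k]$ -- wait, this is the subtle point -- one must show a uniform $N$ exists. In fact it does: $\wt{l'K}$ is generated over $\tilK\cdot l'$ by reductions of elements of $\Kcirc$, and since $K$ is finitely generated over $k$, a finite set of elements of $\Kcirc$ suffices to generate $\wt{l'K}$ over $\tilK\cdot l'$ for \emph{all} $l'$ simultaneously; each such generator satisfies a fixed $p$-power equation, giving a uniform bound. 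Then $\wt{lK}$ being separable over $l$ together with this uniform generation forces $\wt{l'K}$ separable over $l'$ for all $l'\supseteq l$, after possibly one more finite enlargement.

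\textbf{Main obstacle.} The delicate step is the third one: establishing that the passage from $\tilK\cdot l'$ to $\wt{l'K}$ is controlled \emph{uniformly} in $l'$, rather than requiring an ever-larger base field as $l'$ grows. The naive worry is that as $l'$ becomes more ramified, new inseparable elements could appear in the residue field at unbounded depth. I expect to neutralize this by working with a fixed affine model: choose a finitely generated $k$-subalgebra $A\subseteq\Kcirc$ with $\Frac(A)=K$ and with the center of $\Kcirc$ a point $x$; then $\wt{l'K}$ is the residue field of $\Nr_{l'K}(A)$ at the center of $\Lcirc'$, and since normalization and the residue field are governed by finitely many equations over $A$, the inseparability is bounded by data intrinsic to $A$ (e.g. a bound on the $p$-powers needed to express the integral closure), independent of $l'$. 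This is morally the stability theorem (Remark \ref{Abhrem2}) in disguise -- $K_B$ is stable, hence $K$ is stable, and stability bounds defect, which bounds exactly this kind of inseparable growth -- so if the direct argument proves unwieldy I would instead derive the result from stability: the extension $l'K/K$ is defectless of degree $[l':k]$, and defectlessness together with $e_{l'K/K}=1$, $[l':k]=[\wt{l'K}:\tilK]$ for $l'$ large enough pins down $\wt{l'K}$ as $\tilK\cdot l'$ up to separable closure, which is what we need.
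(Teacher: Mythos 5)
Your proposal correctly identifies the central tension: $\wt{l'K}$ contains $\tilK\cdot l'$ but may be a strictly larger, purely inseparable extension of it, and the whole problem is to control this gap. Your step 2 (apply MacLane's criterion to the finitely generated extension $\tilK/k$ to make $\tilK\cdot l'$ separable over $l'$) is sound. But step 3, which you rightly flag as the delicate one, is where the argument stops being a proof.

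The key difficulty is that ``the gap is uniformly bounded'' is not enough. The extension $\wt{l'K}/(\tilK\cdot l')$ is \emph{purely inseparable}, so if it is nontrivial then $\wt{l'K}$ is automatically inseparable over $l'$ even when $\tilK\cdot l'$ is separable over $l'$. You do not need a bound on the gap; you need the gap to \emph{vanish}. The affine-model sketch does not produce this, and the stability fallback as you state it conflates several unverified things: the assertion $e_{l'K/K}=1$ is not a priori clear, the phrase ``up to separable closure'' does not make sense for a purely inseparable extension, and ``for $l'$ large enough'' begs the question since you need a single $l$ that works for all further $l'$.

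The paper's proof resolves exactly this by first making a cleaner reduction that your outline misses: once one finds a single finite purely inseparable $l/k$ with $\wt{lK}$ separable over $l$, the statement for all further $l'$ follows \emph{automatically} from the fundamental inequality $ef\le n$ applied to $l'K/lK$. Indeed with $\wt{lK}$ separable over $l$ and $l'/l$ purely inseparable of degree $d$, one has $[l'K:lK]\le d$ while $[\wt{l'K}:\wt{lK}]\ge [l'\wt{lK}:\wt{lK}]=d$, and $[\wt{l'K}:\wt{lK}]\le [l'K:lK]$; all become equalities, forcing $\wt{l'K}=l'\wt{lK}$ exactly. So the entire content is finding the one $l$. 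The paper then gets this via Galois theory of valued fields (reducing, after a tame and then a totally-wild normal enlargement, to a tower of degree-$p$ steps) combined with stability of the Abhyankar field $\oL=\ok L$ over the perfection $\ok=k^{1/p^\infty}$, which in each degree-$p$ step produces either a value-group jump or a residue-field jump that can be descended to a finite purely inseparable subfield of $\ok$. Your sketch gestures at stability but does not set up the perfection $\ok$ nor the degree-$p$ d\'evissage, which is what makes the stability input usable.
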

\begin{proof}
Note that if $\tilK$ is separable over $k$ and $l'/k$ is purely inseparable of degree $d$ then $[l'K:K]\le d$ and $[\wt{l'K}:\tilK]\ge [l'\tilK:\tilK]=d$. Since $[l'K:K]\ge [\wt{l'K}:\tilK]$, all inequalities are equalities and $\wt{l'K}=l'\tilK$. But $l'\tilK$ is separable over $l'$ by separability of $\tilK$ over $k$. This argument shows that it suffices only to find $l$ such that $\wt{lK}$ is separable over $l$ because then each $\wt{l'K}$ is separable over $l'$.

Next, we prove the theorem under the additional assumption that $K$ is of degree $p$ over a subfield $L$ such that $\tilL$ is separable over $k$. Let $\ok=k^{1/p^\infty}$ be the perfection of $k$ and set $\oK=\ok K$ and $\oL=\ok L$. If $\oK=\oL$ then already for a $k$-finite subfield $l\subset\ok$ we have that $lK=lL$, and we have shown above that $\wt{lK}=\!\wt{\ lL}=l\tilL$ is separable over $l$. So, we have only to consider the case when $[\oK:\oL]=p$. Since $\oL$ is an Abhyankar field over $\ok$, it is stable and the stability allows us to control the extension $\oK/\oL$ in terms of the value groups and the residue fields. In particular, we can find an element $x\in\oK$ such that either $|x|\notin|\oL^\times|$, or $|x|=1$ and $\tilx$ does not belong to the residue field of $\oL$. In the first case, we simply take a $k$-finite subfield $l\subset\ok$ so that $x\in lK$. Then $|x|$ belongs to $|(lK)^\times|$ but does not belong to $|(lL)^\times|$, hence $e_{lK/lL}=p$ and $f_{lK/lL}=1$. In particular, $\wt{lK}=\!\wt{\ lL}=l\tilL$ is separable over $l$. In the second case, we find a sufficiently large $k$-finite subfield $l\subset\ok$ so that $x\in lK$ and the composite field $l\tilL(\tilx)$ is separable over $l$ (use that $\ok\tilL(\tilx)$ is separable over $\ok$). Since $[\wt{lK}:\wt{lL}]\le p$,
$\wt{lK}$ must coincide with $l\tilL(\tilx)$ and we are done.

Finally, we drop our assumption on $K$. Anyway, $K$ is a finite extension of an Abhyankar field $L$ with separable $\tilL$ (for example, take $L=k(B)$ for an Abhyankar basis $B$ of $K$). It suffices to verify the assertion of the theorem for a finite valued extension of $K$. It follows from the Galois theory of valued fields that after enlarging $K$ we can split the extension $K/L$ as $K/L_s/L_t/L$ where the extension $L_t/L$ is tame, the extension $L_s/L_t$ is Galois and totally wildly ramified (and hence is of degree $p^n$) and the extension $K/L_s$ is purely inseparable. Since $\tilL_t$ is separable over $\tilL$, it is separable also over $k$ and we can safely replace $L$ with $L_t$. We have achieved the situation when the extension $K/L$ is normal of degree $p^n$, and it follows from the theories of $p$-groups and inseparable extensions that $K/L$ splits into a tower  $L=L_0\subset L_1\subset\dots\subset L_n=K$ of extensions of degree $p$. By the particular case proved above there exists a finite purely inseparable extension $l_1/k$ such that $\wt{l_1L_1}$ is separable over $l_1$. Applying the same argument once again, we find $l_2/l_1$ such that $\wt{l_2L_2}$ is separable over $l_2$, so we can proceed inductively until $l=l_n$ is found.
\end{proof}

\appendix
\section{Monoids}\label{monoidsec}

\subsection{Toric monoids}\label{toricsec}

By monoid we mean a set $P$ with a binary operation $\cdot$ or $+$ and a neutral element $1$ or $0$, respectively. All groups and monoids are automatically assumed to be commutative. Usually we will work with multiplicative notation $\cdot,1$, but a few times we will use additive notation $\bfN,\bfZ,\bfQ,\bfR$ or $(M,+)$. We prefer to work with multiplicative notation in order to be consistent with the language of valuations in the paper. The interested reader can easily translate everything to usual additive toric geometry.

Given a monoid $P$, we denote the set of its invertible elements as $P^\times$; it is the largest subgroup of $P$. Also, we use the notation $\oP=P/P^\times$. Any homomorphism from $P$ to a group factors through a universal group which will be denoted $P^\gp$ (the Grothendieck group of $P$). A monoid $P$ is {\em integral} if there is cancellation in $P$, and the latter happens if and only if the map $P\to P^\gp$ is injective. One says that $P$ is {\em fine} if it is finitely generated and integral. By {\em saturation} of a monoid $P$ in a larger monoid $Q\supset P$ we mean the set of all elements $x\in Q$ such that $x^n\in P$ for a positive $n$, and an integral monoid $P$ is called {\em saturated} if it coincides with its saturation in $P^\gp$.

By a {\em toric monoid} $P$ we mean a fine saturated monoid such that $P^\gp$ is a lattice (i.e. $P^\gp$ is torsion free), and by {\em dimension} of $P$ we mean the rank of $P^\gp$. Any such monoid can be described as a cone in $P^\gp$, in the sense that $P=P^\gp\cap P_\bfR$, where $P_\bfR$ is the {\em topological saturation} of $P$ in $P^\gp_\bfR:=P^\gp\otimes_\bfZ\bfR$, i.e. the closure of the saturation of $P$ in $P^\gp_\bfR$. (Note that elements of $P^\gp\otimes_\bfZ\bfR$ are products of real powers of elements of $P$.) Note that $P_\bfR$ is a rational polyhedral cone, i.e. it is the intersection of finitely many rational half spaces. Furthermore, $P$ is sharp (i.e. $\oP=P$, or $P^\times=1$) if and only if the cone is strictly convex. We say that $P$ is of {\em simplicial shape} if the saturation $P_\bfQ$ of $P$ in $P^\gp_\bfQ$ is isomorphic to $(\bfQ_{\ge 0}^n,+)$ (in particular, $P$ is sharp). Note that the latter happens if and only if $P_\bfR$ is a cone over a simplex. Any toric monoid splits non-canonically as $P^\times\oplus \oP\toisom P$. For example, to find a section $\oP\to P$ one can choose a splitting $P^\gp=P^\times\oplus L$ and then $L$ is isomorphic to $\oP^\gp$ and $L\cap P$ is a required copy of $\oP$ in $P$.

A {\em Kummer} homomorphism of toric monoids is an embedding $h\:P\into Q$ such that for any $q\in Q$ there exists $n\ge 1$ with $q^n\in h(P)$. In this case $Q$ is the saturation of $P$ in $Q^\gp$ and the index $[Q^\gp:P^\gp]$ is finite. We call this index the {\em rank} of $h$. Two toric monoids are called {\em isogenous} if they admit Kummer homomorphisms to a third monoid. Note that $M$ is of simplicial shape if and only if it is isogenous to a free monoid $P$ (i.e. $P\toisom\bfN^n$). Indeed, consider the submonoid $P$ in $M$ generated by the primitive elements on the edges of the cone $M_\bfR$.

\subsection{Valuation monoids}\label{valmonsec}
Let $\Lam$ be a "multiplicative" group. We say that a submonoid $\Lamcirc$ is a {\em valuation monoid} of $\Lam$ if $(\Lamcirc)^\gp=\Lam$ and for any element $m\in\Lam$ the monoid $\Lamcirc$ contains at least one element from the set $\{m,m^{-1}\}$. In particular, if $(\Lamcirc)^\times=1$ then $\Lamcirc$ contains exactly one element from any set $\{m,m^{-1}\}$. A valuation monoid is always saturated, in particular, it contains the torsion subgroup $\Lam_\tor$ and studying it reduces to studying the valuation monoid $\Lamcirc/\Lam_\tor$ of $\Lam/\Lam_\tor$. Even if $\Lam$ is a "multiplicative" lattice (i.e. it is finitely generated and torsion free), usually $\Lamcirc$ is not finitely generated, so the following theorem is very useful.

\begin{theor}\label{simth}
Assume that $\Lamcirc$ is a valuation monoid of a "multiplicative" lattice $\Lam$ and $(\Lamcirc)^\times=1$. Then $\Lamcirc$ is a filtered union of its free submonoids with $M^\gp\toisom\Lam$.
\end{theor}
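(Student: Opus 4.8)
The plan is to realize $\Lamcirc$ as the union of free monoids coming from rational polyhedral approximations of the cone associated to the valuation, using the fact that a valuation monoid on a lattice corresponds to a (not necessarily rational) half-space decomposition. First I would pass to the vector space $V=\Lam\otimes_\bfZ\bfR$ and consider the topological saturation $C$ of $\Lamcirc$ in $V$; since $(\Lamcirc)^\times=1$, the monoid $\Lamcirc$ contains exactly one of $m,m^{-1}$ for each $m\in\Lam\setminus\{1\}$, so $C$ is a closed convex cone that is "half-space like": $C\cup(-C)=V$ and $C\cap(-C)=\{0\}$, i.e. $C$ is cut out by $\ell\ge 0$ for a single linear functional when $\Lam$ has rank one, and in general $C$ is the closure of a union of a nested family of rational simplicial cones (this is the standard picture of a valuation of a lattice as a flag of rational subspaces together with archimedean data on the successive quotients). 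The key point is that $\Lamcirc=\Lam\cap C$, which follows because $\Lamcirc$ is saturated: if $m\in\Lam\cap C$ but $m\notin\Lamcirc$ then $m^{-1}\in\Lamcirc\subset C$, forcing $m\in C\cap(-C)=\{0\}$.

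Next I would build the exhausting family of free submonoids. Fix any finite subset $S\subset\Lamcirc$; I want a free submonoid $N\subseteq\Lamcirc$ with $N^\gp=\Lam$ containing $S$. Choose a $\bfZ$-basis and hence identify $\Lam\cong\bfZ^n$; among the elements of $\Lamcirc$ one can pick $n$ elements $e_1\.e_n$ that form a $\bfZ$-basis of $\Lam$ and lie in the interior of $C$ relative to the appropriate faces — concretely, one runs an inductive "unimodular refinement" on the flag defining the valuation: on each one-dimensional successive quotient the valuation is given by a real number, and a standard continued-fraction / Stern–Brocot argument produces arbitrarily fine unimodular cones inside the corresponding ray whose generators lie in $\Lamcirc$ and eventually dominate any prescribed finite set. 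Taking $N=\langle e_1\.e_n\rangle$, freeness is automatic since the $e_i$ form part of a basis, $N^\gp=\Lam$ by construction, and $N\subseteq\Lamcirc$ because each $e_i\in\Lamcirc$ and $\Lamcirc$ is a submonoid. The family of such $N$ is filtered: given $N_1,N_2$ one enlarges a unimodular refinement to absorb a finite generating set of $N_1N_2$ (which is again a finitely generated submonoid of $\Lamcirc$, hence contained in some $N_3$ as above), so directedness reduces to the single assertion "every finite subset of $\Lamcirc$ lies in some free $N$ with $N^\gp=\Lam$." Finally $\Lamcirc=\bigcup N$ since every $m\in\Lamcirc$ lies in the $N$ produced by $S=\{m\}$.

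The step I expect to be the main obstacle is the unimodular refinement producing free submonoids whose group of differences is all of $\Lam$ (not a finite-index sublattice) while still absorbing a prescribed finite subset of $\Lamcirc$. The subtlety is that a naive choice of $n$ elements of $\Lamcirc$ that span $C$ need only span a sublattice of finite index; one must argue, ray by ray along the flag of convex subgroups determined by the valuation, that the unimodular refinements can be chosen compatibly so that the concatenated basis is actually unimodular in $\Lam$. This is exactly where the hypothesis $(\Lamcirc)^\times=1$ and the structure theory of valuations on finitely generated ordered groups (a finite chain of convex subgroups with rank-one archimedean quotients, cf. the discussion of $E_{l/k}$ and Abhyankar bases in \S\ref{valsec}) get used: on each rank-one quotient the problem is the classical fact that the positive cone of a rank-one valuation of $\bfZ^m$ is an increasing union of free monoids of full rank, proved by the Stern–Brocot subdivision, and one then splices these together along the flag by induction on the height. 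Once this combinatorial core is in place, filteredness and the union statement are formal, completing the proof.
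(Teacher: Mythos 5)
The paper does not give a proof of Theorem~\ref{simth}; it flags the reduction (``it is enough just to prove that any finite subset of $\Lamcirc$ is contained in a free submonoid''), warns that ``surprisingly enough this is not so simple,'' and then cites \cite[6.1.30]{GR}, with Remark~\ref{lasttorrem}(iii) pointing out that the same statement is the dual of the KKMS theorem that every rational fan has a regular refinement. Your overall strategy --- exhaust $\Lamcirc$ by monoids generated by $\bfZ$-bases of $\Lam$ lying inside $\Lamcirc$ --- is the right one, and you correctly isolate the hard step. But several of the specific claims you lean on are false, and the step you label as ``the main obstacle'' is dispatched too lightly.

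First, the identities $C\cap(-C)=\{0\}$ and $\Lamcirc=\Lam\cap C$ fail whenever the valuation has height $>1$. Take $\Lam=\bfZ^2$ with the lexicographic valuation monoid $\Lamcirc=\{(a,b): a>0\}\cup\{(0,b): b\ge 0\}$. The topological saturation $C$ is the closed half-plane $\{x\ge 0\}$, so $C\cap(-C)$ is the whole line $\{x=0\}$, and $\Lam\cap C=\{(a,b): a\ge 0\}$ strictly contains $\Lamcirc$ (it has $(0,-1)$). The argument you give for $\Lamcirc=\Lam\cap C$ therefore breaks. Second, the successive quotients in the flag of convex subgroups are not rank-one lattices: a height-one valuation on $\bfZ^n$ (an injective $\lambda\:\bfZ^n\to\bfR$) has the trivial flag $1\subset\Lam$ with a single quotient of rank $n$. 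So the problem does not reduce along the flag to a chain of rank-one pieces; it reduces to finitely many pieces each of the form ``height-one valuation monoid of $\bfZ^m$,'' with $m$ as large as you like.

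That last case is exactly where the difficulty lives, and Stern--Brocot / continued fractions only handle $m=2$. For $m\ge 3$ one needs to approximate the ray $\bfR_{\ge 0}\lambda$ in $\bfR^m$ by a shrinking family of full-dimensional \emph{unimodular} simplicial cones --- a multidimensional Euclidean-algorithm argument (iteratively replacing one generator $v_j$ by $v_i+v_j$, tracking the barycentric coordinates of $\lambda$ and arguing the cones actually shrink to the ray and not to a face). This is precisely the content of \cite[6.1.30]{GR}, equivalently the combinatorial heart of toric resolution in \cite[Ch.~1, Th.~11]{KKMS}. Calling it ``the classical fact... proved by the Stern--Brocot subdivision'' is where the proposal has a genuine gap: your inductive scheme has the correct shape, but it outsources the only nontrivial step to a two-dimensional argument that does not generalize, and it does not address the convergence issue (why do the unimodular cones collapse onto the ray rather than stabilize around a proper face?). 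If you repair the setup to avoid the false identities about $C$, reduce as the paper suggests to the height-one case of arbitrary lattice rank, and then either cite \cite[6.1.30]{GR} or run the multidimensional Euclidean algorithm with a convergence argument, the rest of your proposal (filteredness and the union statement being formal consequences) is fine.
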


Obviously, it is enough just to prove that any finite subset of $\Lamcirc$ is contained in a free submonoid. Surprisingly enough this is not so simple. We refer to \cite[6.1.30]{GR} for an elementary proof of the theorem. The remaining part of the appendix is not used in the paper. We will make two remarks about the geometry of dual monoids, monoidal desingularization and local uniformization, and the monoidal Riemann-Zariski space. All these objects describe some combinatorial features of their classical analogs. We will treat $\bfN$ as a multiplicative monoid, so we choose a "uniformizer" $\pi\in(0,1)$ and embed $\bfN$ into $\bfR_+^\times$ as
$\pi^\bfN$.

\begin{rem}\label{lasttorrem}
(i) Elements of toric and valuation monoids can be considered as functions on geometric objects corresponding to dual monoids. For example, as a geometric object corresponding to a toric monoid $M$ one can take the dual monoid $M^*=\Hom(M,\pi^\bfN)$ or the dual real cone $M^*_\bfR=\Hom(M,(0,1]^\times)$ or the monoidal spectrum $\Spec(M)$ as defined by Deitmar in \cite{Dei}, i.e. the set of facets of $M^*_\bfR$.

(ii) One can glue global monoidal schemes from such monoidal spectra. If $\Lam$ is a lattice then to any complete rational fan $\Sigma=\{X_\sigma\}_{\sigma\in\Sigma}$ in $\Lam^*_\bfR$ there corresponds a monoidal scheme $X_\Sigma$ glued from $\Spec(M_i)$ with $M_i^\gp=\Lam$. On the level of topological spaces, $X_\Sigma$ is the set of facets $\sigma\in\Sigma$ provided with the quotient topology with respect to the projection $\Lam^*_\bfR\to X_\Sigma$. The stalk $\calO_\sigma=\calO_{X_\Sigma,\sigma}$ consists of the elements $\lam\in\Lam$ with $\lam(\sigma)\le 1$.

(iii) We say that $X_\Sigma$ as above is {\em regular} if all stalks $\calO_\sigma$ are of the form $\bfZ^l\times\bfN^m$. An equivalent condition is that all monoids $\ocalO_\sigma$ are free. This happens if and only if the associated toric variety over a field $k$ is regular. By \cite[Ch. 1, Th. 11]{KKMS} and its proof, any fan $\Sigma$ has a refinement by a regular fan $\Xi$. This claim can be considered as a combinatorial (or monoidal) global desingularization $X_\Xi\to X_\Sigma$, and it implies toric (and toroidal) desingularization. In a sense, this is the "combinatorial part" of the desingularization of varieties. Passing to the dual monoids (the monoids of functions) one easily deduces Theorem \ref{simth}, which is a monoidal analog of local uniformization along a valuation but is formulated in the dual language.
\end{rem}

\begin{rem}\label{lastrzrem}
(i) One can also define a monoidal Riemann-Zariski space $\RZ_\Lam$ to be the set of all valuation monoids of $\Lam$ provided with the natural quasi-compact Zariski (and compact constructible) topology and a sheaf of monoids. We do not give all details but note that on the level of sets it can be described as follows: there is one generic point of height zero; the set of points $M$ of height one can be naturally identified with the unit sphere $S(\Lam^*_\bfR)$ in $\Lam^*_\bfR:=\Hom(\Lam,\bfR_+^\times)$ by normalizing an order preserving functional $\lam_M{\colon}M\to\bfR_+^\times$; if a point $M$ is of height one and the projective coordinates of $\lam_M$ are not linearly independent over $\bfQ$ then $M$ possesses specializations of height two corresponding to rational directions through $M$ in $S(\Lam^*_\bfR)$, and so on for higher heights.

(ii) Alternatively, $\RZ_\Lam$ can be described as the projective limit of all $X_\Sigma$'s, where $\Sigma$ runs through the set of all complete rational fans. Note also that $\RZ_\Lam$ is the set of points of the site (or topos) of $S(\Lam^*_\bfR)$ provided with the $G$-topology of rational polyhedra.

(iii) The monoidal Riemann-Zariski space $\gtX=\RZ_\Lam$ is tightly connected to the graded Riemann-Zariski spaces $\gtY=\bfP_{K/k}$ with $K=k[\Lam]$; see the example after Corollary 2.7 in \cite[\S2]{temred2}. In particular, these spaces are homeomorphic and their sheaves of monoids and graded rings are related by $\calO_\gtY=k[\calO_\gtX]$.
\end{rem}

\section{Relations between local-\'etaleness and \'etaleness}\label{etalesec}
This appendix is due to B. Conrad and the referee. We say that a morphism $f{\colon}Y\to X$ is {\em local-\'etale} at a point $y\in Y$ if the induced morphism $\Spec(\calO_{Y,y})\to\Spec(\calO_{X,f(y)})$ is a localization of an \'etale morphism. Note that the same notion is called essentially \'etale in \cite[$\rm IV_4$, \S18.6.1]{ega}, but our terminology is also common. If, in addition, $k(f(y))\toisom k(y)$ then $f$ is {\em strictly local-\'etale} at $y$.

If $f$ is locally of finite presentation then $f$ is (strictly) local-\'etale at $y$ if and only if it is (strictly) \'etale at $y$. Somewhat surprisingly, one should be very careful with attempts to replace finite presentation with a finite type assumption. We start with a result on the positive side. Although it is not used directly in the paper, it is proved by the same argument that plays the main role in the proof of Theorem \ref{etaleth}.

\begin{propsect}\label{localetale}
Let $X$ be a scheme that locally has a finite number of associated points and let $f{\colon}Y\to X$ be of finite type. Then $f$ is local-\'etale at a point $y\in Y$ if and only if $f$ is \'etale at $y$.
\end{propsect}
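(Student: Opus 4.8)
The ``if'' direction is trivial: an \'etale morphism is in particular locally of finite presentation and local-\'etale at every point, so the content is the ``only if'' direction. Assume therefore that $f$ is of finite type and local-\'etale at $y$, set $x=f(y)$, and reduce immediately to the affine case $X=\Spec(A)$, $Y=\Spec(B)$ with $B$ a finitely generated $A$-algebra. The hypothesis means that $\calO_{Y,y}$ is a localization of an \'etale $\calO_{X,x}$-algebra; in particular $\calO_{Y,y}$ is flat over $\calO_{X,x}$, unramified over it, and $\Spec(\calO_{Y,y})\to\Spec(\calO_{X,x})$ has one point in its closed fiber with trivial residue extension after the strict henselization. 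The standard trick is to bound the ramification: by \cite[$\rm IV_4$, 18.12.1 and 18.12.2]{ega} there is an \'etale morphism $g{\colon}X'\to X$ with $g^{-1}(x)=\{x'\}$, $g$ strictly \'etale at $x'$, and an open neighborhood $V'$ of the unique point $y'\in Y\times_X X'$ over $y$ such that $V'\to X'$ is \emph{finite}. By \'etale descent it suffices to prove that $V'\to X'$ is \'etale at $y'$, so we may rename and assume from the outset that $f$ is finite, $X$ is local at $x$ with closed point $x$, and $f^{-1}(x)=\{y\}$ with $k(x)\toisom k(y)$.

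\textbf{Key steps.} With $f$ finite, $B$ is a finite $A$-module, semi-local, and the hypothesis that $\calO_{Y,y}$ is a localization of an \'etale $A$-algebra forces, after localizing $A$ suitably, that $B$ is actually \'etale over $A$ \emph{at} $y$ in the sense that $B_{\gtm_y}$ is \'etale over $A$; the issue is only finite presentation of $B$ itself as an $A$-algebra. Here is where the hypothesis on associated points enters. Write $A$ as a filtered colimit $A=\varinjlim A_\alp$ of its finitely generated $\bfZ$-subalgebras (or, if one prefers, use \cite[6.9.15]{egaI} to realize $\Spec(A)$ as a limit of finite-type schemes); then $B$, being a finite $A$-algebra, descends to a finite $A_\alp$-algebra $B_\alp$ for large $\alp$, and $B_\alp$ is automatically of finite presentation over the noetherian ring $A_\alp$. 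Because being \'etale is a finitely-presented condition, the \'etale locus of $\Spec(B_\alp)\to\Spec(A_\alp)$ is open; pulling back, we get that $f$ is \'etale on an open $U\ni y$ \emph{provided} the point $y_\alp\in\Spec(B_\alp)$ lies in the \'etale locus of $\Spec(B_\alp)\to\Spec(A_\alp)$. Establishing this last fact is the crux: one knows $\calO_{Y,y}=\varinjlim(\calO_{Y_\alp,y_\alp})$ is a localization of an \'etale $A$-algebra, hence flat and unramified over $A$; flatness of the colimit together with the finite-presentation of $B_\alp$ over $A_\alp$, and the finiteness of the set of associated primes of $A$ (so that flatness can be tested on the finitely many associated points, and descends to some $A_\alp$), yields that $B_\alp$ is flat over $A_\alp$ in a neighborhood of $y_\alp$. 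Unramifiedness likewise descends. Together these give $f$ \'etale at $y$.

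\textbf{Main obstacle.} The delicate point is the descent of flatness through the filtered colimit $A=\varinjlim A_\alp$: flatness of $\calO_{Y,y}$ over $A$ does \emph{not} in general descend to flatness of $B_\alp$ over $A_\alp$ near $y_\alp$ for large $\alp$ without some noetherian-type finiteness input, and this is precisely the role of the hypothesis that $X$ locally has finitely many associated points. Concretely I expect to argue: a finite-type flat-at-a-point condition over $A$ can be checked by the local criterion of flatness applied over the (finitely many) associated primes of $A$ and the fibre; because $B_\alp$ is finitely presented over the noetherian $A_\alp$, its non-flat locus is closed, and the images of the associated primes of $A$ stabilize for large $\alp$, so the generic flatness along associated points propagates down. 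This is the same mechanism underlying Theorem \ref{etaleth}, where one replaces a partial normalization by a finite modification and then works over a noetherian base; the hypothesis on associated points is exactly what is needed to make the $\pi$-torsion / flatness bookkeeping finite. Once flatness at $y_\alp$ is in hand, unramifiedness is automatic (it is insensitive to the colimit since $\Omega_{B/A}=\varinjlim\Omega_{B_\alp/A_\alp}$ and the stalk vanishes), and \'etaleness at $y$ follows. The remaining verifications --- openness of the \'etale locus over a noetherian base, and \'etale descent of \'etaleness through $g{\colon}X'\to X$ --- are standard and I would not belabor them.
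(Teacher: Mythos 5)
The initial reduction---localize, use \cite[$\rm IV_4$, 18.12.1, 18.12.2]{ega} to pass to a finite morphism with $k(x)\toisom k(y)$---agrees with the paper's proof. After that the routes diverge, and yours has a real gap.

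Once $f$ is finite with $f^{-1}(x)=\{y\}$ and $k(x)\toisom k(y)$, the paper never invokes noetherian approximation or a flatness descent. It simply notes that $\calO_{X,x}\to\calO_{Y,y}$ is then a \emph{finite} strictly local-\'etale map, hence an isomorphism; by Nakayama, after shrinking $X$ around $x$, the finite map $A\to B$ is surjective, so $f$ is a closed immersion $V(I)\into X$. The role of the hypothesis on associated points is then a single clean observation: shrinking $X$ further so that $x$ lies in the closure of every associated point, the ideal $I$ satisfies $I_\eta=0$ at each associated point $\eta$ (because $\calO_{X,\eta}$ is a localization of $\calO_{X,x}$ and $I_x=0$), and since every nonzero element of $A$ has annihilator contained in some associated prime, this forces $I=0$. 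No colimits, no flatness descent.

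Your plan instead tries to descend flatness through the filtered colimit $A=\varinjlim A_\alp$, asserting that flatness of $\calO_{Y,y}$ over $A$ ``can be tested on the finitely many associated points, and descends to some $A_\alp$.'' This is the crux, and it is not established---nor is it a standard fact. There is no criterion by which flatness of a finite module over a general commutative ring is detected at its (weakly) associated primes, and the ``local criterion of flatness'' you cite concerns completions and nilpotent filtrations, not associated primes. Even the weaker statement you actually need (that $\calO_{Y_\alp,y_\alp}$ is flat over $\calO_{X_\alp,x_\alp}$ for large $\alp$, given that the colimit is flat over $A$) is problematic because the maps $\calO_{X_\alp,x_\alp}\to\calO_{X,x}$ need not be flat, so there is no faithfully flat descent available, and nothing in the filtered-colimit formalism of \cite[$\rm IV_3$, \S8]{ega} gives you flatness descending in the needed direction. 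You would do better to drop the colimit machinery entirely: after the finite reduction, observe that $\calO_{X,x}\toisom\calO_{Y,y}$, get a closed immersion by Nakayama, and then use the associated-points hypothesis only to kill the kernel ideal as above.
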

\begin{proof}
Only direct implication needs a proof. Shrinking $Y$ around $y$ we can assume that $f$ is quasi-finite. Furthermore, if $X'\to X$ is an \'etale morphism and $y'\in Y\times_XX'$ is a point over $y$ then it suffices to show that $f'=f\times_XX'$ is \'etale at $y'$. Obviously, $f'$ is local-\'etale at $y'$, hence we can replace the initial $X$, $y$, and $Y$ with $X'$, $y'$, and a neighborhood $V'$ of $y'$ in $Y\times_XX'$. We will use this reduction a couple of times until $f$ becomes an isomorphism.

Clearly, we can achieve in this way that $k(x)\toisom k(y)$, where $x=f(y)$. Furthermore, by \cite[$\rm IV_4$, 18.12.1]{ega} we can choose $X'\to X$, $y'$, and $V'$ in such a way that $V'\to X'$ is finite, so we can assume in addition that $f$ is finite. At this stage, $\calO_{X,x}\to\calO_{Y,y}$ becomes a finite strictly-\'etale homomorphism, hence an isomorphism. Therefore, after shrinking $X$ around $x$ (and replacing $Y$ with the preimage) we can also achieve that $f$ is a closed immersion. By our assumption on the finiteness of associated points, we can further shrink $X$ so that $x$ is contained in the closure of any associated point of $X$ (in other words, $x$ lies in all irreducible and embedded components of $X$). At this stage, the closed immersion $f$ becomes an isomorphism because any associated point $\eta$ of $X$ is the image of a point $\varepsilon\in Y$ and $f$ induces an isomorphism $\calO_{X,\eta}\toisom\calO_{Y,\varepsilon}$. This finishes the proof.
\end{proof}

The following examples show that the local finiteness assumption is necessary.

\begin{examsect}
(i) We start with a reduced example with infinitely many irreducible components. Let $A=\prod_{i\in I}k_i$ be an infinite product of fields. It is well known that $X=\Spec(A)$ is a totally disconnected compact space that contains non-discrete points (that correspond to the non-principal ultrafilters in $I$). If $x$ is such a point then $\calO_{X,x}=k(x)$ and the closed immersion $x\into X$ is a local-\'etale morphism of finite type. On the other hand it is not \'etale because the point is not discrete and hence its ideal $m_x\subset A$ is not finitely generated.

(ii) Now, let us construct an irreducible example with infinitely many embedded components. Fix a field $k$ with an infinite subset $I$ (e.g. $\bfQ$ and $\bfN$), and consider the closed immersion of irreducible schemes
$$Y=\Spec(k[y])\to X=\Spec(k[x_i,y_i]_{i\in I}/(x_i^2,x_i(y_i-i)))$$
which is local-\'etale at every point of $\bfA^1_k\setminus I$ but not \'etale anywhere.
\end{examsect}

\end{document}